\RequirePackage[2022-06-01]{latexrelease} 
\documentclass{amsart}

\usepackage{silence}

\usepackage{longtable}
\usepackage{array}

\usepackage[T1]{fontenc}
\usepackage[utf8]{inputenc}
\usepackage{newunicodechar}
\makeatletter
\def\newunicodechar#1#2{%
  \@tempswafalse
  \edef\nuc@tempa{\detokenize{#1}}%
  \if\relax\nuc@tempa\relax
  \nuc@emptyargerr
  \else
  \edef\@tempb{\expandafter\@car\nuc@tempa\@nil}%
  \nuc@check
  \if@tempswa
  \@namedef{u8:\nuc@tempa}{#2}%
  \fi
  \fi
}
\makeatother

\usepackage{amssymb}
\usepackage{mathtools}
\usepackage[mathcal]{euscript}
\usepackage{stmaryrd}
\usepackage{physics}

\usepackage{tikz}
\usetikzlibrary{cd,arrows.meta,decorations.pathreplacing,calligraphy,decorations.markings,positioning,calc}
\usepackage{tikz-cd}

\usepackage{pgfplots}
\pgfplotsset{width=7cm,compat=newest}
\pgfdeclarelayer{edgelayer}
\pgfdeclarelayer{nodelayer}
\pgfsetlayers{edgelayer,nodelayer,main}
\usepackage{mathrsfs}

\usepackage{xr}[=v5]

\usepackage[linktoc=all]{hyperref} 


\usepackage[shortlabels]{enumitem} 

\usepackage[capitalize,nosort]{cleveref} 
\crefformat{equation}{(#2#1#3)} 
\crefmultiformat{equation}{(#2#1#3)}{ and~(#2#1#3)}{, (#2#1#3)}{,
and~(#2#1#3)} 
\crefname{figure}{Figure}{Figures}
\Crefname{enumi}{}{}

\Crefname{subdefinition}{Definition}{Definitions}
\Crefname{subenumi}{Definition}{Definitions}
\AtBeginEnvironment{subdefinition}{%
  \crefalias{enumi}{subenumi}%
  \setlist[enumerate,1]{ref={\thedefinition.(\arabic*)}}%
}

\Crefname{conditionenumi}{Condition}{Conditions}

\Crefname{partenumi}{Part}{Parts}

\usepackage{etoolbox}
\Crefname{definition}{Definition}{Definitions}
\Crefname{definitionenumi}{Definition}{Definitions}
\AtBeginEnvironment{definition}{%
  \crefalias{enumi}{definitionenumi}%
  \setlist[enumerate,1]{
    label={{(\roman*)}},
    ref={\thedefinition.(\roman*)}
  }%
}

\usepackage{booktabs}
\usepackage{float}
\usepackage{wrapfig}
\usepackage{caption}
\usepackage{verbatim}

\makeatletter
\DeclareRobustCommand*\cal{\@fontswitch\relax\mathcal}
\DeclareRobustCommand*\mit{\@fontswitch\relax\mathnormal}
\DeclareRobustCommand*\frak{\@fontswitch\relax\mathfrak}
\makeatother

\usepackage{xcolor}
\usepackage[margin=2cm]{geometry}

\usepackage{graphicx}
\usepackage{import}
\usepackage{xifthen}
\usepackage{pdfpages}
\usepackage{transparent}

\mathcode`\:="603A 
\mathchardef\colon="303A 

\newcommand{\cat}[1]{\mathsf{#1}}

\newcommand{\Set}{\cat{Set}}
\newcommand{\sset}{\cat{sSet}}

\newcommand{\Top}{\cat{Top}}

\newcommand{\nerve}{\mathcal{N}}

\newcommand{\we}{\simeq}

\newcommand{\id}{\mathrm{id}}
\newcommand{\dom}{\textnormal{dom}}

\DeclareMathOperator*{\colim}{colim}
\DeclareMathOperator*{\holim}{holim}
\DeclareMathOperator*{\hocolim}{hocolim}

\newcommand{\proj}{\textnormal{proj}}

\newcommand{\inj}{\textnormal{inj}}

\newcommand{\Cech}{\textnormal{\v{C}ech}}

\newcommand{\Open}{\textnormal{Open}}
\newcommand{\pr}{\textnormal{pr}}

\newcommand{\Mor}{\textnormal{Mor}}

\DeclareMathOperator{\Sing}{Sing}
\newcommand{\unit}{\mathbf{1}}

\newcommand{\CSet}{\textnormal{C}^{\infty}\Set}
\newcommand{\CsSet}{\textnormal{C}^{\infty}\sset}

\newcommand{\field}[1]{\mathfrak{Field}_{#1}}
\newcommand{\fieldnoniso}[1]{\cat{Field}_{#1}}

\newcommand{\Cart}{\cat{C}^{\infty}\cat{Cart}}
\newcommand{\Man}{\cat{C}^{\infty}\cat{Man}}

\newcommand{\fembnoniso}[1]{\cat{C}^{\infty}\cat{Emb}_{#1}}
\newcommand{\fembcartnoniso}[1]{\cat{C}^{\infty}\cat{EmbCart}_{#1}}

\newcommand{\femb}[1]{\mathfrak{C}^{\infty}\mathfrak{Emb}_{#1}}
\newcommand{\fembcart}[1]{\mathfrak{C}^{\infty}\mathfrak{Emb}\cat{Cart}_{#1}}
\newcommand{\gemb}[1]{\mathfrak{GEmb}_{#1}}

\newcommand{\Bord}{\mathfrak{Bord}}

\newcommand{\FFT}{\mathfrak{FFT}}

\newcommand{\GCat}{\mathfrak{G}\mathfrak{Cat}}

\newcommand{\sheaf}[1]{\mathcal{#1}}

\renewcommand{\ev}{\textnormal{\textbf{ev}}}

\newcommand{\Fun}{\mathrm{Fun}}
\newcommand{\Map}{\mathrm{Map}}

\newcommand{\psh}{\mathrm{PSh}}
\newcommand{\spsh}{\mathrm{PSh}_{\Delta}}

\newcommand{\ra}{\rightarrow}

\newcommand{\RM}{\sheaf{R}_{M}}
\newcommand{\CM}{{\sheaf{C}}_{M}}

\newcommand{\Cc}{\mathfrak{C}}

\newcommand{\GL}{\mathrm{GL}}
\newcommand{\RR}{{\mathbb{R}}}
\newcommand{\NN}{{\mathbb{N}}}
\newcommand{\ZZ}{{\mathbb{Z}}}

\newcommand{\rpathgermy}{{\mathrm{RPath}}^{\textnormal{\textbf{g}}}_{M}}

\renewcommand{\op}{\mathrm{op}}
\newcommand{\Yo}{\mathscr{Y}}

\newcommand{\pullback}{\ar@{}[dr]|{\mathrm{pb}}}
\newcommand{\pushout}{\ar@{}[dr]|{\mathrm{po}}}

\makeatletter
\@addtoreset{equation}{section}
\@addtoreset{equation}{subsection}
\let\proof@qed\displaymath@qed
\let\c@subsubsection\c@equation

\makeatother

\theoremstyle{definition}
\newtheorem{theorem}[equation]{Theorem}
\newtheorem{example}[equation]{Example}
\newtheorem{definition}[equation]{Definition}

\newtheorem{lemma}[equation]{Lemma}
\newtheorem{corollary}[equation]{Corollary}
\newtheorem{proposition}[equation]{Proposition}

\newtheorem{remark}[equation]{Remark}

\newtheorem{notation}[equation]{Notation}

\newtheorem*{maintheorem}{Theorem}

\numberwithin{equation}{subsection}
\numberwithin{subsubsection}{subsection}

\makeatletter
\patchcmd{\@settitle}{\uppercasenonmath\@title}{}{}{}
\patchcmd{\@setauthors}{\MakeUppercase{\authors}}{\authors}{}{}
\makeatother

\author{Jacek Kenig, Dmitri Pavlov}
\title{Moduli spaces of geometric functorial field theories}

\begin{document}
\raggedbottom

\begin{abstract}
  We develop tools to compute moduli spaces of geometric functorial field
  theories as mapping spaces of equivariant simplicial presheaves. Given a
  \(d\)-dimensional geometric structure \(F\), presented as a presheaf on the
  site of smooth families of \(d\)-manifolds, we define its Cartesian
  realization, which is an \(\mathrm{O}(d)\)-equivariant simplicial presheaf on
  the site of Cartesian spaces. We use Cartesian realizations to present the
  moduli space of functorial field theories with geometric structure \(F\) as a
  mapping space between \(\mathrm{O}(d)\)-equivariant simplicial presheaves. In
  a companion paper, we use this result to compute the moduli space of smooth
  one-dimensional oriented Riemannian functorial field theories valued in an
  arbitrary smooth symmetric monoidal infinity-category.
\end{abstract}

\maketitle

\tableofcontents

\section{Introduction}

Among the many approaches to formalizing quantum field theory, two broad
approaches have been especially influential. They may be viewed as
field-theoretic analogues of the Heisenberg and Schr\"{o}dinger pictures of
ordinary quantum mechanics. In algebraic quantum field theory, which is closer
to the Heisenberg picture, one assigns an algebra of observables to each
spacetime region, with inclusions of regions inducing morphisms between the
corresponding algebras. In the functorial approach, which is closer to the
Schr\"{o}dinger picture, one instead assigns objects playing the role of state
spaces to spatial boundaries and morphisms playing the role of evolution
operators to the spacetime regions between them. The present paper adopts the
functorial viewpoint.

Informally, a \(d\)-dimensional functorial field theory assigns an object
\(Z(Y)\) of a target category to every closed \((d-1)\)-manifold \(Y\), and a
morphism
\[
  Z(X)\colon Z(Y_0)\longrightarrow Z(Y_1)
\]
to every \(d\)-dimensional bordism \(X\colon Y_0\rightsquigarrow Y_1\). Gluing
bordisms corresponds to composing morphisms, while disjoint union corresponds to
the monoidal product. In a fully extended theory, this assignment continues
through all dimensions: points are sent to objects, \(1\)-dimensional bordisms
to \(1\)-morphisms, and, more generally, \(k\)-dimensional bordisms with corners
to \(k\)-morphisms. These assignments are encoded by a symmetric monoidal
functor from a symmetric monoidal \((\infty,d)\)-category of bordisms to a
suitable target symmetric monoidal \((\infty,d)\)-category. The bordism category
appearing as the source of this functor is itself part of the definition. Its
objects and morphisms may be manifolds and bordisms without additional
structure, or manifolds and bordisms equipped with additional data that the
field theory is required to respect.

For topological field theories, the bordism category retains only topological
information. This is appropriate when the theory is invariant
under all deformations of the underlying manifolds, but it discards precisely
the information needed in many nontopological examples arising in physics. In
dimension \(1\),
for instance, all compact intervals are isomorphic as unoriented topological
bordisms, so a topological field theory cannot distinguish intervals of
different lengths. Quantum-mechanical time evolution, by contrast, depends on
elapsed time, which is naturally represented by the Riemannian length of the
worldline. Moreover, it is not enough merely to distinguish individual
lengths: a smoothly parametrized family of intervals should be sent to a
smoothly parametrized family of morphisms in the target. Thus, a smooth
variation of the elapsed time, or of any other external parameter, gives a
corresponding smooth variation of the evolution operator, in the sense
encoded by the target category.

A geometric functorial field theory retains this smoothly parametrized geometric
information. Bordisms may carry Riemannian metrics, maps to a target manifold,
principal bundles with connection, or other geometric structures. Both the
bordisms and their geometric structures are organized into smooth families,
and the theory assigns to them smooth families of morphisms in the target,
naturally with respect to reparametrizations. These data must also
satisfy descent with respect to open covers and remain compatible with
isotopies. The main advantage of the Grady--Pavlov framework
\cite{GP22,GP23,GP26} is that it incorporates these requirements into the
bordism and target categories themselves. It therefore preserves the local
and smoothly varying geometric information that is invisible to an ordinary
topological bordism category.

\subsection{The Grady--Pavlov framework}

Grady--Pavlov~\cite{GP26} introduce a general notion of a field on a manifold
that includes Riemannian metrics, principal bundles with connection,
symplectic and complex structures, and many more geometric structures relevant
to physics.
They construct symmetric monoidal \((\infty,d)\)-categories of bordisms
equipped with such fields. The use of smooth families and isotopies is
essential here: it ensures that the construction records both the local
variation of the fields and the higher morphisms needed for a fully extended
bordism category.

In~\cite{GP23}, Grady--Pavlov prove that extended functorial field theories
with geometric data satisfy locality. In particular, the bordism
construction is compatible with descent in the geometric structure. They
also construct classifying spaces for concordance classes of geometric
functorial field theories. This locality theorem is what allows a geometric
structure to be computed locally and then reconstructed from its values on
sufficiently simple families of manifolds.

The geometric cobordism hypothesis of~\cite{GP22} combines this locality with
the framed cobordism hypothesis. It expresses spaces of functorial field
theories with arbitrary geometric structures in terms of derived mapping
objects. This does not make the resulting mapping space automatically easy to
compute: the geometric structure appearing as its source may itself have a
complicated homotopy type. It does, however, separate the general bordism
theory from the structure-specific computation and places the latter in a
form accessible to the methods of homotopy theory.

\subsection{The main result}

The contribution of the present paper is to develop further the theory of
geometric structures in the framework of Grady--Pavlov and use the above results
to provide a simpler form of the moduli space of geometric functorial field
theories whose study is more amenable to the usual computational methods of
homotopy theory.

\begin{maintheorem}[\Cref{thm:moduli.space.reduction}]
  Let \(\cat{V}\) be a fibrant geometric symmetric monoidal
  \((\infty,d)\)-category with isotopies. Let \(F\) be a \(d\)-dimensional
  geometric structure without isotopies, and let \(\mathfrak{I}_{d}F\) be its
  isotopification (\Cref{def:isotopification.functor}). Let
  \(\FFT_{d,\cat{V}}^{\mathfrak{I}_{d}F}\) denote the smooth simplicial set of
  \(d\)-dimensional functorial field theories with geometric structure \(F\), as
  defined in \Cref{def:functorial.field.theory}. There is a natural weak
  equivalence
  \[
    \FFT_{d,\cat{V}}^{\mathfrak{I}_{d}F}\we
    \CsSet\bigl(\lambda^{\infty}_{\mathrm{O}(d)}\Cc_{d}q^{*}(F),\rho^{*}_{\infty,d}\mathfrak{q}^{*}\FFT_{d}(\cat{V})\bigr).
  \]
\end{maintheorem}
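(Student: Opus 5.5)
The plan is to chain together three equivalences: the geometric cobordism hypothesis of \cite{GP22}, the locality theorem of \cite{GP23}, and an identification of the resulting mapping space with a mapping space of \(\mathrm{O}(d)\)-equivariant simplicial presheaves on \(\Cart\).

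First, by \Cref{def:functorial.field.theory}, \(\FFT_{d,\cat{V}}^{\mathfrak{I}_{d}F}\) is the derived mapping object out of the bordism category with geometric structure \(\mathfrak{I}_{d}F\) into \(\cat{V}\). The geometric cobordism hypothesis rewrites it as a derived mapping space in simplicial presheaves on the site \(\femb{d}\) of smooth families of \(d\)-manifolds with isotopies:
\[
\FFT_{d,\cat{V}}^{\mathfrak{I}_{d}F}\we \mathbb{R}\Map\bigl(\mathfrak{I}_{d}F,\FFT_{d}(\cat{V})\bigr).
\]
Here the target is the presheaf \(S\times U\mapsto \FFT_{d,\cat{V}}^{S\times U}\) built from the framed cobordism hypothesis. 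Since \(\mathfrak{I}_{d}\) is left adjoint to the restriction \(\mathfrak{q}^{*}\) along the inclusion of the site without isotopies, this becomes a mapping space out of \(F\) on \(\fembnoniso{d}\) into \(\mathfrak{q}^{*}\FFT_{d}(\cat{V})\). Two inputs are needed for this to be a derived statement: that the adjunction is Quillen, and that the target is fibrant. Fibrancy follows from fibrancy of \(\cat{V}\) and from descent of \(\FFT_{d}(\cat{V})\), which is the locality theorem.

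Second, I use locality to restrict from all smooth families of \(d\)-manifolds to the full subsite \(\fembcartnoniso{d}\) of families \(\RR^{d}\times U\to U\) with \(U\) Cartesian. Every family is covered by such Cartesian pieces, and \(\mathfrak{q}^{*}\FFT_{d}(\cat{V})\) satisfies descent. A comparison lemma for dense subsites therefore shows that restriction along \(q\) induces an equivalence of mapping spaces \(\Map(F,-)\we\Map(q^{*}F,-)\) for sheaf targets.

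Third, I identify presheaves on \(\fembcartnoniso{d}\) with \(\mathrm{O}(d)\)-equivariant simplicial presheaves on \(\Cart\). The automorphisms of \(\RR^{d}\times U\) over \(U\) are families of open embeddings \(\RR^{d}\to\RR^{d}\). Up to the homotopy that shrinks to a germ at the origin and then takes the derivative, these retract onto families of \(\GL(d)\), and \(\GL(d)\) retracts further onto \(\mathrm{O}(d)\). The Cartesian realization \(\lambda^{\infty}_{\mathrm{O}(d)}\Cc_{d}\) is presumably built to implement exactly this. Concretely, \(\Cc_{d}\) passes from the Cartesian embedding site to a simplicially enriched version whose mapping objects are \(\Sing\) of embedding spaces. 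The functor \(\lambda^{\infty}_{\mathrm{O}(d)}\) is then left Kan extension or realization along the equivalence of that enriched category with \(\mathrm{O}(d)\times\Cart\). Dually, \(\rho^{*}_{\infty,d}\) restricts presheaves along it. I would verify that \((\lambda^{\infty}_{\mathrm{O}(d)}\Cc_{d},\rho^{*}_{\infty,d})\) forms a Quillen adjunction, in the derived sense, between \(\fembcartnoniso{d}\)-presheaves that are local with respect to isotopy/embedding homotopies and \(\mathrm{O}(d)\)-equivariant simplicial presheaves. The mapping space then becomes \(\CsSet(\lambda^{\infty}_{\mathrm{O}(d)}\Cc_{d}q^{*}F,\rho^{*}_{\infty,d}\mathfrak{q}^{*}\FFT_{d}(\cat{V}))\), provided two things hold: \(\rho^{*}_{\infty,d}\) of the target is fibrant, and the target is already local for embedding homotopies, i.e.\ homotopy invariant in the embedding variable. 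The second point is where the isotopies in \(\cat{V}\) enter: \(\FFT_{d}(\cat{V})\) sends isotopic embeddings to homotopic maps.

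The main obstacle is this third step. It requires showing that the simplicial category of Cartesian families with embedding spaces as mapping objects is Dwyer–Kan equivalent to \(\mathrm{O}(d)\) acting on \(\Cart\), uniformly in smooth families. It also requires that \(\mathfrak{q}^{*}\FFT_{d}(\cat{V})\) is local for the corresponding localization. For the Dwyer–Kan equivalence I would first try the standard linearization homotopy \(\phi_{t}(x)=\phi(tx)/t\) combined with Gram–Schmidt, checking smoothness in the parameter \(U\). Throughout, naturality in \(F\) comes for free, since every step is a natural Quillen adjunction.
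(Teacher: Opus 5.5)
The gap is in your third step, which carries the actual content of the theorem. There you guess the functors in the statement, and the guesses are wrong. In the paper, $\lambda^{\infty}_{\mathrm{O}(d)}$ is just the constant-in-the-smooth-direction functor (\Cref{def:equivariant.lambda}). $\Cc_d$ is the weighted colimit $F\mapsto F\otimes_{\fembcartnoniso{d}}Q_{\proj}\mathcal{K}_d$ against a Cartesian coefficient (\Cref{def:Cartesian.realization.functor}). Neither is a Kan extension or realization along $\Cart\times\mathbf{B}\Sing\mathrm{O}(d)\to\fembcart{d}$.

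Consequently, the Quillen adjunction you propose to verify between $\lambda^{\infty}_{\mathrm{O}(d)}\Cc_d$ and $\rho^{*}_{\infty,d}$ is ill-typed. Both are left Quillen functors \emph{into} $\psh(\Cart,\CsSet)^{\mathrm{O}(d)}$. Moreover, $\rho^{*}_{\infty,d}$ is defined on presheaves on the enriched site $\fembcart{d}$, not on $\fembcartnoniso{d}$. No localization of $\spsh(\fembcartnoniso{d})$ at embedding homotopies is involved either.

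The Dwyer--Kan equivalence you isolate is correct; it is \Cref{prop:rho.on.representables}. Your linearization $\phi(tx)/t$ must be recentred, since $\phi(0)\neq 0$ in general. That equivalence only gives the global Quillen equivalence for $\rho^{*}_{\infty,d}$. You still have to match the \v{C}ech localizations (\Cref{prop:rho.Cech.local.Quillen.equivalence}). Above all, you have to relate $\Cc_d$ to the isotopy-enriched representables, and your description of $\Cc_d$ does not do this.

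The missing idea is the comparison of left Quillen functors $\lambda^{\infty}_{\mathrm{O}(d)}\Cc_dq^{*}\simeq\rho^{*}_{\infty,d}\mathfrak{q}^{*}\iota^{\infty}_{!}\lambda^{\infty}$ (\Cref{prop:Cartesian.realization.comparison}). On a Cartesian representable it is forced by three facts: co-Yoneda, $\Cc_d(\Yo_u)\cong Q_{\proj}\mathcal{K}_d(u)$; the defining zigzag $\lambda^{\infty}_{\mathrm{O}(d)}\mathcal{K}_d(u)\simeq\rho^{*}_{\infty,d}\mathfrak{q}^{*}\Yo_{\iota(u)}$; and $\iota^{\infty}_{!}\lambda^{\infty}\Yo_u\simeq\Yo_{\iota(u)}$. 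It extends to all $F$ via hypercovers by Cartesian families and homotopy cocontinuity. That extension uses that $\Cc_d$ is left Quillen for the \v{C}ech-local structures (\Cref{prop:Cartesian.realization.Quillen.adjunction}).

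With this comparison, the paper never moves $\mathfrak{I}_d$ across an adjunction. It applies the Quillen equivalence $\rho^{*}_{\infty,d}\mathfrak{q}^{*}$ to both arguments of $\CsSet(\mathfrak{I}_dF,\FFT_d(\cat{V}))$; this functor is also right Quillen, so it preserves fibrancy of $\FFT_d(\cat{V})$. It then swaps the source using the comparison.

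Your ordering can be repaired. You would need $\CsSet(\lambda^{\infty}_{\mathrm{O}(d)}\Cc_d\Yo_u,\rho^{*}_{\infty,d}\mathfrak{q}^{*}\FFT_d(\cat{V}))\simeq\FFT_d(\cat{V})(\iota(u))$ naturally in $u$. This again follows from the coefficient property and the homotopical full faithfulness of $\rho^{*}_{\infty,d}\mathfrak{q}^{*}$.

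Your first step is right in content but misattributed. $\FFT^{\mathfrak{I}_dF}_{d,\cat{V}}\simeq\RR\Map(\mathfrak{I}_dF,\FFT_d(\cat{V}))$ comes from the enriched adjunction $\Bord_d^{(-)}\dashv\FFT_d$: the weighted-colimit presentation of $\Bord_d^{\mathfrak{I}_dF}$ plus the left Quillen property of $\Bord_d^{(-)}$. Here $\FFT_d(\cat{V})(p)=\CsSet(\Bord_d^{\Yo_p},\cat{V})$ for every family $p$, not only products. The cobordism hypothesis plays no role in this theorem; it enters only afterwards (\Cref{rem:gch.is.for.noneq}).

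Finally, the right adjoint of $\mathfrak{I}_d$ is $(-)_{\RR^0}$, and it gives only a simplicial, not a $\CsSet$-enriched, adjunction. To keep the smooth direction you should use $\iota^{\infty}_{!}\dashv\iota^{*}_{\infty}$ applied to $\lambda^{\infty}F$. The symbol $\mathfrak{q}^{*}$ denotes restriction to $\fembcart{d}$, not this right adjoint.
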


The right-hand side is formed in the \(\CsSet\)-enriched category of
\(\mathrm{O}(d)\)-equivariant smooth simplicial presheaves on \(\Cart\). The
notation \(\CsSet(-,-)\) denotes the smooth simplicial mapping object.

The source of this mapping object is constructed from \(F\) in three steps.
First, \(q^{*}\) restricts \(F\) from all \(d\)-dimensional families to the
Cartesian ones, namely those isomorphic over \(U\) to the projection
\(\RR^{d}\times U\to U\)
(\Cref{def:Cartesian.subsite.restriction.functors}). Second, the Cartesian
realization functor \(\Cc_d\) represents the values of this restriction as an
\(\mathrm{O}(d)\)-equivariant simplicial presheaf on \(\Cart\)
(\Cref{def:Cartesian.realization.functor}). Finally,
\(\lambda^{\infty}_{\mathrm{O}(d)}\) regards each of its simplicial-set values
as a smooth simplicial set that is constant in an additional smooth direction
(\Cref{def:equivariant.lambda}).

The target is constructed from the field stack \(\FFT_d(\cat{V})\)
(\Cref{def:field.theory.field.stack}). Its value on a \(d\)-dimensional family
\(p\) is the smooth simplicial set of \(\cat{V}\)-valued functorial field
theories on bordisms carrying the representable geometric structure determined
by \(p\). The functor \(\mathfrak{q}^{*}\) restricts this field stack to
Cartesian families while retaining isotopies and their higher homotopies
(\Cref{def:Cartesian.subsite.restriction.functors}). The functor
\(\rho^{*}_{\infty,d}\) then evaluates the restricted stack on the standard
families \(\RR^{d}\times U\to U\) and expresses the result as an
\(\mathrm{O}(d)\)-equivariant smooth simplicial presheaf on \(\Cart\)
(\Cref{def:rho.equivalence}).

If \(\cat{V}\) is assumed to have duals, the geometric framed cobordism
hypothesis makes the target easier to compute: after forgetting the
\(\mathrm{O}(d)\)-action, it is equivalent to the core \(\cat{V}^{\times}\),
consisting of the objects of \(\cat{V}\), the equivalences between them, and
their smooth variation in families. We remark that if \(\cat{V}\) does not have
duals, then the target is weakly equivalent to the core of the fully dualizable
part of \(\cat{V}\).

As an example, we give a further computation in dimension \(1\) and the
Riemannian geometric structure \(\RM\) (\Cref{ex:riemannian.structure}):

\begin{maintheorem}[\Cref{cor:one-dimensional.Riemannian.classification}; more
  details to appear in~{\cite[Theorem~5.0.1]{KP26}}]
  Let \(M\) be a smooth manifold, and let \(\RM\) be the Riemannian geometric
  structure of \Cref{ex:riemannian.structure}. Let \(\cat{V}\) be a fibrant
  geometric symmetric monoidal \((\infty,1)\)-category with isotopies and duals.
  There is a natural weak equivalence
  \[
    \FFT_{1,\cat{V}}^{\mathfrak{I}_{1}\RM}
    \we
    \CsSet\bigl(
      \lambda^{\infty}\nerve(\rpathgermy),
      \cat{V}^{\times}
    \bigr),
  \]
  where \(\rpathgermy\) is the Riemannian path category constructed in the
  companion paper~\cite{KP26}, and the mapping object is computed between smooth
  simplicial presheaves on Cartesian spaces.
\end{maintheorem}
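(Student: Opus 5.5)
The second main theorem will be derived from the first, \Cref{thm:moduli.space.reduction}, specialized to \(d=1\) and \(F=\RM\). That result already gives
\[
  \FFT_{1,\cat{V}}^{\mathfrak{I}_{1}\RM}\we
  \CsSet\bigl(\lambda^{\infty}_{\mathrm{O}(1)}\Cc_{1}q^{*}(\RM),\rho^{*}_{\infty,1}\mathfrak{q}^{*}\FFT_{1}(\cat{V})\bigr),
\]
computed among \(\mathrm{O}(1)\)-equivariant smooth simplicial presheaves. Two things remain: identify the source and the target in concrete terms, and then remove the \(\mathrm{O}(1)=\ZZ/2\)-equivariance.

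\emph{Target.} Since \(\cat{V}\) has duals, the geometric framed cobordism hypothesis of~\cite{GP22} shows that, after forgetting the \(\mathrm{O}(1)\)-action, \(\rho^{*}_{\infty,1}\mathfrak{q}^{*}\FFT_{1}(\cat{V})\) is weakly equivalent to the core \(\cat{V}^{\times}\). The \(\mathrm{O}(1)\)-action is the one induced by reversing orientation of framed points, so it acts on \(\cat{V}^{\times}\) by taking duals.

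\emph{Source.} I will unpack \(\Cc_{1}q^{*}(\RM)\). Its value on \(U\in\Cart\) is built from Riemannian metrics on the standard families \(\RR\times U\to U\) (with maps to \(M\)) and from the embeddings between such families. By the definition of Cartesian realization, this is the nerve of the category whose objects are germs of Riemannian paths in \(M\) and whose morphisms are the isometric embeddings between them. The aim is to show that this realization is \(\mathrm{O}(1)\)-equivariantly equivalent to \(\nerve(\Cc)\) for a category \(\Cc\) on which \(\mathrm{O}(1)\) acts \emph{freely} by path reversal, with \(\rpathgermy\) playing the role of an orbit/quotient model, so that
\[
  \lambda^{\infty}_{\mathrm{O}(1)}\Cc_{1}q^{*}(\RM)\we \mathrm{O}(1)\times\lambda^{\infty}\nerve(\rpathgermy)
\]
as cofibrant-ish \(\mathrm{O}(1)\)-objects, i.e.\ the realization is induced up from the trivial group. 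Granting this, the free/forgetful adjunction gives
\[
  \CsSet_{\mathrm{O}(1)}\bigl(\mathrm{O}(1)\times\lambda^{\infty}\nerve(\rpathgermy),X\bigr)\cong\CsSet\bigl(\lambda^{\infty}\nerve(\rpathgermy),X\bigr),
\]
where \(X\) is the target with its action forgotten. Combining this with the identification \(X\we\cat{V}^{\times}\) and the homotopy invariance of derived mapping objects between a cofibrant source and a fibrant target yields the stated equivalence. Naturality in \(M\) is inherited from that of every construction used.

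\emph{Main obstacle.} The hard step is the identification of the Cartesian realization with a free \(\mathrm{O}(1)\)-orbit of \(\nerve(\rpathgermy)\), with the right smooth structure and germ conditions. Several points need care:
\begin{itemize}
\item Reflections genuinely act freely on germs of oriented paths, including constant-speed degenerate cases; orientation should separate the two orbits.
\item The equivalence has to be checked objectwise in \(U\) while remaining compatible with the \(\lambda^{\infty}\) direction.
\item Cofibrancy must be handled so that the strict adjunction computes the derived mapping space.
\end{itemize}
The detailed construction of \(\rpathgermy\) is deferred to~\cite{KP26}, so here I would only verify this equivariant freeness and cite the companion paper for the categorical equivalence.
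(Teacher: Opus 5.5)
Your proposal is correct and matches the paper's proof step for step. You apply \Cref{thm:moduli.space.reduction} with \(d=1\) and identify the source, via the companion paper, as the free object \(\ZZ/2\times\hocolim_{\CM}\Yo_U\simeq\ZZ/2\times\nerve(\rpathgermy)\); the paper cites \cite[Proposition~3.4.7 and Theorem~4.2.1]{KP26} separately for these two identifications, and you merge them into one step. You then discard the action by the free--forgetful adjunction and replace the action-forgotten target by \(\cat{V}^{\times}\) using the geometric framed cobordism hypothesis. Your aside that \(\mathrm{O}(1)\) acts on \(\cat{V}^{\times}\) by duals holds only homotopy-coherently, as \Cref{rem:gch.is.for.noneq} warns, but you forget the action before using it, so nothing depends on it.
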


For \(M=*\), the map to \(M\) carries no information, and a morphism in the
Riemannian path category is determined by its nonnegative length. Concatenation
adds lengths, so one initially obtains the additive semigroup \((\RR_{\geq
0},+)\). The Riemannian path category itself is not made into a groupoid.
Rather, isotopies supply the homotopies expressing the bordism duality
identities~\cite[Examples~5.5.5 and~5.5.7]{GP26}. Consequently, the
nonnegative-length evolution maps act by automorphisms, and the resulting action
factors through the group completion \((\RR_{\geq 0},+)^{\mathrm{gp}} \cong
(\RR,+) \). Thus, a theory for \(M=*\) determines an object of \(\cat{V}\)
equipped with a smooth one-parameter group of automorphisms, rather than merely
a one-parameter semigroup. More generally, for arbitrary \(M\), the same
argument gives the homotopy groupoid completion of the Riemannian path category.

\subsection{Targets of functorial field theories}

Regardless of the definition of the bordism category one chooses to use and
whether or not the manifolds are merely topological or equipped with geometric
structures, the crucial part of the study of functorial field theories is
defining suitable target categories. \Cref{thm:moduli.space.reduction} applies
to targets presented as geometric symmetric monoidal \((\infty,d)\)-categories
with isotopies and duals.

In physical applications, the most immediate candidate for $d=1$ is some version
of the category of Hilbert spaces and operators between them. The ordinary
category \(\cat{Hilb}\), however, does not by itself determine a target of the
required kind. One must specify an appropriate smooth enrichment and address the
required dualizability properties. In particular, the dualizable objects in the
ordinary tensor category of Hilbert spaces are finite-dimensional, whereas the
state spaces arising in quantum mechanics, such as \(L^2(M)\), are generally
infinite-dimensional. Ramzi's~\cite{Ram26} free rigid construction produces a
rigid symmetric monoidal \((\infty,1)\)-category from a given
\((\infty,1)\)-category. It does not, however, supply the smooth enrichment
required of a geometric target. For \(d>1\), one possible extension is to regard
an \((\infty,d)\)-category as an \((\infty,d-1)\)-category enriched in
\((\infty,1)\)-categories and apply Ramzi's construction to its enriched
hom-objects. However, making this construction compatible with composition and
verifying the required higher dualizability properties has yet to be worked out.

The calculation for \(M=*\) also exposes a separate analytic difficulty in
defining a suitable target for functorial field theories. For a typical positive
unbounded Hamiltonian \(H\), the Euclidean evolution operator \(\exp(-tH)\) is
bounded for \(t\geq 0\), whereas its inverse \(\exp(tH)\) is generally
unbounded, so does not define a morphism in the category of Hilbert spaces and
bounded maps. One possible solution is to replace the Hilbert space
\(\mathcal{H}\) by a suitable dense space of test vectors together with its
continuous dual, forming a
\emph{rigged Hilbert space}
\[
  \Phi\subset\mathcal{H}\subset\Phi'.
\]
The space \(\Phi\) may be chosen so that forward and inverse evolution
both preserve \(\Phi\).
Such a construction addresses the analytic problem of invertible
time evolution, while the free rigid construction addresses the categorical
problem of dualizability. A satisfactory target for quantum-mechanical
applications must incorporate the required smooth and descent properties.
We leave the construction and study of such targets to subsequent work.

The choice of target involves more than analytic considerations: it must also
retain the homological and higher-categorical information carried by the theory.
For example, a suitable smooth model of the homotopy theory of chain complexes
treats quasi-isomorphisms as equivalences and retains chain homotopies and their
higher coherences, rather than only complexes and chain maps in an ordinary
category. Nontruncated targets are also natural in the description of
\emph{twisted} or \emph{relative} field theories. In the model of
Stolz--Teichner~\cite{ST11}, a twist \(T\) assigns algebras to codimension-one
manifolds and bimodules to bordisms, while a \(T\)-twisted field theory is a
natural transformation from the trivial twist to \(T\). For an invertible twist,
the partition function on a closed bordism \(\Sigma\) takes values in the dual
of the line \(T(\Sigma)\); in families, it is a section of the resulting line
bundle rather than a scalar-valued function. This requires a higher-categorical
target in which twists, transformations between them, and the coherence data
governing gluing are retained. The Batalin–Vilkovisky formalism provides another
motivation for homological targets: fields and observables are organized into
complexes whose differentials encode gauge symmetry and the equations of motion.
These examples motivate the homotopy-coherent and derived nature of the
formalism used here.

These observations motivate three distinct continuations of the present work.
The first is to construct analytically appropriate geometric target categories,
including suitable versions of \(\cat{Hilb}\), and verify the hypotheses needed
to apply the theorem to them. The second is to compute the derived mapping
spaces presented by the main theorem and thereby classify functorial field
theories for specific targets. The third is to develop quantization procedures
compatible with the functorial framework, including integration over fields, and
compare the resulting functorial field theories with quantum field theories
already present in the physics literature. The present paper supplies the
geometric and homotopical input common to these later problems.

\subsection*{Previous work}

Smooth geometric functorial field theories in the nonextended setting were
introduced by Stolz--Teichner~\cite{ST04,ST11}.

In the fully extended topological setting, the cobordism hypothesis, as
formulated by Lurie~\cite{Lur09}, identifies the space of framed field theories
with the core of the fully dualizable part of the target category; versions with
tangential structures are described by the corresponding equivariant mapping
spaces. The fully extended geometric setting used here was developed by
Grady--Pavlov~\cite{GP22,GP23,GP26}.

In dimension \(1\), Berwick-Evans--Pavlov~\cite{BerwickEvansPavlov23} construct
a smooth \(1\)-dimensional bordism category and prove that smooth
\(1\)-dimensional topological field theories over a manifold \(M\) are
equivalent to vector bundles with connection on \(M\).
Ludewig--Stoffel~\cite{LS21} develop a general nonextended framework for
geometric field theories based on smooth families of bordisms. Their framework
is formulated for particular vector-space-valued targets, whereas the present
work describes fully extended field theories with arbitrary target smooth
\((\infty, d)\)-categories.

For a broader account of previous work on functorial field theories and the
cobordism hypothesis, see~\cite[Sections~1.1 and~1.2]{GP22}.

\subsection*{Organization of the paper and prerequisites}

We assume basic familiarity with enriched category theory, model categories,
simplicial homotopy theory, and differential geometry. The more specialized
material on
weighted colimits, descent, and smooth simplicial presheaves in model
categorical language is reviewed in
\Cref{sec:background}.

In \Cref{sec:geometric.structures}, we introduce geometric structures as field
stacks on the sites of fiberwise embeddings, both with and without isotopies. We
construct the isotopification functor and show that the relevant homotopy
theories can be computed by restricting to Cartesian families.

\Cref{sec:geom.fft} introduces the \emph{Cartesian realization functor}
\(\Cc_d\). We then use its properties together with the results of
\Cref{sec:geometric.structures} and Grady--Pavlov \cite{GP23,GP26} to obtain the
desired presentation of the moduli space of \(d\)-dimensional functorial field
theories with arbitrary geometric structure.

\subsection*{Acknowledgments}

The first author was partially supported by NCN OPUS-24 2022/47/B/ST2/03313
``Quantum Geometry and BPS states''.

\section*{Table of notation}

\begingroup
\small
\renewcommand{\arraystretch}{1.03}
\setlength{\tabcolsep}{4pt}

\begin{center}
  \begin{tabular}{@{}
      >{\raggedright\arraybackslash}p{0.24\textwidth}
      >{\raggedright\arraybackslash}p{0.22\textwidth}
      >{\raggedright\arraybackslash}p{0.48\textwidth}
    @{}}
    \toprule
    \textbf{Notation} & \textbf{Defined in} & \textbf{Meaning} \\
    \midrule
    \(\Set,\ \sset\)
    & \Cref{def:simplicial.presheaves}
    & sets and simplicial sets \\
    \(\psh(\cat{C}),\ \spsh(\cat{C})\)
    & \Cref{def:simplicial.presheaves}
    & presheaves and simplicial presheaves on \(\cat{C}\) \\
    \(\psh(\cat{C},\cat{V})\)
    & \Cref{def:simplicial.presheaves}
    & \(\cat{V}\)-valued \(\cat{V}\)-enriched presheaves \\
    \(\Yo_c\)
    & \Cref{def:simplicial.presheaves}
    & contravariant representable \(\cat{C}(-,c)\) \\
    \(\Yo^c\)
    & \Cref{def:weighted.colimit}
    & covariant representable \(\cat{C}(c,-)\) \\
    \(\CSet,\ \CsSet\)
    & \Cref{def:simplicial.presheaves}
    & smooth sets and smooth simplicial sets \\
    \(\Man,\ \Cart\)
    & \Cref{def:Man.Cart.sites}
    & smooth manifolds and Cartesian spaces \\
    \(\Sing^\infty\)
    & \Cref{def:smooth.singular.complex}
    & smooth singular complex \\
    \(\lambda^\infty,\ \ev_{\RR^0}\)
    & \Cref{def:constant.smooth.direction}
    & constant-in-the-smooth-direction functor and evaluation at~\(\RR^0\) \\
    \addlinespace[2pt]
    \(\fembnoniso{d},\ \flat\)
    & \Cref{def:fembnoniso}
    & ordinary fiberwise-embedding site and its base-space functor \\
    \(\femb{d}\)
    & \Cref{def:femb}
    & enriched fiberwise-embedding site \\
    \(\fieldnoniso{d},\ \field{d}\)
    & \Cref{def:geometric.structures.non.isotopies,def:geometric.structures}
    & geometric structures without and with isotopies \\
    \(\RM\)
    & \Cref{ex:riemannian.structure}
    & oriented Riemannian \(\sigma\)-model structure with target \(M\) \\
    \(\iota,\ \iota^\infty_!\)
    & \Cref{def:useful.functors}
    & inclusion into the site with isotopies and the induced enriched left Kan
    extension \\
    \(\lambda^\infty_{\mathrm{O}(d)}\)
    & \Cref{def:equivariant.lambda}
    & equivariant constant-in-the-smooth-direction functor \\
    \(\mathfrak{I}_d\)
    & \Cref{def:isotopification.functor}
    & isotopification functor \\
    \(\fembcartnoniso{d},\ \fembcart{d}\)
    & \Cref{def:fembcart}
    & ordinary and enriched Cartesian subsites \\
    \(q^{*},\ \mathfrak{q}^{*}\)
    & \Cref{def:Cartesian.subsite.restriction.functors}
    & functors induced by restrictions to the Cartesian subsites \\
    \addlinespace[2pt]
    \(\mathcal{K}_d\)
    & \Cref{def:Cartesian.coefficient}
    & \(\mathrm{O}(d)\)-equivariant Cartesian coefficient \\
    \(\Cc_d\)
    & \Cref{def:Cartesian.realization.functor}
    & (derived) Cartesian realization functor \\
    \(\mathfrak{R}_d\)
    & \Cref{prop:Cartesian.realization.Quillen.adjunction}
    & right adjoint to \(\Cc_d\) \\
    \(\rho_d,\ \rho_{\infty,d}^{*}\)
    & \Cref{def:rho.equivalence}
    & inclusion functor and the induced restriction equivalence \\
    \(\FFT_{d,\cat{V}}^{F}\)
    & \Cref{def:functorial.field.theory}
    & smooth simplicial set of \(F\)-structured field theories valued in
    \(\cat{V}\) \\
    \(\cat{V}^\times\)
    & \Cref{def:core.of.V}
    & invertible part, or core, of \(\cat{V}\) \\
    \(\FFT_d(\cat{V})\)
    & \Cref{def:field.theory.field.stack}
    & field stack of \(\cat{V}\)-valued field theories \\
    \bottomrule
  \end{tabular}
\end{center}
\endgroup

\section{Background and conventions}\label{sec:background}

This section collects the categorical and model-categorical background used
throughout the paper and fixes our conventions. Although most of the individual
ingredients are available in the literature, their combination in the precise
enriched model-categorical form required here is not, to our knowledge,
standard. We therefore record only the formulations needed in the subsequent
sections.

\subsection{Enrichment, tensors, and weighted colimits}\label{subsec:enrichment}
We fix our conventions for enrichment, tensors, cotensors, and ordinary and
enriched weighted colimits.

\begin{notation}\label{def:simplicial.presheaves}
  We use the following notation, following
  \textnormal{\cite[Notation~2.2.1-2]{GP26}}. Let \( \cat{C} \)
  be a small category. We write
  \[
    \psh(\cat{C})\coloneqq \Fun(\cat{C}^{\op},\Set),
    \qquad
    \spsh(\cat{C})\coloneqq \Fun(\cat{C}^{\op},\sset)
  \]
  for the categories of presheaves and simplicial presheaves on
  \( \cat{C} \), respectively. More generally, if \( \cat{V} \) is a
  category, we write
  \(\psh(\cat{C},\cat{V})\coloneqq\Fun(\cat{C}^{\op},\cat{V})\). If \(\cat{C}\)
  is instead a small \(\cat{V}\)-enriched category, then
  \(\psh(\cat{C},\cat{V})\) denotes the category of \(\cat{V}\)-enriched
  presheaves \(\Fun_{\cat{V}}(\cat{C}^{\op},\cat{V})\). Thus the same notation
  is used for ordinary and enriched presheaves; the
  intended meaning is determined by whether \(\cat{C}\) is ordinary or
  \(\cat{V}\)-enriched. If \( c\) is an object in \(\cat{C} \) (enriched or
  unenriched), we write
  \[
    \Yo_c\coloneqq \cat{C}(-,c)
  \]
  for the corresponding representable presheaf, regarded as a discrete
  simplicial presheaf when necessary.

  In the special case \( \cat{C}=\Cart \)
  (see \Cref{def:Man.Cart.sites}), we write
  \[
    \CSet\coloneqq \psh(\Cart),
    \qquad
    \CsSet\coloneqq \spsh(\Cart),
  \]
  and refer to these categories as \emph{smooth sets} and \emph{smooth
  simplicial sets}, respectively.
  For \(F\in \CSet\) and \(L\in \Cart\), we write
  \(F_L\coloneqq F(L)\).

  If \(\cat{C}\) is a small \(\CsSet\)-enriched category, we also
  write \( \psh(\cat{C},\CsSet) \) for its category of \(\CsSet\)-enriched
  presheaves.

  If \( \cat{D} \) is a \( \CSet \)-enriched category and
  \(L\in \Cart\), then an element of \( \cat{D}(x,y)_L \) is called an
  \emph{\(L\)-family of morphisms} from \(x\) to \(y\). We write
  \( \cat{D}_L \) for the ordinary category with the same objects as
  \( \cat{D} \) and with morphism sets
  \[
    \cat{D}_L(x,y)\coloneqq \cat{D}(x,y)_L.
  \]
  The category \( \cat{D}_{\RR^0} \) is called the \emph{underlying category
  of \( \cat{D} \)}, and an \( \RR^0 \)-family of morphisms is simply
  called a morphism.

  Finally, if \( f:\cat{C}\to\cat{D} \) is a \( \CSet \)-enriched
  functor between \( \CSet \)-enriched categories, we denote by
  \( f_L:\cat{C}_L\to\cat{D}_L \) the functor obtained by evaluating the
  enriched hom-objects on \(L\).
\end{notation}

\begin{remark}\label{rem:presheaves.complete.cocomplete}
  If \( \cat{C} \) is a small category and \( \cat{V} \) is a complete and
  cocomplete category, then the category \( \psh(\cat{C},\cat{V}) \) is complete
  and cocomplete,
  with limits and colimits computed objectwise. In particular,
  \( \spsh(\cat{C}) \) is complete and cocomplete, with limits and colimits
  computed objectwise.

  More generally, let \( \cat{V} \) be complete and cocomplete closed
  symmetric monoidal category, and let \( \cat{C} \) be a small \( \cat{V}
  \)-enriched
  category. Then the underlying category of the $\cat{V}$-enriched category
  \[
    \psh(\cat{C},\cat{V})
    =
    \Fun_{\cat{V}}(\cat{C}^{\op},\cat{V})
  \]
  admits small limits and colimits, computed objectwise.
\end{remark}

\begin{definition}\label{def:tensored.cotensored}
  \textnormal{\cite[Section~3.7]{Kel82}},
  \textnormal{\cite[Definitions~3.7.2--3]{Rie14}}.
  Let \( \cat{V} \) be a closed symmetric monoidal category, with internal hom
  functor \( \underline{\cat{V}}(-,-) \), viewed as enriched over itself.

  A \( \cat{V} \)-enriched category \( \cat{M} \) is \emph{tensored}
  or \emph{copowered} over \( \cat{V} \) if it is equipped with a functor
  \[
    -\otimes- : \cat{V}\times \cat{M}\to \cat{M}
  \]
  and an isomorphism
  \[
    \cat{M}(v\otimes x,y)
    \cong
    \underline{\cat{V}}\bigl(v,\cat{M}(x,y)\bigr),
  \]
  natural in \( v\in\cat{V} \) and \( x,y\in\cat{M} \).

  A \( \cat{V} \)-enriched category \( \cat{M} \) is \emph{cotensored}
  or \emph{powered} over \( \cat{V} \) if it is equipped with a functor
  \[
    (-)^{(-)} : \cat{V}^{\op}\times \cat{M}\to \cat{M}
  \]
  and an isomorphism
  \[
    \cat{M}(x,y^{v})
    \cong
    \underline{\cat{V}}\bigl(v,\cat{M}(x,y)\bigr),
  \]
  natural in \( v\in\cat{V} \) and \( x,y\in\cat{M} \).
\end{definition}

\begin{definition}\label{def:tensor.cotensor.presheaves}
  \textnormal{\cite[Section~3.7]{Kel82}},
  \textnormal{\cite[Definitions~3.7.2--3]{Rie14}}.
  Let \( \cat{V} \) be a complete closed symmetric monoidal
  category, and let \( \cat{C} \) be a small \( \cat{V} \)-enriched category.
  For \( K\in\cat{V} \) and \( X\in \psh(\cat{C},\cat{V}) \), define
  \( K\otimes X \) and \( X^{K} \) objectwise by
  \[
    (K\otimes X)(c)=K\otimes X(c),
    \qquad
    (X^{K})(c)=\underline{\cat{V}}\bigl(K,X(c)\bigr),
  \]
  for all \( c\in\cat{C} \). The enriched presheaf structures are the
  ones induced from the enriched presheaf structure on \( X \), using the
  symmetry and closed structure of \( \cat{V} \). The enrichment is
  given by the end
  \[
    \psh(\cat{C},\cat{V})(X,Y)
    \coloneqq
    \int_{c\in\cat{C}}
    \underline{\cat{V}}\bigl(X(c),Y(c)\bigr),
  \]
  as in \cite[Section~2.2]{Kel82}; this end exists by completeness of
  \(\cat{V}\). Thus
  \( \psh(\cat{C},\cat{V}) \) is enriched, tensored, and cotensored over \(
  \cat{V} \).
  Equivalently, there are natural isomorphisms in
  \( \cat{V} \)
  \[
    \psh(\cat{C},\cat{V})(K\otimes X,Y)
    \cong
    \underline{\cat{V}}\bigl(
      K,\psh(\cat{C},\cat{V})(X,Y)
    \bigr)
  \]
  and
  \[
    \psh(\cat{C},\cat{V})(X,Y^{K})
    \cong
    \underline{\cat{V}}\bigl(
      K,\psh(\cat{C},\cat{V})(X,Y)
    \bigr).
  \]
  In particular, for \( \cat{V}=\sset \) this gives
  \[
    (K\otimes X)(c)=K\times X(c),
    \qquad
    (X^{K})(c)=\Map_{\sset}\bigl(K,X(c)\bigr).
  \]
\end{definition}

\begin{definition}\label{def:weighted.colimit}
  \textnormal{\cite[Section~3.10]{Kel82}},
  \textnormal{\cite[Definition~7.4.1]{Rie14}}.
  Let \( \cat{V} \) be a closed symmetric monoidal category, let
  \( \cat{C} \) be a small \( \cat{V} \)-enriched category, and let
  \( \cat{M} \) be a cocomplete \( \cat{V} \)-enriched category tensored over
  \( \cat{V} \). For enriched functors
  \[
    F:\cat{C}^{\op}\to\cat{V},
    \qquad
    G:\cat{C}\to \cat{M},
  \]
  the \emph{weighted colimit} of \(G\) with weight \(F\) is the object
  \(F\otimes_{\cat{C}}G\in\cat{M}\) characterized by the natural isomorphism
  \[
    \cat{M}\bigl(F\otimes_{\cat{C}}G,m\bigr)
    \cong
    \psh(\cat{C},\cat{V})
    \bigl(F,\cat{M}(G(-),m)\bigr),
  \]
  for \(m\in\cat{M}\). It is computed by the enriched coend
  \[
    F\otimes_{\cat{C}}G
    \cong
    \int^{c\in \cat{C}} F(c)\otimes G(c).
  \]
  We will use the enriched co-Yoneda
  isomorphisms
  \[
    F\otimes_{\cat{C}}\Yo^{c}
    \cong
    F(c)
  \]
  and, for \(A\in\cat{M}\),
  \[
    F\otimes_{\cat{C}}\bigl(\Yo^{c}\otimes A\bigr)
    \cong
    F(c)\otimes A,
  \]
  where \(\Yo^{c}=\cat{C}(c,-)\) and
  \((\Yo^{c}\otimes A)(d)=\cat{C}(c,d)\otimes A\).
\end{definition}

\subsection{Model structures on enriched presheaves and their
localizations}\label{subsec:model.categories}

Model categories provide the technical language in which one can
reliably perform computations with \((\infty,d)\)-categories. For this reason,
Grady--Pavlov~\cite{GP23,GP22,GP26} make systematic use of model
categorical presentations, and we follow the same strategy here. Using model
categories for our computations is not strictly necessary, but constructions
with enriched presheaves, weighted
colimits, derived mapping objects, and Bousfield localizations retain formulas
close to those of ordinary category theory. A quasicategorical treatment would
instead require systematic use of (co)cartesian and left/right fibrations.

\subsubsection{Enriched model categories}
Enriched model categories allow us to perform homotopical constructions while
retaining mapping objects in the enriching category, which in our applications
record smooth families.

\begin{definition}\label{def:adjectives.for.model.categories}
  Let \(\cat{M}\) be a model category.
  \begin{itemize}
    \item \textnormal{\cite[Definition~13.1.1]{Hir03}}.
      We say that \(\cat{M}\) is \emph{left proper} if pushouts of weak
      equivalences
      along cofibrations are weak equivalences, i.e., for every pushout square
      \[
        \begin{tikzcd}
          A \ar[r,rightarrowtail] \ar[d, "\sim"'] & B \ar[d] \\
          A' \ar[r,rightarrowtail] & B'
        \end{tikzcd}
      \]
      in which the map \(A\to B\) is a cofibration and \(A\to A'\) is a weak
      equivalence, the
      induced map \(B\to B'\) is a weak equivalence.

    \item \textnormal{\cite[Definition~A.2.6.1]{Lur17b}}.
      We say that \(\cat{M}\) is \emph{combinatorial} if it is cofibrantly
      generated
      and its underlying category is locally presentable.

    \item \textnormal{\cite[Definition~1.21]{Bar10}}.
      We say that \(\cat{M}\) is \emph{tractable} if it is combinatorial and
      admits
      sets \(I\) and \(J\) of generating cofibrations and generating acyclic
      cofibrations, respectively, whose domains are cofibrant.

  \end{itemize}

  If \(\cat{V}\) is a symmetric monoidal model category and \(\cat{M}\) is a
  \(\cat{V}\)-model category (\Cref{def:v.model.category}), then we say that
  \(\cat{M}\) is left proper, combinatorial, or tractable as a \(\cat{V}\)-model
  category if its underlying ordinary model category has the corresponding
  property above.
\end{definition}

\begin{definition}\label{def:left.quillen.bifunctor}
  \textnormal{\cite[Definition~4.2.1]{Hov99}},
  \textnormal{\cite[Definition~1.27.1]{Bar10}}.
  Let \(\cat{M}\), \(\cat{N}\), and \(\cat{P}\) be model categories. An
  \emph{adjunction of two variables} consists of functors
  \[
    -\otimes-:\cat{M}\times\cat{N}\to\cat{P},
    \qquad
    \Mor_{\mathrm{r}}:\cat{N}^{\op}\times\cat{P}\to\cat{M},
    \qquad
    \Mor_{\mathrm{\ell}}:\cat{M}^{\op}\times\cat{P}\to\cat{N},
  \]
  together with natural isomorphisms
  \[
    \cat{P}(X\otimes Y,Z)
    \cong
    \cat{M}(X,\Mor_{\mathrm{r}}(Y,Z))
    \cong
    \cat{N}(Y,\Mor_{\mathrm{\ell}}(X,Z)).
  \]
  Thus, for each \(Y\in\cat{N}\), the functor
  \(-\otimes Y:\cat{M}\to\cat{P}\) is left adjoint to
  \(\Mor_{\mathrm{r}}(Y,-):\cat{P}\to\cat{M}\), while for each
  \(X\in\cat{M}\), the functor \(X\otimes-:\cat{N}\to\cat{P}\) is left
  adjoint to \(\Mor_{\mathrm{\ell}}(X,-):\cat{P}\to\cat{N}\). The
  subscripts record whether the right- or left-hand tensor variable is held
  fixed.
  It is a \emph{left Quillen bifunctor}, or, equivalently, a \emph{Quillen
    adjunction
  of two variables}, if for every cofibration \(i:X\to X'\) in \(\cat{M}\)
  and every cofibration \(j:Y\to Y'\) in \(\cat{N}\), the pushout-product
  \[
    i\square j:
    (X'\otimes Y)\sqcup_{X\otimes Y}(X\otimes Y')
    \longrightarrow
    X'\otimes Y'
  \]
  is a cofibration in \(\cat{P}\), and is a weak equivalence whenever
  \(i\) or \(j\) is a weak equivalence.
\end{definition}

\begin{notation}\label{def:cofibrant.fibrant.replacement}
  \textnormal{\cite[Paragraph before Lemma 1.1.9]{Hov99}}.
  Let \(\cat{M}\) be a model category. We write
  \[
    QX \to X,
    \qquad
    Y \to RY
  \]
  for a cofibrant replacement of an object \(X\in \cat{M}\) and a
  fibrant replacement of an object \(Y\in \cat{M}\), respectively.
\end{notation}

\begin{definition}\label{def:symmetric.monoidal.model.category}
  \textnormal{\cite[Definition~4.2.6]{Hov99}},
  \textnormal{\cite[Definition~1.27.2]{Bar10}}.
  A symmetric monoidal model category is a closed symmetric monoidal category
  \((\cat{V},\otimes,\unit_{\cat{V}},\underline{\cat{V}}(-,-))\) equipped
  with a model structure such that
  \begin{enumerate}
    \item the tensor product and internal homs form a Quillen adjunction
      of two variables
      \[
        \otimes:\cat{V}\times\cat{V}\to\cat{V},
        \qquad
        \underline{\cat{V}}(-,-):\cat{V}^{\op}\times\cat{V}\to\cat{V};
      \]
    \item for some cofibrant replacement \(Q\unit_{\cat{V}}\to\unit_{\cat{V}}\)
      as in \Cref{def:cofibrant.fibrant.replacement}, and every object
      \(A\in\cat{V}\), the composite
      \[
        Q\unit_{\cat{V}}\otimes A
        \longrightarrow
        \unit_{\cat{V}}\otimes A
        \longrightarrow
        A
      \]
      is a weak equivalence.
  \end{enumerate}
\end{definition}

\begin{definition}\label{def:Cartesian.model.category}
  \textnormal{\cite[Definition~1.27.3]{Bar10}}.
  A Cartesian model category is a symmetric monoidal model category whose
  underlying symmetric monoidal structure is the cartesian
  monoidal structure.
\end{definition}

\begin{definition}\label{def:derived.mapping.space}
  \textnormal{\cite[Theorem 5.4.9]{Hov99}},
  \textnormal{\cite[Section 17.4]{Hir03}},
  \textnormal{\cite[Definition 1.35]{Bar10}}.
  Let \(\cat{M}\) be a \(\cat{V}\)-model category. We write
  \[
    \Map_{\cat{M}}^{\cat{V}}(X,Y)
  \]
  for its enriched mapping object and
  \[
    \mathbb{R}\Map_{\cat{M}}^{\cat{V}}(X,Y)
    \simeq
    \Map_{\cat{M}}^{\cat{V}}(QX,RY)
  \]
  for its derived mapping object, computed using cofibrant and fibrant
  replacements. In this paper, the enriching categories are \(\sset\) and
  \(\CsSet\). Since \(\CsSet\) is Quillen equivalent to \(\sset\) by
  \Cref{prop:Cartesian.model.str.smooth.simp.sets}, we use the terms
  \emph{derived mapping object} and \emph{derived mapping space} interchangeably
  for both simplicial and smooth simplicial enrichments. When the enrichment is
  clear, we write
  \[
    \Map_{\cat{M}}(X,Y),
    \qquad
    \mathbb{R}\Map_{\cat{M}}(X,Y).
  \]
  The underlying simplicial homotopy type of a derived
  \(\CsSet\)-enriched mapping object is obtained by applying
  \(\Sing^\infty\).
\end{definition}

\begin{definition}\label{def:v.model.category}
  \textnormal{\cite[Definition~4.2.18]{Hov99}},
  \textnormal{\cite[Definition~1.27.4]{Bar10}}.
  Let \(\cat{V}\) be a symmetric monoidal model category. A
  \emph{\(\cat{V}\)-model category} is a complete and cocomplete
  \(\cat{V}\)-enriched category \(\cat{M}\), tensored and cotensored over
  \(\cat{V}\), equipped with a model structure on its underlying ordinary
  category, such that the following conditions hold.
  Write \(\cat{V}_{0}\) and \(\cat{M}_{0}\) for the underlying ordinary
  categories.
  \begin{enumerate}
    \item The underlying tensor functor
      \[
        -\otimes-:
        \cat{V}_{0}\times\cat{M}_{0}
        \longrightarrow
        \cat{M}_{0}
      \]
      is a left Quillen bifunctor in the sense of
      \Cref{def:left.quillen.bifunctor}. In the notation of that definition, the
      three model categories \(\cat{M},\cat{N},\cat{P}\) there are here
      \(\cat{V}_{0},\cat{M}_{0},\cat{M}_{0}\), respectively, and the two right
      adjoints are
      \[
        \Mor_{\mathrm{r}}(X,Y)
        =
        \Map_{\cat{M}}^{\cat{V}}(X,Y),
        \qquad
        \Mor_{\mathrm{\ell}}(K,Y)
        =
        Y^{K}.
      \]
      Equivalently, for every cofibration \(i:K\to L\) in \(\cat{V}\)
      and every cofibration \(j:X\to Y\) in \(\cat{M}\), the
      pushout-product
      \[
        i\square j:
        (L\otimes X)\sqcup_{K\otimes X}(K\otimes Y)
        \longrightarrow
        L\otimes Y
      \]
      is a cofibration in \(\cat{M}\), and is a weak equivalence whenever
      \(i\) or \(j\) is a weak equivalence.
    \item For some cofibrant replacement
      \(Q\unit_{\cat{V}}\to\unit_{\cat{V}}\), as in
      \Cref{def:cofibrant.fibrant.replacement}, and every object
      \(X\in\cat{M}\), the composite
      \[
        Q\unit_{\cat{V}}\otimes X
        \longrightarrow
        \unit_{\cat{V}}\otimes X
        \longrightarrow
        X
      \]
      is a weak equivalence.
  \end{enumerate}
\end{definition}

\begin{definition}\label{def:v.quillen.adjunction}
  \textnormal{\cite[Definition~1.27.6]{Bar10}}.
  Let \(\cat{M}\) and \(\cat{N}\) be \(\cat{V}\)-model categories. A
  \(\cat{V}\)-enriched adjunction
  \[
    F:\cat{M}\rightleftarrows\cat{N}:G
  \]
  is a \emph{\(\cat{V}\)-enriched Quillen adjunction} if its underlying
  ordinary adjunction is a Quillen adjunction. In this case, the functor \(F\)
  is called
  a \emph{left Quillen \(\cat{V}\)-functor} and \(G\) is called a \emph{right
    Quillen
  \(\cat{V}\)-functor}.
\end{definition}

\begin{definition}\label{def:quillen.equivalence.criterion}
  \textnormal{\cite[Chapter~I, Section~4]{Qui67}},
  \textnormal{\cite[Definition 1.3.12]{Hov99}},
  \textnormal{\cite[Definition~8.5.20]{Hir03}}.
  Let
  \[
    F:\cat{M}\rightleftarrows \cat{N}:G
  \]
  be a Quillen adjunction. We say that it is a \emph{Quillen
  equivalence} if for every cofibrant object \(X\in\cat{M}\) and every
  fibrant object \(Y\in\cat{N}\), a morphism
  \[
    FX \to Y
  \]
  is a weak equivalence in \(\cat{N}\) if and only if its adjoint
  \[
    X \to GY
  \]
  is a weak equivalence in \(\cat{M}\). For an enriched Quillen
  adjunction, this definition is applied to the underlying ordinary Quillen
  adjunction.
\end{definition}

\begin{definition}\label{def:v.quillen.equivalence}
  \textnormal{\cite[Definition~1.27.6]{Bar10}}.
  A \(\cat{V}\)-enriched Quillen adjunction
  \[
    F:\cat{M}\rightleftarrows\cat{N}:G
  \]
  is a \emph{\(\cat{V}\)-enriched Quillen equivalence} if its underlying
  Quillen adjunction is a Quillen equivalence.
\end{definition}

\begin{definition}\label{def:derived.functors}
  \textnormal{\cite[Definition~8.5.11]{Hir03}}.
  Let
  \[
    F:\cat{M}\rightleftarrows\cat{N}:G
  \]
  be a Quillen adjunction. Choose a functorial cofibrant replacement
  \(Q\) in \(\cat{M}\) and a functorial fibrant replacement \(R\) in
  \(\cat{N}\). The composite
  \[
    \mathbb{L}F\coloneqq FQ:\cat{M}\longrightarrow\cat{N}
  \]
  is the \emph{left derived functor} of \(F\). Thus, for a
  morphism \(f:X\to Y\) in \(\cat{M}\), the left derived morphism is
  \[
    F(Qf):F(QX)\longrightarrow F(QY)
  \]
  in \(\cat{N}\). Dually, the composite
  \[
    \mathbb{R}G\coloneqq GR:\cat{N}\longrightarrow\cat{M}
  \]
  is the \emph{right derived functor} of \(G\). Thus, for a
  morphism \(g:X\to Y\) in \(\cat{N}\), the right derived morphism is
  \[
    G(Rg):G(RX)\longrightarrow G(RY)
  \]
  in \(\cat{M}\).
\end{definition}

\subsubsection{Enriched left Bousfield localization}
Left Bousfield localization is the model-categorical analogue of reflective
localization; for combinatorial model categories, its theory parallels that
of reflective localizations of locally presentable categories developed by
Adámek--Rosický~\cite{AdamekRosicky94}.

\begin{definition}\label{def:enriched.left.bousfield.localization}
  \textnormal{\cite[Definition~4.42]{Bar10}}.
  Let \(\cat{V}\) be a symmetric monoidal model category, let
  \(\cat{M}\) be a \(\cat{V}\)-model category, and let \(S\) be a class of
  morphisms in \(\cat{M}\). A \emph{\(\cat{V}\)-enriched left Bousfield
  localization} of \(\cat{M}\) with respect to \(S\) is a
  \(\cat{V}\)-model category \(L_S\cat{M}\), together with a
  \(\cat{V}\)-enriched left Quillen functor
  \[
    j:\cat{M}\longrightarrow L_S\cat{M},
  \]
  which is initial among \(\cat{V}\)-enriched left Quillen functors
  \(\varphi:\cat{M}\to\cat{N}\) to \(\cat{V}\)-model categories \(\cat{N}\)
  whose left derived functors send every morphism in \(S\) to a weak
  equivalence in \(\cat{N}\).
\end{definition}

\begin{definition}\label{def:enriched.s.local.objects.equivalences}
  \textnormal{\cite[Definition~4.45]{Bar10}}.
  Let \(\cat{V}\) be a symmetric monoidal model category, let \(\cat{M}\) be a
  \(\cat{V}\)-model category, and let \(S\) be a class of morphisms in
  \(\cat{M}\). An object \(Z\in\cat{M}\) is \emph{enriched \(S\)-local} if,
  for every morphism \(s:A\to B\) in \(S\), the induced map
  \[
    s^{*}:
    \mathbb{R}\Map_{\cat{M}}^{\cat{V}}(B,Z)
    \longrightarrow
    \mathbb{R}\Map_{\cat{M}}^{\cat{V}}(A,Z)
  \]
  is a weak equivalence in \(\cat{V}\).

  A morphism \(f:X\to Y\) in \(\cat{M}\) is an
  \emph{enriched \(S\)-local equivalence} if, for every enriched
  \(S\)-local object \(Z\), the induced map
  \[
    f^{*}:
    \mathbb{R}\Map_{\cat{M}}^{\cat{V}}(Y,Z)
    \longrightarrow
    \mathbb{R}\Map_{\cat{M}}^{\cat{V}}(X,Z)
  \]
  is a weak equivalence in \(\cat{V}\).
\end{definition}

\begin{theorem}\label{thm:enriched.left.bousfield.localization.concrete}
  \textnormal{\cite[Theorem~4.46 and Proposition~4.47]{Bar10}},
  \textnormal{\cite[Proposition~2.3.10]{GP26}}.
  For the unenriched statement, see
  \cite[Theorem~4.7]{Bar10} and \cite[Theorem~4.1.1]{Hir03}.
  Let \(\cat{V}\) be a tractable symmetric monoidal model category, let
  \(\cat{M}\) be a left proper combinatorial \(\cat{V}\)-model category, and
  let \(S\) be a set of morphisms in \(\cat{M}\). Then the
  \(\cat{V}\)-enriched left Bousfield localization of \(\cat{M}\) at \(S\)
  exists and is a left proper combinatorial \(\cat{V}\)-model category.
  Moreover:
  \begin{enumerate}
    \item its underlying \(\cat{V}\)-enriched category is \(\cat{M}\);
    \item its weak equivalences are precisely the enriched \(S\)-local
      equivalences;
    \item its cofibrations and acyclic fibrations coincide with those of
      \(\cat{M}\);
    \item its fibrant objects are precisely the fibrant enriched \(S\)-local
      objects of \(\cat{M}\).
  \end{enumerate}
  If \(\cat{M}\) is tractable, then so is its localization.
\end{theorem}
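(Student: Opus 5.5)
The theorem just stated is a cited result: the enriched left Bousfield localization theorem of Barwick, whose unenriched form is due to Hirschhorn. My plan is to reduce to the unenriched localization at an enlarged set of maps and then check enrichment separately. Fix generating cofibrations \(I_{\cat{V}}\) of \(\cat{V}\) with cofibrant domains, which exist by tractability. Choose cofibrant replacements \(\tilde s:\tilde A\to\tilde B\) of the maps \(s\in S\), arranged to be cofibrations between cofibrant objects. Define the enlarged set
\[
  S_{\cat{V}}\coloneqq\{\, i\,\square\,\tilde s \;:\; i\in I_{\cat{V}},\ s\in S\,\},
\]
where \(\square\) is the pushout-product with respect to the tensoring of \(\cat{M}\) over \(\cat{V}\). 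Since \(\cat{M}\) is left proper and combinatorial and \(S_{\cat{V}}\) is a set, the unenriched theorem \cite[Theorem~4.7]{Bar10} gives a left proper combinatorial model structure \(L_{S_{\cat{V}}}\cat{M}\) on the same underlying category. That theorem also supplies the rest of the structure: cofibrations are those of \(\cat{M}\), weak equivalences are the \(S_{\cat{V}}\)-local equivalences, and fibrant objects are the fibrant \(S_{\cat{V}}\)-local objects.

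\textbf{Local objects.} I will identify \(S_{\cat{V}}\)-local objects with enriched \(S\)-local objects. For fibrant \(Z\), the mapping adjunction converts lifting and homotopy conditions against \(i\square\tilde s\) into conditions on the map \(\Map^{\cat{V}}(\tilde B,Z)\to\Map^{\cat{V}}(\tilde A,Z)\) against the maps \(i\in I_{\cat{V}}\). Testing a map between fibrant objects of \(\cat{V}\) against all generating cofibrations with cofibrant domains detects whether it is a weak equivalence. Hence \(Z\) is \(S_{\cat{V}}\)-local exactly when it is enriched \(S\)-local. It follows that \(S_{\cat{V}}\)-local equivalences coincide with enriched \(S\)-local equivalences, which gives parts (2) and (4). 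Part (3) comes directly from the unenriched theorem, and part (1) holds because the underlying enriched category is untouched.

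\textbf{The \(\cat{V}\)-model axiom.} Next I check the pushout-product axiom in the localized structure. The cofibration part is inherited from \(\cat{M}\). For acyclicity, it suffices to show that for a generating cofibration \(i\) of \(\cat{V}\) and a local acyclic cofibration \(j\), the map \(i\square j\) is a local equivalence. By adjunction this reduces to showing that for fibrant local \(Z\), the cotensor \(Z^{K}\) is again local for cofibrant \(K\). This holds because \(\Map(-,Z^K)\cong\Map(K,\Map(-,Z))\) and the functor \(\Map(K,-)\) preserves weak equivalences between fibrant objects. The unit axiom transfers because weak equivalences only grew.

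\textbf{Universal property and tractability.} The universal property follows from the characterization of local equivalences. A \(\cat{V}\)-left Quillen \(\varphi\) inverting \(S\) also inverts \(S_{\cat{V}}\), since it preserves tensors and pushouts. So \(\varphi\) sends local acyclic cofibrations to weak equivalences and remains left Quillen out of \(L_S\cat{M}\). Tractability is preserved because the generating cofibrations are unchanged, and generating acyclic cofibrations with cofibrant domains can be obtained by Barwick's explicit construction.

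\textbf{Main obstacle.} I expect the hardest step to be the comparison of local objects, specifically justifying that detection by \(I_{\cat{V}}\) suffices. This is exactly where the cofibrant-domain (tractability) hypothesis on \(\cat{V}\) is needed, and I would follow \cite[Proposition~4.47]{Bar10} for it.
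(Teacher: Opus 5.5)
The paper gives no proof of its own here and simply cites Barwick and Grady--Pavlov. Your reconstruction localizes without enrichment at \(I_{\cat{V}}\square\widetilde{S}\), identifies the local objects with the enriched \(S\)-local ones using the cofibrant domains of \(I_{\cat{V}}\), and obtains the pushout-product axiom from closure of local objects under cotensors \(Z^{K}\) with \(K\) cofibrant; this is essentially the reduction behind those citations, and it is correct. One ordering point: your second paragraph's identification of \(S_{\cat{V}}\)-local equivalences with enriched \(S\)-local equivalences already needs the cotensor-closure fact you prove only in the third, because detecting that \(\mathbb{R}\Map^{\cat{V}}(Y,Z)\to\mathbb{R}\Map^{\cat{V}}(X,Z)\) is a weak equivalence in \(\cat{V}\) amounts to testing \(f\) against the local objects \(Z^{K}\).
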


\subsubsection{Model structures on enriched diagram categories}
Injective and projective model structures allow us to derive constructions on
enriched presheaves while retaining objectwise control of weak equivalences.

\begin{definition}\label{def:injective.model.structure}
  \textnormal{\cite[Theorem~2.3]{Jar87}},
  \textnormal{\cite[Section~2]{DHI04}},
  \textnormal{\cite[Theorem~2.30]{Bar10}},
  \textnormal{\cite[Proposition A.2.8.2]{Lur17b}}.
  Let \(\cat{V}\) be a combinatorial model category, and let
  \(\cat{C}\) be a small category. The combinatorial injective model structure
  on
  \[
    \psh(\cat{C},\cat{V})
    =
    \Fun(\cat{C}^{\op},\cat{V})
  \]
  is the model structure whose weak equivalences and cofibrations are defined
  objectwise. Thus a morphism \(X\to Y\) is a weak equivalence if, for every
  \(c\in\cat{C}\), the map
  \[
    X(c)\to Y(c)
  \]
  is a weak equivalence in \(\cat{V}\), and it is a cofibration if, for every
  \(c\in\cat{C}\), the map
  \[
    X(c)\to Y(c)
  \]
  is a cofibration in \(\cat{V}\). The fibrations are the morphisms having the
  right lifting property with respect to all morphisms that are both
  cofibrations and weak equivalences in this model structure.

  Under the same assumptions as above there exists (also by
  \cite[Proposition~A.2.8.2]{Lur17b}) a combinatorial projective
  model structure on \(\psh(\cat{C}, \cat{V})\), where weak equivalences and
  fibrations are defined objectwise.
\end{definition}

\begin{theorem}\label{thm:enriched.injective.projective.model.structures}
  \textnormal{\cite[Theorem~4.4]{Mos18}; the enriched assertion follows from
  \cite[Theorem~5.4]{Mos18}.}
  Let \(\cat{V}\) be a combinatorial symmetric monoidal model category, let
  \(\cat{M}\) be a combinatorial \(\cat{V}\)-model category, and let
  \(\cat{C}\) be a small \(\cat{V}\)-enriched category. For \(c,c'\in\cat{C}\),
  write
  \[
    -\otimes \cat{C}(c,c'):\cat{M}\longrightarrow \cat{M}
  \]
  for tensoring in \(\cat{M}\) by the hom-object
  \(\cat{C}(c,c')\in\cat{V}\).

  If, for all \(c,c'\in\cat{C}\), the functor
  \[
    -\otimes \cat{C}(c,c'):\cat{M}\longrightarrow \cat{M}
  \]
  preserves cofibrations, then
  \[
    \Fun_{\cat{V}}(\cat{C},\cat{M})
  \]
  admits the injective model structure. Its weak equivalences and cofibrations
  are defined objectwise, and its fibrations are determined by the corresponding
  lifting property.

  If, for all \(c,c'\in\cat{C}\), the functor
  \[
    -\otimes \cat{C}(c,c'):\cat{M}\longrightarrow \cat{M}
  \]
  preserves trivial cofibrations, then
  \[
    \Fun_{\cat{V}}(\cat{C},\cat{M})
  \]
  admits the projective model structure. Its weak equivalences and fibrations
  are defined objectwise, and its cofibrations are determined by the
  corresponding lifting property.

  Moreover, whenever one of these model structures exists, it is again a
  \(\cat{V}\)-model category. Its \(\cat{V}\)-enrichment is given by
  \[
    \Fun_{\cat{V}}(\cat{C},\cat{M})(F,G)
    \coloneqq
    \int_{c\in\cat{C}}
    \Map_{\cat{M}}^{\cat{V}}\bigl(F(c),G(c)\bigr),
  \]
  by the same end construction as in
  \Cref{def:tensor.cotensor.presheaves}, and its tensors and cotensors are
  computed objectwise.

  In particular, applying this theorem to \(\cat{C}^{\mathrm{op}}\) and
  \(\cat{M}=\cat{V}\) gives the corresponding injective and projective model
  structures on
  \[
    \psh(\cat{C},\cat{V})
    =
    \Fun_{\cat{V}}(\cat{C}^{\mathrm{op}},\cat{V}).
  \]
\end{theorem}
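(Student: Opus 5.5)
The plan is to obtain both model structures by \emph{inducing} them along the forgetful functor
\[
  U\colon \Fun_{\cat{V}}(\cat{C},\cat{M})\longrightarrow \prod_{c\in\mathrm{ob}\,\cat{C}}\cat{M},
  \qquad G\mapsto (G(c))_{c},
\]
where the target carries the product model structure, which is combinatorial. The functor \(U\) preserves all limits and colimits, since these are computed objectwise (\Cref{rem:presheaves.complete.cocomplete}). It has a left adjoint \(L\) given by the enriched left Kan extension
\[
  L(X)(c')=\coprod_{c}X(c)\otimes\cat{C}(c,c'),
\]
and a right adjoint \(R\) given by the end
\[
  R(Y)(c')=\prod_{c}Y(c)^{\cat{C}(c',c)}.
\]
Because \(\cat{C}\) is small and \(\cat{M}\) is locally presentable, \(\Fun_{\cat{V}}(\cat{C},\cat{M})\) is locally presentable. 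The projective structure will be right-induced along \(U\dashv R\)'s companion adjunction \(L\dashv U\), and the injective structure will be left-induced along \(U\dashv R\).

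\textbf{Projective case.} We take as generators \(L(I)\) and \(L(J)\), where \(I\) and \(J\) are the generating (acyclic) cofibrations of \(\prod\cat{M}\). By Kan's transfer theorem (smallness is automatic in the locally presentable setting), it suffices to show that \(U\) sends relative \(L(J)\)-cell complexes to weak equivalences. Since \(U\) preserves colimits, \(UL(j)\) is objectwise a coproduct of maps \(j\otimes\cat{C}(c,c')\). By hypothesis these are trivial cofibrations in \(\cat{M}\). Pushouts and transfinite compositions are computed objectwise, so \(U\) of any relative \(L(J)\)-cell complex is an objectwise trivial cofibration. This yields the projective structure.

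\textbf{Injective case.} Left-induction requires the criterion of Bayeh--Hess--Karpova--K\k{e}dziorek--Riehl--Shipley and Hess--K\k{e}dziorek--Riehl--Shipley for accessible and left-induced structures. One first shows that the objectwise cofibrations form the left class of a cofibrantly generated weak factorization system. This follows from Makkai--Rosick\'y, using that \(U\) is a left adjoint between locally presentable categories. The remaining acyclicity condition asks that every map with the right lifting property against all objectwise cofibrations be an objectwise weak equivalence. I would prove this by producing functorial path objects. Let \(\unit_{\cat{V}}\sqcup\unit_{\cat{V}}\to \mathrm{Cyl}\to\unit_{\cat{V}}\) be a cylinder for a cofibrant replacement of the unit, and consider \(G\mapsto G^{\mathrm{Cyl}}\), computed objectwise. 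Here the hypothesis that each \(-\otimes\cat{C}(c,c')\) preserves cofibrations is used: it guarantees that the relevant objectwise constructions, namely \(R\) applied to fibrant replacements and the tensors \(-\otimes\cat{C}(c,c')\) appearing in \(LU\), are compatible with the cofibration class. With this compatibility, the standard ``path object'' acyclicity argument goes through. I expect this acyclicity verification to be the main obstacle. The first thing to try is to reduce, via \(U\dashv R\), to the statement that \(R\) preserves fibrant objects and path objects in \(\prod\cat{M}\). This is checked using that the cotensors \(Y^{\cat{C}(c',c)}\) are right Quillen, which is the adjoint form of the cofibration-preservation hypothesis.

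\textbf{Enrichment.} Tensors and cotensors are objectwise, and the mapping objects are the ends displayed in the statement. For the injective structure, the pushout-product axiom of \Cref{def:v.model.category} holds because cofibrations and weak equivalences are objectwise. It therefore reduces to the axiom in \(\cat{M}\) at each \(c\). For the projective structure, it suffices to check the axiom on generators, using the isomorphism
\[
  k\,\square\, L(i)\cong L(k\,\square\, i)
\]
for \(k\) a cofibration in \(\cat{V}\). The unit axiom holds objectwise in the injective case, and it follows for projectively cofibrant objects by the same generator reduction. Finally, applying the theorem with \(\cat{C}^{\op}\) and \(\cat{M}=\cat{V}\) gives the stated structures on \(\psh(\cat{C},\cat{V})\).
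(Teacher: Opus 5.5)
Your projective case and your enrichment checks are fine. The injective case has a genuine gap: the acyclicity condition for the left-induced structure is never actually established, and the route you sketch would not establish it.

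For a structure left-induced along \(U\), what must be shown is that every map having the right lifting property against all objectwise cofibrations is an objectwise weak equivalence. Fibrations are not created by \(U\) here, so whether \(R\) preserves fibrant objects or path objects is beside the point. The dual of Quillen's argument that does apply in this setting needs good cylinder objects together with cofibrant replacements, and you construct neither. Your \(G^{\mathrm{Cyl}}\), for a cylinder on \(Q\unit_{\cat{V}}\), maps to \(G^{Q\unit_{\cat{V}}}\times G^{Q\unit_{\cat{V}}}\) rather than to \(G\times G\), so it is not a path object for \(G\) anyway.

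Moreover, the claim that the cotensors \(Y^{\cat{C}(c',c)}\) are right Quillen is not the adjoint form of your hypothesis. By adjunction, \(-\otimes K\) preserving cofibrations is equivalent only to \((-)^{K}\) preserving trivial fibrations. Being right Quillen would also require \(-\otimes K\) to preserve trivial cofibrations. That is the projective hypothesis, and it is not available in the injective case, since the hom-objects \(\cat{C}(c,c')\) need not be cofibrant.

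The missing idea is a one-line adjunction argument using the left adjoint \(L\) you already wrote down. Since \(L\dashv U\), a map \(p\) in \(\Fun_{\cat{V}}(\cat{C},\cat{M})\) has the right lifting property against \(L(i)\) if and only if \(U(p)\) has it against \(i\). The value of \(UL\) at \(c'\) is \(\coprod_{c}(-)_c\otimes\cat{C}(c,c')\). Your hypothesis says exactly that this preserves cofibrations, i.e.\ that \(L\) sends cofibrations of \(\prod_c\cat{M}\) to injective cofibrations.

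Hence, if \(p\) lifts against all injective cofibrations, then \(U(p)\) lifts against all cofibrations of \(\prod_c\cat{M}\). So \(U(p)\) is an objectwise trivial fibration, and in particular an objectwise weak equivalence. Equivalently, evaluation at \(c\) has left adjoint \(X\mapsto X\otimes\cat{C}(c,-)\), which sends cofibrations to injective cofibrations.

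Combine this with the Makkai--Rosick\'y existence of the left-induced weak factorization system, which you invoke correctly, and you get the injective structure. This is precisely what the hypothesis of the cited theorem is designed for, and it mirrors the standard unenriched argument. No cylinder or path objects are needed.
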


\begin{remark}
  The hypotheses of
  \Cref{thm:enriched.injective.projective.model.structures} hold for
  \(\cat{V}=\sset\) and \(\cat{V}=\CsSet\). Both are combinatorial Cartesian
  model categories in which every object is cofibrant; hence tensoring by any
  enriched hom-object preserves cofibrations and trivial cofibrations. Thus
  the enriched injective and projective diagram model structures used below
  exist; see~\cite[Remark~5.5]{Mos18}.
\end{remark}

\begin{remark}\label{rem:injective.presheaves.cofibrant}
  In the cases \(\cat{V}=\sset\) and \(\cat{V}=\CsSet\), cofibrations are
  monomorphisms. Hence every object of \(\psh(\cat{C},\cat{V})\) is
  cofibrant in the injective model structure: indeed, for every
  \(X\in\psh(\cat{C},\cat{V})\), the map from the initial presheaf
  \[
    \varnothing\to X
  \]
  is objectwise the monomorphism \(\varnothing\to X(c)\).

  This is one reason why the injective model structure is the natural choice
  for the arguments below. In functorial field theory one is typically
  interested in derived mapping objects out of bordism categories or presheaves
  encoding geometric structures.
  Since all objects are cofibrant in the cases used below, these sources
  require no cofibrant replacement.
\end{remark}

\subsubsection{Smooth simplicial sets}
Smooth simplicial sets allow homotopy types to vary smoothly with a Cartesian
parameter.

\begin{definition}\label{def:smooth.singular.complex}
  \textnormal{\cite[Definition~2.2.3]{GP26}}.
  For \(n\geq 0\), let
  \[
    \Delta^n_{\mathrm{e}}
    \coloneqq
    \left\{(t_0,\ldots,t_n)\in\RR^{n+1}
      \middle|
      \sum_{i=0}^{n}t_i=1
    \right\}
  \]
  be the extended smooth \(n\)-simplex, regarded as an object of \(\Cart\)
  (\Cref{def:Man.Cart.sites}). The \emph{smooth singular complex} of
  \(X\in\CsSet\) is the simplicial set
  \[
    \Sing^\infty(X)_n \coloneqq X(\Delta^n_{\mathrm{e}})_n.
  \]
  This is the diagonal of a bisimplicial set: its simplicial operators are
  induced jointly by the cosimplicial structure maps of the extended smooth
  simplices and by the simplicial structure maps of \(X\).
\end{definition}

\begin{proposition}\label{prop:Cartesian.model.str.smooth.simp.sets}
  \textnormal{\cite[Proposition~2.2.4--2.2.5]{GP26}},
  \textnormal{\cite[Theorem~12.7]{PavlovSmoothOka}}.
  The category \( \CsSet \) admits a left proper combinatorial Cartesian
  model structure in which:
  \begin{enumerate}
    \item cofibrations are monomorphisms of simplicial presheaves;
    \item weak equivalences are the maps \(f:X\to Y\) such that
      \[
        \Sing^\infty(f):\Sing^\infty(X)\longrightarrow \Sing^\infty(Y)
      \]
      is a weak equivalence of simplicial sets.
  \end{enumerate}
  We use this model structure on \( \CsSet \) throughout the paper.

  Moreover, the smooth realization--singular complex adjunction
  \[
    \sset
    \rightleftarrows
    \CsSet
  \]
  is a Quillen equivalence for this model structure.
\end{proposition}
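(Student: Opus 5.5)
The plan is to build the model structure as an enriched left Bousfield localization of the injective model structure on \(\spsh(\Cart)\). Then identify its weak equivalences with the \(\Sing^\infty\)-equivalences, and finally compare with \(\sset\) through the smooth realization functor. Start with the injective model structure of \Cref{def:injective.model.structure} for \(\cat{V}=\sset\) and \(\cat{C}=\Cart\). It is combinatorial, and its cofibrations are the monomorphisms. It is left proper because every object is cofibrant (\Cref{rem:injective.presheaves.cofibrant}). It is Cartesian because products are computed objectwise and \(\sset\) is Cartesian. Let \(S\) be the set of projections \(\Yo_{\RR^n}\to\Yo_{\RR^0}\) for \(n\geq 0\). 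The localization \(L_S\) exists by \Cref{thm:enriched.left.bousfield.localization.concrete} and is left proper and combinatorial with the same cofibrations, so condition (1) holds.

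To see that \(L_S\) is still Cartesian, it suffices to check that products of maps in \(S\) with representables are \(S\)-local equivalences. Indeed, \(\Yo_{\RR^n}\times\Yo_{\RR^m}\to\Yo_{\RR^m}\) is \(\Yo_{\RR^{n+m}}\to\Yo_{\RR^m}\), which is a retract-and-2-out-of-3 consequence of maps in \(S\). The local objects are then closed under internal homs.

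Next I would identify the weak equivalences. For \(X\in\CsSet\), consider the natural map
\[
  X\longrightarrow \widetilde X,
  \qquad
  \widetilde X(U)\coloneqq \operatorname{diag}\,[k]\mapsto X(U\times\Delta^k_{\mathrm e})_k .
\]
The object \(\widetilde X\) is homotopy invariant: the straight-line contraction of \(\RR^n\) gives a simplicial homotopy between \(\widetilde X(\RR^n)\) and \(\widetilde X(\RR^0)\). Since \(\widetilde X(\RR^0)=\Sing^\infty(X)\), an injective fibrant replacement of \(\widetilde X\) is \(S\)-local, and its derived value at \(\RR^0\) is \(\Sing^\infty X\).

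The key claim is that \(X\to\widetilde X\) is an \(S\)-local equivalence. I would write \(\widetilde X\) as the diagonal, that is the homotopy colimit, of the simplicial object \(k\mapsto X(-\times\Delta^k_{\mathrm e})\). The map \(X\to X(-\times\Delta^k_{\mathrm e})\) is an \(S\)-local equivalence for each \(k\). To see this, reduce by colimits to representables, where it is \(\Yo_{U\times\Delta^k_{\mathrm e}}\to\Yo_U\), a pullback of a map in \(S\). Realization of levelwise local equivalences is a local equivalence, which proves the claim.

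Granting this, \(f\) is an \(S\)-local equivalence if and only if \(\widetilde f\) is. Since \(\widetilde X\) and \(\widetilde Y\) are both homotopy invariant, this holds if and only if \(\widetilde f(\RR^0)=\Sing^\infty(f)\) is a weak equivalence. This proves condition (2).

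For the Quillen equivalence, let \(|{-}|\colon\sset\to\CsSet\) be the left Kan extension of \(\Delta^n\mapsto\Yo_{\Delta^n_{\mathrm e}}\); its right adjoint is \(\Sing^\infty\). The functor \(|{-}|\) preserves monomorphisms. It sends horn inclusions to local equivalences, since the realization of a horn inclusion is a map between contractible colimits of representables, each \(S\)-equivalent to \(\Yo_{\RR^0}\). Hence the adjunction is Quillen. For the derived unit \(K\to\Sing^\infty|K|\), note that \(\Sing^\infty\) commutes with colimits and preserves objectwise equivalences because it is a diagonal. We therefore reduce to \(K=\Delta^n\), where \(\Sing^\infty\Yo_{\Delta^n_{\mathrm e}}\) is contractible by the straight-line homotopy. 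Together with the fact that \(\Sing^\infty\) detects weak equivalences, by (2), this gives the Quillen equivalence.

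I expect the main obstacle to be the claim that \(X\to\widetilde X\) is a local equivalence. The delicate point is commuting the diagonal and homotopy colimit with the localization. I would handle it via left properness and the fact that all objects are cofibrant, so that the diagonal computes the homotopy colimit in \(L_S\).
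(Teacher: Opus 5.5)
Your architecture is sound. The paper gives no argument of its own here: it cites Grady--Pavlov and Pavlov, and, via \Cref{prop:smooth.model.as.localization}, Bunk's characterization of the \(\RR\)-local equivalences. So a self-contained proof by localizing at \(\Yo_{\RR^n}\to\Yo_{\RR^0}\) and using the extended-simplex singular construction \(X\mapsto\widetilde X\) as a localization functor is a reasonable route.

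However, the step you yourself flag as the crux is justified incorrectly. The presheaf \(X(-\times\Delta^k_{\mathrm e})\) is the cotensor \(W\mapsto X(W\times\Delta^k_{\mathrm e})\), not the product \(X\times\Yo_{\Delta^k_{\mathrm e}}\). For \(X=\Yo_V\), the map in question is the inclusion of constant maps \(\Yo_V\to\Cart(-\times\Delta^k_{\mathrm e},V)\). Its target is not representable, and it points the opposite way from \(\Yo_{V\times\Delta^k_{\mathrm e}}\to\Yo_V\). So the reduction to representables does not establish the claim.

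The correct argument works for every \(X\) at once. For a vertex \(v\) of \(\Delta^k_{\mathrm e}\), restriction along \(W\times\{v\}\hookrightarrow W\times\Delta^k_{\mathrm e}\) gives a retraction \(r\) of \(c\colon X\to X(-\times\Delta^k_{\mathrm e})\). The affine contraction \((t,y)\mapsto ty+(1-t)v\) of \(\Delta^k_{\mathrm e}\), defined for all \(t\in\RR\), induces a natural map \(X(-\times\Delta^k_{\mathrm e})\times\Yo_{\RR}\to X(-\times\Delta^k_{\mathrm e})\). It restricts to the identity at \(t=1\) and to \(cr\) at \(t=0\). Since \(Z\times\Yo_{\RR}\to Z\) is an \(S\)-local equivalence for every \(Z\) (this is where your Cartesian check enters), the identity and \(cr\) agree in the homotopy category. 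Hence \(c\) is an \(S\)-local equivalence.

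Also, the passage from levelwise to diagonal rests on Reedy cofibrancy, not on left properness. By Eilenberg--Zilber applied objectwise, every simplicial object in \(\CsSet\) has monomorphic latching maps. Its diagonal is therefore its realization and computes the homotopy colimit in the simplicial model category \(L_S\CsSet\).

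The second error is in the realization functor. Since \(\Sing^\infty(X)_n=X(\Delta^n_{\mathrm e})_n\) is a diagonal, its left adjoint sends \(\Delta^n\) to \(\Yo_{\Delta^n_{\mathrm e}}\times\Delta^n\), which corepresents \(X\mapsto X(\Delta^n_{\mathrm e})_n\). The left Kan extension of \(\Delta^n\mapsto\Yo_{\Delta^n_{\mathrm e}}\) is instead left adjoint to \(X\mapsto X(\Delta^\bullet_{\mathrm e})_0\), which only sees vertices, so it is not the adjunction in the statement.

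With the corrected \(|{-}|\), your argument goes through:
\begin{enumerate}
\item \(|{-}|\) still preserves monomorphisms.
\item The natural map \(|K|\to K\) to the constant presheaf is an \(S\)-local equivalence, by cell induction starting from \((\Yo_{\Delta^n_{\mathrm e}}\to\Yo_{\RR^0})\times\Delta^n\). Hence horn inclusions go to local equivalences.
\item \(\Sing^\infty|\Delta^n|\cong\Sing^\infty(\Yo_{\Delta^n_{\mathrm e}})\times\Delta^n\) is contractible, so the derived unit is a weak equivalence.
\item Condition (2) shows that \(\Sing^\infty\) reflects weak equivalences.
\end{enumerate}
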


\begin{proposition}\label{prop:smooth.model.as.localization}
  Let
  \[
    I_{\RR}
    \coloneqq
    \left\{
      \Yo_L\otimes\RR
      \xrightarrow{\id_{\Yo_L}\otimes {!}}
      \Yo_L\otimes\RR^0
      \cong\Yo_L
      \ \middle|\ L\in\Cart
    \right\},
  \]
  where \({!}:\RR\ra\RR^0\) denotes the morphism of representable smooth
  simplicial sets induced by the collapse map \(\RR \ra \RR^0\) of Cartesian
  spaces.
  The \(\RR\)-local injective model structure on \(\CsSet\) from
  \Cref{prop:Cartesian.model.str.smooth.simp.sets} is the left
  Bousfield localization of the injective model structure at
  \(I_{\RR}\).
\end{proposition}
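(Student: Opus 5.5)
We will identify the two model structures by comparing their classes of cofibrations and weak equivalences; a model structure is determined by these two classes. Both are structures on the same underlying category \(\CsSet\). The localized injective structure \(L_{I_{\RR}}\CsSet_{\inj}\) exists by \Cref{thm:enriched.left.bousfield.localization.concrete}. Here we use that the injective structure on \(\spsh(\Cart)\) is left proper and combinatorial, and that it is tractable because all objects are cofibrant. By the same theorem, its cofibrations are those of the injective structure, namely the monomorphisms. These agree with the cofibrations of \Cref{prop:Cartesian.model.str.smooth.simp.sets}. It therefore remains to show that the \(I_{\RR}\)-local equivalences are exactly the maps \(f\) with \(\Sing^\infty(f)\) a weak equivalence.

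The key step is to identify the fibrant objects of the two structures. First we show that every fibrant object \(Z\) of the model structure of \Cref{prop:Cartesian.model.str.smooth.simp.sets} is injectively fibrant and \(I_{\RR}\)-local. Injective fibrancy holds because injective acyclic cofibrations are objectwise weak equivalences and monomorphisms. Such maps induce weak equivalences on \(\Sing^\infty\), since \(\Sing^\infty\) is a diagonal and hence preserves objectwise equivalences. Locality holds because \(\Yo_L\otimes\RR\to\Yo_L\) is a \(\Sing^\infty\)-equivalence: \(\RR\times L\to L\) is a smooth homotopy equivalence. Then \(\Map(\Yo_L,Z)\to\Map(\Yo_L\times\RR,Z)\) is a weak equivalence, because the Cartesian model structure makes mapping into a fibrant object a right Quillen functor.

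Conversely, let \(Z\) be injectively fibrant and \(I_{\RR}\)-local. Locality says that \(Z(L)\to Z(L\times\RR)\) is a weak equivalence for every \(L\), so \(Z\) is \(\RR\)-homotopy invariant. For such \(Z\) we use the standard extra-degeneracy argument: the bisimplicial set \(Z(\Delta^n_{\mathrm{e}})_m\) is homotopically constant in \(n\), because each \(\Delta^n_{\mathrm{e}}\cong\RR^n\). Consequently the natural map \(Z(\RR^0)\to\Sing^\infty Z\) is a weak equivalence, and more generally \(Z(L)\simeq\Sing^\infty(Z^{\Yo_L})\). This shows that the \(\Sing^\infty\)-equivalences \(f\colon X\to Y\) between cofibrant objects induce equivalences \(\Map(Y,Z)\to\Map(X,Z)\). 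To see this, use \(\Map(X,Z)\simeq\holim_{\Yo_L\to X}Z(L)\), together with the fact that for homotopy invariant \(Z\) this homotopy limit depends only on \(\Sing^\infty X\). The latter is seen by replacing \(X\) by the \(\Sing^\infty\)-equivalent object given by the colimit of \(\Yo_{\Delta^n_{\mathrm{e}}}\)'s over the simplices of \(\Sing^\infty X\). Hence every \(\Sing^\infty\)-equivalence is an \(I_{\RR}\)-local equivalence.

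For the reverse inclusion, let \(f\) be an \(I_{\RR}\)-local equivalence. By the first paragraph of the fibrant comparison, every fibrant object of the structure of \Cref{prop:Cartesian.model.str.smooth.simp.sets} is local. Thus \(f\) induces equivalences on derived maps into all such fibrant objects, and so \(f\) is a weak equivalence in that structure. Hence the two classes of weak equivalences coincide, and so do the model structures.

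The main obstacle is the step identifying \(\holim_{\Yo_L\to X}Z(L)\) with a functor of \(\Sing^\infty X\) for \(\RR\)-invariant \(Z\). For this we would first try to invoke the comparison of \Cref{prop:Cartesian.model.str.smooth.simp.sets} with \(\sset\), via the Quillen equivalence, and realize \(Z\) as coming from its value at \(\RR^0\). If that route fails, we would instead cite the corresponding statement in \cite{GP26} or \cite{PavlovSmoothOka}, where the model structure is constructed as exactly this localization.
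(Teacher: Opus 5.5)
Your frame matches the paper's: both structures have the monomorphisms as cofibrations, so everything reduces to identifying the \(I_{\RR}\)-local equivalences with the \(\Sing^\infty\)-equivalences. The paper does not prove this identification. It notes that the localization at \(I_{\RR}\) is Bunk's \(\RR\)-local model structure and quotes \cite[Theorem~5.7]{Bun22}. Your inclusion ``\(I_{\RR}\)-local equivalence \(\Rightarrow\) \(\Sing^\infty\)-equivalence'' is correct.

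The gap is in the converse. You justify that \(\Map(X,Z)\) ``depends only on \(\Sing^\infty X\)'' by replacing \(X\) with the realization \(\Phi\Sing^\infty X\), the left adjoint of \(\Sing^\infty\) built from the \(\Yo_{\Delta^n_{\mathrm{e}}}\). But you only know this replacement leaves \(\Map(-,Z)\) unchanged if the counit \(\Phi\Sing^\infty X\to X\) is an \(I_{\RR}\)-local equivalence. The only reason available is that it is a \(\Sing^\infty\)-equivalence, and that is exactly the inclusion being proved. So the sentence ``This shows that\dots'' is unsupported, and the argument is circular as written. Neither fallback is carried out; citing is essentially what the paper does, though with Bunk as the source.

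The ingredients you already have close the gap.

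\emph{First step.} For \(\RR\)-invariant \(Z\), the maps \(Z(\RR^0)\to Z(L)\) and \(Z(\RR^0)\to\Sing^\infty Z\) are weak equivalences, natural in \(Z\). By two-out-of-three, a \(\Sing^\infty\)-equivalence between \(\RR\)-invariant objects is therefore an objectwise weak equivalence.

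\emph{Second step.} Let \(Z\) be injectively fibrant and \(I_{\RR}\)-local. Factor \(Z\to *\) in the structure of \Cref{prop:Cartesian.model.str.smooth.simp.sets} as a monomorphic \(\Sing^\infty\)-equivalence \(j\colon Z\to W\) followed by a fibration. By your second paragraph, \(W\) is \(\RR\)-invariant, so \(j\) is an injective acyclic cofibration. Since \(Z\) is injectively fibrant, it is a retract of \(W\) and hence \(\Sing^\infty\)-fibrant.

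\emph{Conclusion.} The two structures therefore have the same fibrant objects. Both are simplicial with the same mapping spaces, and all objects are cofibrant. Their weak equivalences, which are detected by \(\Map(-,Z)\) into fibrant \(Z\), therefore coincide.

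Alternatively, your homotopy-limit route can be made rigorous. Since \(\RR^0\) is terminal, \(Z\) is objectwise equivalent to the constant presheaf at \(Z(\RR^0)\), so \(\Map(X,Z)\simeq\Map(\hocolim_{\Cart^{\op}}X,Z(\RR^0))\). The functor \(\Delta^\bullet_{\mathrm{e}}\colon\Delta^{\op}\to\Cart^{\op}\) is homotopy final: its comma category at \(L\) is the opposite simplex category of the contractible simplicial set \(\mathrm{C}^\infty(\Delta^\bullet_{\mathrm{e}},L)\). Hence \(\hocolim_{\Cart^{\op}}X\simeq\Sing^\infty X\).

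Either fix gives a self-contained argument in place of the paper's citation.
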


\begin{proof}
  The localization at \(I_{\RR}\) is the \(\RR\)-local model structure
  of \cite[Definition~2.5]{Bun22}. By
  \cite[Theorem~5.7]{Bun22}, its weak equivalences are precisely the
  morphisms sent to weak equivalences of simplicial sets by the smooth
  singular complex. Since left Bousfield localization does not change
  the cofibrations, this is the model structure of
  \Cref{prop:Cartesian.model.str.smooth.simp.sets}.
\end{proof}

\begin{definition}\label{def:constant.smooth.direction}
  Let
  \[
    \lambda^\infty:\sset\longrightarrow \CsSet
  \]
  denote the functor that regards a simplicial set as a smooth simplicial
  set constant in the smooth direction: \( \lambda^\infty(K)_L=K \), for
  \(L\in\Cart\).
  The evaluation at the terminal Cartesian space \( \RR^0 \) defines a functor
  \[
    \ev_{\RR^0}:\CsSet\longrightarrow\sset,
    \qquad
    X\longmapsto X(\RR^0).
  \]
  The functor \( \lambda^\infty \) is left adjoint to
  \( \ev_{\RR^0} \). Moreover, this is a Quillen equivalence by
  Bunk~\cite[Lemma~2.17]{Bun22}.

  More generally, for any small category \( \cat{C} \), we use the same
  notation for the objectwise extension
  \[
    \lambda^\infty:
    \spsh(\cat{C})
    \longrightarrow
    \psh(\cat{C},\CsSet),
    \qquad
    (\lambda^\infty X)(c)_L=X(c).
  \]
  Its right adjoint is the objectwise evaluation functor
  \[
    \ev_{\RR^0}:
    \psh(\cat{C},\CsSet)
    \longrightarrow
    \spsh(\cat{C}),
    \qquad
    (\ev_{\RR^0}Y)(c)=Y(c)(\RR^0).
  \]
\end{definition}

\subsection{Homotopy colimits and limits}\label{subsec:homotopy.colimits}
Homotopy weighted colimits allow us to extend enriched diagrams from
representable presheaves to arbitrary presheaves in a homotopy-coherent way.

\begin{proposition}\label{prop:weighted.colimit.quillen.bifunctor}
  \textnormal{\cite[Theorem 21.1]{Shu06}},
  \textnormal{\cite[Proposition~A.2.9.26~and~Remark~A.2.9.27]{Lur17b}},
  \textnormal{\cite[Theorem~3.2]{Gam10}}.
  Let \(\cat{V}\) be a combinatorial symmetric monoidal model category. Let
  \(\cat{C}\) be a small \(\cat{V}\)-enriched category, and let
  \(\cat{M}\) be a combinatorial \(\cat{V}\)-model category.
  Assume that
  \[
    \psh(\cat{C},\cat{V})
    =
    \Fun_{\cat{V}}(\cat{C}^{\op},\cat{V})
  \]
  admits the injective model structure and that
  \[
    \Fun_{\cat{V}}(\cat{C},\cat{M})
  \]
  admits the projective model structure.
  Then the weighted
  colimit bifunctor
  \[
    -\otimes_\cat{C} - :
    \psh(\cat{C},\cat{V})_{\inj}
    \times
    \Fun_{\cat{V}}(\cat{C},\cat{M})_{\proj}
    \longrightarrow
    \cat{M}
  \]
  is a \(\cat{V}\)-enriched left Quillen bifunctor.

  In particular, if \(G\in \Fun_{\cat{V}}(\cat{C},\cat{M})\) is projectively
  cofibrant, then
  \[
    -\otimes_\cat{C} G :
    \psh(\cat{C},\cat{V})_{\inj}
    \longrightarrow
    \cat{M}
  \]
  is a \(\cat{V}\)-enriched left Quillen functor. Dually, if
  \(F\in\psh(\cat{C},\cat{V})\) is injectively cofibrant, then
  \[
    F\otimes_\cat{C} - :
    \Fun_{\cat{V}}(\cat{C},\cat{M})_{\proj}
    \longrightarrow
    \cat{M}
  \]
  is a \(\cat{V}\)-enriched left Quillen functor.
\end{proposition}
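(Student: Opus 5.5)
The plan is to reduce the bifunctor statement to the standard pushout-product criterion, using the coend formula from \Cref{def:weighted.colimit}:
\[
  F\otimes_{\cat{C}}G\cong\int^{c\in\cat{C}}F(c)\otimes G(c).
\]
The enriched adjunction of two variables is already in place. The right adjoints are \(\Map(G(-),m)\in\psh(\cat{C},\cat{V})\), from the defining isomorphism of the weighted colimit, and \(m^{F}\colon c\mapsto\int_{c'}m^{\cat{C}(c,c')\otimes F(c')}\) in \(\Fun_{\cat{V}}(\cat{C},\cat{M})\), given by an enriched end. \(\cat{V}\)-enrichment holds because both sides commute with tensoring by \(K\in\cat{V}\) objectwise (\Cref{thm:enriched.injective.projective.model.structures}). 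What remains is to show that for an injective cofibration \(i\colon F\to F'\) and a projective cofibration \(j\colon G\to G'\), the map \(i\square j\) is a cofibration in \(\cat{M}\), and is acyclic if either \(i\) or \(j\) is.

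First I reduce to generators. Both model structures are combinatorial, and \(-\otimes_{\cat{C}}-\) is cocontinuous in each variable. By the standard argument (Hovey, Lemma~4.2.4), it therefore suffices to check the pushout-product condition when \(j\) runs over generating projective (acyclic) cofibrations, while \(i\) is an arbitrary injective (acyclic) cofibration. The projective generators are \(\Yo^{c}\otimes k\) for \(k\colon A\to B\) a generating (acyclic) cofibration of \(\cat{M}\), with \(\Yo^{c}=\cat{C}(c,-)\). By the co-Yoneda isomorphism of \Cref{def:weighted.colimit},
\[
  F\otimes_{\cat{C}}(\Yo^{c}\otimes A)\cong F(c)\otimes A.
\]
Hence \(i\square(\Yo^{c}\otimes k)\) is isomorphic to \(i_{c}\square k\), where \(i_{c}\colon F(c)\to F'(c)\) and the pushout-product is formed using the tensoring \(\cat{V}\times\cat{M}\to\cat{M}\).

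Now \(i_{c}\) is a cofibration in \(\cat{V}\), since injective cofibrations are objectwise, and it is acyclic if \(i\) is, since weak equivalences are objectwise. Axiom~(1) of \Cref{def:v.model.category} then says that \(i_{c}\square k\) is a cofibration, acyclic if \(i_{c}\) or \(k\) is. This covers all three required cases.

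The "in particular" clauses follow by taking \(j\colon\varnothing\to G\) or \(i\colon\varnothing\to F\). For this I use that \(-\otimes_{\cat{C}}-\) preserves the initial object in each variable, so that \(i\square(\varnothing\to G)=i\otimes_{\cat{C}}G\), and symmetrically.

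The step I expect to be the main obstacle is the reduction to generators. Injective cofibrations are not generated by maps of any simple representable form, so the argument must keep \(i\) arbitrary and saturate only in \(j\). This needs care: the class of \(j\) for which \(i\square j\) is an (acyclic) cofibration must be closed under pushouts, transfinite composition and retracts. That closure follows from cocontinuity of \(i\square-\) as a functor from the arrow category, which is where local presentability and the coend formula are used.
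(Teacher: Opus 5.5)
Your proof is correct, but it works on the opposite side of the adjunction from the paper's.

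\textbf{The paper's route.} The paper never uses generating sets. It identifies the right adjoint in the \(G\)-variable as \(X^{F}\colon c\mapsto X^{F(c)}\); your end \(\int_{c'}m^{\cat{C}(c,c')\otimes F(c')}\) reduces to this by enriched Yoneda. It then checks the adjoint pullback-corner form of the criterion \cite[Lemma~4.2.2]{Hov99}. Take an injective cofibration \(i\colon F\to F'\) and a fibration \(p\colon X\to Y\) in \(\cat{M}\). At each \(c\), the map \(\theta_{i,p}\colon X^{F'}\to Y^{F'}\times_{Y^{F}}X^{F}\) is a fibration by the \(\cat{V}\)-model axiom, trivial if \(i_c\) or \(p\) is. Projective (trivial) fibrations are detected objectwise, so this finishes the argument. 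Because injective cofibrations and projective fibrations are both objectwise notions, this is a direct objectwise check. It needs no saturation argument, and it uses only the existence of the two diagram model structures, not cofibrant generation of \(\cat{M}\).

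\textbf{Your route.} You work with projective cofibrations, which are not objectwise. So you must pass to the generators \(\Yo^{c}\otimes k\) and show that \(\{j: i\square j\ \text{is a (trivial) cofibration}\}\) is saturated. In exchange, you make explicit the co-Yoneda identity \(i\square(\Yo^{c}\otimes k)\cong i_{c}\square k\). This is the same computation the paper later uses to obtain \(\Cc_d(\Yo_u)\cong Q_{\proj}\mathcal{K}_d(u)\).

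\textbf{How they relate.} The two proofs are adjoint transcriptions of each other, since \(\Yo^{c}\otimes-\) is left adjoint to evaluation at \(c\). The cleanest justification of your saturation step is the lifting correspondence: \(i\square j\) lifts against \(p\) if and only if \(j\) lifts against \(\theta_{i,p}\). Once you use that, observing that \(\theta_{i,p}\) is computed objectwise removes the need for generators entirely. The step you flagged as the main obstacle is exactly what the paper's choice of side eliminates.
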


\begin{proof}
  The proof is the \(\cat{V}\)-enriched version of the pointwise
  right-adjoint argument of~\cite[Theorem~3.2]{Gam10}. The weighted-colimit
  bifunctor is \(\cat{V}\)-enriched and left
  adjoint in each variable. We describe the right adjoint in the second
  variable. For \(F\in\psh(\cat{C},\cat{V})\) and \(X\in\cat{M}\), let
  \[
    X^{F}:\cat{C}\longrightarrow\cat{M}
  \]
  denote the enriched functor defined objectwise by
  \[
    X^{F}(c)\coloneqq X^{F(c)},
  \]
  where the power on the right is the cotensoring of \(\cat{M}\)
  over \(\cat{V}\). Its enriched functor structure is induced by the
  enriched presheaf structure of \(F\). The coend-end and tensor-cotensor
  adjunctions give a natural isomorphism
  \[
    \begin{aligned}
      \Map_{\cat{M}}^{\cat{V}}
      \bigl(F\otimes_{\cat{C}}G,X\bigr)
      &\cong
      \int_{c\in\cat{C}}
      \Map_{\cat{M}}^{\cat{V}}
      \bigl(F(c)\otimes G(c),X\bigr) \\
      &\cong
      \int_{c\in\cat{C}}
      \Map_{\cat{M}}^{\cat{V}}
      \bigl(G(c),X^{F(c)}\bigr) \\
      &\cong
      \Map_{\Fun_{\cat{V}}(\cat{C},\cat{M})}^{\cat{V}}
      \bigl(G,X^{F}\bigr).
    \end{aligned}
  \]
  Thus \(X^{F}\) is the right adjoint to the weighted colimit in its
  second variable.

  By the adjoint form of the pushout-product criterion
  \textnormal{\cite[Lemma~4.2.2]{Hov99}}, it suffices to verify the
  corresponding pullback-corner condition for this right adjoint. Let
  \[
    i:F\longrightarrow F'
  \]
  be an injective cofibration in \(\psh(\cat{C},\cat{V})\), and let
  \[
    p:X\longrightarrow Y
  \]
  be a fibration in \(\cat{M}\). The induced pullback-corner map is
  \[
    \theta_{i,p}:
    X^{F'}
    \longrightarrow
    Y^{F'}\times_{Y^{F}}X^{F}.
  \]
  Since limits in the enriched functor category are computed objectwise,
  evaluation at \(c\in\cat{C}\) yields
  \[
    \theta_{i,p}(c):
    X^{F'(c)}
    \longrightarrow
    Y^{F'(c)}
    \times_{Y^{F(c)}}
    X^{F(c)}.
  \]
  The map \(i(c):F(c)\to F'(c)\) is a cofibration in \(\cat{V}\).
  Moreover, since \(\cat{M}\) is a \(\cat{V}\)-model category, the ordinary
  tensor bifunctor
  \[
    -\otimes-:\cat{V}\times\cat{M}\longrightarrow\cat{M}
  \]
  is left Quillen in two variables. Its adjoint pullback-corner condition
  implies that \(\theta_{i,p}(c)\) is a fibration in~\(\cat{M}\), trivial
  whenever either \(i(c)\) or \(p\) is a weak equivalence.

  Projective fibrations and trivial fibrations in
  \(\Fun_{\cat{V}}(\cat{C},\cat{M})\) are detected objectwise. Hence
  \(\theta_{i,p}\) is a projective fibration, and it is a trivial
  projective fibration whenever \(i\) or \(p\) is a weak
  equivalence. Therefore, by the adjoint
  pushout-product criterion, the weighted colimit functor
  \[
    -\otimes_{\cat{C}}-:
    \psh(\cat{C},\cat{V})_{\inj}
    \times
    \Fun_{\cat{V}}(\cat{C},\cat{M})_{\proj}
    \longrightarrow
    \cat{M}
  \]
  is a \(\cat{V}\)-enriched left Quillen bifunctor.

  The final assertions follow by fixing a cofibrant variable in this
  Quillen bifunctor.
\end{proof}

\Cref{prop:weighted.colimit.quillen.bifunctor} allows us to define a
homotopy-coherent version of a weighted colimit.

\begin{definition}\label{def:homotopy.weighted.colimit}
  \textnormal{\cite[Chapter~XII, Section 2]{BK72}},
  \textnormal{\cite[Definition 13.2]{Shu06}},
  \textnormal{\cite[Definition~19.1.2]{Hir03}}.
  Let \(\cat{V}\), \(\cat{C}\), and \(\cat{M}\) satisfy the hypotheses of
  \Cref{prop:weighted.colimit.quillen.bifunctor}. For
  \(F\in \psh(\cat{C},\cat{V})\) and a \(\cat{V}\)-enriched functor
  \(G:\cat{C}\to \cat{M}\), the \emph{homotopy weighted colimit} of
  \(G\) with weight \(F\) is the left derived weighted colimit
  (\Cref{def:weighted.colimit})
  \[
    F\otimes^{\mathbb{L}}_{\cat{C}}G
    \coloneqq
    Q_{\inj}F\otimes_{\cat{C}} Q_{\proj}G
    =
    \int^{c\in \cat{C}}
    Q_{\inj}F(c)\otimes Q_{\proj}G(c),
  \]
  where
  \[
    Q_{\inj}F\to F
  \]
  is an injectively cofibrant replacement in
  \( \psh(\cat{C},\cat{V}) \), and
  \[
    Q_{\proj}G\to G
  \]
  is a projectively cofibrant replacement in
  \( \Fun_{\cat{V}}(\cat{C},\cat{M}) \).

  When \(F\) is already injectively cofibrant, we write
  \[
    F\otimes^{\mathbb{L}}_{\cat{C}}G
    \simeq
    F\otimes_{\cat{C}} Q_{\proj}G.
  \]
  In particular, in the cases \(\cat{V}=\sset\) and \(\cat{V}=\CsSet\)
  used below, injective cofibrations in presheaf categories are objectwise
  monomorphisms, so every weight \(F\in\psh(\cat{C},\cat{V})\) is
  injectively cofibrant.
\end{definition}

\begin{definition}\label{def:homotopy.limits.colimits}
  \textnormal{\cite[Definition~8.2]{Shu06}}.
  Let \(\cat{M}\) be a combinatorial model category and
  \(\cat{I}\) a small ordinary category. For diagrams
  \(X,Y:\cat{I}\to\cat{M}\), define
  \[
    \hocolim_{\cat{I}}X
    \coloneqq
    \colim_{\cat{I}}Q_{\proj}X,
    \qquad
    \holim_{\cat{I}}Y
    \coloneqq
    \lim_{\cat{I}}R_{\inj}Y,
  \]
  using the projective and injective diagram model structures,
  respectively. These constructions represent
  \(\mathbb{L}\colim_{\cat{I}}\) and \(\mathbb{R}\lim_{\cat{I}}\).
  If \(\cat{M}\) is a \(\cat{V}\)-model category, let
  \(\cat{I}_{\cat{V}}\) denote the free \(\cat{V}\)-enrichment of
  \(\cat{I}\), with
  \(\cat{I}_{\cat{V}}(i,j)=
  \coprod_{f\in\cat{I}(i,j)}\unit_{\cat{V}}\). Then the ordinary colimit is
  the weighted colimit over \(\cat{I}_{\cat{V}}\):
  \[
    \unit_{\cat{V}}\otimes_{\cat{I}_{\cat{V}}}X
    \cong
    \colim_{\cat{I}}X.
  \]
  In the cases used below every object of \(\cat{V}\) is cofibrant, and
  hence
  \[
    \unit_{\cat{V}}
    \otimes^{\mathbb{L}}_{\cat{I}_{\cat{V}}}X
    \simeq
    \hocolim_{\cat{I}}X.
  \]
  Thus ordinary homotopy colimits agree with the homotopy weighted
  colimits of \Cref{def:homotopy.weighted.colimit} in this case.
\end{definition}

\subsubsection{Grothendieck construction and Thomason's theorem}
The Grothendieck construction and Thomason's theorem allow us to replace certain
homotopy colimits by nerves of explicit ordinary categories.

\begin{definition}\label{def:grothendieck.construction}
  \textnormal{\cite[Definition~5.1.1]{Ric20}}.
  For a functor \(F:\cat{C}\to\cat{Cat}\), its \emph{Grothendieck
  construction} \(\int_{\cat{C}}F\) has objects \((c,x)\), where
  \(x\in F(c)\), and morphisms \((c_1,x_1)\to(c_2,x_2)\) given by pairs
  \((f,g)\), where \(f:c_1\to c_2\) and
  \(g:F(f)(x_1)\to x_2\). Composition is
  \[
    (f_2,g_2)\circ(f_1,g_1)
    =
    (f_2\circ f_1,g_2\circ F(f_2)(g_1)).
  \]
  If \(F\) is set-valued, its values are regarded as discrete categories,
  so a morphism is simply a map \(f:c_1\to c_2\) satisfying
  \(F(f)(x_1)=x_2\). We write \(\int F\) when \(\cat{C}\) is clear.
\end{definition}

\begin{theorem}\label{thm:thomason}
  \textnormal{\cite[Theorem~1.2]{Tho79}}.
  Let \(F:\cat{K}\to \cat{Cat}\) be a functor. Write
  \[
    \nerve(F)\coloneqq\nerve\circ F:\cat{K}\longrightarrow\sset.
  \]
  There is a natural weak
  equivalence
  \[
    \eta:\hocolim_{\cat{K}}\nerve(F)\to \nerve\left(\int_{\cat{K}}F\right)
  \]
  between the homotopy colimit of \(\nerve(F)\) and the nerve of the
  Grothendieck construction \(\int_{\cat{K}}F\).
\end{theorem}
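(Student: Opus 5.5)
The plan is to realize the homotopy colimit by an explicit projectively cofibrant diagram that maps objectwise to \(\nerve(F)\) by a weak equivalence and whose strict colimit is \(\nerve\bigl(\int_{\cat{K}}F\bigr)\). Write \(\pi\colon\int_{\cat{K}}F\to\cat{K}\) for the projection \((k,x)\mapsto k\). For \(k\in\cat{K}\), consider the comma category \(\pi/k\). Its objects are triples \((k',x,f)\) with \(x\in F(k')\) and \(f\colon k'\to k\). These categories assemble into a functor \(\pi/(-)\colon\cat{K}\to\cat{Cat}\) by postcomposition, giving a diagram \(\nerve(\pi/-)\colon\cat{K}\to\sset\).

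The first step is to construct a strictly natural comparison \(r\colon\pi/(-)\to F\). On objects it is \(r_k(k',x,f)=F(f)(x)\). A morphism \((g,h)\colon(k'_1,x_1,f_1)\to(k'_2,x_2,f_2)\) with \(f_2g=f_1\) is sent to \(F(f_2)(h)\colon F(f_1)(x_1)=F(f_2)F(g)(x_1)\to F(f_2)(x_2)\). Naturality in \(k\) holds on the nose because \(F(u)F(f)=F(uf)\).

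Each \(r_k\) is left adjoint to the fiber inclusion \(F(k)\to\pi/k\), \(x\mapsto(k,x,\id_k)\). This is the usual statement that \(\pi\) is an opfibration, and the unit is \((f,\id)\colon(k',x,f)\to(k,F(f)x,\id_k)\). An adjunction induces a homotopy equivalence on nerves. Hence \(\nerve(r)\colon\nerve(\pi/-)\to\nerve(F)\) is an objectwise weak equivalence of diagrams \(\cat{K}\to\sset\).

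The second step is to identify \(\nerve(\pi/-)\) as projectively cofibrant with colimit \(\nerve(\int_{\cat{K}}F)\).

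\begin{itemize}
  \item \emph{Degreewise description.} An \(n\)-simplex of \(\nerve(\pi/k)\) is an \(n\)-simplex \(\sigma\) of \(\nerve(\int_{\cat{K}}F)\) together with a map \(\pi(\sigma_n)\to k\), where \(\sigma_n\) is the last vertex. So in degree \(n\) the diagram is the coproduct \(\coprod_{\sigma}\cat{K}(\pi\sigma_n,-)\) of covariant representables.
  \item \emph{Colimit.} The colimit over \(\cat{K}\) of each representable \(\cat{K}(c,-)\) is a point. Therefore \(\colim_{\cat{K}}\nerve(\pi/-)\cong\nerve(\int_{\cat{K}}F)\).
  \item \emph{Cofibrancy.} The degeneracies act on the indexing simplices \(\sigma\) and preserve last vertices, so nondegenerate simplices are freely added. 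A skeletal induction then writes \(\nerve(\pi/-)\) as a cell complex built from \(\cat{K}(c,-)\times\partial\Delta^n\to\cat{K}(c,-)\times\Delta^n\), the generating projective cofibrations. This shows projective cofibrancy.
\end{itemize}

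Combining the two steps with \Cref{def:homotopy.limits.colimits} gives
\[
  \hocolim_{\cat{K}}\nerve(F)\simeq\hocolim_{\cat{K}}\nerve(\pi/-)\simeq\colim_{\cat{K}}\nerve(\pi/-)\cong\nerve\Bigl(\int_{\cat{K}}F\Bigr).
\]
The first equivalence holds because homotopy colimits preserve objectwise weak equivalences. The second holds because the diagram is projectively cofibrant. Naturality in \(F\) is clear, since every construction is functorial in natural transformations \(F\to F'\). To get an actual map \(\eta\) in the direction stated, rather than a zigzag, take \(Q_{\proj}\nerve(F)\) and lift its augmentation to \(\nerve(F)\) through the acyclic projective fibration obtained by factoring \(\nerve(r)\). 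Alternatively, use the Bousfield--Kan model, where \(\eta\) has the explicit formula \((k_0\to\cdots\to k_n;\,x_0\to\cdots\to x_n)\mapsto\bigl((k_0,x_0)\to\cdots\to(k_n,F(k_0\to k_n)x_n)\bigr)\).

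I expect the main obstacle to be the cofibrancy claim in the second step. It requires checking the cell decomposition carefully, and in particular that the latching maps are monomorphisms with free complements. The adjunction and colimit computations are routine.
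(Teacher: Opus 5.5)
The paper gives no argument for this statement; it only cites Thomason, where \(\hocolim\) is the Bousfield--Kan construction and \(\eta\) is the explicit map you quote at the end. Your proof is a correct, self-contained alternative that takes a different route.

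\begin{itemize}
  \item The strictly natural \(r\colon\pi/(-)\to F\) is objectwise left adjoint to the fiber inclusion. It therefore gives an objectwise weak equivalence \(\nerve(\pi/-)\to\nerve(F)\).
  \item The diagram \(\nerve(\pi/-)\) is free. Its nondegenerate \(n\)-simplices form the subfunctor \(\coprod_{\sigma\text{ nondeg}}\cat{K}(\pi\sigma_n,-)\). The transition maps preserve this subfunctor because they do not change the underlying chain in \(\int_{\cat{K}}F\).
  \item Hence the objectwise skeletal filtration is a projective cell decomposition.
  \item The colimit is \(\nerve(\int_{\cat{K}}F)\) because \(\colim\cat{K}(c,-)=*\).
\end{itemize}

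Your route works directly with the paper's own definition \(\hocolim=\colim Q_{\proj}\) (\Cref{def:homotopy.limits.colimits}). It therefore needs no comparison between the Bousfield--Kan model and projective cofibrant replacement, which the bare citation tacitly assumes. Thomason's route instead gives an explicit point-set map. For the paper's only use of the theorem, the set-valued \(\pi_0\)-statement in \Cref{prop:iota.left.quillen}, either suffices.

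One step needs correcting: your factorization trick does not produce a direct map \(\eta\). Factoring \(\nerve(r)\) as \(\nerve(\pi/-)\xrightarrow{\sim}P\twoheadrightarrow\nerve(F)\) and lifting \(Q_{\proj}\nerve(F)\to P\) only gives \(\colim Q_{\proj}\nerve(F)\to\colim P\xleftarrow{\sim}\nerve(\int_{\cat{K}}F)\), which is still a zigzag. To remove it you would need a retraction \(P\to\nerve(\pi/-)\) of the acyclic cofibration. The lifting axioms supply one only when \(\nerve(\pi/-)\) is objectwise Kan, and \(\pi/k\) is not a groupoid in general.

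There are two correct fixes.
\begin{itemize}
  \item Take \(\nerve(\pi/-)\to\nerve(F)\) itself as the projectively cofibrant replacement. It is functorial in \(F\), and \(\hocolim\) is only defined up to this choice. Then \(\eta\) is the isomorphism \(\colim\nerve(\pi/-)\cong\nerve(\int_{\cat{K}}F)\).
  \item Work in the Bousfield--Kan model. There, Thomason's \(\eta\) precomposed with \(\hocolim^{\mathrm{BK}}\nerve(r)\) is not equal to the canonical map \(\hocolim^{\mathrm{BK}}\nerve(\pi/-)\to\colim\nerve(\pi/-)\). It is, however, simplicially homotopic to it via the morphisms \(\bigl((k_0\to k_i)\circ f_i,\id\bigr)\colon(k'_i,x_i)\to\bigl(k_i,F(k_0\to k_i)F(f_i)x_i\bigr)\). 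These are natural in \(i\) and compatible with the simplicial operators, so \(2\)-out-of-\(3\) shows that \(\eta\) is a weak equivalence.
\end{itemize}
As written, your Bousfield--Kan remark states the formula for \(\eta\) but not this verification.
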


\subsection{Descent}\label{subsec:descent}
We recall the \v{C}ech-local model structures and hypercover constructions used
to formulate descent for ordinary and enriched presheaves. From this subsection
onward, the enriching model category \(\cat{V}\) is either \(\sset\) or
\(\CsSet\).

\subsubsection{Sites and \v{C}ech descent}
\v{C}ech descent expresses the homotopy-coherent gluing of local data. We impose
it by the left Bousfield localization.

\begin{definition}\label{def:coverage.sieve}\label{def:coverage.site}
  \textnormal{\cite[Definition~C2.1.1]{Joh02}}.
  Let \(\cat{C}\) be a category. A \emph{coverage} on \(\cat{C}\)
  assigns to every object \(U\in\cat{C}\) a collection
  \(\mathrm{Cov}_{\cat{C}}(U)\) of families of morphisms \(
  \mathcal{U}=\{f_i:U_i\to U\}_{i\in I}\), called \emph{covering families},
  satisfying the following stability
  condition. If
  \[
    \mathcal{U}
    =
    \{f_i:U_i\to U\}_{i\in I}
    \in
    \mathrm{Cov}_{\cat{C}}(U)
  \]
  is a covering family and \(g:V\to U\) is any morphism, then there exists
  a covering family
  \[
    \mathcal{V}
    =
    \{h_j:V_j\to V\}_{j\in J}
    \in
    \mathrm{Cov}_{\cat{C}}(V)
  \]
  such that, for every \(j\in J\), the composite
  \(g h_j:V_j\to U\) factors through one of the morphisms
  \(f_i:U_i\to U\). Equivalently, for every \(j\in J\), there exist
  \(i\in I\) and a morphism \(k:V_j\to U_i\) making the diagram
  \[
    \begin{tikzcd}
      {V_j} & {U_i} \\
      V & U
      \arrow["k", from=1-1, to=1-2]
      \arrow["{h_j}"', from=1-1, to=2-1]
      \arrow["{f_i}", from=1-2, to=2-2]
      \arrow["g"', from=2-1, to=2-2]
    \end{tikzcd}
  \]
  commute.

  A category equipped with a coverage is called a \emph{site}. An \emph{enriched
  site} is a \(\cat{V}\)-enriched category
  \(\cat{C}\) whose underlying ordinary category \(\cat{C}_{0}\) is
  equipped with a coverage. Covering families in an enriched site are
  therefore ordinary families of morphisms in \(\cat{C}_{0}\).
\end{definition}

\begin{definition}\label{def:covering.sieve.morphism}
  Let \(\cat{V}\) be either \(\sset\) or \(\CsSet\), and let
  \(\cat{C}\) be either an ordinary site or a small \(\cat{V}\)-enriched
  category whose underlying ordinary category \(\cat{C}_0\) is a site. In
  the ordinary case, put \(\cat{C}_0=\cat{C}\). Let
  \[
    \mathcal{U}=\{f_i:U_i\to U\}_{i\in I}
  \]
  be a covering family in \(\cat{C}_0\). Write
  \[
    \Yo^0:\cat{C}_0\longrightarrow\psh(\cat{C}_0),
    \qquad
    c\longmapsto
    \Yo_c^0\coloneqq\cat{C}_0(-,c),
  \]
  for the ordinary Yoneda embedding.

  We write
  \[
    c_{\mathcal{U}}^{0}\subseteq\Yo_U^{0}
  \]
  for the ordinary sieve generated by \(\mathcal{U}\). Thus
  \(c_{\mathcal{U}}^{0}(V)\) consists of
  those morphisms \(V\to U\) that factor through some \(f_i\).

  In the ordinary case, let \(\Yo_c\) denote the \(\cat{V}\)-valued
  representable obtained from \(\Yo_c^0\) by copowering with the unit of
  \(\cat{V}\); in the enriched case, let \(\Yo_c\) denote the enriched
  representable. Define
  \[
    c_{\mathcal{U}}
    \coloneqq
    \int^{c\in\cat{C}_0}
    c_{\mathcal{U}}^{0}(c)\cdot\Yo_c,
    \qquad
    S\cdot\Yo_c\coloneqq\coprod_{s\in S}\Yo_c.
  \]
  The ordinary covering-sieve inclusion induces a morphism
  \[
    s_{\mathcal{U}}:
    c_{\mathcal{U}}\longrightarrow\Yo_U
  \]
  in \(\psh(\cat{C},\cat{V})\). We call it the \emph{covering-sieve
  morphism} associated to \(\mathcal{U}\), and put
  \[
    S_{\cat{C}}^{\check{C}}
    \coloneqq
    \{s_{\mathcal{U}}\mid
    U\in\cat{C}_0,\ \mathcal{U}\in\mathrm{Cov}_{\cat{C}_0}(U)\}.
  \]
\end{definition}

\begin{definition}\label{def:cech.local.injective.model.structure}
  Let \(\cat{C}\) and \(\cat{V}\) be as in
  \Cref{def:covering.sieve.morphism}. Suppose that the injective model
  structure (\Cref{def:injective.model.structure}) on \(\psh(\cat{C},\cat{V})\)
  and the enriched left Bousfield
  localization (\Cref{def:enriched.left.bousfield.localization})
  below exist. The \emph{\v{C}ech-local injective model
  structure} on \(\psh(\cat{C},\cat{V})\) is the \(\cat{V}\)-enriched left
  Bousfield localization
  \[
    L_{S_{\cat{C}}^{\check{C}}}
    \psh(\cat{C},\cat{V})_{\inj}.
  \]
\end{definition}

\begin{remark}
  The cofibrations in the \v{C}ech-local injective model structure
  are the injective cofibrations. Its weak equivalences are the enriched
  \(S_{\cat{C}}^{\check{C}}\)-local equivalences
  (\Cref{def:enriched.s.local.objects.equivalences}), which we call
  \emph{\v{C}ech-local weak equivalences}, and its fibrant objects are
  precisely the injectively fibrant presheaves that are enriched
  \(S_{\cat{C}}^{\check{C}}\)-local.
\end{remark}

\begin{definition}\label{def:cech.descent.condition}
  \textnormal{\cite[Definition~4.54]{Bar10}}.
  Let \(\cat{C}\) and \(\cat{V}\) be in either of the settings of
  \Cref{def:cech.local.injective.model.structure}, and let
  \(X\in\psh(\cat{C},\cat{V})\).

  We say that \(X\) satisfies \emph{\v{C}ech descent} if, for every covering
  family
  \[
    \mathcal{U}=\{U_i\to U\}_{i\in I}
  \]
  in the relevant ordinary site, the canonical map
  \[
    \mathbb{R}\Map_{\psh(\cat{C},\cat{V})}^{\cat{V}}(\Yo_{U},X)
    \longrightarrow
    \mathbb{R}\Map_{\psh(\cat{C},\cat{V})}^{\cat{V}}(c_{\mathcal{U}},X)
  \]
  is a weak equivalence in \(\cat{V}\), where the derived mapping objects
  (\Cref{def:derived.mapping.space}) are
  computed in \(\psh(\cat{C},\cat{V})\).

  When \(\cat{C}\) is ordinary, this is equivalently the requirement
  that the canonical map
  \[
    X(U)
    \longrightarrow
    \holim_{(V\to U)\in(\cat{C}/c_{\mathcal{U}}^{0})^{\op}} X(V)
  \]
  be a weak equivalence in \(\cat{V}\), where
  \(\cat{C}/c_{\mathcal{U}}^{0}\) is the full subcategory of
  \(\cat{C}/U\) spanned by the morphisms that factor through a member of
  \(\mathcal{U}\).

  Thus the fibrant objects in the \v{C}ech-local injective model structure
  are precisely the injectively fibrant \(\cat{V}\)-valued presheaves
  satisfying \v{C}ech descent.
\end{definition}

\begin{remark}\label{rem:local.equivalences.between.local.objects}
  \textnormal{\cite[Proposition~3.2.13]{Hir03}}.
  A \v{C}ech-local weak equivalence between \v{C}ech-local objects is
  detected in the original injective model structure. Thus, if \(X\to Y\)
  is a morphism between fibrant objects in the \v{C}ech-local injective
  model structure on \(\psh(\cat{C},\cat{V})\), then \(X\to Y\) is a
  \v{C}ech-local weak equivalence if and only if
  \[
    X(c)\to Y(c)
  \]
  is a weak equivalence in \(\cat{V}\) for every \(c\in\cat{C}\).
\end{remark}

\subsubsection{Hyperdescent}
Hyperdescent extends the gluing condition introduced above from ordinary covers
to
hypercovers and provides the local weak equivalences used in our stalkwise
arguments.

\begin{definition}\label{def:matching.relative.matching}\label{def:latching.object}
  \textnormal{\cite[Definition~15.2.5 and Corollary~15.2.9]{Hir03}}.
  Let \(\cat{C}\) be a site, and let
  \[
    U_{\bullet}\longrightarrow X
  \]
  be an augmented simplicial presheaf on \(\cat{C}\), where \(X\) is a constant
  simplicial presheaf.
  Equivalently, we regard
  \(U_{\bullet}\) as a simplicial object of the slice category
  \(\psh(\cat{C})_{/X}\).

  For \(n\geq 0\), the \emph{\(n\)-th relative latching presheaf} of
  \(U_{\bullet}\to X\) is the latching object of this simplicial object in
  the slice category:
  \[
    L_n(U/X)\coloneqq
    \colim_{\sigma:[n]\twoheadrightarrow[m], m<n} U_m
    \quad\in\psh(\cat{C})_{/X},
  \]
  where the colimit is taken over the category of proper surjective maps
  out of \([n]\). The degeneracy maps of \(U_{\bullet}\) determine the relative
  latching
  morphism
  \[
    L_n(U/X)\longrightarrow U_n
  \]
  in \(\psh(\cat{C})_{/X}\).

  Dually, the \emph{\(n\)-th relative matching presheaf} of
  \(U_{\bullet}\to X\) is the matching object of \(U_{\bullet}\) in the
  slice category:
  \[
    M_n(U/X)\coloneqq
    \lim_{\delta:[m]\hookrightarrow[n], m<n} U_m
    \quad\in\psh(\cat{C})_{/X},
  \]
  where the limit is taken over the category of proper injective maps into
  \([n]\). Equivalently, on underlying presheaves,
  \[
    M_n(U/X)
    \cong
    X\times_{M_nX}M_nU,
  \]
  where
  \[
    M_nU\coloneqq
    \lim_{\delta:[m]\hookrightarrow[n], m<n} U_m
  \]
  is the ordinary matching presheaf of \(U_{\bullet}\). The augmentation and
  the face maps of \(U_{\bullet}\) determine the relative matching morphism
  \[
    U_n\longrightarrow M_n(U/X)
  \]
  in \(\psh(\cat{C})_{/X}\).
\end{definition}

Recall that a morphism \(E\to B\) of presheaves on \(\cat{C}\) is \emph{locally
surjective}, also
called a \emph{local epimorphism} or \emph{generalized cover}, if for
every \(c\in\cat{C}\) and every section \(b\in B(c)\), there is a covering
family \(\{c_i\to c\}\) such that each restriction
\(b|_{c_i}\in B(c_i)\) lifts to a section of \(E(c_i)\).

\begin{definition}\label{def:hypercover}
  \textnormal{\cite[Definition~4.2]{DHI04}}.
  Let \(\cat{C}\) be a site. A \emph{hypercover} of a presheaf \(X\) on
  \(\cat{C}\) is an augmented simplicial presheaf
  \[
    U_{\bullet}\longrightarrow X
  \]
  such that each \(U_n\) is a coproduct of representable presheaves and, for
  every \(n\geq 0\), the relative matching morphism
  (\Cref{def:matching.relative.matching})
  \[
    U_n\longrightarrow M_n(U/X)
  \]
  is locally surjective. If \(p\in\cat{C}\), a hypercover of \(p\) is a
  hypercover
  \(U_{\bullet}\to\Yo_p\).
\end{definition}

\begin{remark}\label{rem:reedy.factorization.for.hypercovers}
  The preceding latching and matching objects are the usual Reedy latching
  and matching objects for the Reedy category \(\Delta^{\op}\), computed in
  the slice category \(\psh(\cat{C})_{/X}\).

  We will use the following standard Reedy extension principle. Suppose that
  an augmented simplicial presheaf \(U_{\bullet}\to X\) has been constructed
  in degrees \(<n\). Then there is a canonical morphism
  \[
    L_n(U/X)\longrightarrow M_n(U/X)
  \]
  in \(\psh(\cat{C})_{/X}\). To extend \(U_{\bullet}\to X\) to degree \(n\)
  is equivalently to choose an object \(U_n\in\psh(\cat{C})_{/X}\) and a
  factorization
  \[
    L_n(U/X)\longrightarrow U_n\longrightarrow M_n(U/X)
  \]
  of this canonical morphism. This is a special case of
  \cite[Remark~15.2.10]{Hir03} for simplicial objects.
\end{remark}

\begin{definition}\label{def:split.hypercover}
  \textnormal{\cite[Definition~4.8]{DHI04}}.
  Let \(\cat{C}\) be a site, and let
  \[
    U_{\bullet}\longrightarrow X
  \]
  be a hypercover (\Cref{def:hypercover}). We say that \(U_{\bullet}\to X\) is
  \emph{split} if, for every
  \(n\geq0\), there is a subpresheaf \(N_n\hookrightarrow U_n\), called the
  presheaf of nondegenerate \(n\)-simplices, such that
  \[
    U_n\cong
    \coprod_{\sigma:[n]\twoheadrightarrow[r]}N_r,
  \]
  where the coproduct ranges over all surjections in \(\Delta\), and the
  degeneracy maps of~$U$ are induced by composition of surjections.

  Equivalently, the relative latching morphism
  (\Cref{def:latching.object})
  \[
    L_n(U/X)\longrightarrow U_n
  \]
  identifies \(L_n(U/X)\) with the coproduct of the summands indexed by the
  nonidentity surjections \(\sigma:[n]\twoheadrightarrow[r]\), \(r<n\), so
  that, on underlying presheaves,
  \[
    U_n\cong L_nU\amalg N_n.
  \]
  Thus every simplex of \(U_{\bullet}\) is uniquely a degeneration of a
  nondegenerate simplex, and the nondegenerate simplices in each degree form
  a coproduct of representables.

\end{definition}

\begin{lemma}\label{lem:split.hypercovers.from.basis}
  Let \(\cat{C}\) be a small site, and let
  \(\cat{B}\subset\cat{C}\) be a full subcategory such that every object of
  \(\cat{C}\) admits a covering family by objects of \(\cat{B}\). Then, for
  every \(p\in\cat{C}\), the representable presheaf \(\Yo_p\) admits a split
  hypercover
  \[
    U_{\bullet}\longrightarrow \Yo_p
  \]
  such that, for every \(n\geq 0\), the presheaf \(N_n\) of nondegenerate
  \(n\)-simplices is a coproduct of representables \(\Yo_b\), with
  \(b\in\cat{B}\). Consequently each level \(U_n\) is also a coproduct of
  representables associated to objects of \(\cat{B}\).
\end{lemma}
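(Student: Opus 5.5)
We build the split hypercover degree by degree, using the Reedy extension principle of \Cref{rem:reedy.factorization.for.hypercovers} in the slice category \(\psh(\cat{C})_{/\Yo_p}\). We keep the inductive invariant that in each degree \(m<n\) already constructed, \(U_m\cong\coprod_{\sigma:[m]\twoheadrightarrow[r]}N_r\), where each \(N_r\) is a coproduct of representables \(\Yo_b\) with \(b\in\cat{B}\). At each stage the construction has two parts: we produce an \(N_n\) with a locally surjective map \(N_n\to M_n(U/\Yo_p)\), and we then set
\[
  U_n\coloneqq L_n(U/\Yo_p)\amalg N_n .
\]

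\emph{Base case.} In degree \(0\), the relative matching object is \(\Yo_p\) itself. Choose a covering family \(\{b_i\to p\}_{i\in I}\) with \(b_i\in\cat{B}\), as provided by the hypothesis, and put
\[
  N_0=U_0\coloneqq\coprod_{i}\Yo_{b_i}\longrightarrow\Yo_p .
\]
This map is locally surjective essentially by definition. Indeed, a section \(g\colon c\to p\) can be pulled back, using the stability axiom of \Cref{def:coverage.site}, to a covering family of \(c\) whose members factor through some \(b_i\).

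\emph{Inductive step.} Let \(M\coloneqq M_n(U/\Yo_p)\), which is an arbitrary presheaf. Define
\[
  N_n\coloneqq\coprod_{(b,x)}\Yo_b ,
\]
where the coproduct runs over all pairs with \(b\in\cat{B}\) and \(x\in M(b)\), mapped to \(M\) by Yoneda. Since \(\cat{C}\) is small, this is a set-indexed coproduct.

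We check that \(N_n\to M\) is locally surjective. Given \(x\in M(c)\), choose a covering \(\{b_j\to c\}\) by objects of \(\cat{B}\). Each restriction \(x|_{b_j}\) is then hit by the summand indexed by \((b_j,x|_{b_j})\).

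The latching map \(L_n(U/\Yo_p)\to M\) is the canonical one. So \(U_n\to M\) is the induced map out of the coproduct, and it is locally surjective because it contains the summand \(N_n\).

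\emph{Degeneracies.} Define the degeneracy maps on the new summand \(N_n\) by inclusion along the summands indexed by surjections out of \([n+1]\) at the next stage. The face maps on \(N_n\) come from its map to \(M_n\).

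\emph{Main obstacle.} The step needing real care is checking that \(L_n(U/\Yo_p)\) computed from the split structure in lower degrees is \(\coprod_{\sigma:[n]\twoheadrightarrow[r],\,r<n}N_r\). Equivalently, one must show the simplicial identities make \(U_\bullet\) split in the sense of \Cref{def:split.hypercover}. The intended route is that of \cite[Section~4]{DHI04}: define \(U_n\) directly by the formula \(\coprod_{[n]\twoheadrightarrow[r]}N_r\). Then:
\begin{itemize}
  \item the degeneracies are given by precomposition of surjections;
  \item the faces \(d_i\) on a summand indexed by \(\sigma\) are given by factoring \(\sigma\circ\delta_i\) as a surjection followed by an injection, using the epi–mono factorization in \(\Delta\) together with the face data of the \(N_r\) determined by their maps to the matching objects.
\end{itemize}
With this definition, the latching object is evidently the coproduct over the nonidentity surjections. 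This argument is purely combinatorial and independent of the site, so we will cite it rather than redo it.

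Finally, since coproducts of coproducts of \(\cat{B}\)-representables are again such, each \(U_n\) is a coproduct of representables \(\Yo_b\) with \(b\in\cat{B}\). This gives the consequence stated in the lemma.
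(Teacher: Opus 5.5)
Your proposal is correct and follows the paper's proof essentially verbatim. Both arguments induct via the Reedy extension principle, choose a locally surjective map onto \(M_n(U/\Yo_p)\) from a coproduct of \(\cat{B}\)-representables, and set \(U_n\coloneqq L_n(U/\Yo_p)\amalg N_n\), with local surjectivity inherited from the summand \(N_n\).

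The differences are cosmetic. You index \(N_n\) by all pairs \((b,x)\) with \(x\in M(b)\) and use a single covering family of \(p\) in degree~\(0\), whereas the paper chooses a \(\cat{B}\)-cover for every section. The identification \(L_nU\cong\coprod_{\sigma:[n]\twoheadrightarrow[r],\,r<n}N_r\) that you flag as the main obstacle is likewise simply asserted in the paper from the splitness of the lower degrees.
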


\begin{proof}
  We first record a simple consequence of the assumption on \(\cat{B}\).
  Every presheaf \(F\) on \(\cat{C}\) admits a locally surjective morphism
  \[
    \coprod_{\alpha\in A}\Yo_{b_\alpha}\longrightarrow F,
    \qquad b_\alpha\in\cat{B}.
  \]
  Indeed, for every object \(c\in\cat{C}\) and every section
  \(x\in F(c)\), choose a covering family
  \[
    \{b_{(c,x),i}\to c\}_{i\in I(c,x)},
    \qquad b_{(c,x),i}\in\cat{B}.
  \]
  The restriction of \(x\) to \(b_{(c,x),i}\) determines, by Yoneda, a map
  \[
    \Yo_{b_{(c,x),i}}\longrightarrow F.
  \]
  Taking the coproduct over all triples \((c,x,i)\) gives the desired
  locally surjective morphism.

  Put \(X\coloneqq\Yo_p\). We construct a split hypercover
  \(U_{\bullet}\to X\) by induction on the simplicial degree.

  In degree \(0\), the relative matching presheaf is
  \[
    M_0(U/X)\cong X.
  \]
  Apply the preceding observation to \(X\) and choose a locally surjective
  morphism
  \[
    N_0=U_0\longrightarrow X,
  \]
  with \(N_0\) a coproduct of representables \(\Yo_b\), \(b\in\cat{B}\).

  Suppose that \(U_{\bullet}\to X\) has been constructed as a split
  augmented simplicial presheaf in degrees \(<n\), and that for every
  \(m<n\) the nondegenerate presheaf \(N_m\) is a coproduct of
  representables associated to objects of \(\cat{B}\). By
  \Cref{rem:reedy.factorization.for.hypercovers}, there is a canonical map
  in the slice category \(\psh(\cat{C})_{/X}\)
  \[
    L_n(U/X)\longrightarrow M_n(U/X).
  \]
  Since the partial simplicial presheaf is split, the underlying presheaf of
  \(L_n(U/X)\) is
  \[
    L_nU\cong
    \coprod_{\sigma:[n]\twoheadrightarrow[r], r<n}N_r.
  \]

  Apply the construction from the first paragraph of the proof to the presheaf
  \(M_n(U/X)\). Choose a
  locally surjective morphism
  \[
    N_n\longrightarrow M_n(U/X)
  \]
  with \(N_n\) a coproduct of representables \(\Yo_b\), \(b\in\cat{B}\).
  Define \(U_n\) in the slice category by
  \[
    U_n\coloneqq L_n(U/X)\sqcup N_n,
  \]
  and define
  \[
    U_n\longrightarrow M_n(U/X)
  \]
  to be the canonical map on the summand \(L_n(U/X)\) and the chosen
  locally surjective map on the summand \(N_n\). Thus we have a
  factorization
  \[
    L_n(U/X)\longrightarrow U_n\longrightarrow M_n(U/X).
  \]
  By the Reedy extension principle of
  \Cref{rem:reedy.factorization.for.hypercovers}, this factorization extends
  the augmented simplicial presheaf to degree \(n\). The construction makes
  the new degree split, with \(N_n\) as the presheaf of nondegenerate
  \(n\)-simplices.

  The map \(U_n\to M_n(U/X)\) is locally surjective because its restriction
  to the summand \(N_n\) is locally surjective. Hence the hypercover
  condition holds in degree \(n\).

  Induction gives a split hypercover
  \[
    U_{\bullet}\longrightarrow X=\Yo_p.
  \]
  By construction, each \(N_n\), and therefore each \(U_n\), is a coproduct
  of representables associated to objects of \(\cat{B}\).
\end{proof}

\begin{definition}\label{def:hyperdescent}
  \textnormal{\cite[Definition~4.3 and Theorem~1.1]{DHI04}}.
  Let \(\cat{C}\) and \(\cat{V}\) be as in
  \Cref{def:covering.sieve.morphism}, suppose that
  \(\psh(\cat{C},\cat{V})\) is equipped with the injective model structure,
  and let \(X\in\psh(\cat{C},\cat{V})\). We say that \(X\) satisfies
  \emph{hyperdescent} if, for every hypercover
  \[
    U_{\bullet}\longrightarrow\Yo_U
  \]
  in the relevant underlying ordinary site, interpreted in
  \(\psh(\cat{C},\cat{V})\) as in \Cref{def:hypercover}, the canonical map
  \[
    \mathbb{R}\Map_{\psh(\cat{C},\cat{V})}^{\cat{V}}(\Yo_{U},X)
    \longrightarrow
    \holim_{[n]\in\Delta}
    \mathbb{R}\Map_{\psh(\cat{C},\cat{V})}^{\cat{V}}(U_n,X)
  \]
  is a weak equivalence in \(\cat{V}\).
\end{definition}

\begin{remark}\label{rem:hyperdescent.coproduct.representables}
  Suppose that \(X\) is injectively fibrant and write
  \[
    U_n\cong
    \coprod_{\alpha\in I_n}\Yo_{u_{n,\alpha}}
  \]
  in \(\psh(\cat{C},\cat{V})\). Since all objects in the injective model
  structures used here are cofibrant
  (\Cref{rem:injective.presheaves.cofibrant}), there are natural weak
  equivalences
  \[
    \mathbb{R}\Map(\Yo_U,X)\cong X(U),
    \qquad
    \mathbb{R}\Map(U_n,X)
    \cong
    \prod_{\alpha\in I_n}\nolimits^h X(u_{n,\alpha}).
  \]
  Hence hyperdescent is equivalently the requirement that
  \[
    X(U)
    \longrightarrow
    \holim_{[n]\in\Delta}
    \prod_{\alpha\in I_n}\nolimits^h X(u_{n,\alpha})
  \]
  be a weak equivalence in \(\cat{V}\). This formulation only uses that
  the hypercover is semi-representable, namely that every \(U_n\) is a
  coproduct of representables. In \Cref{lem:split.hypercovers.from.basis} we
  construct hypercovers that are \emph{split}, which is a
  stronger condition, but splitness is not needed for the definition of
  hyperdescent.
  More general notions of hypercover need not have semi-representable
  levels; the mapping-object formulation from \Cref{def:hyperdescent} continues
  to make sense in that
  setting.
\end{remark}

\begin{remark}\label{rem:jardine.is.hyper}
  \textnormal{\cite[Corollary~2.7]{Jar87}},
  \textnormal{\cite[Theorems 1.1 and 1.2]{DHI04}}.
  For \(\cat{V}=\sset\), the hyperlocal injective model structure on
  \(\spsh(\cat{C})\) is the Jardine local model structure. Equivalently, it
  is the left Bousfield localization of the injective model structure at
  hypercovers. Its fibrant objects are precisely the injectively fibrant
  simplicial presheaves satisfying hyperdescent. Its weak equivalences are the
  local weak equivalences: the morphisms that
  induce isomorphisms on the associated sheaf of connected components and
  on all associated sheaves of homotopy groups. In particular, a local
  isomorphism between simplicially constant
  presheaves is a hyperlocal weak equivalence.

  For \(\cat{C}=\Cart\), these weak equivalences can be detected on
  ordinary stalks. More precisely, for \(X\in\spsh(\Cart)\),
  \(V\in\Cart\), and \(x\in V\), put
  \[
    X_x\coloneqq
    \colim_{\substack{x\in W\subseteq V\\ W\in\Cart}}X(W),
  \]
  where \(W\) ranges over the Cartesian open neighborhoods of \(x\).
  Then a morphism \(f:X\to Y\) is a local weak equivalence if and only
  if
  \[
    f_x:X_x\longrightarrow Y_x
  \]
  is a weak equivalence of simplicial sets for every \(V\in\Cart\) and
  every \(x\in V\); see \cite[Last paragraph of p.~48]{Jar87}.
\end{remark}

\begin{remark}\label{rem:hypercomplete.sites}
  Hyperdescent is stronger than \v{C}ech descent, i.e.,
  localization at covering sieves and localization at hypercovers need not give
  the same model structure. In the model-categorical language used here, we say
  that the
  \v{C}ech-local model structure is \emph{hypercomplete} if every \v{C}ech-local
  object satisfies hyperdescent. Equivalently, the
  \v{C}ech-local and hyperlocal model structures coincide.
\end{remark}

\section{Geometric structures}\label{sec:geometric.structures}

This section shows some of the properties of geometric structures used in
geometric functorial field theories.
We first define the sites of fiberwise embeddings and the corresponding
categories of field stacks, both without and with isotopies. We then construct
the \emph{isotopification functor}, and prove that
it is left Quillen for the \v{C}ech-local injective model structures. Finally,
we pass
to the Cartesian subsites and show that restriction induces a Quillen
equivalence. Thus, the \v{C}ech-local homotopy theory of geometric structures
may be
computed on Cartesian families.

\subsection{Field stacks}

A geometric structure on \(d\)-manifolds should vary smoothly in families,
restrict along fiberwise open embeddings, and satisfy descent. We encode these
requirements using presheaves on sites whose objects are smooth families of
\(d\)-dimensional manifolds parametrized by \(U\in\Cart\).

\begin{definition}\label{def:Man.Cart.sites}
  \textnormal{\cite[Definition~2.1.8]{GP26}}.
  We define two small sites. The objects of \(\Man\) are smooth
  manifolds, its morphisms are smooth maps, and its covering families
  are open covers. The full subcategory
  \(\Cart\subset\Man\) consists of those manifolds which are
  diffeomorphic to \(\RR^n\) for some \(n\geq 0\) and whose underlying
  set is contained in \(\RR^m\) for some \(m\geq 0\). Its morphisms are
  again smooth maps. A covering family in \(\Cart\) is an open cover
  \[
    \{U_i\longrightarrow U\}_{i\in I}
  \]
  for which every nonempty finite intersection
  \(U_{i_0}\cap\cdots\cap U_{i_k}\) is diffeomorphic to some
  \(\RR^r\).
\end{definition}

\begin{remark}\label{rem:Cart.size.convention}
  The condition on the underlying set is a size convention ensuring
  that \(\Cart\) is small. Although an object of \(\Cart\) is required
  to be set-theoretically contained in some \(\RR^m\), the resulting
  inclusion into \(\RR^m\) is not required to be smooth and is not
  included among the data of the object.
\end{remark}

For any category \( \cat{D} \), we can think of objects in the
functor category \(
\Fun(\Cart^{\op}, \cat{D}) \) as smoothly parametrized objects of \(
\cat{D} \). In particular, this approach encodes a smooth manifold~\(M\) as a
functor \( \Cart^{\op} \ra \Set \) given by the assignment \(
U \mapsto \textnormal{C}^{\infty}(U, M) \).

\begin{definition}\label{def:split.grothendieck.fibration}
  Let \( \pi:\cat{E}\to \cat{B} \) be a functor. A morphism
  \( \widetilde{f}:e'\to e \) in \( \cat{E} \) is called
  \( \pi \)-Cartesian if, for every object \( z\in\cat{E} \), every
  morphism \( h:z\to e \), and every factorization
  \[
    \pi(h)=\pi(\widetilde{f})\circ u
  \]
  in \( \cat{B} \), there exists a unique morphism
  \( \widetilde{u}:z\to e' \) such that
  \[
    \widetilde{f}\circ \widetilde{u}=h,
    \qquad
    \pi(\widetilde{u})=u.
  \]

  The functor \( \pi \) is a \emph{Grothendieck fibration} if for every
  object \( e\in\cat{E} \) and every morphism
  \( f:b'\to \pi(e) \) in \( \cat{B} \), there exists a
  \( \pi \)-Cartesian morphism
  \[
    f^{*}e\longrightarrow e
  \]
  lying over \( f \).  A \emph{cleavage} is a choice of such a Cartesian
  lift for every pair \( (f,e) \).  A cleavage is \emph{splitting} if the
  chosen lifts are compatible with identities and composition, i.e.
  \[
    \id_{\pi(e)}^{*}e=e,
    \qquad
    (f\circ g)^{*}e=g^{*}(f^{*}e)
  \]
  on the nose, with the evident identities for the chosen Cartesian
  arrows.  A Grothendieck fibration equipped with a splitting cleavage is
  called a \emph{split Grothendieck fibration}.

  If \( \cat{E} \) and \( \cat{B} \) are \( \mathrm{C}^{\infty}\Set \)-enriched
  categories, then a \( \mathrm{C}^{\infty}\Set \)-enriched Grothendieck
  fibration is a \( \mathrm{C}^{\infty}\Set \)-enriched functor
  \[
    \pi:\cat{E}\to\cat{B}
  \]
  such that, for every \( L\in\Cart \), the evaluated functor
  \[
    \pi_L:\cat{E}_L\to\cat{B}_L
  \]
  is a Grothendieck fibration, and for every morphism
  \( a:L'\to L \) in \( \Cart \), the induced structure functor
  \[
    \cat{E}_L\longrightarrow \cat{E}_{L'}
  \]
  sends \( \pi_L \)-Cartesian morphisms to \( \pi_{L'} \)-Cartesian
  morphisms.

  A \( \mathrm{C}^{\infty}\Set \)-enriched cleavage of \( \pi \) is a choice
  of cleavage for each \( \pi_L \), natural in \( L\in\Cart \).  Such a
  cleavage is \emph{splitting} if, for every \( L\in\Cart \), the cleavage of
  \( \pi_L \) is splitting.  A \( \mathrm{C}^{\infty}\Set \)-enriched
  Grothendieck fibration equipped with a splitting
  \( \mathrm{C}^{\infty}\Set \)-enriched cleavage is called a
  \emph{\( \mathrm{C}^{\infty}\Set \)-enriched split Grothendieck fibration}.
\end{definition}

\begin{definition}\label{def:fembnoniso}
  \textnormal{\cite[Definition~4.2.1]{GP26}}.
  The site of \emph{fiberwise embeddings} \(
  \fembnoniso{d} \) has as objects submersions
  \( p: T \ra U \) in \( \Man \) with \( d \)-dimensional fibers
  and \( U\in \Cart \), where we require that the underlying map of
  sets of \( p
  \) is the restriction of a projection \( \RR^{m}\times U \ra U \)
  to a subset
  \( T\subset \RR^{m}\times U \), for some \( m \in \NN\).
  Morphisms \( (p: T \ra U) \ra (q: T' \ra V) \) are pairs of morphisms
  \( ( f: T \ra T', g: U \ra V ) \) in \( \Man \) such that the diagram
  \[
    \begin{tikzcd}
      T & T' \\
      U & V
      \arrow["f", from=1-1, to=1-2]
      \arrow["p"', from=1-1, to=2-1]
      \arrow["q", from=1-2, to=2-2]
      \arrow["g"', from=2-1, to=2-2]
  \end{tikzcd}\]
  commutes and \( f \) is a fiberwise open embedding over \( g \),
  i.e.,  the map \( T \ra g^{*}T' \) is an open embedding, where
  the pullback \(
  g^{*}T' \) exists because \( q \) is a submersion. Covering
  families (\Cref{def:coverage.site}) are given
  by collections of morphisms
  \[
    \begin{tikzcd}
      {T_{\alpha}} & T \\
      {U_{\alpha}} & U
      \arrow["{i_{\alpha}}", from=1-1, to=1-2]
      \arrow[from=1-1, to=2-1]
      \arrow[from=1-2, to=2-2]
      \arrow["{j_{\alpha}}"', from=2-1, to=2-2]
  \end{tikzcd}\]
  such that both horizontal maps \( i_{\alpha} \) and \( j_{\alpha}
  \) are open
  embeddings and the collection \( \{i_{\alpha}\} \) is a covering
  family of \( T \) in the site \( \Man \). We do not require \(
  \{j_{\alpha}\} \) to be a covering family of \( U \).

  We also define the \emph{base space functor}
  \begin{align*}
    \flat: \fembnoniso{d} &\ra \Cart \\
    (p: T\ra U) & \xmapsto{\flat} U.
  \end{align*}
\end{definition}

\begin{definition}\label{def:femb}
  \textnormal{\cite[Definition~4.2.13]{GP26}}.
  The \emph{\(\CSet\)-enriched site of fiberwise
  embeddings} \(\femb{d}\) has the same
  objects as \(\fembnoniso{d}\) and given two objects \(p:T\ra U\)
  and \(q:T'\ra
  V\), the corresponding enriched mapping object \(\femb{d}(p, q)\)
  is a smooth
  set whose \(L\)-points (\(L\in \Cart\)) are given by morphisms
  \((f,g\times\id_{L})\in \fembnoniso{d}\) of
  the form
  \[
    \begin{tikzcd}
      T\times L \arrow[r,"f"] \arrow[d,"p\times
      \id_{L}"'] & T'\times
      L \arrow[d,"q\times \id_{L}"] \\
      U\times L \arrow[r,"g\times \id_{L}"'] & V\times L
    \end{tikzcd}
  \]
  for a map \(g: U \ra V\). This assignment is functorial in \(L\):
  Let \(a:L'\to L\) be a
  morphism in \(\Cart\). Define
  \[
    a^\ast:\femb{d}(p,q)_L\longrightarrow \femb{d}(p,q)_{L'}
  \]
  by sending \((f,g\times \id_L)\) to \((a^\ast f, g\times \id_{L'})\), where
  \(a^\ast f:T\times L'\to T'\times L'\) is a map making the following diagram
  commute:
  \[
    \begin{tikzcd}
      T\times L' \arrow[r,"a^\ast f"] \arrow[d,"\id_T\times a"'] &
      T'\times L' \arrow[d,"\id_{T'}\times a"] \\
      T\times L \arrow[r,"f"'] &
      T'\times L .
    \end{tikzcd}
  \]
  The identity and composition are defined in an evident way.

  The base space functor \(\flat\) is promoted to a split \(\CsSet\)-enriched
  Grothendieck fibration
  (\cite[Definition~4.2.4]{GP26})
  \[
    \flat: \femb{d}\ra \Cart,
  \]
  which maps \(p:T\ra U\) to \(U\) and an \(L\)-family \((f,
  g\times\id_{L})\) of
  morphisms to the map \(g: U \ra V\). The category \(\femb{d}\) is
  equipped with
  the same coverage as \(\fembnoniso{d}\).
\end{definition}

\begin{remark}\label{rem:base.space.functor}
  The projection condition in
  \Cref{def:fembnoniso,def:femb} is the family-level analogue of the
  size convention in \Cref{rem:Cart.size.convention}. It keeps the
  categories of families small and makes base change along morphisms
  in \(\Cart\) strictly functorial. More precisely, it supplies chosen
  cartesian lifts for the base space functor
  \[
    \flat:\femb{d}\longrightarrow\Cart,
  \]
  so that \(\flat\) is a split Grothendieck fibration. This property is used in
  the construction of the geometric bordism
  category of Grady--Pavlov~\cite{GP26}.
\end{remark}

\begin{lemma}\label{lem:femb.enriched.covering.sieve}
  Let \(d\geq 0\), let \(\mathcal{U}=\{\alpha_i:q_i\to q\}_{i\in I}\) be a
  covering family in \(\femb{d}\), and let
  \(s_{\mathcal{U}}:c_{\mathcal{U}}\to\Yo_q\) be its enriched
  covering-sieve morphism (\Cref{def:covering.sieve.morphism}). For every
  \(r\in\femb{d}\) and \(L\in\Cart\),
  the map
  \[
    c_{\mathcal{U}}(r)(L)\longrightarrow\femb{d}(r,q)(L)
  \]
  is injective. Its image consists precisely of the \(L\)-families
  \(\sigma:r\to q\) for which there are an \(i\in I\) and an
  \(L\)-family \(\widetilde\sigma:r\to q_i\) satisfying
  \(\sigma=\alpha_i\widetilde\sigma\). Consequently,
  \(c_{\mathcal{U}}\) is naturally identified with the enriched
  subpresheaf of \(\Yo_q\) consisting of the families that factor through
  a member of \(\mathcal{U}\).
\end{lemma}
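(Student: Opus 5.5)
We will compute the coend defining \(c_{\mathcal{U}}\) objectwise and identify it with a quotient of a coproduct of enriched representables. Unwinding \Cref{def:covering.sieve.morphism}, for \(r\in\femb{d}\) and \(L\in\Cart\) we have
\[
  c_{\mathcal{U}}(r)(L)
  \cong
  \Bigl(\coprod_{c}\ \coprod_{\phi\in c^{0}_{\mathcal{U}}(c)}\femb{d}(r,c)(L)\Bigr)\Big/\!\sim,
\]
where \(c\) ranges over the objects of the underlying ordinary category, and \(\sim\) is generated by \((\phi\circ g,\tau)\sim(\phi,g\circ\tau)\) for \(g:c'\to c\) an ordinary morphism. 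Since colimits in \(\psh(\femb{d},\CsSet)\) are computed objectwise, this formula is exact. The map to \(\femb{d}(r,q)(L)\) sends the class of \((\phi,\tau)\) to \(\phi\circ\tau\), where \(\phi\) is regarded as a constant \(L\)-family.

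\emph{Description of the image.} Each \(\phi\in c^{0}_{\mathcal{U}}(c)\) factors as \(\phi=\alpha_i\circ\psi\), so \(\phi\circ\tau=\alpha_i\circ(\psi\circ\tau)\), and \(\psi\circ\tau\) is an \(L\)-family \(r\to q_i\). Conversely, a factorization \(\sigma=\alpha_i\widetilde\sigma\) is the image of \((\alpha_i,\widetilde\sigma)\), since \(\alpha_i\in c^0_{\mathcal U}(q_i)\).

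\emph{Injectivity.} This is the main step. First, every class has a representative with \(\phi=\alpha_i\): using \(\phi=\alpha_i\psi\), we get \((\alpha_i\psi,\tau)\sim(\alpha_i,\psi\tau)\). It therefore suffices to show that \((\alpha_i,\widetilde\sigma)\sim(\alpha_j,\widetilde\sigma')\) whenever \(\alpha_i\widetilde\sigma=\alpha_j\widetilde\sigma'\). The difficulty is that \(\widetilde\sigma\) is an \(L\)-family, whereas the relation only lets us slide \emph{ordinary} morphisms. To handle this, take \(c\) to be the object \(r\) itself, or better the target family through which \(\sigma\) passes. Concretely, an \(L\)-family \(\sigma=(f,g\times\id_L)\) has image \(f(T_r\times L)\) in \(T_q\times L\). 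Let \(W\subseteq T_q\) be the open subset given by the union of the images of the \(L\)-slices, intersected with \(\alpha_i(T_{q_i})\cap\alpha_j(T_{q_j})\); this intersection contains all slices because \(\sigma\) factors through both \(\alpha_i\) and \(\alpha_j\). Take \(w\) to be the family \(T_{q_i}\times_{T_q}T_{q_j}\) over the corresponding base. This is an object of \(\femb{d}\): it is an open subfamily of \(q\), which we may realize inside \(\RR^m\times U\), with base the appropriate open subset, possibly enlarged to \(U_q\) itself since the base is not required to be covered. The factorizations give an ordinary morphism \(\beta:w\to q\) lying in \(c^0_{\mathcal U}(w)\), ordinary morphisms \(\gamma_i:w\to q_i\) and \(\gamma_j:w\to q_j\) with \(\alpha_i\gamma_i=\beta=\alpha_j\gamma_j\), and an \(L\)-family \(\widehat\sigma:r\to w\) with \(\gamma_i\widehat\sigma=\widetilde\sigma\) and \(\gamma_j\widehat\sigma=\widetilde\sigma'\). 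The existence and uniqueness of these factorizations uses that the \(\alpha\)'s are fiberwise open embeddings, hence monomorphisms in each \(\femb{d}(-,-)(L)\), so that the factorizations are unique and smooth.

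This yields the chain
\[
  (\alpha_i,\widetilde\sigma)=(\alpha_i,\gamma_i\widehat\sigma)\sim(\alpha_i\gamma_i,\widehat\sigma)=(\beta,\widehat\sigma)=(\alpha_j\gamma_j,\widehat\sigma)\sim(\alpha_j,\widetilde\sigma'),
\]
which proves injectivity. The obstacle we expect is verifying that this pullback family is a legitimate object of the site, namely that it satisfies the submersion, base-in-\(\Cart\), and projection conventions. The base \(U_{q_i}\cap U_{q_j}\) need not be Cartesian, so we would instead take the base to be \(U_q\) with total space the open \(T_{q_i}\cap T_{q_j}\subseteq T_q\); this is allowed because submersions need not be surjective. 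Finally, naturality in \(r\) and \(L\) identifies \(c_{\mathcal{U}}\) with the stated enriched subpresheaf of \(\Yo_q\).
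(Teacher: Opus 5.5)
Your coend formula, the description of the image, and the overall shape of the injectivity argument are right, and they match the paper. You reduce to representatives $(\alpha_i,\widetilde\sigma)$, then join $(\alpha_i,\widetilde\sigma)$ to $(\alpha_j,\widetilde\sigma')$ by sliding ordinary morphisms through an intermediate object $w$. That object carries ordinary maps $\gamma_i,\gamma_j$ with $\alpha_i\gamma_i=\alpha_j\gamma_j$ and an $L$-family $\widehat\sigma: r\to w$.

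The gap is the choice of $w$: your final choice does not work. Take $w$ with total space $T_w=\alpha_i(T_{q_i})\cap\alpha_j(T_{q_j})\subseteq T_q$ and base $U_q$, and let $\beta: w\to q$ be the inclusion of this open subfamily, as you intend. Then the base component $b$ of $\beta$ must be the identity on the open set $q(T_w)$. On the other hand, $b=\alpha_{i,U}\circ(\text{base of }\gamma_i)=\alpha_{j,U}\circ(\text{base of }\gamma_j)$, so $b$ takes values in $\alpha_{i,U}(U_{q_i})\cap\alpha_{j,U}(U_{q_j})$. Covering families in $\femb{d}$ do not require the base components $\alpha_{i,U}$ to be surjective, so these two constraints are generally incompatible. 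For example, let $q=(\RR^d\times\RR\to\RR)$ be covered by its restrictions $q_k$ to $(k-1,k+1)$, $k\in\ZZ$, and take $i=0$, $j=1$. You would need a continuous $b:\RR\to(0,1)$ with $b|_{(0,1)}=\id$, which is impossible at the point $1$.

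Your earlier candidates fail as well. The base $U_{q_i}\cap U_{q_j}$ need not be Cartesian, as you note yourself. The union of the images of the slices $\sigma_\ell$ is not open in $T_q$ when the base map $g: V\to U_q$ of $\sigma$ is not a submersion (for instance when $g$ is constant), so it does not define a subfamily of $q$.

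The missing idea is to build $w$ over the Cartesian base $V$ of $r$ rather than over $U_q$.
\begin{itemize}
\item Pull back $q$, $q_i$, $q_j$ along the base maps $g$, $g_i$, $g_j$ of $\sigma$, $\widetilde\sigma$, $\widetilde\sigma'$. These maps are independent of $\ell\in L$ and satisfy $g=\alpha_{i,U}g_i=\alpha_{j,U}g_j$.
\item After pullback, $\sigma$ is a base-preserving family of open embeddings into the total space of $g^*q$. The maps $\alpha_i,\alpha_j$ induce fiberwise open embeddings $g_i^*q_i\to g^*q$ and $g_j^*q_j\to g^*q$ over $\id_V$.
\item The joint image $\bigcup_{\ell\in L}\operatorname{im}(\sigma_\ell)$ is open in the total space of $g^*q$ and lies inside both of these images. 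Let $w$ be the restriction of $g^*q$ to this joint image.
\item Then $\widehat\sigma$ is the corestriction of $\sigma$. The maps $\gamma_i,\gamma_j$ are the induced factorizations composed with the canonical pullback morphisms $g_i^*q_i\to q_i$ and $g_j^*q_j\to q_j$. Their base components are $g_i$ and $g_j$, so they always exist.
\end{itemize}
With this $w$ your chain of coend relations goes through unchanged; this is exactly the construction used in the paper.
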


\begin{proof}
  By \Cref{def:covering.sieve.morphism}, an element of
  \(c_{\mathcal{U}}(r)(L)\) is represented by a pair
  \([a:e\to q,\tau:r\to e]\), where \(a\) is an ordinary morphism in
  the sieve generated by \(\mathcal{U}\) and \(\tau\) is an \(L\)-family.
  Its image in \(\femb{d}(r,q)(L)\) is \(a\tau\). If \(a=\alpha_i h\), the
  coend relation gives
  \[
    [a,\tau]=[\alpha_i,h\tau].
  \]
  Thus, every element in the image factors through a member of
  \(\mathcal{U}\). Conversely, if an \(L\)-family
  \(\sigma:r\to q\) has the form
  \(\sigma=\alpha_i\widetilde\sigma\), then
  \([\alpha_i,\widetilde\sigma]\in c_{\mathcal{U}}(r)(L)\) maps to
  \(\sigma\).

  Suppose that \([\alpha_i,\tau_i]\) and \([\alpha_j,\tau_j]\) have the same
  image,
  and write
  \[
    \alpha_i\tau_i=\alpha_j\tau_j\eqqcolon\sigma.
  \]
  Write \(r:R\to V\), \(q_i:T_i\to U_i\),
  \(q_j:T_j\to U_j\), and \(q:T\to U\), and let \(g_i:V\to U_i\),
  \(g_j:V\to U_j\), and \(g:V\to U\) be the base maps of
  \(\tau_i\), \(\tau_j\), and \(\sigma\), respectively. These maps are
  independent of the parameter in \(L\). Pull back \(q_i\), \(q_j\),
  and \(q\) along \(g_i\), \(g_j\), and \(g\). The families
  \(\tau_i\), \(\tau_j\), and \(\sigma\) may then be regarded as
  base-preserving families, and the morphisms \(\alpha_i\) and \(\alpha_j\)
  induce ordinary fiberwise-open embeddings into \(g^*q\).

  Let \(e\) be the restriction of \(g^*q\) to the open subset
  \[
    \bigcup_{\ell\in L}\operatorname{im}(\sigma_\ell)
  \]
  of its total space. This is an object of \(\femb{d}\). By its
  definition, \(\sigma\) determines an \(L\)-family
  \(\widetilde\sigma:r\to e\). Since
  \(\sigma_\ell=\alpha_i(\tau_i)_\ell=\alpha_j(\tau_j)_\ell\) for every
  \(\ell\in L\), the displayed open subset is contained in the images
  of both pulled-back cover members. Its inclusion into \(g^*q\)
  therefore factors through each of the two induced open embeddings.
  Composing these factorizations with the pullback morphisms to
  \(q_i\) and \(q_j\) gives ordinary morphisms
  \(v_i:e\to q_i\) and \(v_j:e\to q_j\) satisfying
  \[
    \tau_i=v_i\widetilde\sigma,
    \qquad
    \tau_j=v_j\widetilde\sigma,
    \qquad
    \alpha_iv_i=\alpha_jv_j.
  \]
  The coend relations now give
  \[
    [\alpha_i,\tau_i]
    =[\alpha_iv_i,\widetilde\sigma]
    =[\alpha_jv_j,\widetilde\sigma]
    =[\alpha_j,\tau_j].
  \]
  Hence the covering-sieve morphism is injective.
\end{proof}

\begin{definition}\label{def:geometric.structures.non.isotopies}
  \textnormal{\cite[Definition~5.1.1]{GP26}.}
  Recall the site \( \fembnoniso{d} \) of \(d\)-dimensional fiberwise
  embeddings without isotopies; see \Cref{def:fembnoniso}. A
  \emph{\(d\)-dimensional field stack without isotopies} is a simplicial
  presheaf on \( \fembnoniso{d} \).

  We write
  \[
    \fieldnoniso{d}
    \coloneqq
    \spsh(\fembnoniso{d})_{\inj,\Cech}
  \]
  for the category \(\spsh(\fembnoniso{d})\) equipped with the
  \v{C}ech-local injective model structure of
  \Cref{def:cech.local.injective.model.structure}.

  We will refer to objects of \( \fieldnoniso{d} \) as
  \emph{geometric structures without isotopies}.
\end{definition}

\begin{definition}\label{def:geometric.structures}
  \textnormal{\cite[Definition~5.1.3]{GP26}.}
  Recall the \( \CsSet \)-enriched site \( \femb{d} \) of
  \(d\)-dimensional fiberwise embeddings with isotopies; see
  \Cref{def:femb}. A \emph{\(d\)-dimensional field stack with isotopies}
  is a \( \CsSet \)-enriched presheaf on \( \femb{d} \).

  We write
  \[
    \field{d}
    \coloneqq
    \psh(\femb{d},\CsSet)_{\inj,\Cech}
  \]
  for the category of \(\CsSet\)-enriched presheaves on
  \(\femb{d}\), equipped with the enriched \v{C}ech-local injective model
  structure
  of \Cref{def:cech.local.injective.model.structure}. This enriched
  localization exists by
  \Cref{thm:enriched.left.bousfield.localization.concrete}.

  We will refer to objects of \( \field{d} \) as
  \emph{geometric structures with isotopies}.
\end{definition}

The general constructions in \Cref{subsec:isotopification} apply to arbitrary
objects of \(\fieldnoniso{d}\)
and \(\field{d}\). The following example is the geometric structure used in the
Riemannian \(\sigma\)-model application.

\begin{example}\label{ex:riemannian.structure}
  For a smooth manifold \(M\), let \(\RM\in\fieldnoniso{1}\) be the
  discrete sheaf whose value on \(p:T\to U\) consists of a fiberwise
  Riemannian metric and orientation on \(T\), together with a smooth map
  \(T\to M\). Its presheaf structure is given by pullback. We call \(\RM\)
  the geometric structure of oriented Riemannian \(1\)-dimensional
  \(\sigma\)-models with target \(M\); see
  \cite[Definition~3.1.1]{KP26} for the detailed formulation.
\end{example}

\subsection{Isotopification}\label{subsec:isotopification}

In this subsection we introduce the \emph{isotopification} functor, which
passes from field stacks without isotopies to field stacks with isotopies. We
then show that this functor is left Quillen for the relevant local injective
model structures.

\begin{definition}\label{def:useful.functors}
  Consider the canonical inclusion of the site without
  isotopies into the
  \( \CsSet \)-enriched site
  \[
    \iota:\fembnoniso{d}\longrightarrow \femb{d}.
  \]

  Write
  \[
    \iota_{!}^{\infty}:\psh(\fembnoniso{d}, \CsSet)\longrightarrow
    \psh(\femb{d}, \CsSet)
  \]
  for the left Kan
  extension along
  \( \iota \). It is
  left adjoint to the restriction functor
  \[
    \iota^{*}_{\infty}:\psh(\femb{d}, \CsSet)\longrightarrow
    \psh(\fembnoniso{d}, \CsSet).
  \]
\end{definition}

\begin{proposition}\label{prop:lambda.left.quillen}
  The functor of \Cref{def:constant.smooth.direction}
  \[
    \lambda^{\infty}:
    \spsh(\fembnoniso{d})
    \longrightarrow
    \psh(\fembnoniso{d},\CsSet)
  \]
  is left Quillen for the \v{C}ech-local injective model structures.
  Moreover, its adjunction with objectwise evaluation at \(\RR^0\) is a
  Quillen equivalence.
\end{proposition}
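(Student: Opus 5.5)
We prove the two assertions in order: first that \(\lambda^\infty\) is left Quillen for the injective structures, then for the \v{C}ech-local ones, and finally that the adjunction \(\lambda^\infty\dashv\ev_{\RR^0}\) is a Quillen equivalence.

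\emph{Injective level.} Write \(\cat{C}=\fembnoniso{d}\). Since \(\lambda^\infty\) is computed objectwise, it preserves monomorphisms, hence injective cofibrations. For weak equivalences, observe that \(\lambda^\infty K\) is constant in the smooth direction, so \(\Sing^\infty(\lambda^\infty K)\) is the diagonal of the bisimplicial set \(([n],[m])\mapsto K_m\), which is \(K\) itself. Thus \(\lambda^\infty:\sset\to\CsSet\) preserves and reflects weak equivalences. Applying this objectwise, \(\lambda^\infty\) preserves injective weak equivalences, and so it is left Quillen between the injective structures.

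\emph{Passing to the \v{C}ech-local structures.} By the universal property of enriched left Bousfield localization (\Cref{def:enriched.left.bousfield.localization}, together with \Cref{thm:enriched.left.bousfield.localization.concrete}), it suffices to show that \(\lambda^\infty\) sends each covering-sieve morphism \(s_{\mathcal{U}}\in S_{\cat{C}}^{\check{C}}\) to a weak equivalence of the target; no derived correction is needed because all objects are cofibrant (\Cref{rem:injective.presheaves.cofibrant}). Since \(\lambda^\infty\) is a left adjoint, it commutes with the coend defining \(c_{\mathcal{U}}\). It also sends the discrete representable \(\Yo_c\) to \(\lambda^\infty\Yo_c\), and this is exactly the \(\CsSet\)-valued representable of the ordinary site \(\cat{C}\), obtained by copowering with the unit. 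Hence \(\lambda^\infty(s_{\mathcal{U}})\) is precisely the covering-sieve morphism \(s_{\mathcal{U}}\) in \(\psh(\cat{C},\CsSet)\), which is a weak equivalence by construction. This settles the first assertion.

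\emph{Quillen equivalence.} We verify the criterion of \Cref{def:quillen.equivalence.criterion}. Since \(\ev_{\RR^0}\lambda^\infty=\id\), the derived unit is the identity on cofibrant objects provided \(\ev_{\RR^0}\) needs no fibrant replacement correction. It is therefore enough to show that the derived counit \(\lambda^\infty\ev_{\RR^0}Y\to Y\) is a weak equivalence for every fibrant \(Y\). Such a \(Y\) is injectively fibrant and \v{C}ech-local. Injective fibrancy implies objectwise fibrancy in \(\CsSet\), and fibrant objects of \(\CsSet\) are \(\RR\)-local by \Cref{prop:smooth.model.as.localization}. By Bunk's result quoted in \Cref{def:constant.smooth.direction}, namely that \(\lambda^\infty\dashv\ev_{\RR^0}\) is a Quillen equivalence on \(\sset\rightleftarrows\CsSet\), the map \(\lambda^\infty(Y(c)(\RR^0))\to Y(c)\) is a weak equivalence for every \(c\). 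Therefore the counit is an objectwise weak equivalence, hence a local one.

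The main obstacle is the converse direction of the criterion: we must show that \(\ev_{\RR^0}\) reflects weak equivalences between fibrant objects, and that \(\ev_{\RR^0}\) of a \v{C}ech-local fibrant object is again \v{C}ech-local. We plan to deduce both from the objectwise equivalence \(Y\simeq\lambda^\infty\ev_{\RR^0}Y\) just established. For reflection, a map \(Y\to Y'\) of fibrant objects with \(\ev_{\RR^0}\) an objectwise equivalence is then an objectwise equivalence, by two-out-of-three, and this suffices by \Cref{rem:local.equivalences.between.local.objects}. For locality, \v{C}ech descent of \(\ev_{\RR^0}Y\) follows from that of \(Y\), because \(\Sing^\infty\lambda^\infty=\id\) commutes with the relevant homotopy limits up to weak equivalence: \(\Sing^\infty\) is right Quillen-equivalent and preserves homotopy limits.
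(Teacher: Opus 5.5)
Your first two steps are correct and match the paper. \(\lambda^\infty\) is computed objectwise, so it preserves monomorphisms, and it preserves weak equivalences because \(\Sing^\infty\lambda^\infty K=K\). It also carries each covering-sieve morphism of \(\spsh(\fembnoniso{d})\) to the corresponding covering-sieve morphism of \(\psh(\fembnoniso{d},\CsSet)\), so the adjunction descends to the \v{C}ech-local structures.

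The Quillen-equivalence part, however, has a genuine gap.
\begin{itemize}
\item After localization, the derived unit at \(X\) is \(X=\ev_{\RR^0}\lambda^\infty X\to\ev_{\RR^0}R\lambda^\infty X\), where \(R\) is a \v{C}ech-local fibrant replacement. This replacement is not objectwise, and \(\ev_{\RR^0}\) has no reason to send the \v{C}ech-local weak equivalence \(\lambda^\infty X\to R\lambda^\infty X\) to a weak equivalence. So your proviso that \(\ev_{\RR^0}\) ``needs no fibrant replacement correction'' fails exactly where it is needed. It holds only for the global structures, where Bunk's lemma applies objectwise.
\item What you do establish is that the counit \(\lambda^\infty\ev_{\RR^0}Y\to Y\) is a weak equivalence for fibrant \(Y\), and hence that \(\ev_{\RR^0}\) reflects weak equivalences between fibrant objects. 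This only says that \(\mathbb{R}\ev_{\RR^0}\) is fully faithful on homotopy categories.
\item Your reflection argument uses nothing beyond the counit, and fully faithful functors are automatically conservative. For example, the identity adjunction \(\cat{M}\rightleftarrows L_S\cat{M}\) satisfies both of your conditions without being a Quillen equivalence. So your two conditions together are not a valid criterion.
\end{itemize}

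The missing ingredient is that \(\lambda^\infty\) reflects \v{C}ech-local weak equivalences, or equivalently that the derived unit is a weak equivalence. For this you need \(\lambda^\infty\) to send \v{C}ech-local objects to \v{C}ech-local objects. One way to see it: a constant Kan complex is \(\RR\)-local, and homotopy limits of \(\RR\)-local objects of \(\CsSet\) are computed objectwise in \(\Cart\). Hence \(\lambda^\infty\) preserves the homotopy limits in the descent condition. Then, for \v{C}ech-fibrant \(X\), the map \(\lambda^\infty X\to R\lambda^\infty X\) is an objectwise weak equivalence, and Bunk's lemma applied objectwise gives the derived unit. A general \(X\) reduces to this case via its fibrant replacement.

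The paper avoids all of this by citing invariance of left Bousfield localization under Quillen equivalences \cite[Theorem~3.3.20]{Hir03}. The global adjunction is an objectwise Quillen equivalence, and the target localizing set is exactly the image under \(\lambda^\infty\) of the source one.

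Finally, the ``obstacle'' you single out, \v{C}ech-locality of \(\ev_{\RR^0}Y\), needs no homotopy-limit argument. It follows at once from \(\ev_{\RR^0}\) being right Quillen for the localized structures, which you have already proved.
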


\begin{proof}
  The functor is given by viewing a simplicial presheaf as a
  \(\CsSet\)-valued presheaf that is constant in the smooth direction. In
  particular, it is defined objectwise. Since injective cofibrations and
  injective trivial cofibrations are defined objectwise, it follows that
  \(\lambda^{\infty}\) is left Quillen for the global injective model
  structures.

  It remains to check compatibility with the left Bousfield localizations. In
  this case, the localized injective model structure is obtained by localizing
  at the \v{C}ech covering-sieve morphisms.
  Since \(\lambda^\infty\) does not change the underlying objects or
  morphisms and only regards simplicial presheaves as presheaves valued in
  \(\CsSet\), it sends each such localizing map to the corresponding
  localizing map in the target category. Hence it sends the
  chosen localization set to weak equivalences in the localized target model
  structure.

  Therefore the global Quillen adjunction descends to the localized
  injective model structures. Thus, the functor is left Quillen for
  the localized injective model structures.

  For the Quillen-equivalence assertion, first note that the global
  adjunction is an objectwise Quillen equivalence by
  \Cref{def:constant.smooth.direction}. Moreover, the target
  localization set is precisely the image under \(\lambda^\infty\) of the
  source localization set, as observed above. The invariance of
  left Bousfield localization under a Quillen equivalence
  \cite[Theorem~3.3.20]{Hir03} therefore shows that the localized
  adjunction is a Quillen equivalence as well.
\end{proof}

\begin{definition}\label{def:isotopification.functor}
  Let
  \[
    (-)_{\RR^0}:\field{d}\longrightarrow\fieldnoniso{d}
  \]
  be the functor that sends \(X\in\field{d}\) to the simplicial presheaf
  \[
    X_{\RR^0}(p)=X(p)_{\RR^0}.
  \]
  Following \cite{GP26}, we define the isotopification functor
  \[
    \mathfrak{I}_{d}:\fieldnoniso{d}\longrightarrow\field{d}
  \]
  to be the left adjoint of \((-)_{\RR^0}\). Equivalently, the functor
  \(\mathfrak{I}_{d}\) is computed by
  \[
    \mathfrak{I}_{d}F \coloneqq \iota^\infty_!\lambda^\infty F.
  \]
\end{definition}

\begin{lemma}\label{lem:LKE.raw.triples}
  Let \(F\in \fieldnoniso{d}\) be a geometric structure without isotopies.
  Let \(q\in \femb{d}\), \(L\in \Cart\), and \(k\geq 0\). Then
  \[
    \bigl(\mathfrak{I}_{d}F(q)\bigr)_{L,k}
  \]
  is naturally identified with the quotient of
  \[
    \coprod_{p\in \fembnoniso{d}}
    F(p)_k\times \femb{d}(q,\iota(p))_L
  \]
  by the equivalence relation generated by
  \[
    (p_1,F(h)(x),e)
    \sim
    (p_2,x,(\iota(h)\times \id_L)\circ e)
  \]
  for every morphism \(h:p_1\to p_2\) in \(\fembnoniso{d}\),
  \(x\in F(p_2)_k\), and
  \(e\in \femb{d}(q,\iota(p_1))_L\).
\end{lemma}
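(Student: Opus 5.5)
We unwind the definition \(\mathfrak{I}_{d}F=\iota^\infty_!\lambda^\infty F\) from \Cref{def:isotopification.functor} and compute the enriched left Kan extension as a weighted colimit. Since \(\iota^\infty_!\) is the \(\CsSet\)-enriched left Kan extension along \(\iota\colon\fembnoniso{d}\to\femb{d}\), we may regard \(\fembnoniso{d}\) as \(\CsSet\)-enriched via its free enrichment (hom-objects are discrete, constant smooth simplicial sets, as in \Cref{def:homotopy.limits.colimits}). The standard formula for an enriched left Kan extension of a presheaf then gives, for \(q\in\femb{d}\),
\[
  \bigl(\iota^\infty_!G\bigr)(q)
  \cong
  \int^{p\in\fembnoniso{d}} G(p)\times\femb{d}(q,\iota(p)),
\]
with \(G=\lambda^\infty F\). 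We verify this formula by checking the defining adjunction with \(\iota^*_\infty\) via the enriched Yoneda lemma and the end formula for enriched hom-objects from \Cref{def:tensor.cotensor.presheaves}. Because the source category carries the free enrichment, the enriched coend reduces to an ordinary coend in \(\CsSet\) of the bifunctor \((p,p')\mapsto G(p')\times\femb{d}(q,\iota(p))\).

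Next we evaluate at \(L\in\Cart\) and simplicial degree \(k\). Colimits, and hence coends, in \(\CsSet=\psh(\Cart,\sset)\) are computed objectwise by \Cref{rem:presheaves.complete.cocomplete}. Products are also objectwise. Moreover, \((\lambda^\infty F)(p)_{L,k}=F(p)_k\), and \(\femb{d}(q,\iota(p))\) is a smooth set, so it is discrete in the simplicial direction with \(L\)-points \(\femb{d}(q,\iota(p))_L\). The \((L,k)\)-component is therefore the ordinary set-valued coend
\[
  \int^{p} F(p)_k\times\femb{d}(q,\iota(p))_L.
\]

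A set-valued coend is the quotient of \(\coprod_p F(p)_k\times\femb{d}(q,\iota(p))_L\) by the equivalence relation generated by identifying the two images of each element of \(\coprod_{h\colon p_1\to p_2}F(p_2)_k\times\femb{d}(q,\iota(p_1))_L\). One image applies the contravariant action \(F(h)\) to \(x\); the other postcomposes \(e\) with \(\iota(h)\). This is exactly the stated relation, once we write postcomposition by the ordinary morphism \(\iota(h)\) on an \(L\)-family as \((\iota(h)\times\id_L)\circ e\), which is how \(\RR^0\)-families act on \(L\)-families in \Cref{def:femb}.

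Naturality in \(q\), \(L\), and \(k\) is immediate, because every identification made above is natural. The only point requiring care, and the expected main obstacle, is justifying the first step: that the enriched left Kan extension along a functor out of a freely enriched category is the displayed coend computed in \(\CsSet\). Once that is in place, the objectwise computation of coends does the rest.
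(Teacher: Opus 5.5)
Your proposal is correct and follows the paper's proof essentially step for step. Both arguments use the pointwise coend formula for the enriched left Kan extension, evaluate objectwise at \((L,k)\) (using that \(\lambda^\infty F\) is constant in the smooth direction and the hom-objects are simplicially discrete), and then present the resulting set-valued coend as a quotient by the relation generated by the two coequalizer maps. The only difference is that you explicitly flag, and sketch a justification for, the reduction of the enriched coend to an ordinary coend via the free enrichment of \(\fembnoniso{d}\); the paper simply invokes this as the standard pointwise formula.
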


\begin{proof}
  Since the functor \( \iota_!^\infty \) is given by enriched left Kan
  extension along the inclusion
  \[
    \iota:\fembnoniso{d}\hookrightarrow \femb{d},
  \]
  the presheaf
  \[
    \mathfrak{I}_{d}F=\iota_!^\infty(\lambda^\infty F)
  \]
  admits a pointwise description by the coend formula. Fix
  \(q\in \femb{d}\), \(L\in \Cart\), and \(k\geq 0\). Since evaluation at
  \(L\) and \(k\) preserves colimits, we obtain

  \begin{equation}\label{eq:pointwise.formula.for.isotopification}
    \bigl(\mathfrak{I}_{d}F(q)\bigr)_{L,k}
    \cong
    \left(
      \int^{p\in \fembnoniso{d}}
      (\lambda^\infty F)(p)\times \femb{d}(q,\iota(p))
    \right)_{L,k}.
  \end{equation}
  By definition of \( \lambda^\infty \), the smooth simplicial presheaf
  \( (\lambda^\infty F)(p) \) is constant in the smooth direction, while
  \( \femb{d}(q,\iota(p)) \) is regarded as constant in the simplicial
  direction. Hence
  \[
    \bigl(\mathfrak{I}_{d}F(q)\bigr)_{L,k}
    \cong
    \int^{p\in \fembnoniso{d}}
    F(p)_k\times \femb{d}(q,\iota(p))_L.
  \]

  Writing this coend as a coequalizer, we obtain
  \[
    \bigl(\mathfrak{I}_{d}F(q)\bigr)_{L,k}
    \cong
    \operatorname{coeq}\left(
      \coprod_{h:p_1\to p_2}
      F(p_2)_k\times \femb{d}(q,\iota(p_1))_L
      \rightrightarrows
      \coprod_{p\in \fembnoniso{d}}
      F(p)_k\times \femb{d}(q,\iota(p))_L
    \right),
  \]
  where the two arrows are given on the summand indexed by
  \(h:p_1\to p_2\) by
  \[
    d_0(x,e)=(F(h)(x),e),
    \qquad
    d_1(x,e)=\bigl(x,(\iota(h)\times \id_L)\circ e\bigr).
  \]
  Equivalently,
  \[
    \bigl(\mathfrak{I}_{d}F(q)\bigr)_{L,k}
    \cong
    \left(
      \coprod_{p\in \fembnoniso{d}}
      F(p)_k\times \femb{d}(q,\iota(p))_L
    \right)\Big/{\sim},
  \]
  where the equivalence relation is generated by the displayed relation.
\end{proof}

Thus an \((L,k)\)-simplex of \(\mathfrak{I}_{d}F(q)\) is represented by a
triple \((p,x,e)\), where \(p\in \fembnoniso{d}\),
\(x\in F(p)_k\), and
\(e\in \femb{d}(q,\iota(p))_L\) is an \(L\)-parameterized family of
fiberwise open embeddings of \(q\) into \(p\). Geometrically, such a
representative is an \(F\)-structure in simplicial degree \(k\) on some
target family \(p\), together with an \(L\)-family of embeddings of \(q\)
into \(p\), modulo the operation of pulling the geometric structure back
along morphisms in \(\fembnoniso{d}\). In the proof of
\Cref{prop:iota.left.quillen}, the same coend will be rewritten as a colimit
over a Grothendieck construction, and the triples \((p,x,e)\) appearing here
will be identified with the objects of the corresponding category of
elements.

\begin{proposition}\label{prop:reduced.category.discrete}
  Let \(F\in \fieldnoniso{d}\), \(q\in \femb{d}\),
  \(L\in \Cart\), and \(k\geq 0\). Then the set of
  \((L,k)\)-simplices of \(\mathfrak{I}_{d}F(q)\) is naturally identified
  with the set of isomorphism classes of triples
  \[
    (p,x,e),
  \]
  where \(p\in \fembnoniso{d}\), \(x\in F(p)_k\), and
  \[
    e\in \femb{d}(q,\iota(p))_L
  \]
  The map \(e\) is a base-preserving \(L\)-parameterized family of
  fiberwise open embeddings whose joint image is \(p\).

  Two such triples \((p,x,e)\) and \((p',x',e')\) are isomorphic if and only
  if there exists a necessarily unique isomorphism \(h:p\to p'\) in
  \(\fembnoniso{d}\) such that
  \[
    F(h)(x')=x,
    \qquad
    e'=(\iota(h)\times \id_L)\circ e.
  \]
  More generally, the same description of connected components by
  isomorphism classes of reduced triples holds with \(F(p)_k\) replaced by
  \(X(p)_{L,k}\), for every
  \(X\in\psh(\fembnoniso{d},\CsSet)\). The reduction depends only on the
  embedding datum \(e\).
\end{proposition}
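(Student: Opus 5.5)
Starting from \Cref{lem:LKE.raw.triples}, I regard the coend as a colimit over the category of elements. For fixed \(q,L,k\), let \(\cat{E}\) be the category whose objects are triples \((p,x,e)\) with \(x\in F(p)_k\) and \(e\in\femb{d}(q,\iota(p))_L\). A morphism \((p_1,x_1,e_1)\to(p_2,x_2,e_2)\) is a morphism \(h:p_1\to p_2\) in \(\fembnoniso{d}\) with \(F(h)(x_2)=x_1\) and \((\iota(h)\times\id_L)\circ e_1=e_2\). The quotient in \Cref{lem:LKE.raw.triples} is then \(\pi_0\) of \(\cat{E}\), i.e.\ the set of its connected components. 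The task is therefore to show two things: each component contains a distinguished \emph{reduced} triple, namely one whose embedding datum has joint image all of \(p\); and two reduced triples lie in the same component exactly when they are isomorphic.

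\textbf{Step 1: reduction.} Given \((p,x,e)\) with \(p:T\to U\), write \(g:V\to U\) for the base map of \(e\), where \(q:S\to V\). Form the pullback \(g^{*}p\) (strictly defined by the splitting of \Cref{rem:base.space.functor}), and let \(p_e\) be its restriction to the open subset \(\bigcup_{\ell\in L}\operatorname{im}(e_\ell)\), exactly as in the proof of \Cref{lem:femb.enriched.covering.sieve}. There is a canonical morphism \(r_e:p_e\to p\) in \(\fembnoniso{d}\), and \(e\) factors uniquely as \((\iota(r_e)\times\id_L)\circ\widetilde e\), where \(\widetilde e\) is base-preserving with joint image \(p_e\). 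The generating relation then gives \((p,x,e)\sim(p_e,F(r_e)(x),\widetilde e)\). The construction uses only \(e\), which justifies the final sentence of the statement.

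\textbf{Step 2: a reduced triple has a unique morphism to anything.} Suppose \((p_0,x_0,e_0)\) is reduced and \(h:(p_0,x_0,e_0)\to(p,x,e)\) is a morphism in \(\cat{E}\). The base of \(h\) must be the base map \(g\) of \(e\), because \(e_0\) is base-preserving. Fiberwise, \(h\) must agree with \(e\circ e_0^{-1}\) on the joint image of \(e_0\), which is all of \(p_0\). Hence \(h\) is uniquely determined, and it equals \(p_0\cong p_e\xrightarrow{r_e}p\). It follows that every zigzag in \(\cat{E}\) between reduced triples can be pushed through the reductions: any morphism \((p_1,x_1,e_1)\to(p_2,x_2,e_2)\) induces an isomorphism \((p_1)_{e_1}\cong(p_2)_{e_2}\) over \(V\), since both are the same open subset of the pullback to \(V\). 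This isomorphism is compatible with the \(x\)'s and the \(e\)'s. Consequently, two reduced triples are connected if and only if they are isomorphic, and the isomorphism is unique by the uniqueness just established.

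The main obstacle is this last point: checking that a morphism \(h:p_1\to p_2\) really identifies the two reductions. This requires that \(h\) be a fiberwise open embedding, so that images correspond. It also requires a careful choice of the strict pullback, so that "isomorphism over \(V\)" produces an actual morphism of \(\fembnoniso{d}\) and is not just an isomorphism up to a chosen identification.

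\textbf{Step 3: general \(X\).} For an arbitrary \(X\in\psh(\fembnoniso{d},\CsSet)\), the same argument goes through with \(F(p)_k\) replaced by \(X(p)_{L,k}\). The coend description is unchanged, and the reduction never touched the \(X\)-coordinate except through pullback, so Steps 1 and 2 apply verbatim.
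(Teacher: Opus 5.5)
Your proposal is correct and follows essentially the paper's argument: the paper also works with the category of triples, reduces each triple by pulling back along the base map and restricting to the joint image (two steps that you merge into one), pushes zigzags through a reduction functor \(\nu\) with \(\nu J=\id\) and \(J\nu\to\id\), and uses joint surjectivity to show that the reduced subcategory is a groupoid with at most one morphism between any two objects. The only quibble is the heading of your Step 2, which suggests existence, whereas what you correctly prove is that a morphism out of a reduced triple is unique when it exists.
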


\begin{proof}
  Fix \(q\in \femb{d}\), \(L\in \Cart\), \(k\geq 0\), and
  \(X\in \psh(\fembnoniso{d},\CsSet)\). Let
  \(\mathcal{G}_{X,q,L,k}\) be the category whose objects are triples
  \((p,x,e)\) with
  \[
    p\in \fembnoniso{d},\qquad
    x\in X(p)_{L,k},\qquad
    e\in \femb{d}(q,\iota(p))_L,
  \]
  Its morphisms
  \[
    (p_1,x_1,e_1)\longrightarrow (p_2,x_2,e_2)
  \]
  are morphisms \(h:p_1\to p_2\) in \(\fembnoniso{d}\) satisfying
  \[
    X(h)(x_2)=x_1,
    \qquad
    e_2=(\iota(h)\times \id_L)\circ e_1.
  \]
  Let \(\mathcal{G}^{\mathrm{red}}_{X,q,L,k}\) be the full subcategory on
  the triples for which \(e\) is base-preserving and has joint image equal
  to \(p\). We first prove that its inclusion into
  \(\mathcal{G}_{X,q,L,k}\) induces a bijection on connected components and
  that \(\mathcal{G}^{\mathrm{red}}_{X,q,L,k}\) is a groupoid with at most
  one morphism between any two objects.

  First, every representative may be chosen so that the map~\(e\) is
  base-preserving. Let \((p,x,e)\) be an object of
  \(\mathcal{G}_{X,q,L,k}\), and write
  \[
    e=(f,g\times \id_L):q\times \id_L\longrightarrow p\times \id_L.
  \]
  Since morphisms in \(\femb{d}\) are fiberwise open embeddings over maps of
  base spaces, the map \(e\) canonically factors through the pullback of \(p\)
  along the base map \(g\times \id_L\). In other words, if
  \(p'\coloneqq g^{*}p\), then there is a factorization
  \[
    q\times \id_L
    \xrightarrow{\ \widetilde{e}\ }
    p'\times \id_L
    \xrightarrow{\ \iota(h)\times \id_L\ }
    p\times \id_L,
  \]
  where \(\widetilde{e}\) lies over the identity on the base and
  \(h:p'\to p\) is the canonical morphism. Passing from \((p,x,e)\) to
  \((p',X(h)(x),\widetilde{e})\) gives a morphism
  \[
    (p',X(h)(x),\widetilde{e})\longrightarrow (p,x,e)
  \]
  in \(\mathcal{G}_{X,q,L,k}\). Thus every connected component contains an
  object whose embedding family is base-preserving.

  Second, after making the representative base-preserving, one may further
  replace the target by the joint image of the family of embeddings. Suppose
  therefore that the map
  \[
    e:q\times \id_L\longrightarrow p\times \id_L
  \]
  lies over the identity on the base. For each \(\ell\in L\), let
  \(e_\ell:q\to p\) denote the corresponding fiberwise open embedding. Define
  \[
    p^\circ\coloneqq \operatorname{im}(\pi_1\circ e)\subset p,
  \]
  where \(\pi_1:p\times \id_L\to p\) is the projection. Equivalently,
  \[
    p^\circ=\bigcup_{\ell\in L}\operatorname{im}(e_\ell).
  \]
  Since each \(e_\ell\) is an open embedding, \(p^\circ\) is an open subobject
  of \(p\), hence again an object of \(\fembnoniso{d}\). The map \(e\) factors
  through \(p^\circ\times \id_L\) as
  \[
    q\times \id_L
    \xrightarrow{\ e^\circ\ }
    p^\circ\times \id_L
    \xrightarrow{\ \iota(j)\times \id_L\ }
    p\times \id_L,
  \]
  where \(j:p^\circ\hookrightarrow p\) is the open inclusion. Passing from
  \((p,x,e)\) to \((p^\circ,X(j)(x),e^\circ)\) gives a morphism
  \[
    (p^\circ,X(j)(x),e^\circ)\longrightarrow (p,x,e)
  \]
  in \(\mathcal{G}_{X,q,L,k}\). By construction, the new representative has
  the property that its target is exactly the union of the images of the
  family \(\{e_\ell\}_{\ell\in L}\).

  Applying the two reductions successively defines a functor
  \[
    \nu:\mathcal{G}_{X,q,L,k}\longrightarrow
    \mathcal{G}^{\mathrm{red}}_{X,q,L,k}.
  \]
  On objects, \(\nu\) sends a triple \((p,x,e)\) to its reduced representative
  \[
    (\overline{p},\overline{x},\overline{e}),
    \qquad
    \overline{x}\coloneqq X(r)(x),
  \]
  where \(r:\overline{p}\to p\) is the composite reduction morphism.

  We now define \(\nu\) on morphisms. Let
  \[
    a:(p_1,x_1,e_1)\longrightarrow(p_2,x_2,e_2)
  \]
  be a morphism in \(\mathcal{G}_{X,q,L,k}\). For \(i=1,2\), write
  \[
    r_i:\overline{p}_i\longrightarrow p_i
  \]
  for the composite reduction morphism, and write
  \[
    (\overline{p}_i,\overline{x}_i,\overline{e}_i)
  \]
  for the reduced representative of \((p_i,x_i,e_i)\). Thus
  \[
    \overline{x}_i=X(r_i)(x_i),
    \qquad
    e_i=(\iota(r_i)\times\id_L)\circ \overline{e}_i.
  \]

  The identity
  \[
    e_2=(\iota(a)\times\id_L)\circ e_1
  \]
  implies that \(\iota(a)\times\id_L\) sends the joint image of
  \(\overline{e}_1\) into the joint image of \(\overline{e}_2\). Since
  \(\overline{p}_i\) is defined as this joint image after making the embedding
  base-preserving, there is a unique induced morphism
  \[
    \overline{a}:\overline{p}_1\longrightarrow\overline{p}_2
  \]
  such that
  \[
    r_2\overline{a}=a r_1.
  \]

  It remains to check that \(\overline{a}\) is a morphism of triples, i.e., that
  it
  satisfies the condition on the \(X\)-coordinates. Since \(X\) is
  contravariant, we have
  \[
    X(\overline{a})(\overline{x}_2)
    =
    X(\overline{a})\bigl(X(r_2)(x_2)\bigr)
    =
    X(r_1)\bigl(X(a)(x_2)\bigr)
    =
    X(r_1)(x_1)
    =
    \overline{x}_1.
  \]
  Hence \(\overline{a}\) defines a morphism
  \[
    (\overline{p}_1,\overline{x}_1,\overline{e}_1)
    \longrightarrow
    (\overline{p}_2,\overline{x}_2,\overline{e}_2)
  \]
  in \(\mathcal{G}^{\mathrm{red}}_{X,q,L,k}\). The uniqueness of the induced
  morphism \(\overline{a}\) makes this assignment
  compatible with identities and composition, so \(\nu\) is a functor.

  If \(J:\mathcal{G}^{\mathrm{red}}_{X,q,L,k}\hookrightarrow
  \mathcal{G}_{X,q,L,k}\) denotes the inclusion, then \(\nu J=\id\). Moreover,
  the morphisms produced by the two reduction steps give a natural
  transformation
  \[
    J\nu\longrightarrow \id_{\mathcal{G}_{X,q,L,k}}.
  \]
  Therefore \(J\) induces a bijection on connected components: every object of
  \(\mathcal{G}_{X,q,L,k}\) is connected to a reduced object, and any zigzag
  between reduced objects may be sent by \(\nu\) to a zigzag entirely inside
  \(\mathcal{G}^{\mathrm{red}}_{X,q,L,k}\).

  It remains to prove the stated property of the reduced category. Let
  \[
    h:(p_1,x_1,e_1)\longrightarrow (p_2,x_2,e_2)
  \]
  be a morphism in \(\mathcal{G}^{\mathrm{red}}_{X,q,L,k}\). Since both
  embedding families are base-preserving, the morphism \(h:p_1\to p_2\) lies
  over the identity on the common base. Since
  \[
    e_2=(\iota(h)\times \id_L)\circ e_1
  \]
  and \(e_2\) is jointly surjective, every point of \(p_2\) lies in the image
  of \(h\). Thus \(h\) is surjective on total spaces. But \(h\) is already a
  fiberwise open embedding over the fixed base, so it is an isomorphism.
  Therefore every morphism in
  \(\mathcal{G}^{\mathrm{red}}_{X,q,L,k}\) is an isomorphism.

  We now show that between any two objects there is at most one morphism. Let
  \[
    h_1,h_2:(p_1,x_1,e_1)\longrightarrow (p_2,x_2,e_2)
  \]
  be two morphisms in \(\mathcal{G}^{\mathrm{red}}_{X,q,L,k}\). By definition,
  \[
    e_2=(\iota(h_1)\times \id_L)\circ e_1
    =(\iota(h_2)\times \id_L)\circ e_1.
  \]
  Let \(y_1\in p_1\). Since \(e_1\) is jointly surjective, there exist
  \(\ell\in L\) and \(y\in q\) such that \(e_{1,\ell}(y)=y_1\). Then
  \[
    h_1(y_1)
    =h_1(e_{1,\ell}(y))
    =e_{2,\ell}(y)
    =h_2(e_{1,\ell}(y))
    =h_2(y_1).
  \]
  Since this holds for every \(y_1\in p_1\), we conclude that \(h_1=h_2\).

  In particular, every automorphism is the identity. Since
  \(\mathcal{G}^{\mathrm{red}}_{X,q,L,k}\) is a groupoid with at most one
  morphism between any two objects, it is equivalent to the discrete category
  of its isomorphism classes. Explicitly, if
  \[
    \pi_0\bigl(\mathcal{G}^{\mathrm{red}}_{X,q,L,k}\bigr)
  \]
  denotes the set of isomorphism classes of objects, then the quotient functor
  \[
    Q:
    \mathcal{G}^{\mathrm{red}}_{X,q,L,k}\longrightarrow
    \pi_0\bigl(\mathcal{G}^{\mathrm{red}}_{X,q,L,k}\bigr)
  \]
  is essentially surjective by construction, and it is fully faithful because
  for any two objects there is exactly one morphism between them if they are
  isomorphic, and none otherwise.
  Now take \(X=\lambda^\infty F\). By
  \Cref{lem:LKE.raw.triples},
  \(\bigl(\mathfrak{I}_{d}F(q)\bigr)_{L,k}\) is the set of connected
  components of \(\mathcal{G}_{\lambda^\infty F,q,L,k}\), and
  \[
    (\lambda^\infty F)(p)_{L,k}=F(p)_k.
  \]
  The preceding argument therefore identifies this set with the isomorphism
  classes of the triples in the statement. The description of an isomorphism
  between two such triples follows directly from the definition of the
  morphisms in \(\mathcal{G}_{\lambda^\infty F,q,L,k}\) and their uniqueness
  in the reduced category.

  Finally, both reductions are defined using only \(e\): the first replaces
  the target by the pullback determined by the base map of \(e\), and the
  second replaces that pullback by the joint image of the resulting
  base-preserving family. The \(x\)-coordinate is only transported along the
  resulting morphism. Hence the reduction is independent of \(x\) and of
  \(k\), and the same argument applies to arbitrary \(X\).
\end{proof}

\begin{proposition}\label{prop:iota.left.quillen}
  The functor
  \[
    \iota_{!}^{\infty}:
    \psh(\fembnoniso{d},\CsSet)\longrightarrow
    \psh(\femb{d},\CsSet)
  \]
  is left Quillen for the \v{C}ech-local injective model structures.
\end{proposition}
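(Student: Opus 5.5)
We argue in two stages: first for the global injective model structures, then for the \v{C}ech localizations. The second stage is formal. Enriched left Kan extension \(\iota^\infty_!\) sends the enriched representable \(\Yo_p\) to \(\Yo_{\iota(p)}\). Since \(\iota\) is the identity on objects and on the underlying ordinary sites, and both sites carry the same coverage, \(\iota^\infty_!\) sends the covering-sieve morphism \(s_{\mathcal{U}}\colon c_{\mathcal{U}}\to\Yo_q\) in \(\psh(\fembnoniso{d},\CsSet)\) to the covering-sieve morphism of the same family in \(\psh(\femb{d},\CsSet)\). This follows from the coend formula for \(c_{\mathcal{U}}\) in \Cref{def:covering.sieve.morphism} and the fact that \(\iota^\infty_!\) commutes with colimits and copowers. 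Once the global statement is known, this preserves the localizing sets, so the universal property of enriched left Bousfield localization (\Cref{def:enriched.left.bousfield.localization}, \Cref{thm:enriched.left.bousfield.localization.concrete}) gives left Quillen functors between the \v{C}ech-local structures. This is the same argument as in \Cref{prop:lambda.left.quillen}. All objects are cofibrant, so no derived issues arise.

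For the global injective structures, we must show that \(\iota^\infty_!\) preserves monomorphisms and objectwise trivial cofibrations. The key input is the analysis in \Cref{prop:reduced.category.discrete}. For fixed \(q\in\femb{d}\), \(L\in\Cart\) and \(k\geq0\), the set \((\iota^\infty_! X)(q)_{L,k}\) is the set of connected components of the category of triples \(\mathcal{G}_{X,q,L,k}\), and the inclusion of the reduced category is a retract-type equivalence onto a discrete category. Moreover, the reduction depends only on the datum \(e\).

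We rewrite this using a Grothendieck construction. Let \(\mathcal{E}_{q,L}\) be the category of pairs \((p,e)\) with \(e\in\femb{d}(q,\iota(p))_L\), and let \(\mathcal{E}^{\mathrm{red}}_{q,L}\) be its reduced part, which is equivalent to a discrete set \(R_{q,L}\). The reduction functor \(\nu\) is independent of \(x\), and it is right adjoint-like to the inclusion (\(J\nu\to\id\) with \(\nu J=\id\)). From this we get a natural isomorphism
\[
(\iota^\infty_!X)(q)_{L,k}\cong\coprod_{[\bar p,\bar e]\in R_{q,L}}X(\bar p)_{L,k}.
\]
The point is that \(\nu\) is a right adjoint to the inclusion, so the colimit over \(\mathcal{E}_{q,L}\) is computed on the reduced subcategory, where each component is a contractible groupoid with trivial automorphisms. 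Equivalently, \(\iota^\infty_!X(q)\) is a coproduct, natural in \(X\), of values of \(X\) at chosen representatives. The smooth structure in \(L\) only reindexes which summand one lands in, and \(X(\bar p)\) is evaluated at \(L\) compatibly.

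Making this rigorous as a statement about smooth simplicial sets is the step I expect to be the main obstacle. The indexing set depends on \(L\), so the formula is not literally a coproduct in \(\CsSet\) but a "coproduct over a smooth set of components." To handle it, I would first verify preservation of monomorphisms levelwise in \((L,k)\) directly from the formula: a mono \(X\to Y\) induces, summand by summand, injections \(X(\bar p)_{L,k}\to Y(\bar p)_{L,k}\), and distinct summands remain disjoint. This settles cofibrations.

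For trivial cofibrations, I would avoid the \(L\)-dependence by using the right adjoint. It suffices to show that \(\iota^*_\infty\) preserves injective fibrations between injectively fibrant objects, or, more simply, to show that \(\iota^\infty_!\) preserves objectwise weak equivalences. For the latter, I would present \(\iota^\infty_!X(q)\) as a weighted colimit \(X\otimes_{\fembnoniso{d}}\femb{d}(q,\iota(-))\) and use the discreteness established above to identify it with a homotopy weighted colimit. Then \Cref{prop:weighted.colimit.quillen.bifunctor} and the Ken Brown lemma give invariance under weak equivalences. The identification requires that the diagram \(p\mapsto\femb{d}(q,\iota(p))\) behave projectively cofibrantly, which I would deduce from the freeness of the action exhibited by the trivial automorphisms of reduced triples.
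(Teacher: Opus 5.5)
Your treatment of monomorphisms and of the \v{C}ech step is fine; the latter is exactly the last step of the paper's proof. The trivial-cofibration step, however, has a genuine gap.

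For each fixed \(L\), the functor \(G_q=\femb{d}(q,\iota(-))\colon\fembnoniso{d}\to\CsSet\) is indeed free: \(G_q(-)_L\cong\coprod_{[\bar p,\bar e]\in R_{q,L}}\fembnoniso{d}(\bar p,-)\). But \(G_q\) is not projectively cofibrant, so \Cref{prop:weighted.colimit.quillen.bifunctor} cannot be applied to it. Freeness of reduced triples cannot show otherwise, because a reparametrization in the \(L\)-direction can coincide with a nontrivial automorphism of the target.

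Here is a counterexample. Take \(d=1\), \(q=((0,1)\to\RR^0)\), \(p=(\RR\to\RR^0)\), \(L=\RR\), \(e_\ell(x)=x+\ell\), \(a(\ell)=\ell+1\), and \(k(y)=y+1\). Then \(a^*e=(\iota(k)\times\id_L)\circ e\) with \(k\neq\id\).

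Now suppose \(G_q\) were a retract of a projective cell complex via \(r\circ i=\id\). In a cell complex, every element is uniquely of the form \(g_*w\) with \(w\) a cell generator. Restriction along an automorphism of \(L\) sends generators to generators whose \(\fembnoniso{d}\)-component is the identity. So the relation \(a^*i(e)=\iota(k)_*i(e)\), with \(i(e)=g_*w\), forces \(k\circ g=g\). The source of \(g\) has nonempty total space, since otherwise \(e=g_*r(w)\) would lie in an empty set. Hence \(k\) would have a fixed point, which is impossible. Consequently your argument does not show that \(\iota^\infty_!\) preserves objectwise \(\Sing^\infty\)-equivalences.

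The missing idea is to separate the two difficulties, as the paper does.

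First, the \(L\)-dependence of the indexing set is harmless for the injective model structure on \(\CsSet\) before localization at \(I_{\RR}\). There, cofibrations and weak equivalences are levelwise in \(L\), so your natural coproduct formula for fixed \((q,L)\) already shows that \(\iota^\infty_!\) is left Quillen for the global injective structures with those values.

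Second, pass to \(\RR\)-local values using \Cref{prop:smooth.model.as.localization}. The presheaf model structures are the localizations at the maps \((\Yo_L\otimes\RR)\otimes\Yo_p\to\Yo_L\otimes\Yo_p\). Since \(\iota^\infty_!\) preserves tensors and representables, it sends these to the corresponding localizing maps for \(\iota(p)\). Only after this does your \v{C}ech argument apply.

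Alternatively, your weighted-colimit route can be repaired by comparing \(X\otimes G_q\) with \(X\otimes Q_{\proj}G_q\) levelwise in \(L\). At that level, the freeness you established does suffice.
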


\begin{proof}
  Equip \(\CsSet=\spsh(\Cart)\) first with the injective model
  structure before localization at \(I_{\RR}\)
  (\Cref{prop:smooth.model.as.localization}), and equip the two presheaf
  categories with the resulting injective model structures. We first prove
  that \(\iota_{!}^{\infty}\) is left Quillen for these model structures.

  Let \(X\in \psh(\fembnoniso{d},\CsSet)\), and fix
  \(q\in \femb{d}\), \(L\in \Cart\), and \(k\geq 0\). By the pointwise coend
  formula for left Kan extension,
  \[
    \bigl((\iota_{!}^{\infty}X)(q)\bigr)_{L,k}
    \cong
    \int^{p\in \fembnoniso{d}}
    X(p)_{L,k}\times \femb{d}(q,\iota(p))_L.
  \]
  Let \(\mathcal{E}_{q,L}\) be the category whose objects are pairs
  \((p,e)\), where
  \( e\in \femb{d}(q,\iota(p))_L \), and a morphism
  \[
    (p_{1},e_{1}) \longrightarrow (p_{2}, e_{2})
  \]
  is a morphism \( h: p_1\to p_2 \) in \( \fembnoniso{d} \) such that the
  diagram
  \[
    \begin{tikzcd}
      q & {\iota(p_1)} \\
      & {\iota(p_2)}
      \arrow["{e_1}", from=1-1, to=1-2]
      \arrow["{e_2}"', from=1-1, to=2-2]
      \arrow["{\iota(h)}", from=1-2, to=2-2]
    \end{tikzcd}
  \]
  commutes. Write
  \[
    \mathcal{E}^{\mathrm{red}}_{q,L}
    \subseteq \mathcal{E}_{q,L}
  \]
  for the full subcategory on the pairs \((p,e)\) for which \(e\) is
  base-preserving and has joint image equal to \(p\). Write
  \[
    G_{X,q,L,k}: \mathcal{E}_{q,L}^{\op}\longrightarrow \Set
  \]
  for the functor defined on objects by
  \[
    G_{X,q,L,k}(p,e)\coloneqq X(p)_{L,k}.
  \]
  By the standard description of a coend as a colimit over the opposite of the
  Grothendieck construction, there is a natural isomorphism
  \[
    \bigl((\iota_{!}^{\infty}X)(q)\bigr)_{L,k}
    \cong
    \colim_{\mathcal{E}_{q,L}^{\op}} G_{X,q,L,k}.
  \]

  Let
  \[
    \mathcal{G}_{X,q,L,k}\coloneqq \int G_{X,q,L,k}
  \]
  denote the Grothendieck construction (\Cref{def:grothendieck.construction})
  of \(G_{X,q,L,k}\). By the set-valued
  special case of Thomason's theorem (\Cref{thm:thomason}), the colimit above
  may be identified with
  the set of connected components of the nerve of \(\mathcal{G}_{X,q,L,k}\):
  \[
    \colim_{\mathcal{E}_{q,L}^{\op}} G_{X,q,L,k}
    \cong
    \pi_0\bigl(\nerve(\mathcal{G}_{X,q,L,k})\bigr)
    \cong
    \pi_0(\mathcal{G}_{X,q,L,k}).
  \]
  Unwinding the definitions, \(\mathcal{G}_{X,q,L,k}\) is the category whose
  objects are triples
  \[
    (p,x,e),
    \qquad
    p\in \fembnoniso{d},\ x\in X(p)_{L,k},\ e\in \femb{d}(q,\iota(p))_L,
  \]
  and whose morphisms
  \[
    (p_1,x_1,e_1)\longrightarrow (p_2,x_2,e_2)
  \]
  are morphisms \( h: p_1\to p_2 \) in \( \fembnoniso{d} \) such that
  \[
    X(h)(x_2)=x_1,
    \qquad
    e_2=(\iota(h)\times \id_L)\circ e_1.
  \]
  Thus
  \[
    \bigl((\iota_{!}^{\infty}X)(q)\bigr)_{L,k}
    \cong
    \pi_0(\mathcal{G}_{X,q,L,k}).
  \]
  For \(X=\lambda^\infty F\), the objects of
  \(\mathcal{G}_{X,q,L,k}\) are precisely the raw triples considered in
  \Cref{lem:LKE.raw.triples}.

  Let
  \[
    \mathcal{G}^{\mathrm{red}}_{X,q,L,k}
  \]
  denote the full subcategory of \(\mathcal{G}_{X,q,L,k}\) on the triples for
  which \(e\) is base-preserving and has joint image equal to \(p\). By
  \Cref{prop:reduced.category.discrete}, the inclusion
  \[
    \mathcal{G}^{\mathrm{red}}_{X,q,L,k}
    \hookrightarrow
    \mathcal{G}_{X,q,L,k}
  \]
  induces a bijection on connected components, and
  \(\mathcal{G}^{\mathrm{red}}_{X,q,L,k}\) is equivalent to the discrete
  category of its isomorphism classes.

  Choose once and for all a skeleton
  \[
    S_{q,L}\subset \mathcal{E}^{\mathrm{red}}_{q,L}.
  \]
  We claim that every connected component of \(\mathcal{G}_{X,q,L,k}\) contains
  a unique object of the form
  \[
    (p_s,x_s,e_s),
    \qquad
    (p_s,e_s)\in S_{q,L},\ x_s\in X(p_s)_{L,k}.
  \]
  Existence follows from \Cref{prop:reduced.category.discrete}: every raw
  triple is connected to a reduced triple. Replacing the reduced embedding
  datum by its unique representative in the chosen skeleton \(S_{q,L}\) gives
  a representative of the displayed form.

  For uniqueness, suppose
  \[
    (p_s,x_s,e_s),\ (p_t,x_t,e_t)
  \]
  are two such objects lying in the same connected component of
  \(\mathcal{G}_{X,q,L,k}\). Since the inclusion
  \[
    \mathcal{G}^{\mathrm{red}}_{X,q,L,k}
    \hookrightarrow
    \mathcal{G}_{X,q,L,k}
  \]
  induces a bijection on connected components, these two objects lie in the
  same connected component of \(\mathcal{G}^{\mathrm{red}}_{X,q,L,k}\). Since
  \(\mathcal{G}^{\mathrm{red}}_{X,q,L,k}\) is equivalent to a discrete
  category, there is an isomorphism between them in the reduced triple
  category.

  In particular, \((p_s,e_s)\) and \((p_t,e_t)\) are isomorphic in
  \(\mathcal{E}^{\mathrm{red}}_{q,L}\). Since \(S_{q,L}\) is a skeleton, this
  implies \(s=t\). Thus both objects lie over the same reduced embedding datum
  \((p_s,e_s)\). Any morphism between them is then given by an endomorphism
  \(h:p_s\to p_s\) satisfying
  \[
    e_s=(\iota(h)\times \id_L)\circ e_s.
  \]
  By joint surjectivity of \(e_s\), this forces \(h=\id_{p_s}\). Hence the
  condition \(X(h)(x_t)=x_s\) becomes \(x_t=x_s\). Therefore the representative
  is unique.

  Consequently there is a natural bijection
  \[
    \bigl((\iota_{!}^{\infty}X)(q)\bigr)_{L,k}
    \cong
    \coprod_{(p_s,e_s)\in S_{q,L}} X(p_s)_{L,k}.
  \]
  This construction is compatible with the simplicial operators, since the
  reduction procedure and the skeleton \(S_{q,L}\) depend only on the embedding
  datum \(e\), not on the simplex \(x\) or the degree \(k\). Hence we obtain an
  isomorphism of simplicial sets
  \[
    (\iota_{!}^{\infty}X)(q)_L
    \cong
    \coprod_{(p_s,e_s)\in S_{q,L}} X(p_s)_L.
  \]

  It follows that for fixed \(q\in \femb{d}\) and \(L\in \Cart\), the functor
  \[
    X\longmapsto (\iota_{!}^{\infty}X)(q)_L
  \]
  is a coproduct of evaluation functors
  \[
    X\longmapsto X(p_s)_L.
  \]
  Coproducts in simplicial sets preserve monomorphisms and weak equivalences.
  Hence \(\iota_{!}^{\infty}\) preserves objectwise cofibrations and objectwise
  trivial cofibrations, so it is a left Quillen functor for the global
  injective model structures with values in injective model structure on
  \(\CsSet\).

  We next pass to the injective model structures with values in the
  \(\RR\)-local injective
  model structure on \(\CsSet\). By
  \Cref{prop:smooth.model.as.localization}, these are obtained from the
  preceding injective model structures by left Bousfield localization at the
  maps
  \[
    (\Yo_L\otimes\RR)\otimes\Yo_p
    \xrightarrow{(\id_{\Yo_L}\otimes{!})\otimes\id_{\Yo_p}}
    \Yo_L\otimes\Yo_p,
    \qquad
    p\in\mathcal{C},\quad L\in\Cart,
  \]
  where \(\mathcal{C}\) is either \(\fembnoniso{d}\) or
  \(\femb{d}\).

  Enriched left Kan extension preserves tensors and sends representables to
  representables. Therefore
  \[
    \iota_{!}^{\infty}
    \left(
      (\Yo_L\otimes\RR)\otimes\Yo_p
      \longrightarrow
      \Yo_L\otimes\Yo_p
    \right)
    \cong
    \left((\Yo_L\otimes\RR)\otimes\Yo_{\iota(p)}
      \longrightarrow
    \Yo_L\otimes\Yo_{\iota(p)}\right) .
  \]
  This is one of the corresponding localizing maps in the target category.
  The localization criterion~\cite[Proposition~3.3.18(1)]{Hir03} therefore shows
  that
  \(\iota_{!}^{\infty}\) is left Quillen for the global injective model
  structures with values in the \(\RR\)-local injective model structure on
  \(\CsSet\).

  We now pass to the localized injective model structures. By the universal
  property of left Bousfield localization
  (\Cref{def:enriched.left.bousfield.localization}), it is enough to show that
  the left
  derived functor (\Cref{def:derived.functors}) of \(\iota_{!}^{\infty}\) sends
  the localizing morphisms to
  weak equivalences in the target model category.

  Since \(\iota_{!}^{\infty}\) is left Quillen for the global injective model
  structures, its left derived functor is computed by applying
  \(\iota_{!}^{\infty}\) to cofibrant objects, or equivalently by composing
  \(\iota_{!}^{\infty}\) with a cofibrant replacement functor. But in the
  injective model structure all objects are cofibrant
  (\Cref{rem:injective.presheaves.cofibrant}). Hence the left derived
  functor of \(\iota_{!}^{\infty}\) is computed by \(\iota_{!}^{\infty}\)
  itself.

  The localized injective model structures are obtained by left Bousfield
  localization at the \v{C}ech maps
  \[
    c_{\mathcal{U}}\longrightarrow \Yo_p
  \]
  associated to covering families
  \(\mathcal{U}=\{\alpha_i:p_i\to p\}_{i\in I}\). The sites
  \(\fembnoniso{d}\) and \(\femb{d}\) have the same objects and the same
  covering families. Moreover, \(\iota_{!}^{\infty}\) sends representables to
  the corresponding representables, and it sends the sieve generated by
  \(\mathcal{U}\) in \(\fembnoniso{d}\) to the sieve generated by the same
  family in \(\femb{d}\). Hence the
  ordinary covering-sieve map \(s_{\mathcal{U}}^{0}\) is the same on both sides.
  By \Cref{def:covering.sieve.morphism}, cocontinuity of
  \(\iota_{!}^{\infty}\), and the enriched Yoneda lemma,
  \[
    \begin{aligned}
      \iota_{!}^{\infty}c_{\mathcal{U}}
      &\cong
      \iota_{!}^{\infty}
      \left(
        \int^{r\in\fembnoniso{d}}
        c_{\mathcal{U}}^{0}(r)\cdot\Yo_r
      \right)\\
      &\cong
      \int^{r\in\fembnoniso{d}}
      c_{\mathcal{U}}^{0}(r)\cdot\Yo_{\iota(r)}
      \cong
      c_{\mathcal{U}},
    \end{aligned}
  \]
  and
  \[
    \iota_{!}^{\infty}\Yo_p\cong\Yo_{\iota(p)}.
  \]
  Under these identifications,
  \(\iota_{!}^{\infty}(s_{\mathcal{U}})\) is the enriched covering-sieve
  morphism associated to the same covering family \(\mathcal{U}\). It is
  therefore a \v{C}ech-local weak equivalence in the target model structure.

  Therefore the global Quillen adjunction descends to the localized injective
  model structures by~\cite[Proposition~3.3.18(1)]{Hir03}. Hence
  \(\iota_{!}^{\infty}\) is left Quillen for the localized
  injective model structures.
\end{proof}

\begin{corollary}\label{cor:isotop.is.left.quillen}
  The isotopification functor
  \[
    \mathfrak{I}_d=\iota_{!}^{\infty}\lambda^\infty:
    \spsh(\fembnoniso{d})\longrightarrow
    \psh(\femb{d},\CsSet)
  \]
  is left Quillen for the \v{C}ech-local model structure.
\end{corollary}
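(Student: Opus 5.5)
The plan is to show that \(\mathfrak{I}_d\) is a composite of two left Quillen functors. By \Cref{def:isotopification.functor}, \(\mathfrak{I}_d=\iota_{!}^{\infty}\lambda^\infty\). We regard
\[
  \lambda^\infty:\spsh(\fembnoniso{d})_{\inj,\Cech}\longrightarrow\psh(\fembnoniso{d},\CsSet)_{\inj,\Cech}
\]
and
\[
  \iota_{!}^{\infty}:\psh(\fembnoniso{d},\CsSet)_{\inj,\Cech}\longrightarrow\psh(\femb{d},\CsSet)_{\inj,\Cech}=\field{d}.
\]
Here the source of the composite is \(\fieldnoniso{d}\) by \Cref{def:geometric.structures.non.isotopies}. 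The intermediate category carries the \v{C}ech-local injective model structure of \Cref{def:cech.local.injective.model.structure}, formed with \(\CsSet\) in its model structure from \Cref{prop:Cartesian.model.str.smooth.simp.sets}.

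First, \Cref{prop:lambda.left.quillen} states that \(\lambda^\infty\) is left Quillen for these \v{C}ech-local injective model structures. Second, \Cref{prop:iota.left.quillen} states that \(\iota_{!}^{\infty}\) is left Quillen between the \v{C}ech-local injective model structures. Since a composite of left Quillen functors preserves cofibrations and trivial cofibrations, \(\mathfrak{I}_d\) is left Quillen. Its right adjoint is the composite \(\ev_{\RR^0}\circ\iota^{*}_\infty\), which is exactly \((-)_{\RR^0}\) of \Cref{def:isotopification.functor}. This confirms that the identification \(\mathfrak{I}_d\cong\iota_{!}^{\infty}\lambda^\infty\) is compatible with the adjunction used to define \(\mathfrak{I}_d\).

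The only point requiring care is that the model structure on the middle category \(\psh(\fembnoniso{d},\CsSet)\) must be the same in both propositions. In \Cref{prop:lambda.left.quillen} it is the target, and in \Cref{prop:iota.left.quillen} it is the source. In both cases it is the \v{C}ech localization of the injective structure with values in the \(\RR\)-local model structure on \(\CsSet\). The auxiliary unlocalized structures appearing inside the proof of \Cref{prop:iota.left.quillen} are only intermediate steps, and the final statement there concerns the fully localized structures. Once this is checked, the corollary follows immediately, with no further obstacle.
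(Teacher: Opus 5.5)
Your proposal is correct and is essentially the paper's proof: \(\mathfrak{I}_d=\iota_{!}^{\infty}\lambda^\infty\) is the composite of the left Quillen functors from \Cref{prop:lambda.left.quillen} and \Cref{prop:iota.left.quillen}. Your two additional checks are sound but are not needed in the paper's one-line argument: the right adjoint is \(\ev_{\RR^0}\circ\iota^{*}_\infty=(-)_{\RR^0}\), and the \v{C}ech-local injective structure on \(\psh(\fembnoniso{d},\CsSet)\) is the same in both propositions.
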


\begin{proof}
  By \Cref{prop:iota.left.quillen}, \(\iota_{!}^{\infty}\) is left Quillen and
  by \Cref{prop:lambda.left.quillen}, \(\lambda^{\infty}\) is left
  Quillen. Therefore their composite is left Quillen.
\end{proof}

\subsection{The Cartesian subsite}

Having incorporated isotopies, we now restrict to Cartesian \(d\)-dimensional
families and prove that this smaller subsite retains the local homotopical
information carried by the full site.

\begin{definition}\label{def:fembcart}
  \textnormal{\cite[Definition~4.10]{GP24}}.
  The \emph{(Cartesian) site} \( \fembcart{d} \) is the full subcategory of
  \( \femb{d} \) (\Cref{def:femb}) on objects isomorphic to a projection
  \( \RR^{d}\times U \ra U \), with the coverage induced from \( \femb{d}
  \). We define the Cartesian site \( \fembcartnoniso{d} \) as the full
  subcategory of \( \fembnoniso{d} \) (\Cref{def:fembnoniso}) in the same way.
\end{definition}

\begin{lemma}\label{lem:femb.cover.by.fembcart}
  Every object of \( \femb{d} \) admits a
  covering family by objects of
  \( \fembcart{d} \). More precisely, for every \( p:T\to U \in
  \femb{d} \), there exists a covering
  family
  \[
    \mathcal{U}=\{(u_i:T_i\to U_i) \to (p:T\to U)\}_{i\in I}
  \]
  in \( \femb{d} \) such that each \( u_i \) lies in \( \fembcart{d}
  \). The same statement holds for \( \fembnoniso{d} \) and \(
  \fembcartnoniso{d} \).
\end{lemma}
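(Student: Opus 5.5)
The plan is to build the cover pointwise from local product charts of the submersion. Fix \(p:T\to U\) in \(\femb{d}\) and a point \(t\in T\), and put \(u=p(t)\). By the local normal form of submersions there are open neighbourhoods \(W_t\ni t\) in \(T\) and \(V_t\ni u\) in \(U\), together with a diffeomorphism
\[
  \phi_t:\RR^d\times V_t\longrightarrow W_t
\]
satisfying \(p\circ\phi_t=\pr_{V_t}\). The submersion chart first gives a product of a small open ball with some open set in \(U\). I will shrink the base factor to a small coordinate ball, so that \(V_t\in\Cart\), and identify the fibre ball with \(\RR^d\) by a fixed diffeomorphism.

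Next I will set \(T_t\coloneqq W_t\) and \(U_t\coloneqq V_t\), with \(u_t\coloneqq p|_{W_t}:W_t\to V_t\). The object \(u_t\) must satisfy the size convention of \Cref{def:fembnoniso}. It does, because the underlying map of \(u_t\) is the restriction of the given projection \(\RR^m\times U\to U\) to \(W_t\subset T\subset\RR^m\times U\), and \(W_t\) lies over \(V_t\). The chart \(\phi_t\) is an isomorphism from the standard projection \(\RR^d\times V_t\to V_t\) to \(u_t\), so \(u_t\in\fembcart{d}\). Concretely, \(\phi_t\) is a fibrewise diffeomorphism over \(\id_{V_t}\), hence a morphism of \(\fembnoniso{d}\) whose inverse is also a morphism.

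The morphism \(u_t\to p\) is the pair of inclusions \((W_t\hookrightarrow T,\ V_t\hookrightarrow U)\). Both maps are open embeddings, the evident square commutes, and \(W_t\to V_t\times_U T=p^{-1}(V_t)\) is an open inclusion. It is therefore a fibrewise open embedding and a morphism of \(\fembnoniso{d}\), and via \(\iota\) also an \(\RR^0\)-point of \(\femb{d}\). The family \(\{W_t\}_{t\in T}\) covers \(T\) and is therefore an open cover in \(\Man\). The definition does not require the \(V_t\) to cover \(U\), so this is a covering family in both sites.

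The only point needing care is the smooth-manifold bookkeeping. One step is checking that the shrunk base \(V_t\) is a genuine object of \(\Cart\): it must be diffeomorphic to some \(\RR^n\) and set-theoretically contained in some \(\RR^{m'}\). The latter follows because \(V_t\subset U\) and \(U\) already satisfies the convention. The other step is checking that the \(\RR^d\)-factor can be arranged while keeping \(W_t\) open in \(T\). Beyond that, the argument is identical for \(\fembnoniso{d}\) and \(\femb{d}\), since the two sites have the same objects and coverage, and \(\fembcart{d}\) and \(\fembcartnoniso{d}\) are defined by the same isomorphism condition.
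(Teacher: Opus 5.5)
Your proposal is correct and follows the same route as the paper: local product charts of the submersion \(p\) give restrictions \(p|_{W_t}:W_t\to V_t\) isomorphic to Cartesian projections, whose inclusions into \(p\) are fiberwise open embeddings with \(\{W_t\}\) covering \(T\). Your extra checks of the size convention, of \(V_t\in\Cart\), and of the fiberwise-open-embedding condition fill in details that the paper leaves implicit.
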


\begin{proof}
  Let \( p:T\to U \) be an object of \( \femb{d} \). Since \( p \) is a
  submersion with \( d \)-dimensional fibers and \( U \in \Cart \),
  every point
  \( x\in T \) admits an open neighborhood \( V_x\subseteq T \) and an open
  neighborhood \( U_x\subseteq U \) of \( p(x) \) such that
  \[
    u_x\coloneqq p\restriction_{V_x}:V_x\longrightarrow U_x
  \]
  is isomorphic, over \( U_x \), to the projection
  \[
    \mathbb{R}^{d}\times U_x\longrightarrow U_x.
  \]
  The maps
  \[
    u_x\longrightarrow p
  \]
  are fiberwise open embeddings. Since the open subsets \(V_x\subset T\) cover
  \(T\), these maps form a covering family of \(p\) by objects of
  \(\fembcart{d}\). The statement without isotopies is proved in the same way.
\end{proof}

\begin{lemma}\label{lem:femb.enough.points.hypercomplete}
  The \v{C}ech-local injective model
  structures on
  \[
    \psh(\femb{d},\CsSet),
    \qquad
    \psh(\fembcart{d},\CsSet),
    \qquad
    \psh(\fembnoniso{d},\sset),
    \qquad
    \psh(\fembcartnoniso{d},\sset)
  \]
  are hypercomplete (\Cref{rem:hypercomplete.sites}).
\end{lemma}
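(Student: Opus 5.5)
I will show that every Čech-local (fibrant) object satisfies hyperdescent, site by site. The plan rests on two facts: the sites are locally finite-dimensional, and each object has a basis of covers by Cartesian families. Together these give a uniform bound on cohomological dimension, and that bound forces hypercompleteness.

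I begin with the two unenriched cases, \(\spsh(\fembnoniso{d})\) and \(\spsh(\fembcartnoniso{d})\), since the enriched ones will reduce to them. The first step is to identify the sheaf topos. A covering family in \(\fembnoniso{d}\) is an open cover of the total space \(T\), with no condition on the base. So for fixed \(p:T\to U\), the sieves on \(p\) generated by covering families correspond to sieves of open subsets of \(T\). Every object of \(\fembnoniso{d}\) is a manifold \(T\) of dimension \(d+\dim U\) with a finite-dimensional base, and covers only involve open subsets of \(T\).

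The second step is to show that \(\infty\)-sheaves on such a site are hypercomplete. I will invoke Lurie's theorem that the \(\infty\)-topos of sheaves on a paracompact space of finite covering dimension \(n\) is locally of homotopy dimension \(\le n\), hence hypercomplete. Rather than applying it literally to the whole site, I will argue objectwise. Consider a Čech-local \(X\), a hypercover \(U_\bullet\to \Yo_p\), and the slice. The restriction of \(X\) to the small site of open subsets of \(T\) (via open subfamilies \(T'\subseteq T\) over \(U\)) is a sheaf on the manifold \(T\). The hypercover restricts to a hypercover in \(\mathrm{Sh}(T)\). Since \(T\) has finite covering dimension, descent holds along it.

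The main obstacle is that the morphisms \(V\to p\) appearing in a hypercover need not be open subfamilies: they may carry nontrivial base maps. To handle this, I will use the reduction of \Cref{prop:reduced.category.discrete}. Every morphism to \(p\) factors canonically through a pullback \(g^*p\) followed by a base-preserving open embedding. The subcategory of base-preserving open subfamilies of \(p\) is therefore coinitial in the relevant slices, by the same argument as in that proposition. This coinitiality will let me replace the hypercover by one whose levels are open subfamilies of \(p\), without changing the homotopy limit.

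For the Cartesian subsites, \Cref{lem:femb.cover.by.fembcart} and the comparison lemma identify their Čech sheaves with those on the full sites. The hypercover statement transfers via \Cref{lem:split.hypercovers.from.basis}, which produces hypercovers with levels built from \(\fembcartnoniso{d}\).

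For the \(\CsSet\)-enriched cases, I will reduce to the simplicial ones using \Cref{prop:lambda.left.quillen} and its analogue for \(\femb{d}\). The enriched covering sieves are the \(\lambda^\infty\)-images of the unenriched ones (see \Cref{lem:femb.enriched.covering.sieve}). An enriched presheaf \(X\) is Čech-local exactly when, for each \(L\in\Cart\) and each \(k\), the simplicial presheaf obtained by evaluating on \(\Yo_L\)-tensored representables satisfies ordinary Čech descent on the underlying site \(\femb{d,\RR^0}=\fembnoniso{d}\). The same holds for hyperdescent, with hypercovers likewise tensored. Hence hypercompleteness in the enriched setting follows from the unenriched case, once one checks that the mapping objects \(\mathbb{R}\Map(\Yo_L\otimes U_n,X)\) commute with the homotopy limits involved, which holds because \(\Map(\Yo_L\otimes -, X)\) is a right Quillen cotensor.
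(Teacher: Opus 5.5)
Your reductions for the Cartesian subsites (via the comparison lemma) and for the enriched cases (via \(\iota^{\infty}_{!}\), which carries ordinary covering sieves and hypercovers to the enriched ones) are reasonable in outline. It is also true that a \v{C}ech-local \(X\), restricted to the base-preserving open subfamilies of \(p\colon T\to U\), is a sheaf on the manifold \(T\) and so satisfies hyperdescent there.

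The argument breaks when you pass from the big site \(\fembnoniso{d}\) to this small site. Hyperdescent along \(U_\bullet\to\Yo_p\) concerns \(\holim_n\prod_\alpha X(u_{n,\alpha})\), where the maps \(u_{n,\alpha}\to p\) are arbitrary morphisms of \(\fembnoniso{d}\). The values \(X(u_{n,\alpha})\) are not values of \(X|_{\Open(T)}\), and they are not determined by it. The factorization in \Cref{prop:reduced.category.discrete} is \(u\to g^{*}p\to p\). Only the first arrow is a base-preserving open embedding; \(g^{*}p\to p\) is a base change, not an open subfamily of \(p\). For example, take the sheaf \(X(T\to U)=\mathrm{C}^{\infty}(T,\RR)\) and a morphism \(u\to p\) with constant base map. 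Then \(X(u)=\mathrm{C}^{\infty}(\RR^{d}\times U_u,\RR)\), which has nothing to do with functions on open subsets of \(T\).

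The coinitiality you claim is also false. If \(u\to p\) has constant base map and \(\dim U>0\), no open subfamily of \(p\) maps to \(u\) over \(p\), so the relevant comma category is empty. The dual cofinality does hold, but vacuously: both categories have the terminal object \(\id_p\), and it only recovers \(X(p)\). So replacing \(U_\bullet\) by a hypercover with open levels changes the terms of the homotopy limit, not just its indexing.

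What is missing is a way to transport \(\infty\)-connectivity, rather than individual hypercovers, from the big site to the small ones. One route is to show that \(\dom_p\colon\Open(p)\to\fembnoniso{d}\) is both continuous and cocontinuous.
\begin{itemize}
\item Cocontinuity holds because every covering family of an object of \(\Open(p)\) consists of open subfamilies, which again lie in \(\Open(p)\).
\item Given both properties, restriction of sheaves is the inverse image of a geometric morphism, so it preserves \(\infty\)-connective morphisms.
\item Sheaves on \(\Open(p)\) are equivalent to sheaves on the finite-dimensional manifold \(T\), hence hypercomplete. So every \(\infty\)-connective \(f\) restricts to an equivalence there, and evaluating at \(\id_p\) shows that \(f(p)\) is an equivalence for every \(p\).
\end{itemize}

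Continuity is not automatic. Intersections of base opens need not be Cartesian. The poset of members of the restricted sieve through which a given family factors can even be disconnected. This happens, for instance, when two cover members have Cartesian bases in \(\RR^3\) whose intersection is \(\RR^3\) minus a line, and the base image of the family winds around that line. So the comparison of sieves holds only stalkwise, not objectwise.

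There is also no uniform dimension bound, since \(\dim U\) is unbounded; finite-dimensionality can only be used object by object in this way.

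The paper does not carry out any of this itself. It proves the lemma by observing that the argument of GM26, Theorems~5.8--5.9, applies verbatim to all four sites.
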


\begin{proof}
  The argument presented in \cite[Theorem~5.8~and~Theorem~5.9]{GM26} applies
  verbatim to all sites above.
\end{proof}

The following three statements generalize
\cite[Proposition~3.1.1]{GP23}.

\begin{lemma}\label{lem:femb.stalk.local.trivialization}
  Let \(d\geq0\), and recall the enriched site \(\femb{d}\) from
  \Cref{def:femb}. For \(p:T\to U\in\femb{d}\), let \(\Open(p)\) be the
  category of ordinary morphisms \(v=(v_T,v_U):r\to p\) whose total-space
  and base-space components are open embeddings, with morphisms the
  commutative triangles over \(p\). For \(x\in T\), let \(\Open(p)_x\) be
  the full subcategory on the morphisms for which \(x\in v_T(T_r)\). For
  \(X\in\psh(\femb{d},\CsSet)\), define
  \[
    X_{(p,x)}
    \coloneqq
    \colim_{(v:r\to p)\in\Open(p)_x^{\op}}X(r).
  \]
  The category \(\Open(p)_x^{\op}\) is filtered. After choosing a local
  trivialization of \(p\) at \(x\), write
  \(p^{p,x}_{\delta,\varepsilon}\) for the restriction of \(p\)
  corresponding to the Cartesian submersion
  \[
    B^d_\delta\times B^n_\varepsilon\longrightarrow B^n_\varepsilon.
  \]
  Then there is a natural isomorphism
  \[
    X_{(p,x)}
    \cong
    \colim_{\delta,\varepsilon\to0}
    X\bigl(p^{p,x}_{\delta,\varepsilon}\bigr).
  \]
  The same assertion holds for \(\fembcart{d}\), \(\fembnoniso{d}\), and
  \(\fembcartnoniso{d}\).
\end{lemma}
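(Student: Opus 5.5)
The plan has two parts. First I show that \(\Open(p)_x^{\op}\) is filtered. Then I show that the subdiagram of standard product neighbourhoods \(p^{p,x}_{\delta,\varepsilon}\) is cofinal in it, so that the two colimits agree.

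\emph{Filteredness.} Morphisms in \(\Open(p)\) are commutative triangles over \(p\) whose legs are open embeddings on total and base spaces. A morphism \(r\to r'\) over \(p\) is therefore determined by the two open embeddings into \(p\): it is unique if it exists, and it exists exactly when \(v_T(T_r)\subseteq v'_T(T_{r'})\) and \(v_U(U_r)\subseteq v'_U(U_{r'})\). This uses that the base map of any \(L\)-family is independent of \(L\) and that open embeddings are monomorphisms. Hence \(\Open(p)_x\) is equivalent to a preorder, and filteredness of its opposite reduces to showing that any two objects \(r_1,r_2\) have a common lower bound containing \(x\). I will take \(T_{r_1}\cap T_{r_2}\), restricted over the base \(U_{r_1}\cap U_{r_2}\), and shrink it. The intersection of the bases may fail to be Cartesian, so I pass to a small coordinate ball around \(p(x)\) inside it, and then to the part of the total space over that ball. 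The result is again an object of \(\femb{d}\) (an open subfamily of \(p\), still satisfying the projection size condition) and it contains \(x\).

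\emph{Cofinality.} I fix a local trivialization \(\phi\) of \(p\) near \(x\), identifying a neighbourhood of \(x\) over a neighbourhood of \(p(x)\) with \(\RR^d\times\RR^n\to\RR^n\), with \(x\) sent to \((0,0)\). The objects \(p^{p,x}_{\delta,\varepsilon}\) form an \(\NN^{\op}\)-indexed chain (say \(\delta=\varepsilon=1/k\)) inside \(\Open(p)_x\). They are objects of \(\fembcart{d}\), since \(B^d_\delta\times B^n_\varepsilon\to B^n_\varepsilon\) is isomorphic to \(\RR^d\times\RR^n\to\RR^n\). For cofinality of \(\NN\to\Open(p)_x^{\op}\), I check two things. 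First, every \(v:r\to p\) in \(\Open(p)_x\) contains some \(p^{p,x}_{\delta,\varepsilon}\): since \(v_T(T_r)\) is an open neighbourhood of \(x\) and \(v_U(U_r)\) is an open neighbourhood of \(p(x)\), for small \(\delta,\varepsilon\) the box \(\phi^{-1}(B^d_\delta\times B^n_\varepsilon)\) lies in \(v_T(T_r)\) over \(B^n_\varepsilon\subseteq v_U(U_r)\). Second, the relevant comma categories are connected, which is automatic because the category is a preorder and the chain is totally ordered. Filtered cofinality then gives \(X_{(p,x)}\cong\colim X(p^{p,x}_{\delta,\varepsilon})\). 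Naturality in \(X\) holds because both colimits are functorial in \(X\) and the comparison map is induced by restriction of the index category. Independence of the choice of trivialization follows because any two such chains are mutually cofinal.

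\emph{Expected obstacle.} The main care is needed in verifying that the proposed lower bounds and boxes are genuine objects of the respective sites. They must have bases in \(\Cart\), be submersions with \(d\)-dimensional fibres, and satisfy the projection size convention of \Cref{def:fembnoniso}. They must also lie in \(\Open(p)\), meaning both components are open embeddings. For the size condition I will realize each new object as a restriction of \(p\) to an open subset of its total space, which inherits the projection from \(\RR^m\times U\), so the condition is preserved. The Cartesian variants \(\fembcart{d}\) and \(\fembcartnoniso{d}\) use the same argument, because the boxes are Cartesian families and therefore give a cofinal chain there too. The variants without isotopies are literally the same argument, since only ordinary morphisms enter \(\Open(p)\).
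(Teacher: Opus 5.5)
Your proposal is correct and follows essentially the same route as the paper. Filteredness comes from a shrunken common refinement of two open neighbourhoods of \(x\), and the conclusion comes from finality of the shrinking trivialized boxes \(p^{p,x}_{\delta,\varepsilon}\), since every object of \(\Open(p)_x\) contains a sufficiently small box. The differences are cosmetic: you use the diagonal chain \(\delta=\varepsilon=1/k\) instead of the two-parameter poset \(I_x\), and you state explicitly that \(\Open(p)_x\) is a preorder, which makes the connectedness of the comma categories automatic (the paper leaves this implicit).
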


\begin{proof}
  The category \(\Open(p)_x^{\op}\) is filtered because the fiber product
  over \(p\) of two open morphisms containing \(x\) contains a smaller open
  morphism containing \(x\) which maps to both.

  Choose open neighborhoods \(x\in V_x\subseteq T\) and
  \(p(x)\in U_x\subseteq U\), an integer \(n\geq0\), positive real numbers
  \(\delta_0,\varepsilon_0>0\), and an isomorphism
  \[
    \phi_x:
    \bigl(p\restriction_{V_x}:V_x\to U_x\bigr)
    \xrightarrow{\ \cong\ }
    \bigl(\pr_2:B^d_{\delta_0}\times B^n_{\varepsilon_0}
    \to B^n_{\varepsilon_0}\bigr)
  \]
  sending \(x\) to \((0,0)\). Write \(\phi_{x,T}\) and \(\phi_{x,U}\) for
  the total-space and base-space components of \(\phi_x\). Let \(I_x\) be
  the poset of pairs \((\delta,\varepsilon)\) with
  \(0<\delta<\delta_0\) and \(0<\varepsilon<\varepsilon_0\), ordered
  coordinatewise. For each \((\delta,\varepsilon)\in I_x\), let
  \(B^{p,x}_{\delta,\varepsilon}\subseteq V_x\) be the inverse image of
  \(B^d_\delta\times B^n_\varepsilon\) under \(\phi_{x,T}\), let
  \[
    U^{p,x}_\varepsilon\coloneqq\phi_{x,U}^{-1}(B^n_\varepsilon),
  \]
  and let
  \[
    p^{p,x}_{\delta,\varepsilon}:
    B^{p,x}_{\delta,\varepsilon}\longrightarrow U^{p,x}_\varepsilon
  \]
  be the restricted submersion. The assignment
  \[
    (\delta,\varepsilon)\longmapsto
    (p^{p,x}_{\delta,\varepsilon}\to p)
  \]
  defines an initial functor \(I_x\to\Open(p)_x\). Indeed, for every
  \(v:r\to p\) in \(\Open(p)_x\), sufficiently small \(\delta\) and
  \(\varepsilon\) give a morphism
  \[
    p^{p,x}_{\delta,\varepsilon}\longrightarrow r
  \]
  over \(p\). Any two such choices admit a common choice with smaller fiber
  and base radii. Hence
  \[
    I_x^{\op}\longrightarrow\Open(p)_x^{\op}
  \]
  is final, and therefore
  \begin{equation}\label{eq:femb.fiberwise.stalk.local.trivialization}
    X_{(p,x)}
    \cong
    \colim_{(\delta,\varepsilon)\in I_x^{\op}}
    X(p^{p,x}_{\delta,\varepsilon}).
  \end{equation}
  The other cases are identical.
\end{proof}

\begin{proposition}\label{prop:femb.stalk.left.quillen}
  Let \(d\geq0\). Equip \(\psh(\femb{d},\CsSet)\) with the
  \v{C}ech-local injective model structure of
  \Cref{def:cech.local.injective.model.structure}, formed using the enriched
  covering-sieve morphisms of \Cref{def:covering.sieve.morphism}. For
  \(p:T\to U\in\femb{d}\) and \(x\in T\), let
  \[
    (-)_{(p,x)}:
    \psh(\femb{d},\CsSet)_{\inj,\Cech}\longrightarrow\CsSet
  \]
  be the fiberwise stalk functor defined by
  \[
    X_{(p,x)}
    =
    \colim_{(v:r\to p)\in\Open(p)_x^{\op}}X(r),
  \]
  where \(\Open(p)_x\) is defined in
  \Cref{lem:femb.stalk.local.trivialization}. Then \((- )_{(p,x)}\) is left
  Quillen. The analogous statement holds for \(\fembcart{d}\) and, with
  \(\CsSet\) replaced by \(\sset\), for \(\fembnoniso{d}\) and
  \(\fembcartnoniso{d}\), using their corresponding \v{C}ech-local injective
  model structures.
\end{proposition}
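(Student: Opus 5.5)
The plan is to prove the statement in two stages: first for the global injective model structure, and then for the \v{C}ech localization, using the universal property of left Bousfield localization.

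\emph{Global stage.} The functor \((-)_{(p,x)}\) is a colimit over \(\Open(p)_x^{\op}\) of evaluation functors \(X\mapsto X(r)\). Each evaluation functor has a right adjoint. Explicitly, for \(K\in\CsSet\), the right adjoint sends \(K\) to the presheaf \(r'\mapsto\) the cotensor of \(K\) by the weight \(\colim_{v}\femb{d}(r',r)\). Hence the stalk functor is itself a left adjoint, with right adjoint the corresponding skyscraper presheaf. Since \(\Open(p)_x^{\op}\) is filtered (\Cref{lem:femb.stalk.local.trivialization}), the colimit is filtered. Filtered colimits in \(\CsSet=\spsh(\Cart)\) preserve monomorphisms. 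They also preserve weak equivalences of the \(\RR\)-local model structure, because \(\Sing^\infty\) commutes with filtered colimits and filtered colimits of simplicial sets preserve weak equivalences. Injective cofibrations and injective trivial cofibrations are objectwise, so \((-)_{(p,x)}\) preserves both, and it is left Quillen for the unlocalized injective structure.

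\emph{Localization stage.} Every object is cofibrant, so the functor is its own left derived functor. By \cite[Proposition~3.3.18(1)]{Hir03}, it therefore suffices to show that each covering-sieve morphism \(s_{\mathcal{U}}:c_{\mathcal{U}}\to\Yo_q\) is sent to a weak equivalence in \(\CsSet\). Since the \v{C}ech-local structure is enriched, one should also check the tensored versions \(K\otimes s_{\mathcal{U}}\). However, the stalk functor commutes with tensors by \(K\in\CsSet\), and \(K\otimes-\) preserves weak equivalences in the Cartesian model category \(\CsSet\), so this reduces to \(s_{\mathcal{U}}\) itself. By \Cref{lem:femb.enriched.covering.sieve}, \(c_{\mathcal{U}}\) is the subpresheaf of \(\Yo_q\) consisting of families that factor through some member of \(\mathcal{U}\). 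Taking the stalk, for fixed \(L\in\Cart\) an \(L\)-family \(\sigma:r\to q\) with \(x\in r\) represents an element of \((\Yo_q)_{(p,x)}(L)\). The task is to show that, after shrinking \(r\) around \(x\) to some \(p^{p,x}_{\delta,\varepsilon}\) as in \Cref{lem:femb.stalk.local.trivialization}, \(\sigma\) factors through a single member of \(\mathcal{U}\). If this holds, then \((s_{\mathcal{U}})_{(p,x)}\) is an isomorphism, in particular a weak equivalence.

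\emph{Main obstacle.} This factorization step is where I expect the difficulty, because \(\sigma\) is an \(L\)-parametrized family and \(L\) is noncompact. As \(\ell\) varies, the image points \(\sigma_\ell(x)\) may pass through different cover members, so a single member need not contain all of them. An isomorphism of stalks is therefore too much to ask. Instead I would aim for a weak equivalence in \(\CsSet\). Regard \(\ell\mapsto\sigma_\ell(x)\) as a smooth map from \(L\) to the total space of \(q\), and pull back the open cover \(\{T_i\}\) along it to obtain an open cover of \(L\). Then \((c_{\mathcal{U}})_{(p,x)}\) is identified, as a smooth set, with the union over this cover, in a form that is \(\RR\)-locally equivalent to \((\Yo_q)_{(p,x)}\). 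Concretely, the comparison map is locally surjective on \(\Cart\), meaning surjective after refining \(L\) by a good open cover. Moreover, its \v{C}ech nerve is a levelwise subsheaf inclusion, and evaluated on \(\Delta^n_{\mathrm{e}}\) it computes \(\Sing^\infty\) of an open cover of a manifold. Applying \(\Sing^\infty\) together with the smooth homotopy invariance encoded in \Cref{prop:smooth.model.as.localization} then gives the weak equivalence.

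If this route proves too delicate, the fallback is to use \Cref{lem:femb.enough.points.hypercomplete} together with a local-acyclic-fibration characterization in \(\CsSet\) (monomorphisms that are locally surjective are weak equivalences). The analogous statements for \(\fembcart{d}\), \(\fembnoniso{d}\) and \(\fembcartnoniso{d}\) follow verbatim. In the unenriched cases the \(L\)-issue disappears, since \(L=\RR^0\), and the stalk map is then literally an isomorphism.
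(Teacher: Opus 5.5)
Your proposal is correct and is essentially the paper's proof: the global stage is the same filtered-colimit argument, and your fallback for the localization stage is what the paper does. The paper shows that the stalk of \(s_{\mathcal{U}}\) is injective (by \Cref{lem:femb.enriched.covering.sieve} and filteredness) and becomes surjective after also taking stalks at \(\ell\in L\) in the \(\Cart\)-variable, so it is a stalkwise isomorphism of presheaves on \(\Cart\) and hence a weak equivalence in \(\CsSet\); the input there is this fact about \(\Cart\), not \Cref{lem:femb.enough.points.hypercomplete}.

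The one point to fix is your justification of local surjectivity. The members of the cover of \(L\) pulled back along \(\ell\mapsto\sigma_\ell(x)\) do not by themselves give factorizations, because no single neighborhood of the point of \(r\) over \(x\) need work uniformly over a noncompact member. You must instead shrink \(r\) around that point and the parameter neighborhood of \(\ell\) simultaneously, using joint continuity of the family at that point. Drop the \v{C}ech-nerve and ``open cover of a manifold'' heuristic of your main route in favor of the fallback.
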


\begin{proof}
  We prove the statement for \(\femb{d}\). The other cases are analogous,
  replacing \(\CsSet\) by \(\sset\) in the unenriched cases.

  Fix \(p:T\to U\in\femb{d}\). Equip \(\Open(p)\) with the coverage induced
  from \(\fembnoniso{d}\). The identity of \(p\) is a terminal object of
  \(\Open(p)\). Let
  \[
    \dom_p:\Open(p)\longrightarrow\femb{d},
    \qquad (v:r\to p)\longmapsto r.
  \]

  For \(x\in T\), define
  \[
    x^*(Z)\coloneqq
    \colim_{(v:r\to p)\in\Open(p)_x^{\op}}Z(v:r\to p)
  \]
  for \(Z\in\psh(\Open(p),\CsSet)\). By definition,
  \begin{equation}\label{eq:femb.fiberwise.stalk}
    X_{(p,x)}
    =x^*(\dom_p^*X)
    =\colim_{(v:r\to p)\in\Open(p)_x^{\op}}X(r).
  \end{equation}

  By \Cref{lem:femb.stalk.local.trivialization}, the category
  \(\Open(p)_x^{\op}\) is filtered.

  Before localization, restriction along \(\dom_p\) preserves
  objectwise cofibrations and objectwise weak equivalences. Filtered colimits in
  \(\CsSet\) preserve monomorphisms and weak equivalences, since
  \(\Sing^\infty\) detects weak equivalences
  (\Cref{prop:Cartesian.model.str.smooth.simp.sets}) and commutes with filtered
  colimits. Hence the composite
  \[
    (-)_{(p,x)}=x^*\dom_p^*:
    \psh(\femb{d},\CsSet)_{\inj,\Cech}
    \longrightarrow
    \CsSet
  \]
  is left Quillen for the global injective model structures.

  Let \(\mathcal{U}=\{\alpha_i:q_i\to q\}_{i\in I}\) be a covering family in
  \(\femb{d}\). Applying \((- )_{(p,x)}\) to the associated enriched
  covering-sieve morphism of \Cref{def:covering.sieve.morphism} gives
  \begin{equation}\label{eq:dom.restricted.covering.sieve.source.stalk}
    \colim_{(v:r\to p)\in\Open(p)_x^{\op}}c_{\mathcal{U}}(r)
    \longrightarrow
    \colim_{(v:r\to p)\in\Open(p)_x^{\op}}\femb{d}(r,q)
  \end{equation}
  in \(\CsSet\). Fix \(L\in\Cart\) and \(\ell\in L\). Taking the stalk at
  \(\ell\) gives
  \begin{equation}\label{eq:dom.restricted.covering.sieve.double.stalk}
    \begin{aligned}
      &\colim_{\ell\in K\subseteq L}
      \colim_{(v:r\to p)\in\Open(p)_x^{\op}}
      c_{\mathcal{U}}(r)(K)\\
      &\hspace{25mm}\longrightarrow
      \colim_{\ell\in K\subseteq L}
      \colim_{(v:r\to p)\in\Open(p)_x^{\op}}
      \femb{d}(r,q)(K),
    \end{aligned}
  \end{equation}
  where \(K\) ranges over Cartesian open neighborhoods of \(\ell\) in \(L\).
  We claim that \Cref{eq:dom.restricted.covering.sieve.double.stalk} is a
  bijection.

  For surjectivity, an element on the right is represented by a pair
  consisting of an open morphism \(v:r\to p\) and a \(K\)-family
  \(\sigma:r\to q\), where \(K\subseteq L\) is a Cartesian neighborhood of
  \(\ell\). Since \(v_T\) is an open embedding and \(x\) belongs to its
  image, there is a unique \(y\in T_r\) such that \(v_T(y)=x\). We then
  evaluate the family \(\sigma\) at \((y,\ell)\).

  Write \(r:S\to V\), \(q_i:T_i\to U_i\), and \(q:T\to U\). For every
  \(i\in I\), write
  \[
    \alpha_i=(\alpha_{i,T},\alpha_{i,U}),
    \qquad
    \alpha_{i,T}:T_i\longrightarrow T,
    \qquad
    \alpha_{i,U}:U_i\longrightarrow U.
  \]
  Write the total-space map of the family and its base-space component as
  \[
    \widetilde{\sigma}_T:S\times K\longrightarrow T\times K,
    \qquad
    \sigma_U:V\longrightarrow U.
  \]
  By the definition of a \(K\)-family, \(\widetilde{\sigma}_T\) is a map
  over \(K\). Hence there is a map \(\sigma_T:S\times K\to T\) such that
  \[
    \widetilde{\sigma}_T(s,k)=(\sigma_T(s,k),k).
  \]
  The map \(\sigma_U\) is independent of the parameter in \(K\).

  The open subsets \(\alpha_{i,T}(T_i)\) cover \(T\), so choose \(i\in I\) such
  that \(\sigma_T(y,\ell)\) belongs to \(\alpha_{i,T}(T_i)\). It follows from
  the commutative square defining
  \(\sigma\) that \(\sigma_U(r(y))\) belongs to \(\alpha_{i,U}(U_i)\). Choose
  local coordinates around \(y\) in which the submersion \(r:S\to V\) is a
  projection. Since the images of \(\alpha_{i,T}\) and \(\alpha_{i,U}\) are open
  and
  \(\sigma_T\) and \(\sigma_U\) are continuous, there are a smaller open
  morphism \(r'\to r\) whose total-space image contains \(y\) and a smaller
  Cartesian neighborhood \(K'\subseteq K\) containing \(\ell\) such that
  \[
    \sigma_T(T_{r'}\times K')
    \subseteq \alpha_{i,T}(T_i),
    \qquad
    \sigma_U(U_{r'})\subseteq \alpha_{i,U}(U_i).
  \]
  Therefore the restricted family has a unique factorization
  \[
    \sigma|_{r'\times K'}
    =\alpha_i\widetilde\sigma
  \]
  through a \(K'\)-family \(\widetilde\sigma:r'\to q_i\). By
  \Cref{lem:femb.enriched.covering.sieve}, the restricted family defines an
  element of \(c_{\mathcal{U}}(r')(K')\), which represents a preimage of the
  original element in
  \Cref{eq:dom.restricted.covering.sieve.double.stalk}.

  For injectivity, \Cref{lem:femb.enriched.covering.sieve} shows that
  \[
    c_{\mathcal{U}}(r)(K)\longrightarrow\femb{d}(r,q)(K)
  \]
  is injective for every \(r\) and \(K\). The two indexing categories in
  \Cref{eq:dom.restricted.covering.sieve.double.stalk} are filtered, hence so
  is their product. Since filtered colimits of sets preserve monomorphisms,
  the map in \Cref{eq:dom.restricted.covering.sieve.double.stalk} is
  injective.

  Consequently,
  \Cref{eq:dom.restricted.covering.sieve.source.stalk} is a stalkwise
  isomorphism in the \(\Cart\)-variable. By
  \cite[Proposition~12.5]{PavlovSmoothOka}, it is a weak equivalence in
  \(\CsSet\). Thus, \((- )_{(p,x)}\) is left Quillen for the \v{C}ech-local
  injective model
  structure. Since all objects are cofibrant, its ordinary value computes its
  left derived value. In particular, every \v{C}ech-local weak equivalence
  induces a weak equivalence on every fiberwise stalk.
\end{proof}

\begin{proposition}\label{prop:femb.stalks.detect.local.we}
  Let \(d\geq0\), and equip \(\psh(\femb{d},\CsSet)\) with the
  \v{C}ech-local injective model structure of
  \Cref{def:cech.local.injective.model.structure}. For \(p:T\to U\in\femb{d}\)
  and \(x\in T\), let \((- )_{(p,x)}\) be the fiberwise stalk functor of
  \Cref{prop:femb.stalk.left.quillen}. Then the family of fiberwise stalk
  functors
  \[
    \bigl\{(-)_{(p,x)}\bigr\}_{p:T\to U\in\femb{d},\,x\in T}
  \]
  is jointly conservative on
  \(\psh(\femb{d},\CsSet)_{\inj,\Cech}\). Equivalently, a morphism
  \(f:X\to Y\) is a \v{C}ech-local weak equivalence if and only if
  \[
    f_{(p,x)}:X_{(p,x)}\longrightarrow Y_{(p,x)}
  \]
  is a weak equivalence in \(\CsSet\) for every \(p:T\to U\) and every
  \(x\in T\). The analogous statement holds for \(\fembcart{d}\) and, with
  \(\CsSet\) replaced by \(\sset\), for \(\fembnoniso{d}\) and
  \(\fembcartnoniso{d}\).
\end{proposition}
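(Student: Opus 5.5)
One direction is already done: by \Cref{prop:femb.stalk.left.quillen}, every \v{C}ech-local weak equivalence induces weak equivalences on all fiberwise stalks, because all objects are cofibrant. The content is the converse. Suppose \(f:X\to Y\) induces weak equivalences \(f_{(p,x)}\) for all \(p\) and \(x\in T\). Choose a fibrant replacement \(Y\to RY\) in \(\psh(\femb{d},\CsSet)_{\inj,\Cech}\). Factor the composite \(X\to Y\to RY\) as a trivial cofibration \(X\to RX\) followed by a fibration \(RX\to RY\). By the already-proved direction, \(X\to RX\) and \(Y\to RY\) are stalkwise equivalences. By two-out-of-three, \(RX\to RY\) is then a stalkwise equivalence between fibrant objects. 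By \Cref{rem:local.equivalences.between.local.objects}, it suffices to show that \(RX(q)\to RY(q)\) is a weak equivalence in \(\CsSet\) for every \(q\in\femb{d}\).

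The plan is to transfer the problem to the unenriched, simplicial setting and apply the classical Jardine/DHI criterion there.

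\textbf{Step 1: reduce to simplicial presheaves on a site.} First I would trade the \(\Cart\)-direction for an honest site direction. Evaluating at \(L\in\Cart\) and then taking stalks at \(\ell\in L\) (using \cite[Proposition~12.5]{PavlovSmoothOka}, which says stalkwise isomorphisms in the \(\Cart\)-variable detect weak equivalences in \(\CsSet\)), I would view the relevant objects as simplicial presheaves on the product-type site \(\fembnoniso{d}\times\Cart\). The pairs \(((p,x),(L,\ell))\) are then points of this site.

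\textbf{Step 2: hyperdescent gives a stalk criterion.} By \Cref{lem:femb.enough.points.hypercomplete}, fibrant objects satisfy hyperdescent. For a morphism \(g:A\to B\) of hyperdescent objects (simplicial presheaves), I would argue as follows. Set \(F\) to be the homotopy fiber of \(g\) over a section \(b\in B(q)\). Homotopy groups of \(F\) sheafify to sheaves whose stalks at every point \((p,x)\) vanish, by the filtered-colimit description of \Cref{lem:femb.stalk.local.trivialization}. A sheaf on \(\femb{d}\) whose stalks at all such points vanish is itself zero. The reason is that local sections over \(p\) are determined on the neighborhoods \(p^{p,x}_{\delta,\varepsilon}\), which form a basis of neighborhoods in the total space \(T\), so these fiberwise points form enough points. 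Hence \(g\) is a local weak equivalence in the Jardine sense (\Cref{rem:jardine.is.hyper}). A local weak equivalence between hyperdescent objects is an objectwise weak equivalence, which is exactly what we need for \(RX\to RY\).

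\textbf{Main obstacle.} The hardest step is the ``enough points'' assertion in Step 2, specifically compatibility with the enrichment. Enriched covering sieves are not the ordinary ones, but \Cref{lem:femb.enriched.covering.sieve} identifies them with subpresheaves of families that locally factor through the cover. The surjectivity argument in the proof of \Cref{prop:femb.stalk.left.quillen} shows that a section becomes locally liftable near every \((y,\ell)\). I would reuse exactly that argument: it shows that a map of enriched presheaves which is a stalkwise epimorphism at all pairs \(((p,x),\ell)\) is locally surjective for the \v{C}ech topology. This gives the homotopy-sheaf criterion for \(\pi_0\) and, after passing to loop objects based at sections, for all \(\pi_n\).

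\textbf{Other cases.} The cases \(\fembcart{d}\), \(\fembnoniso{d}\) and \(\fembcartnoniso{d}\) follow verbatim. In the unenriched cases Step 1 is dropped and \(\CsSet\) is replaced by \(\sset\). In the Cartesian cases one additionally notes that every point \((p,x)\) has a cofinal system of Cartesian neighborhoods, again by \Cref{lem:femb.stalk.local.trivialization}.
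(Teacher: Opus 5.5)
Your outer frame is the same as the paper's: fibrant replacement, two-out-of-three via \Cref{prop:femb.stalk.left.quillen}, and the principle that a local weak equivalence between hyperdescent objects is objectwise. But Step~1 does not work as stated.

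The hypothesis only says that \(f_{(p,x)}\) is a weak equivalence in \(\CsSet\), i.e.\ that \(\Sing^\infty(f_{(p,x)})\) is a weak equivalence. A local weak equivalence on your product site needs more: the stalks at \(((p,x),(L,\ell))\), which are the \(\Cart\)-stalks of \(f_{(p,x)}\), must be weak equivalences of simplicial sets. That does not follow in general. For example, \(\Yo_{\RR}\to *\) is a weak equivalence in \(\CsSet\), yet on stalks it is the map from the discrete set of germs of smooth functions to a point. The result of Pavlov you invoke (Proposition~12.5) only gives the reverse implication: stalkwise equivalences are \(\CsSet\)-equivalences. So it cannot supply what Step~2 needs, namely that the homotopy sheaves of your homotopy fiber have vanishing stalks.

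There is a second problem. \Cref{lem:femb.enough.points.hypercomplete} only concerns hypercovers in the fiberwise-embedding sites. Nothing you cite gives hyperdescent of \(RX,RY\) for a topology on \(\fembnoniso{d}\times\Cart\) that also covers in the \(\Cart\)-variable, nor hypercompleteness of that product site. Also, enriched presheaves on \(\femb{d}\) are not presheaves on that product; forgetting the isotopy action is harmless here, but it must be said.

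The missing idea is to collapse the smooth direction with \(\Sing^\infty\) instead of resolving it by \(\Cart\)-stalks. The paper fixes \(p\), works on the small site \(\Open(p)\), and sets \(A=\Sing^\infty aX\), \(B=\Sing^\infty aY\).
\begin{itemize}
\item Since \(\Sing^\infty\) commutes with filtered colimits, \(A_x\cong\Sing^\infty(aX)_{(p,x)}\), so your hypothesis becomes literally a stalkwise weak equivalence.
\item Since \(\Sing^\infty\) preserves the descent homotopy limits, \(A\) and \(B\) are objectwise Kan and \v{C}ech-local.
\item Point stalks on the topological site of \(T\) detect local weak equivalences, and finite-dimensionality of \(T\) makes \v{C}ech-local and hyperlocal agree.
\item Evaluating at the terminal object \(\id_p\) then shows that \(\Sing^\infty af(p)\) is a weak equivalence.
\end{itemize}

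Your route can be repaired the same way. Alternatively, note that values of fibrant objects are \(\RR\)-local, hence homotopy constant, and filtered colimits preserve homotopy constancy. A \(\CsSet\)-equivalence between homotopy-constant objects is objectwise, so you can evaluate at \(\RR^0\) and work on \(\fembnoniso{d}\) with no \(\Cart\)-topology at all.

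Finally, the ``enough points'' step you single out as the main obstacle is actually easy. The neighbourhoods \(p^{p,x}_{\delta,\varepsilon}\), \(x\in T\), already form a covering family of \(p\), and \Cref{lem:femb.enriched.covering.sieve} plays no role there.
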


\begin{proof}
  By \Cref{prop:femb.stalk.left.quillen}, every \v{C}ech-local weak
  equivalence induces a weak equivalence on every fiberwise stalk.

  Conversely, suppose that \(f:X\to Y\) induces a weak equivalence on every
  fiberwise stalk. Choose functorial \v{C}ech-fibrant replacements
  \[
    X\xrightarrow{\sim}aX,
    \qquad
    Y\xrightarrow{\sim}aY,
  \]
  and let \(af:aX\to aY\) be induced. Since the fiberwise stalk functors are
  left
  Quillen and all objects are cofibrant, the \(2\)-out-of-\(3\) property shows
  that \((af)_{(p,x)}\) is a weak equivalence for every \(p\) and \(x\).

  Fix \(p:T\to U\), and define simplicial presheaves \(A\) and \(B\) on
  \(\Open(p)\) by
  \[
    A(v:r\to p)=\Sing^\infty aX(r),
    \qquad
    B(v:r\to p)=\Sing^\infty aY(r).
  \]
  They are objectwise Kan and satisfy \v{C}ech descent
  (\Cref{def:cech.descent.condition}), because a covering family
  in \(\Open(p)\) is the same covering family in \(\femb{d}\), and
  \(\Sing^\infty\) preserves the corresponding derived homotopy limits.
  Moreover, since \(\Sing^\infty\) commutes with filtered colimits,
  \[
    A_x\cong\Sing^\infty(aX)_{(p,x)},
    \qquad
    B_x\cong\Sing^\infty(aY)_{(p,x)}.
  \]
  Thus \(A\to B\) is a weak equivalence on every stalk.

  The total-space images of the objects of \(\Open(p)\) form a basis for
  the ordinary topology of \(T\): every point of every open subset of \(T\)
  has a smaller neighborhood belonging to \(\Open(p)\).
  Consequently, point stalks on \(\Open(p)\) detect local weak equivalences
  (\Cref{rem:jardine.is.hyper}).
  Indeed, filtered colimits commute with homotopy groups of Kan complexes,
  sheafification does not change stalks, and points of \(T\) detect
  isomorphisms of sheaves. Since \(T\) is finite-dimensional, the
  \v{C}ech-local and hyperlocal model structures agree by
  \cite[Theorems~5.8--5.9]{GM26}. Hence \(A\to B\) is a \v{C}ech-local weak
  equivalence. As \(A\) and \(B\) are objectwise Kan and \v{C}ech-local, this
  map is an objectwise weak equivalence. Evaluating at the terminal object
  \(\id_p\in\Open(p)\) shows that
  \[
    \Sing^\infty af(p):\Sing^\infty aX(p)\longrightarrow
    \Sing^\infty aY(p)
  \]
  is a weak equivalence. Thus \(af(p)\) is a weak equivalence in \(\CsSet\).
  Since \(p\) was arbitrary, \(af\) is an objectwise weak equivalence, and
  therefore \(f\) is
  a \v{C}ech-local weak equivalence by \(2\)-out-of-\(3\). Hence the family
  \[
    \bigl\{(-)_{(p,x)}\bigr\}_{p\in\femb{d},\,x\in T}
  \]
  is jointly conservative. The other cases are analogous.
\end{proof}

\begin{lemma}\label{lem:hypercover.replacement.Cartesian}
  Let \(\mathcal{C}\) denote either \(\femb{d}\) or
  \(\fembnoniso{d}\), and let \(\mathcal{C}_{\mathrm{Cart}}\) denote the
  corresponding Cartesian subsite, namely \(\fembcart{d}\) or
  \(\fembcartnoniso{d}\), respectively. Let \(\psh(\mathcal{C})\) denote
  \(\psh(\femb{d},\CsSet)\) or \(\spsh(\fembnoniso{d})\), respectively.

  For every \(p\in\mathcal{C}\), there is an augmented simplicial object
  \(U_{\bullet}\to\Yo_{p}\) such that each level has the form
  \[
    U_{n}
    =
    \coprod_{\alpha\in I_{n}}\Yo_{u_{n,\alpha}},
    \qquad
    u_{n,\alpha}\in\mathcal{C}_{\mathrm{Cart}}.
  \]
  When \(\mathcal{C}=\fembnoniso{d}\), this augmented simplicial object is
  a split hypercover (\Cref{def:split.hypercover}). When
  \(\mathcal{C}=\femb{d}\), it is obtained by applying
  \(\iota_{!}^{\infty}\lambda^{\infty}\) degreewise to the corresponding
  ordinary split hypercover. In either case, the augmentation induces a
  \v{C}ech-local weak equivalence
  \[
    \hocolim_{[n]\in\Delta^{\op}}U_{n}
    \xrightarrow{\sim}
    \Yo_{p}
  \]
  in \(\psh(\mathcal{C})\).
\end{lemma}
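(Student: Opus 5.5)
The construction comes first. For \(\mathcal{C}=\fembnoniso{d}\), apply \Cref{lem:split.hypercovers.from.basis} with \(\cat{B}=\fembcartnoniso{d}\); its hypothesis holds by \Cref{lem:femb.cover.by.fembcart}. This gives a split hypercover \(U^0_\bullet\to\Yo_p\) in \(\psh(\fembnoniso{d})\) whose levels are coproducts of representables \(\Yo_{u_{n,\alpha}}\) with \(u_{n,\alpha}\) Cartesian. We regard it as a simplicial object of discrete simplicial presheaves. For \(\mathcal{C}=\femb{d}\), put \(U_n\coloneqq\iota^\infty_!\lambda^\infty U^0_n\). Both functors are left adjoints, so they preserve coproducts. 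The functor \(\lambda^\infty\) sends discrete representables to representables, and \(\iota^\infty_!\) sends \(\Yo_u\) to \(\Yo_{\iota(u)}\), as used in the proof of \Cref{prop:iota.left.quillen}. Hence each \(U_n\) is a coproduct of representables \(\Yo_{u_{n,\alpha}}\) with \(u_{n,\alpha}\in\fembcart{d}\), and the augmentation is carried along.

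Next comes the unenriched weak equivalence. In the injective model structure on \(\spsh(\fembnoniso{d})\), the homotopy colimit over \(\Delta^{\op}\) of a simplicial diagram of discrete presheaves is computed objectwise by the diagonal, that is, by the realization \(|U^0_\bullet|\). By \Cref{rem:jardine.is.hyper} (the Dugger--Hollander--Isaksen characterization), the map \(|U^0_\bullet|\to\Yo_p\) is a hyperlocal weak equivalence for any hypercover. \Cref{lem:femb.enough.points.hypercomplete} says the \v{C}ech-local structure on \(\spsh(\fembnoniso{d})\) is hypercomplete, so this map is also a \v{C}ech-local weak equivalence. If one prefers to avoid quoting DHI on this site, there is a direct route. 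Apply the fiberwise stalks of \Cref{prop:femb.stalks.detect.local.we}, which are filtered colimits and so commute with realization and with the matching-object constructions. Local surjectivity of \(U_n\to M_n(U/X)\) then becomes stalkwise surjectivity, so on each stalk the augmented simplicial set is a hypercover of the discrete stalk of \(\Yo_p\), that is, a trivial fibration, and its realization is equivalent to that stalk.

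For the enriched case, use that \(\mathfrak{I}_d=\iota^\infty_!\lambda^\infty\) is left Quillen for the \v{C}ech-local structures (\Cref{cor:isotop.is.left.quillen}). Every object is cofibrant (\Cref{rem:injective.presheaves.cofibrant}), so \(\mathfrak{I}_d\) preserves all \v{C}ech-local weak equivalences. Being a left Quillen functor, it also commutes with homotopy colimits, and we can take these as realizations since all objects are cofibrant, so the Reedy cofibrancy of simplicial objects in an injective structure is automatic. Therefore
\[
  \hocolim_{\Delta^{\op}}U_n\cong\mathfrak{I}_d\bigl(\hocolim_{\Delta^{\op}}U^0_n\bigr)\longrightarrow\mathfrak{I}_d\Yo_p\cong\Yo_{\iota(p)}
\]
is a \v{C}ech-local weak equivalence.

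The main obstacle is the second step: identifying the hypercover realization with a \v{C}ech-local equivalence on the nonstandard site \(\fembnoniso{d}\), whose covers do not cover the base. I would first try to rely on hypercompleteness (\Cref{lem:femb.enough.points.hypercomplete}) together with the general DHI statement. If that needs checking on this site, I would fall back on the stalkwise argument through \Cref{prop:femb.stalks.detect.local.we}. Its delicate point is verifying that the fiberwise stalks turn local surjectivity into surjectivity, and that they commute with the finite limits defining \(M_n(U/X)\), which holds because the colimits are filtered.
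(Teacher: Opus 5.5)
Your proposal is correct and follows the paper's proof. The construction is the same, via \Cref{lem:split.hypercovers.from.basis} and \Cref{lem:femb.cover.by.fembcart}, and the unenriched augmentation uses the same hypercompleteness input (\Cref{lem:femb.enough.points.hypercomplete}); the enriched case is the same transfer along the left Quillen functor \(\mathfrak{I}_d\), using that all objects are cofibrant. For the unenriched step, the paper says that every \v{C}ech-fibrant object satisfies hyperdescent and applies the universal property of the homotopy colimit, which is equivalent to your formulation via localization at hypercovers. Your stalkwise fallback also works, but it does not avoid hypercompleteness, because the proof of \Cref{prop:femb.stalks.detect.local.we} itself invokes it.
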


\begin{proof}
  We first construct the split hypercover. Suppose first that
  \(\mathcal{C}=\fembnoniso{d}\) and
  \(\mathcal{C}_{\mathrm{Cart}}=\fembcartnoniso{d}\). By
  \Cref{lem:femb.cover.by.fembcart}, every object of \(\mathcal{C}\) admits
  a covering family by objects of \(\mathcal{C}_{\mathrm{Cart}}\). Applying
  \Cref{lem:split.hypercovers.from.basis} to
  \[
    \mathcal{C}_{\mathrm{Cart}}\subset\mathcal{C}
  \]
  gives a split hypercover
  \[
    U_\bullet\longrightarrow\Yo_p
  \]
  such that each nondegenerate level \(N_n\) is a coproduct of
  representables associated to objects of \(\mathcal{C}_{\mathrm{Cart}}\).

  Since the hypercover is split,
  \[
    U_n\cong
    \coprod_{\sigma:[n]\twoheadrightarrow[r]}N_r .
  \]
  Writing
  \[
    N_r=\coprod_{\beta\in J_r}\Yo_{v_{r,\beta}},
    \qquad v_{r,\beta}\in\mathcal{C}_{\mathrm{Cart}},
  \]
  we obtain
  \[
    U_n\cong
    \coprod_{\substack{\sigma:[n]\twoheadrightarrow[r]\\ \beta\in J_r}}
    \Yo_{v_{r,\beta}} .
  \]
  Thus \(U_n\) has the required form
  \[
    U_n=\coprod_{\alpha\in I_n}\Yo_{u_{n,\alpha}},
    \qquad
    u_{n,\alpha}\in\mathcal{C}_{\mathrm{Cart}},
  \]
  where \(I_n\) is the indexing set of pairs
  \((\sigma,\beta)\) appearing in the preceding coproduct.

  It remains to prove that any such hypercover gives a hyperlocal weak
  equivalence. Let \(E\) be a fibrant \v{C}ech-local object. By
  \Cref{lem:femb.enough.points.hypercomplete}, the \v{C}ech-local model
  structure is hypercomplete, so \(E\) satisfies hyperdescent
  (\Cref{def:hyperdescent}). Hence the
  hypercover \(U_\bullet\to\Yo_p\) induces a weak equivalence
  \[
    \RR\Map_{\spsh(\fembnoniso{d})}(\Yo_p,E)
    \xrightarrow{\sim}
    \holim_{[n]\in\Delta}
    \RR\Map_{\spsh(\fembnoniso{d})}(U_n,E).
  \]
  By the simplicial Yoneda lemma,
  \[
    \RR\Map_{\spsh(\fembnoniso{d})}(\Yo_p,E)
    \simeq E(p).
  \]
  Therefore
  \[
    E(p)
    \xrightarrow{\sim}
    \holim_{[n]\in\Delta}
    \RR\Map_{\spsh(\fembnoniso{d})}(U_n,E).
  \]

  On the other hand, by the universal property of homotopy colimits
  (\Cref{def:homotopy.limits.colimits}),
  \[
    \RR\Map_{\spsh(\fembnoniso{d})}
    \left(
      \hocolim_{[n]\in\Delta^{\op}}U_n,
      E
    \right)
    \simeq
    \holim_{[n]\in\Delta}
    \RR\Map_{\spsh(\fembnoniso{d})}(U_n,E).
  \]
  Combining these equivalences, the augmentation
  \begin{equation}\label{eq:unenriched.aug}
    \hocolim_{[n]\in\Delta^{\op}}U_n
    \longrightarrow
    \Yo_p
  \end{equation}
  induces a weak equivalence on derived mapping objects into every fibrant
  \v{C}ech-local object \(E\). Hence it is a \v{C}ech-local weak
  equivalence.

  For \(\mathcal{C}=\femb{d}\), regard \(p\) as an object of
  \(\fembnoniso{d}\). Since the isotopification functor
  \(\mathfrak{I}_d\) preserves coproducts and sends ordinary representables to
  the corresponding enriched representables, we have an isomorphism
  \[
    \mathfrak{I}_d (U_n)
    \cong
    \coprod_{\alpha\in I_n}\Yo_{\iota(u_{n,\alpha})},
    \qquad
    \iota(u_{n,\alpha})\in\fembcart{d}.
  \]
  Hence every level of \(\mathfrak{I}_d(U_\bullet)\) has the form asserted in
  the statement. The ordinary augmentation is a \v{C}ech-local weak equivalence
  by the
  first part of the proof. Because the source and target of the ordinary
  augmentation are cofibrant, the left Quillen functor \(\mathfrak{I}_d\)
  sends the ordinary augmentation to a \v{C}ech-local weak equivalence.
  Therefore the enriched augmentation is the required
  \v{C}ech-local weak equivalence.
\end{proof}

\begin{definition}\label{def:Cartesian.subsite.restriction.functors}
  Let
  \[
    q:\fembcartnoniso{d}\hookrightarrow\fembnoniso{d}
  \]
  denote the canonical inclusion of the Cartesian subsite in the site without
  isotopies. Restriction along \(q\) defines a functor
  \[
    q^{*}:
    \spsh(\fembnoniso{d})
    \longrightarrow
    \spsh(\fembcartnoniso{d}).
  \]

  Similarly, let
  \[
    \mathfrak{q}:\fembcart{d}\hookrightarrow\femb{d}
  \]
  denote the canonical inclusion of the enriched Cartesian subsite in the
  enriched site with isotopies. Restriction along \(\mathfrak{q}\) defines
  a functor
  \[
    \mathfrak{q}^{*}:
    \psh(\femb{d},\CsSet)
    \longrightarrow
    \psh(\fembcart{d},\CsSet).
  \]

  We use the same notation for the induced functors between the corresponding
  \v{C}ech-local injective model structures whenever these restriction
  functors are regarded as Quillen functors.
\end{definition}

\begin{lemma}\label{lem:qen.comparison.on.cech.local}
  Let \(E_{1},E_{2}\in\psh(\femb{d},\CsSet)\) be \v{C}ech-local fibrant
  objects, \(f:E_{1}\to E_{2}\) a morphism between them and let
  \(\mathfrak{q}^{*}, q^{*}\) be functors from
  \Cref{def:Cartesian.subsite.restriction.functors}. Then
  \[
    f \text{ is a \v{C}ech-local weak equivalence}
    \Longleftrightarrow
    \mathfrak{q}^{*}f \text{ is a \v{C}ech-local weak equivalence}.
  \]
  The same statement holds for \(q^{*}\).
\end{lemma}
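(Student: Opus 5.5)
The plan is to compare both sides through fiberwise stalks, since these detect \v{C}ech-local weak equivalences by \Cref{prop:femb.stalks.detect.local.we}, and the stalks of \(E\) and \(\mathfrak{q}^{*}E\) agree by \Cref{lem:femb.stalk.local.trivialization}.

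For the forward direction, let \(f\) be a \v{C}ech-local weak equivalence between \v{C}ech-local fibrant objects. By \Cref{rem:local.equivalences.between.local.objects}, \(f\) is then an objectwise weak equivalence, so \(f(p)\) is a weak equivalence in \(\CsSet\) for every \(p\in\femb{d}\). In particular this holds for every \(p\in\fembcart{d}\). Hence \(\mathfrak{q}^{*}f\) is an objectwise weak equivalence, and therefore a \v{C}ech-local weak equivalence, because left Bousfield localization only enlarges the class of weak equivalences. No fibrancy of \(\mathfrak{q}^{*}E_i\) is needed for this direction.

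For the converse, suppose \(\mathfrak{q}^{*}f\) is a \v{C}ech-local weak equivalence in \(\psh(\fembcart{d},\CsSet)\). By \Cref{prop:femb.stalk.left.quillen} for \(\fembcart{d}\), every fiberwise stalk \((\mathfrak{q}^{*}f)_{(u,x)}\), with \(u\in\fembcart{d}\), is a weak equivalence. Now take any \(p:T\to U\in\femb{d}\) and \(x\in T\). By \Cref{lem:femb.stalk.local.trivialization},
\[
  (E_i)_{(p,x)}\cong\colim_{\delta,\varepsilon\to0}E_i\bigl(p^{p,x}_{\delta,\varepsilon}\bigr).
\]
Each \(p^{p,x}_{\delta,\varepsilon}\) is a Cartesian submersion \(B^d_\delta\times B^n_\varepsilon\to B^n_\varepsilon\), so it is an object of \(\fembcart{d}\), after possibly replacing it by an isomorphic object that satisfies the size convention. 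Apply the same lemma to the Cartesian object \(u\coloneqq p^{p,x}_{\delta_0,\varepsilon_0}\) at the point \(x\). The cofinal systems match, so \((E_i)_{(p,x)}\cong(\mathfrak{q}^{*}E_i)_{(u,x)}\), naturally in \(E_i\). Hence every fiberwise stalk of \(f\) is a weak equivalence, and \Cref{prop:femb.stalks.detect.local.we} shows that \(f\) is a \v{C}ech-local weak equivalence. The case of \(q^{*}\) is identical, using the unenriched versions of the same three results with \(\sset\) in place of \(\CsSet\).

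The main obstacle is the stalk identification. One must check that the filtered system of Cartesian neighborhoods \(p^{p,x}_{\delta,\varepsilon}\) in \(\femb{d}\) is the same diagram as the one computing the stalk in \(\fembcart{d}\). Concretely, the subcategory of \(\Open(u)_x\) inside \(\fembcart{d}\) must be cofinal, and the trivialization chosen for \(p\) near \(x\) must induce one for \(u\). I expect this to follow directly from the initiality argument in the proof of \Cref{lem:femb.stalk.local.trivialization}, since that argument uses only balls, which are Cartesian. The identification must also be natural, so that \(f_{(p,x)}\) and \((\mathfrak{q}^{*}f)_{(u,x)}\) correspond under it.
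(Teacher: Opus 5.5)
Your proof is correct, but your converse takes a different route from the paper's.

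\textbf{The paper's route.} It fixes \(p\in\femb{d}\) and takes the Cartesian hypercover of \Cref{lem:hypercover.replacement.Cartesian}. Hyperdescent of the fibrant objects \(E_i\) then gives \(E_i(p)\simeq\holim_{[n]\in\Delta}\prod\nolimits^{h}_{\alpha}E_i(u_{n,\alpha})\). The paper then argues that \(\mathfrak{q}^{*}f\), as a local weak equivalence between \v{C}ech-local objects on \(\fembcart{d}\), is objectwise. This yields the stronger conclusion that each \(f(p)\) is a weak equivalence. However, it uses the fibrancy of \(E_1,E_2\) essentially, and it needs the additional input that \(\mathfrak{q}^{*}E_i\) is again \v{C}ech-local on the Cartesian site.

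\textbf{Your route.} You reduce to fiberwise stalks, using three ingredients:
\begin{itemize}
\item \Cref{prop:femb.stalk.left.quillen} on \(\fembcart{d}\);
\item the identification \(X_{(p,x)}\cong(\mathfrak{q}^{*}X)_{(u,x)}\) for the Cartesian chart \(u=p|_{V_x}\);
\item \Cref{prop:femb.stalks.detect.local.we} on \(\femb{d}\).
\end{itemize}
The identification is the same comparison the paper uses in the proof of \Cref{prop:infty.dense.inclusion.of.sites}. It holds because open morphisms over \(p\) form a preorder in which the chart neighborhoods \(p^{p,x}_{\delta,\varepsilon}\) are initial, both in \(\Open(p)_x\) and in its Cartesian part. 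None of these results depends on the lemma, so there is no circularity.

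\textbf{What each route buys.} Your converse never uses that \(E_i\) is fibrant or that \(\mathfrak{q}^{*}E_i\) is local. It therefore shows directly that \(\mathfrak{q}^{*}\) reflects \v{C}ech-local weak equivalences for arbitrary morphisms. The paper obtains this only afterwards, in \Cref{rem:qen.reflects.local.weak.equivalences}, via fibrant replacement. The cost is that all the weight rests on the stalk-detection proposition rather than on the hypercover lemma.

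A minor point: your hedge about replacing \(p^{p,x}_{\delta,\varepsilon}\) by an isomorphic object is unnecessary. Restrictions of \(p\) inherit the projection convention, so they are already objects of \(\fembcart{d}\).
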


\begin{proof}
  We prove the statement for \(\mathfrak{q}^{*}\); the proof for
  \(q^{*}\) is analogous.

  The forward implication is immediate, since a local weak equivalence
  between \v{C}ech-local objects is detected objectwise
  (\Cref{rem:local.equivalences.between.local.objects}), and restriction
  is computed objectwise. We prove the converse.

  Assume that
  \(\mathfrak{q}^{*}f:\mathfrak{q}^{*}E_{1}\to
  \mathfrak{q}^{*}E_{2}\) is a \v{C}ech-local weak equivalence. Fix
  \(p\in\femb{d}\). By
  \Cref{lem:hypercover.replacement.Cartesian}, there is an augmented
  simplicial object \(U_{\bullet}\to\Yo_{p}\) such that
  \[
    U_{n}
    =
    \coprod_{\alpha\in I_{n}}\Yo_{u_{n,\alpha}},
    \qquad
    u_{n,\alpha}\in\fembcart{d},
  \]
  and the augmentation induces a \v{C}ech-local weak equivalence
  \[
    \hocolim_{[n]\in\Delta^{\op}}U_n
    \xrightarrow{\sim}\Yo_p.
  \]

  We write
  \[
    E_i(U_n)\coloneqq \RR\Map_{\psh(\femb{d},\CsSet)}(U_n,E_i).
  \]
  Since all objects are cofibrant in the injective model structure
  (\Cref{rem:injective.presheaves.cofibrant}) and \(E_i\) is
  fibrant, this is computed by the homotopy product
  (\Cref{rem:hyperdescent.coproduct.representables})
  \[
    E_i(U_n)\simeq
    \prod\nolimits^{h}_{\alpha\in I_n}E_i(u_{n,\alpha}),
  \]
  where \(\prod\nolimits^h\) denotes a homotopy product.

  Since \(E_{1}\) and \(E_{2}\) are fibrant \v{C}ech-local objects,
  the preceding local weak equivalence, the derived Yoneda lemma, and the
  universal property of the homotopy colimit give, for \(i=1,2\), weak
  equivalences
  \[
    E_{i}(p)
    \xrightarrow{\sim}
    \holim_{[n]\in\Delta}E_{i}(U_{n}).
  \]
  We obtain a commutative square
  \[
    \begin{tikzcd}
      E_{1}(p) \arrow[r,"\sim"] \arrow[d,"f(p)"']
      & \displaystyle\holim_{[n]\in\Delta}E_{1}(U_{n}) \arrow[d] \\
      E_{2}(p) \arrow[r,"\sim"']
      & \displaystyle\holim_{[n]\in\Delta}E_{2}(U_{n}).
    \end{tikzcd}
  \]

  Every representable summand of every \(U_{n}\) lies in \(\fembcart{d}\).
  Moreover, since covers in \(\fembcart{d}\) are covers in \(\femb{d}\),
  the restrictions \(\mathfrak{q}^{*}E_{1}\) and
  \(\mathfrak{q}^{*}E_{2}\) are \v{C}ech-local objects on
  \(\fembcart{d}\). Hence \(\mathfrak{q}^{*}f\) is a local weak
  equivalence between \v{C}ech-local objects, so it is objectwise on
  \(\fembcart{d}\). In particular, for every representable summand \(u\)
  of every \(U_{n}\), the map \(E_{1}(u)\to E_{2}(u)\) is a weak
  equivalence.

  Consequently the induced maps \(\prod\nolimits^{h}_{\alpha\in
  I_n}E_1(u_{n,\alpha})\to \prod\nolimits^{h}_{\alpha\in I_n}E_2(u_{n,\alpha})\)
  are weak
  equivalences for all \(n\).
  Hence the induced
  map
  \[
    \holim_{[n]\in\Delta}E_{1}(U_{n})
    \longrightarrow
    \holim_{[n]\in\Delta}E_{2}(U_{n})
  \]
  is a weak equivalence. By the 2-out-of-3 property, the map
  \(f(p):E_{1}(p)\to E_{2}(p)\) is a weak equivalence.

  Since \(p\in\femb{d}\) was arbitrary, \(f\) is an objectwise weak equivalence
  on
  \(\femb{d}\). Therefore \(f\) is a weak equivalence in the underlying
  injective model structure, hence also a local weak equivalence in the
  \v{C}ech-local injective model structure.
\end{proof}

\begin{proposition}\label{prop:infty.dense.inclusion.of.sites}
  The canonical inclusion
  \(q:\fembcartnoniso{d}\hookrightarrow\fembnoniso{d}\) induces a left
  Quillen equivalence
  \[
    q^{*}:
    \spsh(\fembnoniso{d})_{\inj,\Cech}
    \xrightarrow{\we_{Q}}
    \spsh(\fembcartnoniso{d})_{\inj,\Cech}
  \]
  for the \v{C}ech-local injective model structures.

  An analogous statement holds for the enriched case: the
  canonical inclusion of enriched sites
  \(\mathfrak{q}:\fembcart{d}\hookrightarrow\femb{d}\) induces a left
  Quillen equivalence
  \[
    \mathfrak{q}^{*}:
    \psh(\femb{d},\CsSet)_{\inj,\Cech}
    \xrightarrow{\we_{Q}}
    \psh(\fembcart{d},\CsSet)_{\inj,\Cech}.
  \]
\end{proposition}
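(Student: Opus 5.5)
Restriction \(\mathfrak{q}^{*}\) along a fully faithful inclusion has a left adjoint \(\mathfrak{q}_{!}\) (enriched left Kan extension) and a right adjoint \(\mathfrak{q}_{*}\) (enriched right Kan extension). I will show that \(\mathfrak{q}^{*}\) is left Quillen, as the left adjoint of \(\mathfrak{q}_{*}\), for the \v{C}ech-local injective model structures, and then that the pair \((\mathfrak{q}^{*},\mathfrak{q}_{*})\) is a Quillen equivalence. The unenriched case of \(q^{*}\) is handled identically, replacing \(\CsSet\) by \(\sset\).

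\emph{Step 1: before localization.} Restriction is computed objectwise, so it preserves injective cofibrations (objectwise monomorphisms) and objectwise weak equivalences. Hence \(\mathfrak{q}^{*}\) is left Quillen for the global injective structures.

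\emph{Step 2: passage to the localizations.} Since all objects are cofibrant, it suffices by \cite[Proposition~3.3.18]{Hir03} to show that \(\mathfrak{q}^{*}\) sends each covering-sieve morphism \(s_{\mathcal{U}}:c_{\mathcal{U}}\to\Yo_q\) of \(\femb{d}\) to a \v{C}ech-local weak equivalence on \(\fembcart{d}\). I would use the stalk criterion of \Cref{prop:femb.stalks.detect.local.we} for \(\fembcart{d}\). For a Cartesian object \(p\) and a point \(x\), the category \(\Open(p)_x\) computed in \(\fembcart{d}\) is cofinal in the one computed in \(\femb{d}\), since the shrinking boxes \(p^{p,x}_{\delta,\varepsilon}\) of \Cref{lem:femb.stalk.local.trivialization} are Cartesian. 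Consequently the stalks of \(\mathfrak{q}^{*}X\) at \((p,x)\) coincide with the stalks of \(X\) at \((p,x)\). By \Cref{prop:femb.stalk.left.quillen}, \(s_{\mathcal{U}}\) is a stalkwise weak equivalence on \(\femb{d}\), so \(\mathfrak{q}^{*}s_{\mathcal{U}}\) is a stalkwise weak equivalence on \(\fembcart{d}\), and hence a \v{C}ech-local weak equivalence there.

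The same stalk identification proves more: \(\mathfrak{q}^{*}\) both preserves and reflects \v{C}ech-local weak equivalences. It preserves them because, by \Cref{prop:femb.stalk.left.quillen}, a local equivalence induces equivalences on all stalks. It reflects them because every stalk \(X_{(p,x)}\) with \(p\in\femb{d}\) is isomorphic, via the local trivialization of \Cref{lem:femb.stalk.local.trivialization}, to a stalk at a Cartesian object, and is therefore a stalk of \(\mathfrak{q}^{*}X\). Joint conservativity (\Cref{prop:femb.stalks.detect.local.we}) then gives reflection.

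\emph{Step 3: Quillen equivalence.} Since \(\mathfrak{q}^{*}\) reflects weak equivalences between all objects, it suffices to check that the derived counit \(\mathfrak{q}^{*}R\mathfrak{q}_{*}Y\to Y\) is a weak equivalence for fibrant \(Y\). Because \(\mathfrak{q}\) is fully faithful, \(\mathfrak{q}^{*}\mathfrak{q}_{*}Y\cong Y\) already, so the only issue is that \(\mathfrak{q}_{*}Y\) is computed underived. As \(\mathfrak{q}_{*}\) is right Quillen, it preserves fibrant objects, so \(\mathfrak{q}_{*}Y\) needs no fibrant replacement and the derived counit is this isomorphism. The derived unit then follows from 2-out-of-3 together with reflection by \(\mathfrak{q}^{*}\). \Cref{lem:qen.comparison.on.cech.local} gives an alternative route for the unit between fibrant objects, if needed.

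\emph{Main obstacle.} The delicate point is the cofinality of the Cartesian open neighbourhoods inside \(\Open(p)_x\) in the enriched setting. I also need to confirm that \Cref{prop:femb.stalks.detect.local.we} applies to \(\fembcart{d}\), whose proof evaluates at the terminal object \(\id_p\) of \(\Open(p)\). For Cartesian \(p\), not every open sub-family of \(p\) is Cartesian. I would therefore work with the full subcategory of Cartesian opens, which still forms a basis of \(T\) and so still suffices for the hyperdescent argument of \cite{GM26}.
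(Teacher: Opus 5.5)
Your argument is correct. Steps~1 and~2 are the paper's argument: restriction is left Quillen globally, and it descends to the \v{C}ech localizations because the stalks of \(\mathfrak{q}^{*}s_{\mathcal{U}}\) at Cartesian \((u,x)\) agree with those of \(s_{\mathcal{U}}\), after which \Cref{prop:femb.stalk.left.quillen,prop:femb.stalks.detect.local.we} apply.

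You diverge from the paper in the Quillen-equivalence step.

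\emph{The paper's route.} It proves reflection only for maps between \v{C}ech-fibrant objects, via \Cref{lem:qen.comparison.on.cech.local}. That lemma resolves each \(\Yo_p\) by a hypercover with Cartesian levels (\Cref{lem:hypercover.replacement.Cartesian}) and uses hyperdescent. The paper then checks \Cref{def:quillen.equivalence.criterion} by hand: it lifts \(\eta\colon\mathfrak{q}^{*}E\to F\) along \(\mathfrak{q}^{*}E\to\mathfrak{q}^{*}aE\) and applies the lemma to the adjoint transpose \(aE\to\mathfrak{q}_{*}F\).

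\emph{Your route.} You push the stalk comparison one step further. Every fiberwise stalk \(X_{(p,x)}\) with \(p\in\femb{d}\) is naturally a stalk of \(\mathfrak{q}^{*}X\) at the Cartesian chart \(p\restriction_{V_x}\). Joint conservativity therefore gives reflection for arbitrary morphisms at once. The standard criterion \cite[Corollary~1.3.16]{Hov99} then finishes: reflection on cofibrant objects, plus the derived counit \(\mathfrak{q}^{*}\mathfrak{q}_{*}Y\cong Y\) for fibrant \(Y\).

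\emph{Comparison.} Your route is shorter and uses a single tool, stalks, for both halves. It also gives directly the unrestricted reflection statement that the paper records separately in \Cref{rem:qen.reflects.local.weak.equivalences}. The paper's reflection argument is independent of stalks and uses only Cartesian hypercovers and hypercompleteness. That hypercover resolution is reused later in \Cref{prop:Cartesian.realization.comparison}. Hypercompleteness enters your route as well, but only inside the proof of \Cref{prop:femb.stalks.detect.local.we}.

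Your concern about applying \Cref{prop:femb.stalks.detect.local.we} to \(\fembcart{d}\) applies equally to the paper, whose Step~2 uses that case in the same way. Your fix is the right one: restrict \(\Open(p)\) to Cartesian opens, which still form a basis of \(T\) and still have \(\id_p\) as terminal object.
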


\begin{proof}
  Since \(q^{*}\) and \(\mathfrak{q}^{*}\) are restrictions along full
  inclusions, they preserve objectwise monomorphisms and objectwise weak
  equivalences. Hence both functors are left Quillen for the global
  injective model structures. Write \(q_{*}\) and \(\mathfrak{q}_{*}\) for their
  right
  adjoints, given by right Kan extension.

  We first show that the global Quillen adjunction
  \(\mathfrak{q}^*\dashv\mathfrak{q}_*\) descends to the \v{C}ech-local model
  structures. Let
  \[
    s_{\mathcal{U}}:c_{\mathcal{U}}\longrightarrow\Yo_p
  \]
  be a covering-sieve map in \(\psh(\femb{d},\CsSet)\). Fix
  \(u:T_u\to U_u\in\fembcart{d}\) and \(x\in T_u\). By
  \Cref{lem:femb.stalk.local.trivialization}, the stalk of a
  presheaf on \(\femb{d}\) at \((u,x)\), and the stalk of its restriction to
  \(\fembcart{d}\) at \((u,x)\), are computed by the colimits over the
  shrinking Cartesian submersion-chart neighborhoods of \(x\). Consequently,
  there is a natural isomorphism
  \[
    (\mathfrak{q}^*s_{\mathcal{U}})_{(u,x)}
    \cong
    (s_{\mathcal{U}})_{(u,x)}.
  \]
  The covering-sieve map \(s_{\mathcal{U}}\) is a \v{C}ech-local weak
  equivalence by definition. Therefore
  \Cref{prop:femb.stalk.left.quillen}, applied to
  \(\femb{d}\), shows that \((s_{\mathcal{U}})_{(u,x)}\), and hence
  \((\mathfrak{q}^*s_{\mathcal{U}})_{(u,x)}\), is a weak equivalence in
  \(\CsSet\). Since this holds for every \(u\in\fembcart{d}\) and every
  \(x\in T_u\), \Cref{prop:femb.stalks.detect.local.we}, applied to
  \(\fembcart{d}\), shows that
  \[
    \mathfrak{q}^*s_{\mathcal{U}}:
    \mathfrak{q}^*c_{\mathcal{U}}\longrightarrow\mathfrak{q}^*\Yo_p
  \]
  is a \v{C}ech-local weak equivalence. The localization criterion therefore
  shows that \(\mathfrak{q}^*\dashv\mathfrak{q}_*\) is a Quillen adjunction for
  the \v{C}ech-local injective model structures. The proof that
  \(q^*\dashv q_*\) descends to the unenriched \v{C}ech-local injective model
  structures is analogous, using the unenriched assertion of
  \Cref{prop:femb.stalks.detect.local.we}.

  It remains to show that these Quillen adjunctions are Quillen
  equivalences. We give the argument for \(\mathfrak{q}^{*}\); the argument
  for \(q^{*}\) is the same. Since \(\mathfrak{q}\) is a full inclusion,
  \(\mathfrak{q}^{*}\mathfrak{q}_{*}F\cong F\) naturally for every
  \(F\in\psh(\fembcart{d},\CsSet)\). Since all objects are cofibrant, it
  remains to verify the Quillen-equivalence criterion
  (\Cref{def:quillen.equivalence.criterion}) for arbitrary
  \(E\in\psh(\femb{d},\CsSet)\) and \v{C}ech-fibrant
  \(F\in\psh(\fembcart{d},\CsSet)\):
  \[
    E\xrightarrow{\sim}\mathfrak{q}_{*}F
    \quad\Longleftrightarrow\quad
    \mathfrak{q}^{*}E\xrightarrow{\sim}F.
  \]

  The forward implication follows immediately by applying the left
  Quillen functor \(\mathfrak{q}^{*}\) and using
  \(\mathfrak{q}^{*}\mathfrak{q}_{*}F\cong F\). Conversely, suppose
  \(\eta:\mathfrak{q}^{*}E\to F\) is a local weak equivalence. Choose a
  \v{C}ech-fibrant replacement \(i:E\to aE\), with \(i\) a trivial
  cofibration. Since \(\mathfrak{q}^{*}\) is left Quillen,
  \(\mathfrak{q}^{*}i\) is a trivial cofibration. Since \(F\) is
  \v{C}ech-fibrant, the square
  \[
    \begin{tikzcd}
      \mathfrak{q}^{*}E
      \arrow[r,"\eta"]
      \arrow[d,"\mathfrak{q}^{*}i"']
      &
      F
      \arrow[d]
      \\
      \mathfrak{q}^{*}aE
      \arrow[r]
      &
      *
    \end{tikzcd}
  \]
  admits a lift \(v:\mathfrak{q}^{*}aE\to F\) with
  \(v\circ \mathfrak{q}^{*}i=\eta\). By \(2\)-out-of-\(3\), \(v\) is a
  local weak equivalence.

  Let \(g:aE\to \mathfrak{q}_{*}F\) be the adjoint transpose of \(v\). Since
  \(\mathfrak{q}^{*}\dashv \mathfrak{q}_{*}\) is a Quillen adjunction and \(F\)
  is \v{C}ech-fibrant, \(\mathfrak{q}_{*}F\) is \v{C}ech-fibrant, hence
  \v{C}ech-local. The object \(aE\) is \v{C}ech-fibrant as well, and
  \(\mathfrak{q}^{*}g\cong v\). Therefore
  \Cref{lem:qen.comparison.on.cech.local} implies that
  \(g:aE\to \mathfrak{q}_{*}F\) is a local weak equivalence.

  The adjoint transpose of \(\eta\) is the composite
  \(E\xrightarrow{i}aE\xrightarrow{g}\mathfrak{q}_{*}F\). Since both \(i\) and
  \(g\) are local weak equivalences, \(E\to\mathfrak{q}_{*}F\) is a local weak
  equivalence by \(2\)-out-of-\(3\). This proves the
  Quillen-equivalence criterion, so
  \[
    \mathfrak{q}^{*}:
    \psh(\femb{d},\CsSet)_{\inj,\Cech}
    \longrightarrow
    \psh(\fembcart{d},\CsSet)_{\inj,\Cech}
  \]
  is a left Quillen equivalence.
\end{proof}

\begin{remark}\label{rem:qen.reflects.local.weak.equivalences}
  The same argument shows that
  \[
    \mathfrak{q}^{*}:
    \psh(\femb{d},\CsSet)_{\inj,\Cech}
    \longrightarrow
    \psh(\fembcart{d},\CsSet)_{\inj,\Cech}
  \]
  reflects local weak equivalences. The same holds for
  \(q^{*}\).

  Indeed, let \(f:E_{1}\to E_{2}\) be a morphism such that
  \(\mathfrak{q}^{*}f\) is a local weak equivalence. Choose
  \v{C}ech-fibrant replacements \(E_{i}\xrightarrow{\sim}aE_{i}\), for
  \(i=1,2\), and let \(af:aE_{1}\to aE_{2}\) be the induced morphism.
  Since \(\mathfrak{q}^{*}\) is left Quillen, the maps
  \(\mathfrak{q}^{*}E_{i}\to \mathfrak{q}^{*}aE_{i}\) are local
  weak equivalences. Hence \(\mathfrak{q}^{*}(af)\) is a local weak
  equivalence by \(2\)-out-of-\(3\).

  The objects \(aE_{1}\) and \(aE_{2}\) are \v{C}ech-fibrant, hence
  \v{C}ech-local. Therefore \Cref{lem:qen.comparison.on.cech.local}
  implies that \(af:aE_{1}\to aE_{2}\) is a local weak equivalence.
  Applying \(2\)-out-of-\(3\) to the square
  \[
    \begin{tikzcd}
      E_{1} \arrow[r,"f"] \arrow[d,"\sim"']
      & E_{2} \arrow[d,"\sim"] \\
      aE_{1} \arrow[r,"af"']
      & aE_{2}
    \end{tikzcd}
  \]
  shows that \(f\) is a local weak equivalence.
\end{remark}

\begin{remark}\label{rem:GEmb}
  There is a more general formalism in which \( \femb{d} \) is replaced by a
  \emph{fibered geometric site} \( \gemb{d} \)
  \cite[Definition~4.2.5]{GP26}, with
  \( \femb{d} \)
  as a special case. In that setting, one can study field theories
  and geometric
  structures defined on more general objects, such as
  supermanifolds or derived
  manifolds. In this paper, we are interested only in the study of field
  theories defined on ordinary manifolds, so we restrict our
  computations to \( \femb{d} \)
  throughout to avoid importing the full \( \gemb{} \)--apparatus.
\end{remark}

\section{Geometric functorial field theories}\label{sec:geom.fft}

In this section we connect the homotopy theory of geometric structures from
\Cref{sec:geometric.structures} with moduli spaces of geometric functorial
field theories. We first construct the \emph{Cartesian realization} of a
geometric
structure as an \(\mathrm{O}(d)\)-equivariant simplicial presheaf on
\(\Cart\), and then use it, together with the results of Grady--Pavlov
\cite{GP23,GP26} to obtain the reduction
theorem for moduli spaces of functorial field theories. We give an example of
how to use the theorem in the form of the final corollary which specializes to
the case of the one-dimensional Riemannian structure.

\subsection{Cartesian realization of geometric
structures}\label{subsec:Cartesian.realization}

The full fiberwise-embedding sites \(\fembnoniso{d}\) and \(\femb{d}\) are
inconvenient for explicit calculations. We therefore restrict first to Cartesian
families
and then replace the enriched Cartesian site by the homotopically equivalent
site \(\Cart\times\mathbf{B}\Sing\mathrm{O}(d)\). The resulting geometric
structures are \(\mathrm{O}(d)\)-equivariant simplicial presheaves
on \(\Cart\).

\begin{definition}\label{def:sing.GLd.Od}
  Let
  \[
    \GL(d)\coloneqq
    \{A\in\operatorname{Mat}_{d\times d}(\RR)\mid \det(A)\neq 0\},
    \qquad
    \mathrm{O}(d)\coloneqq
    \{A\in\GL(d)\mid A^{\mathsf{T}}A=I_d\},
  \]
  be topological groups equipped with their usual topologies. Their singular
  complexes
  \(\Sing\GL(d)\) and \(\Sing\mathrm{O}(d)\) are simplicial groups under
  pointwise multiplication; explicitly,
  \[
    \bigl(\Sing\GL(d)\bigr)_n
    =\Top\bigl(|\Delta^n|,\GL(d)\bigr),
    \qquad
    \bigl(\Sing\mathrm{O}(d)\bigr)_n
    =\Top\bigl(|\Delta^n|,\mathrm{O}(d)\bigr).
  \]
  The inclusion \(\mathrm{O}(d)\subseteq\GL(d)\) induces a morphism of
  simplicial groups
  \[
    \Sing\mathrm{O}(d)\longrightarrow\Sing\GL(d).
  \]
  This morphism is a weak equivalence, since the polar-decomposition
  homotopy
  \[
    A\longmapsto A(A^{\mathsf{T}}A)^{-t/2},\qquad 0\leq t\leq 1,
  \]
  is a strong deformation retraction of \(\GL(d)\) onto \(\mathrm{O}(d)\).
\end{definition}

\begin{definition}\label{def:Od.equivariant.presheaves}
  Let \(\mathbf{B}\Sing\mathrm{O}(d)\) be the one-object simplicial
  category whose simplicial endomorphism group is \(\Sing\mathrm{O}(d)\).
  We define the category of
  \emph{\(\mathrm{O}(d)\)-equivariant simplicial presheaves on \(\Cart\)} by
  \[
    \spsh(\Cart)^{\mathrm{O}(d)}
    \coloneqq
    \Fun_{\sset}\bigl(\mathbf{B}\Sing\mathrm{O}(d),\spsh(\Cart)\bigr).
  \]
  Similarly, the category of \emph{\(\mathrm{O}(d)\)-equivariant smooth
  simplicial presheaves on \(\Cart\)} is
  \[
    \psh(\Cart,\CsSet)^{\mathrm{O}(d)}
    \coloneqq
    \Fun_{\sset}\bigl(\mathbf{B}\Sing\mathrm{O}(d),
    \psh(\Cart,\CsSet)\bigr).
  \]
  Thus the superscript \(\mathrm{O}(d)\) denotes an action of the simplicial
  group \(\Sing\mathrm{O}(d)\), rather than an action of the underlying
  discrete group.
\end{definition}

Explicitly, an object of \(\spsh(\Cart)^{\mathrm{O}(d)}\) is a
simplicial presheaf \(X\) together with an action
\[
  \Sing\mathrm{O}(d)\times X\longrightarrow X
\]
that is natural in \(\Cart\), and its morphisms are the equivariant natural
transformations. Whenever either underlying presheaf category is equipped
with a combinatorial simplicial model structure, the corresponding equivariant
functor category carries the projective and injective model structures of
\Cref{thm:enriched.injective.projective.model.structures}. In both cases, weak
equivalences are detected by the forgetful functor to the underlying presheaf
category.

\begin{definition}\label{def:equivariant.lambda}
  Define
  \[
    \lambda^{\infty}_{\mathrm{O}(d)}:
    \spsh(\Cart)^{\mathrm{O}(d)}
    \longrightarrow
    \psh(\Cart,\CsSet)^{\mathrm{O}(d)}
  \]
  by applying the functor \(\lambda^{\infty}\) of
  \Cref{prop:lambda.left.quillen} objectwise to the underlying simplicial
  presheaf and to the \(\Sing\mathrm{O}(d)\)-action maps.
  Equivalently,
  \[
    \lambda^{\infty}_{\mathrm{O}(d)}
    =
    \Fun_{\sset}\bigl(\mathbf{B}\Sing\mathrm{O}(d),\lambda^{\infty}\bigr).
  \]
\end{definition}

\begin{proposition}\label{prop:equivariant.lambda.left.quillen}
  The functor \(\lambda^{\infty}_{\mathrm{O}(d)}\) of
  \Cref{def:equivariant.lambda} is left Quillen for the \v{C}ech-local
  injective model structures. Moreover, its adjunction with objectwise
  evaluation at \(\RR^0\) is a Quillen equivalence.
\end{proposition}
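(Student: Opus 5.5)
The plan is to deduce the statement from the non-equivariant \Cref{prop:lambda.left.quillen}, applied with \(\Cart\) in place of \(\fembnoniso{d}\), together with the general fact that taking enriched functor categories out of \(\mathbf{B}\Sing\mathrm{O}(d)\) preserves Quillen adjunctions and Quillen equivalences when we use the injective model structures. The argument is identical for \(\Cart\) and for \(\fembnoniso{d}\), since only objectwise definitions and the shape of the localizing set are used.

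First, define the right adjoint \(\ev_{\RR^0,\mathrm{O}(d)}=\Fun_{\sset}(\mathbf{B}\Sing\mathrm{O}(d),\ev_{\RR^0})\). The adjunction \(\lambda^\infty\dashv\ev_{\RR^0}\) is simplicially enriched: both functors are defined objectwise and commute with products by simplicial sets, because \(\lambda^\infty(K\times X)=K\times\lambda^\infty X\) with \(K\) constant in the smooth direction. Therefore it lifts to an adjunction between the categories of \(\Sing\mathrm{O}(d)\)-equivariant objects.

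The left Quillen property then follows directly. The model structures on both equivariant categories are the injective ones of \Cref{thm:enriched.injective.projective.model.structures}, applied to the \v{C}ech-local injective model structures on \(\spsh(\Cart)\) and \(\psh(\Cart,\CsSet)\). In these structures cofibrations and weak equivalences are detected by forgetting the action. Since \(\lambda^\infty_{\mathrm{O}(d)}\) commutes with the forgetful functors, and \(\lambda^\infty\) preserves cofibrations and trivial cofibrations by \Cref{prop:lambda.left.quillen}, so does \(\lambda^\infty_{\mathrm{O}(d)}\).

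For the Quillen equivalence, use the criterion of \Cref{def:quillen.equivalence.criterion}. Every object is cofibrant, so it suffices to take an arbitrary \(X\) and a fibrant \(Y\) in the equivariant injective structure. An equivariant map \(\lambda^\infty X\to Y\) is a weak equivalence if and only if its underlying map is, and the same holds for its adjoint \(X\to\ev_{\RR^0}Y\). The adjoint transposes agree after forgetting the action.

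The main obstacle is that an injectively fibrant equivariant \(Y\) need not have a \v{C}ech-fibrant underlying object, so the non-equivariant criterion cannot be applied verbatim. To handle this, I would use derived units and counits. By the criterion in \Cref{prop:lambda.left.quillen}, the derived unit and counit of \(\lambda^\infty\dashv\ev_{\RR^0}\) are weak equivalences. Alternatively, \(\ev_{\RR^0}\) preserves all local weak equivalences: it is evaluation in the smooth variable, and \(\lambda^\infty\dashv\ev_{\RR^0}\) is an objectwise Quillen equivalence whose localizing sets correspond, as in \Cref{prop:lambda.left.quillen}. Combining this with the fact that \(\lambda^\infty\) preserves and reflects weak equivalences (\(\ev_{\RR^0}\lambda^\infty=\id\)), both derived maps can be checked on underlying objects, where they are equivalences by the non-equivariant statement. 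If reflection by \(\ev_{\RR^0}\) is delicate, the fallback is to note that fibrant replacement may be taken in the projective equivariant structure. That structure is Quillen equivalent to the injective one via the identity, and its fibrant objects are underlying fibrant.
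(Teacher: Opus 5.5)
Your strategy matches the paper's. Its proof is the single remark that the argument of \Cref{prop:lambda.left.quillen} applies objectwise in \(\Fun_{\sset}(\mathbf{B}\Sing\mathrm{O}(d),-)\), and your left Quillen step is exactly that. The problem is your ``main obstacle'': it does not exist. The forgetful functor \(U\) out of the injective equivariant structure has left adjoint \(X\mapsto\Sing\mathrm{O}(d)\otimes X\). The underlying map of \(\Sing\mathrm{O}(d)\otimes(X\to X')\) is the pushout-product of \(\varnothing\to\Sing\mathrm{O}(d)\) with \(X\to X'\), so it is a (trivial) cofibration whenever \(X\to X'\) is. Hence \(U\) is right Quillen, and every injectively fibrant equivariant \(Y\) has a \v{C}ech-fibrant underlying object. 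Equivalently, injective fibrations are projective fibrations, because the identity from the projective to the injective equivariant structure is left Quillen. With this, \Cref{def:quillen.equivalence.criterion} transfers verbatim. For any \(X\) and fibrant \(Y\), a map \(\lambda^\infty_{\mathrm{O}(d)}X\to Y\) and its transpose are weak equivalences if and only if their underlying maps are. Those underlying maps are transposes under the non-equivariant adjunction, with \(UY\) fibrant. This one observation is what makes the paper's ``applies objectwise'' legitimate, and it is all you need.

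You should delete the paragraph of workarounds, since it contains a false step: \(\ev_{\RR^0}\) does not preserve local weak equivalences. Already in \(\CsSet\), the map \(\Yo_{\RR}\to *\) is a weak equivalence, because \(\Sing^\infty\Yo_{\RR}\) is the smooth singular complex of \(\RR\), which is contractible. But its value at \(\RR^0\) is the map from the discrete simplicial set \(\RR\) to a point. The constant presheaves on \(\Cart\) with these values give a counterexample for the \v{C}ech-local structures. Only the right derived functor of \(\ev_{\RR^0}\) is homotopical. Likewise, \(\lambda^\infty\) reflects weak equivalences because it is a left Quillen equivalence with all objects cofibrant, not because \(\ev_{\RR^0}\lambda^\infty=\id\). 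Your derived-unit/counit route, checked ``on underlying objects'', also silently needs the underlying fibrancy you doubted. The projective fallback would work, since being a Quillen equivalence depends only on the homotopy categories, which the projective and injective structures share. But it is unnecessary.
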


\begin{proof}
  The proof of \Cref{prop:lambda.left.quillen} applies objectwise in
  \(\Fun_{\sset}(\mathbf{B}\Sing\mathrm{O}(d),-)\).
\end{proof}

\begin{remark}\label{rem:presheaf.categories.for.Cartesian.realization}
  In what follows, we will repeatedly apply the general notions recalled in
  \Cref{def:tensored.cotensored,def:tensor.cotensor.presheaves,def:homotopy.weighted.colimit}
  to the simplicial presheaf categories
  \[
    \spsh(\fembnoniso{d}), \qquad \spsh(\Cart), \qquad
    \spsh(\fembcartnoniso{d}).
  \]
  We equip them with local injective model structures. In particular, for
  \(K\in\sset\) and \(F\) an object of one of the above categories, the tensor
  and cotensor are computed objectwise by
  \[
    (K\otimes F)(p)=K\times F(p), \qquad
    (F^{K})(p)=\Map_{\sset}\bigl(K,F(p)\bigr),
  \]
  where \(p\) is an object of \(\fembnoniso{d}\), \(\fembcartnoniso{d}\), or
  \(\Cart\). Moreover, limits and colimits in the above categories are
  computed objectwise (\Cref{rem:presheaves.complete.cocomplete}), and
  homotopy weighted colimits in these categories are given by
  the derived coend of \Cref{def:homotopy.weighted.colimit}.
\end{remark}

\begin{definition}\label{def:rho.equivalence}
  For \(U\in\Cart\), write
  \[
    q_U\coloneqq(\RR^d\times U\longrightarrow U).
  \]
  Define the enriched functor
  \[
    \rho_d:\Cart\times \mathbf{B}\Sing\mathrm{O}(d)
    \longrightarrow \fembcart{d}
  \]
  by
  \[
    \rho_d(U)=q_U,
    \qquad
    \rho_d(g,A)(x,u)=(Ax,g(u))
  \]
  for \(g:U\to V\) and \(A\in\mathrm{O}(d)\). The higher
  simplices are induced by the continuous action of \(\mathrm{O}(d)\) on
  \(\RR^d\). Under the canonical
  identification
  \[
    \psh(\Cart\times\mathbf{B}\Sing\mathrm{O}(d),\CsSet)
    \cong
    \psh(\Cart,\CsSet)^{\mathrm{O}(d)},
  \]
  we equip \(\Cart\times\mathbf{B}\Sing\mathrm{O}(d)\) with the coverage
  in which a family
  \(\{(g_i,A_i):U_i\to U\}_{i\in I}\) is covering if and only if
  \(\{g_i:U_i\to U\}_{i\in I}\) is a covering family in \(\Cart\).
  Restriction along \(\rho_d\) defines a functor
  \[
    \rho^{*}_{\infty,d} : \psh(\fembcart{d},\CsSet)\longrightarrow
    \psh(\Cart,\CsSet)^{\mathrm{O}(d)}
  \]
  given concretely by
  \[
    (\rho^{*}_{\infty,d} F)(U)=F(q_U),
  \]
  with \(\Sing\mathrm{O}(d)\)-action induced by the fiberwise orthogonal
  linear maps \((x,u)\mapsto(Ax,u)\).
\end{definition}

\begin{proposition}\label{prop:rho.on.representables}
  The functor \(\rho_d\) is essentially surjective and homotopically
  fully faithful. More precisely, for every \(U,V\in\Cart\), the induced map
  of mapping spaces
  \[
    \Cart(U,V)\times\Sing\mathrm{O}(d)
    \longrightarrow
    \Map_{\fembcart{d}}(q_U,q_V)
  \]
  is a weak equivalence. Equivalently, for every \(U\in\Cart\), the
  canonical morphism
  \[
    \Yo_U^{\Cart\times\mathbf{B}\Sing\mathrm{O}(d)}
    \longrightarrow
    \rho_{\infty,d}^{*}\Yo_{q_U}^{\fembcart{d}}
  \]
  is an objectwise weak equivalence.
\end{proposition}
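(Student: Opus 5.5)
Essential surjectivity is immediate from \Cref{def:fembcart}: every object of \(\fembcart{d}\) is isomorphic to a projection \(\RR^d\times U\to U\), which is \(q_U=\rho_d(U)\). The equivalence of the two formulations of homotopical full faithfulness is also formal. Evaluating the canonical morphism \(\Yo_U\to\rho^*_{\infty,d}\Yo_{q_U}\) at \(V\) gives exactly the map \(\Cart(V,U)\times\Sing\mathrm{O}(d)\to\Map_{\fembcart{d}}(q_V,q_U)\), with the roles of \(U,V\) swapped. The real content is therefore the mapping-space statement. I will compare both sides in \(\CsSet\), since the hom-objects of \(\fembcart{d}\) are smooth sets, and use \Cref{prop:Cartesian.model.str.smooth.simp.sets}, so that weak equivalences are detected after \(\Sing^\infty\).

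Fix the base map first. An \(L\)-family of morphisms \(q_U\to q_V\) consists of a base map \(g\colon U\to V\), independent of \(L\), together with a smooth map \(f\colon \RR^d\times U\times L\to\RR^d\) such that, for each \((u,\ell)\), the map \(f(-,u,\ell)\) is an open embedding \(\RR^d\to\RR^d\). Hence
\[
  \Map_{\fembcart{d}}(q_U,q_V)\cong\coprod_{g\in\Cart(U,V)}\mathrm{Emb}(\RR^d,\RR^d)^{U},
\]
where \(\mathrm{Emb}(\RR^d,\RR^d)\) is the smooth set of smooth open self-embeddings and \((-)^U\) is the internal hom from \(U\). The map from the left side sends \((g,A)\) to the summand indexed by \(g\) and to the constant family \(A\), regarded as a linear embedding. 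So it suffices to show that \(\mathrm{O}(d)\to\mathrm{Emb}(\RR^d,\RR^d)^U\) is a weak equivalence for each \(U\). Here \(\mathrm{O}(d)\) stands for \(\Sing\mathrm{O}(d)\) viewed as constant in the smooth direction, or equivalently the smooth set it represents; by \Cref{def:constant.smooth.direction} these agree up to weak equivalence. Since \(U\) is smoothly contractible, precomposition with \(U\to\RR^0\) gives an equivalence \(\mathrm{Emb}\to\mathrm{Emb}^U\). This follows from \(\RR\)-locality (\Cref{prop:smooth.model.as.localization}) together with the fact that the smooth homotopy \(u\mapsto tu\) contracts \(U\cong\RR^n\). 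This step reduces the statement to the case \(U=\RR^0\).

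The main obstacle is showing that \(\mathrm{O}(d)\hookrightarrow\mathrm{Emb}(\RR^d,\RR^d)\) is a weak equivalence. I will do this with explicit smooth homotopies, which are automatically compatible with \(\Sing^\infty\) on extended simplices.

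\textbf{Step 1.} Deformation-retract onto embeddings fixing a point. Use \(H_t(f)(x)=f(x)-t\,f(0)\), which keeps embeddings open embeddings.

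\textbf{Step 2.} Apply the Alexander trick: \(H_t(f)(x)=f(tx)/t\) for \(t>0\) and \(H_0(f)=Df(0)\). This retracts onto \(\GL(d)\). The family is smooth in \((t,x,f)\) because \(f(tx)/t=\int_0^1 Df(stx)\,x\,ds\), so it extends smoothly to \(t=0\), and each \(H_t(f)\) remains an open embedding. To make the homotopy smooth in \(t\) at the endpoints, reparametrize \(t\) by a smooth function that is flat near \(0\) and \(1\).

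\textbf{Step 3.} Finish with the polar-decomposition retraction of \(\GL(d)\) onto \(\mathrm{O}(d)\) from \Cref{def:sing.GLd.Od}.

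Each homotopy is natural in plot families, so it defines a smooth homotopy of smooth sets. Such homotopies become simplicial homotopies after \(\Sing^\infty\), which proves the weak equivalence. The remaining care is verifying joint smoothness in the parameter \(L\) of a plot; this is exactly what the integral formula in Step 2 provides.
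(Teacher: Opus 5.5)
Your proposal is correct and follows essentially the paper's route. The paper likewise linearizes at the origin by rescaling, merging your Steps~1--2 into the single deformation \(sF(0,u,\ell)+\bigl(F(sx,u,\ell)-F(0,u,\ell)\bigr)/s\); it then applies the polar-decomposition retraction, contracts \(U\) inside the same deformation rather than by passing through \(\mathrm{Emb}(\RR^d,\RR^d)^U\), and reads off a homotopy equivalence on \(\Delta^n_{\mathrm{e}}\)-plots.

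One small point: the identification of \(\lambda^\infty\Sing\mathrm{O}(d)\) with the smooth set represented by \(\mathrm{O}(d)\) does not follow from the Quillen equivalence of \Cref{def:constant.smooth.direction} alone. It also needs the comparison of the smooth and continuous singular complexes of \(\mathrm{O}(d)\) (Bunk's Theorem~4.15), which is exactly where the paper cites it.
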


\begin{proof}
  Essential surjectivity follows from \Cref{def:fembcart}: every object
  of \(\fembcart{d}\) is isomorphic to \(q_U\) for some \(U\in\Cart\).

  Fix \(U,V,L\in\Cart\) and a base map \(g:U\to V\). An \(L\)-family
  of fiberwise open embeddings \(q_U\to q_V\) over \(g\) is represented by
  a smooth map
  \[
    F:\RR^d\times U\times L\longrightarrow\RR^d
  \]
  such that \(F(-,u,\ell):\RR^d\to\RR^d\) is an open embedding for
  every \((u,\ell)\in U\times L\). For \(s\in[0,1]\), define
  \[
    H_s(F)(x,u,\ell)=
    \begin{cases}
      \displaystyle
      sF(0,u,\ell)+\frac{F(sx,u,\ell)-F(0,u,\ell)}{s},&s>0,\\[1.2ex]
      D_xF(0,u,\ell)x,&s=0.
    \end{cases}
  \]
  This is smooth at \(s=0\), since
  \[
    \frac{F(sx,u,\ell)-F(0,u,\ell)}{s}
    =
    \int_0^1D_xF(rsx,u,\ell)x\,dr.
  \]
  For every \(s\), the map \(H_s(F)(-,u,\ell)\) is an open embedding
  for every \((u,\ell)\in U\times L\). Thus \(H\) is a smooth
  deformation onto the space of linear families
  \[
    x\longmapsto A(u,\ell)x,
    \qquad A:U\times L\longrightarrow\GL(d).
  \]
  Here \(A(u,\ell)=D_xF(0,u,\ell)\) is invertible because
  \(F(-,u,\ell)\) is an embedding between manifolds of the same dimension.
  Polar decomposition gives a deformation of such a family through
  invertible linear families,
  \[
    A_t(u,\ell)
    =A(u,\ell)\bigl(A(u,\ell)^{\mathsf{T}}A(u,\ell)\bigr)^{-t/2},
    \qquad 0\leq t\leq1,
  \]
  from \(A\) to the orthogonal family \(A_1:U\times L\to\mathrm{O}(d)\).
  Choose a point \(u_0\in U\) and a smooth contraction \(h_t:U\to U\) from
  the identity to the constant map with value \(u_0\). Suppressing the fixed
  base map \(g\), define
  \[
    i_L:\mathrm{C}^\infty(L,\mathrm{O}(d))
    \longrightarrow
    \left\{
      \begin{array}{c|c}
        F:\RR^d\times U\times L\to\RR^d
        &F(-,u,\ell)\text{ is an open embedding for every }(u,\ell)
      \end{array}
    \right\}
  \]
  by
  \[
    i_L(B)(x,u,\ell)=B(\ell)x.
  \]
  The preceding construction defines
  \[
    r_L(F)(\ell)=A_1(u_0,\ell)\in\mathrm{O}(d).
  \]
  Indeed, after the rescaling and polar-decomposition homotopies, the
  homotopy
  \[
    (x,u,\ell)\longmapsto A_1(h_t(u),\ell)x
  \]
  ends at \(i_Lr_L(F)\). The resulting homotopy from \(F\) to
  \(i_Lr_L(F)\) is constant on the image of \(i_L\): rescaling fixes linear
  maps, polar decomposition fixes orthogonal matrices, and contraction of
  \(U\) does not change a family independent of \(u\). Thus
  \(r_Li_L=\id\), and these homotopies exhibit the image of \(i_L\) as a
  deformation retract, naturally in \(L\).

  Taking \(L=\Delta^n_{\mathrm{e}}\) gives a homotopy equivalence
  between the mapping simplicial set over the fixed base map \(g\) and the
  smooth singular complex of \(\mathrm{O}(d)\). The latter is weakly
  equivalent to the ordinary singular complex \(\Sing\mathrm{O}(d)\) by
  \cite[Theorem~4.15]{Bun22}. Since the same argument applies to every base
  map \(g:U\to V\), the displayed map in the statement is a weak
  equivalence.
\end{proof}

\begin{proposition}\label{prop:rho.Cech.local.Quillen.equivalence}
  Restriction along the functor \(\rho_d\) of \Cref{def:rho.equivalence}
  is the left adjoint in a Quillen equivalence
  \[
    \rho_{\infty,d}^{*}:
    \psh(\fembcart{d},\CsSet)_{\inj,\Cech}
    \rightleftarrows
    \psh(\Cart,\CsSet)^{\mathrm{O}(d)}_{\inj,\Cech}
    :\rho_{\infty,d,*},
  \]
  where \(\rho_{\infty,d,*}\) is an enriched right
  Kan extension.
\end{proposition}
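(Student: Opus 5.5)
The strategy has two stages. First I show that restriction along \(\rho_d\) is left Quillen for the global injective structures, then for the \v{C}ech-local ones. Then I show the resulting adjunction is a Quillen equivalence, using a Dwyer--Kan type argument: \(\rho_d\) is essentially surjective and homotopically fully faithful by \Cref{prop:rho.on.representables}.

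\textbf{Step 1: left Quillen.} Restriction \(\rho^{*}_{\infty,d}\) is computed objectwise: \((\rho^{*}_{\infty,d}F)(U)=F(q_U)\). It therefore preserves objectwise monomorphisms and objectwise weak equivalences, so it is left Quillen for the global injective structures. The right adjoint \(\rho_{\infty,d,*}\) exists as an enriched right Kan extension because \(\psh(\fembcart{d},\CsSet)\) is complete.

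To descend to the \v{C}ech-local structures, I use \cite[Proposition~3.3.18(1)]{Hir03}. It suffices to check that \(\rho^{*}_{\infty,d}\) sends each covering-sieve morphism \(s_{\mathcal{U}}:c_{\mathcal{U}}\to\Yo_p\) in \(\fembcart{d}\) to a \v{C}ech-local weak equivalence. I would detect this on stalks. The point is that \((\rho^{*}_{\infty,d}X)\) at a point \(u\in U\in\Cart\) is \(\colim_{\varepsilon}X(q_{B^n_\varepsilon(u)})\). This is cofinal in the fiberwise stalk of \(X\) at \((q_U,(0,u))\) only in the base direction, not in the fiber direction.

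So first I would show that the natural map \(X(q_V)\to X(p^{q_V,(0,u)}_{\delta,\varepsilon})\) is a weak equivalence for \v{C}ech-fibrant \(X\), when \(V\) is a small ball. The reason is that the inclusion \(B^d_\delta\times V\hookrightarrow\RR^d\times V\) is isotopic to an isomorphism through the enriched mapping objects: a radial rescaling, which is contractible data by the argument in \Cref{prop:rho.on.representables}. Enriched presheaves send such isotopy equivalences to weak equivalences after \(\RR\)-localization. Granting this, stalks of \(\rho^{*}_{\infty,d}X\) agree with fiberwise stalks of \(X\), at least for fibrant \(X\). Then I apply this to fibrant replacements of \(c_{\mathcal{U}}\) and \(\Yo_p\) and use \Cref{prop:femb.stalk.left.quillen} together with the stalkwise criterion of \Cref{rem:jardine.is.hyper} and \cite[Proposition~12.5]{PavlovSmoothOka}.

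An alternative route avoids fiber stalks. Show directly that \(\rho_{\infty,d,*}\) preserves \v{C}ech-local fibrant objects: for \(G\) satisfying descent on \(\Cart\), the value \((\rho_{\infty,d,*}G)(p)\) is a homotopy end over \(\Cart\times\mathbf{B}\Sing\mathrm{O}(d)\). A Cartesian cover of \(p\) restricts, on base spaces, to a cover of a Cartesian space. I would try this alternative first if the stalk comparison becomes messy.

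\textbf{Step 2: Quillen equivalence.} All objects are cofibrant, so I use \Cref{def:quillen.equivalence.criterion}. It suffices to show that \(\rho^{*}_{\infty,d}\) reflects weak equivalences between fibrant objects and that the derived counit is an equivalence.

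For reflection: a map \(f:E_1\to E_2\) of \v{C}ech-fibrant objects on \(\fembcart{d}\) is local iff it is objectwise, by \Cref{rem:local.equivalences.between.local.objects}. By essential surjectivity every object of \(\fembcart{d}\) is isomorphic to some \(q_U\), so \(f\) is objectwise iff \(\rho^{*}_{\infty,d}f\) is objectwise.

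For the derived counit \(\rho^{*}_{\infty,d}R\rho_{\infty,d,*}G\to G\) with \(G\) fibrant, I compute \((\rho_{\infty,d,*}G)(q_U)\) as the enriched mapping object \(\Map(\rho^{*}_{\infty,d}\Yo_{q_U},G)\). By \Cref{prop:rho.on.representables}, \(\Yo_U\to\rho^{*}_{\infty,d}\Yo_{q_U}\) is an objectwise weak equivalence between injectively cofibrant objects. Mapping into fibrant \(G\) then gives \((\rho_{\infty,d,*}G)(q_U)\simeq G(U)\), and this is exactly the counit evaluated at \(U\). Combining reflection with the counit statement, via 2-out-of-3 as in the proof of \Cref{prop:infty.dense.inclusion.of.sites}, yields the criterion.

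\textbf{Main obstacle.} The main difficulty is Step 1, namely comparing the \v{C}ech-local structures, since \(\rho_d\) does not preserve covers in the fiber direction. I expect the isotopy-invariance argument (inclusion of \(B^d_\delta\) isotopic to an isomorphism) to be the crux.
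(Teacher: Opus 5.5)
Your Step~1 is essentially the paper's argument. Step~2, however, has a genuine gap in the reflection claim.

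You show that for \v{C}ech-fibrant \(E_1,E_2\) on \(\fembcart{d}\), a map \(f\) is a \v{C}ech-local weak equivalence if and only if \(\rho^{*}_{\infty,d}f\) is an \emph{objectwise} weak equivalence. The criterion you invoke needs more: if \(\rho^{*}_{\infty,d}f\) is a \v{C}ech-local weak equivalence on \(\mathcal{A}\coloneqq\Cart\times\mathbf{B}\Sing\mathrm{O}(d)\), then \(f\) must be one. To upgrade ``\v{C}ech-local'' to ``objectwise'' on the \(\mathcal{A}\)-side, you need \(\rho^{*}_{\infty,d}E_1\) and \(\rho^{*}_{\infty,d}E_2\) to be \v{C}ech-local on \(\mathcal{A}\) (\Cref{rem:local.equivalences.between.local.objects}). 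That is, they must satisfy descent for \(\mathcal{A}\)-side covers, and your proposal never establishes this.

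This is not a formality. Step~1 only shows that the \(\mathcal{A}\)-side \v{C}ech structure inverts the restricted \(\fembcart{d}\)-side covering sieves. So it is at least as localized as the structure transported from \(\fembcart{d}\). If it were strictly more localized, your counit computation would still go through verbatim, since it uses only \Cref{prop:rho.on.representables} and fibrancy of \(G\). But the derived unit would then fail to be a weak equivalence, and the adjunction would not be a Quillen equivalence. The reverse comparison of the two localizations is therefore essential, and your discussion of the main obstacle misses it.

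The paper supplies exactly this in the last part of its proof. Take an \(\mathcal{A}\)-covering family \(\mathcal{V}=\{(g_i,A_i):U_i\to U\}\); then \(\rho_d\mathcal{V}\) covers \(q_U\). An \(L\)-family \(q_W\to q_U\) with fiber map \(F\) factors through \(\rho_d(g_i,A_i)\) if and only if its base map factors through \(g_i\); use \(A_i^{-1}F\) as the fiber map of the lift. Every stage of the deformation in \Cref{prop:rho.on.representables} fixes base maps, so it restricts to these sieves. Hence the comparison \(c^{\mathcal{A}}_{\mathcal{V}}\to\rho^{*}_{\infty,d}c_{\rho_d\mathcal{V}}\), lying over \(\Yo_U\to\rho^{*}_{\infty,d}\Yo_{q_U}\), is an objectwise weak equivalence. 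Localization at the restricted sieves therefore inverts every \(\mathcal{A}\)-side sieve, and the two localizing sets define the same model structure.

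From here you have two options:
\begin{itemize}
\item Apply \cite[Theorem~3.3.20]{Hir03} to the global Quillen equivalence, as the paper does. The paper gets the global statement from \cite[Proposition~2.4]{GM20}; your counit computation is a direct substitute for that citation.
\item Combine the square with global homotopical full faithfulness of restriction to show that \(\rho^{*}_{\infty,d}E\) is \v{C}ech-local whenever \(E\) is \v{C}ech-fibrant. This repairs your reflection step directly.
\end{itemize}

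Two smaller corrections to Step~1:
\begin{itemize}
\item Drop the restriction to fibrant \(X\). The isotopy argument works for every enriched presheaf, and it must be applied to \(c_{\mathcal{U}}\) and \(\Yo_p\) themselves. Passing to fibrant replacements would presuppose that \(\rho^{*}_{\infty,d}\) preserves the local weak equivalence \(c_{\mathcal{U}}\to ac_{\mathcal{U}}\), which is what you are proving.
\item The stalk criterion you need on \(\psh(\Cart,\CsSet)\) is \Cref{prop:femb.stalks.detect.local.we} with \(d=0\), after forgetting the action. \Cref{rem:jardine.is.hyper} concerns \(\sset\)-valued presheaves and does not directly apply.
\end{itemize}
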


\begin{proof}
  Put
  \[
    \mathcal{A}\coloneqq
    \Cart\times\mathbf{B}\Sing\mathrm{O}(d),
    \qquad
    \mathcal{B}\coloneqq\fembcart{d}.
  \]
  By \Cref{prop:rho.on.representables}, the functor
  \(\rho_d:\mathcal{A}\to\mathcal{B}\) is essentially surjective and
  induces weak equivalences on mapping objects. Hence it is a
  \(\CsSet\)-weak equivalence in the sense of
  \cite[Definition~2.3(iv)]{GM20}, and
  \cite[Proposition~2.4]{GM20} gives a Quillen equivalence on the global
  enriched presheaf categories. The global projective and injective model
  structures have the same weak equivalences, and restriction along
  \(\rho_d\) preserves objectwise monomorphisms and objectwise weak
  equivalences. Therefore
  \[
    \rho^{*}_{\infty,d}:
    \psh(\mathcal{B},\CsSet)_{\inj}
    \rightleftarrows
    \psh(\mathcal{A},\CsSet)_{\inj}
    :\rho_{\infty,d,*}
  \]
  is a Quillen equivalence for the global injective model structures.

  It remains to compare the \v{C}ech localizations. Let
  \(\mathcal{U}=\{u_i\to u\}_{i\in I}\) be a covering family in
  \(\mathcal{B}\). We show directly that
  \[
    \rho_{\infty,d}^{*}s_{\mathcal{U}}:
    \rho_{\infty,d}^{*}c_{\mathcal{U}}
    \longrightarrow
    \rho_{\infty,d}^{*}\Yo_u
  \]
  is a \v{C}ech-local weak equivalence. By
  \Cref{lem:femb.enriched.covering.sieve}, its evaluation at
  \(V,L\in\Cart\) is the inclusion of those \(L\)-families
  \(\sigma:q_V\to u\) which factor through one of the morphisms
  \(u_i\to u\).

  Let \(X\in\psh(\mathcal{B},\CsSet)\), let \(V\in\Cart\), and let \(v\in V\).
  There is a natural map
  \[
    \bigl(\rho_{\infty,d}^{*}X\bigr)_v
    =
    \colim_{v\in V'\subseteq V}X(q_{V'})
    \longrightarrow
    X_{(q_V,(0,v))},
  \]
  where the right-hand side is the fiberwise stalk of
  \Cref{prop:femb.stalk.left.quillen}. Indeed, every base restriction
  \(q_{V'}\to q_V\) is an open neighborhood
  of \((0,v)\) in \(q_V\). We claim that this map is a weak equivalence in
  \(\CsSet\).

  After choosing coordinates around \(v\), the balls \(B_\varepsilon(v)\)
  form a cofinal system of neighborhoods. Hence
  \[
    \bigl(\rho_{\infty,d}^{*}X\bigr)_v
    \cong
    \colim_{\varepsilon\to0}X(q_{B_\varepsilon(v)}).
  \]
  By
  \Cref{lem:femb.stalk.local.trivialization}, the fiberwise stalk on the right
  is computed by
  \[
    \colim_{\delta,\varepsilon\to0}
    X\bigl(B_\delta^d(0)\times B_\varepsilon(v)
    \longrightarrow B_\varepsilon(v)\bigr).
  \]
  For every \(\delta\) and \(\varepsilon\), this Cartesian submersion is
  isomorphic to \(q_{B_\varepsilon(v)}\). Under such an isomorphism, its
  inclusion into \(q_{B_\varepsilon(v)}\) is represented by a fiberwise open
  embedding \(\RR^d\to\RR^d\) over \(\id_{B_\varepsilon(v)}\). By
  the deformation in the proof of \Cref{prop:rho.on.representables}, this
  fiberwise open embedding is isotopic to an orthogonal linear isomorphism.
  Applying the enriched functor \(X\) to this homotopy gives a homotopy from
  the induced restriction map
  \[
    X(q_{B_\varepsilon(v)})
    \longrightarrow
    X\bigl(B_\delta^d(0)\times B_\varepsilon(v)
    \longrightarrow B_\varepsilon(v)\bigr)
  \]
  to an isomorphism. After applying \(\Sing^\infty\), it is therefore a
  simplicial homotopy equivalence, and hence a weak equivalence in \(\CsSet\)
  by \Cref{prop:Cartesian.model.str.smooth.simp.sets}. Filtered colimits
  preserve weak equivalences in \(\CsSet\) and the claim follows.

  Applying the claim to \(X=c_{\mathcal{U}}\) and \(X=\Yo_u\) gives a
  commutative square
  \[
    \begin{tikzcd}
      \bigl(\rho_{\infty,d}^{*}c_{\mathcal{U}}\bigr)_v
      \ar[r]
      \ar[d,"\sim"']
      &
      \bigl(\rho_{\infty,d}^{*}\Yo_u\bigr)_v
      \ar[d,"\sim"]
      \\
      (c_{\mathcal{U}})_{(q_V,(0,v))}
      \ar[r,"\sim"']
      &
      (\Yo_u)_{(q_V,(0,v))}.
    \end{tikzcd}
  \]
  The bottom map is a weak equivalence by
  \Cref{prop:femb.stalk.left.quillen}, since \(s_{\mathcal{U}}\) is a
  \v{C}ech-local weak equivalence by definition. Hence the top map is a weak
  equivalence by the two-out-of-three property. This holds for every
  \(V\in\Cart\) and every \(v\in V\). Weak equivalences in \(\psh(\mathcal{A},
  \CsSet)\) are
  detected after forgetting the \(\mathrm{O}(d)\)-action, and
  \(\fembcart{0}\cong\Cart\). Therefore
  \Cref{prop:femb.stalks.detect.local.we}, applied with \(d=0\), shows that
  \(\rho_{\infty,d}^{*}s_{\mathcal{U}}\) is a \v{C}ech-local weak
  equivalence. Thus restriction sends every
  \(\mathcal{B}\)-side covering-sieve morphism to an
  \(\mathcal{A}\)-side \v{C}ech-local weak equivalence, and the localization
  criterion gives a Quillen adjunction for the \v{C}ech-local injective model
  structures.

  The
  preceding argument showed that localization at the \(\mathcal{A}\)-side
  covering-sieve morphisms inverts
  \[
    \rho_{\infty,d}^{*}s_{\mathcal{U}}
  \]
  for every \(\mathcal{B}\)-side covering family \(\mathcal{U}\). It remains
  to show that localization at these restricted \(\mathcal{B}\)-side
  morphisms inverts every \(\mathcal{A}\)-side covering-sieve morphism.

  Let
  \(\mathcal{V}=\{(g_i,A_i):U_i\to U\}_{i\in I}\) be a covering family in
  \(\mathcal{A}\). Then \(\rho_d\mathcal{V}\) is a covering family in
  \(\mathcal{B}\). Consider an \(L\)-family \(\sigma:q_W\to q_U\), and
  denote its base map by \(g:W\to U\) and its fiber map by
  \[
    F:\RR^d\times W\times L\longrightarrow\RR^d.
  \]
  The family \(\sigma\) factors through
  \[
    \rho_d(g_i,A_i):q_{U_i}\longrightarrow q_U
  \]
  if and only if \(g\) factors through \(g_i\). Indeed, any such
  factorization induces a map \(h:W\to U_i\) satisfying \(g=g_i h\).
  Conversely, given such an \(h\), the smooth map
  \[
    (x,w,\ell)\longmapsto A_i^{-1}F(x,w,\ell),
  \]
  together with \(h\), defines an \(L\)-family \(q_W\to q_{U_i}\) whose
  composite with \(\rho_d(g_i,A_i)\) is \(\sigma\).

  Every stage of the deformation constructed in the proof of
  \Cref{prop:rho.on.representables} leaves the base map \(g\) unchanged. It
  therefore restricts to the families whose base maps factor through some
  \(g_i\), and gives a commutative square
  \[
    \begin{tikzcd}
      c_{\mathcal{V}}^{\mathcal{A}}
      \ar[r,"s_{\mathcal{V}}"]
      \ar[d,"\sim"']
      &
      \Yo_U^{\mathcal{A}}
      \ar[d,"\sim"]
      \\
      \rho_{\infty,d}^{*}c_{\rho_d\mathcal{V}}^{\mathcal{B}}
      \ar[r,"\rho_{\infty,d}^{*}s_{\rho_d\mathcal{V}}"']
      &
      \rho_{\infty,d}^{*}\Yo_{q_U}^{\mathcal{B}},
    \end{tikzcd}
  \]
  whose vertical maps are objectwise weak equivalences.

  Now consider this square after localizing the \(\mathcal{A}\)-side
  presheaf category at the restrictions of the \(\mathcal{B}\)-side
  covering-sieve morphisms. The bottom horizontal map is a weak equivalence
  by definition, and the vertical maps remain weak equivalences because they
  are objectwise weak equivalences. Hence the two-out-of-three property
  implies that \(s_{\mathcal{V}}\) is a weak equivalence. Thus localization
  at the restricted \(\mathcal{B}\)-side morphisms inverts every
  \(\mathcal{A}\)-side covering-sieve morphism.

  The two collections of morphisms therefore determine the same
  localization. Since every object is cofibrant, restriction computes its
  left derived functor on the localization morphisms. Consequently,
  \cite[Theorem~3.3.20]{Hir03} gives the asserted Quillen equivalence after
  \v{C}ech localization.
\end{proof}

\begin{definition}\label{def:Cartesian.coefficient}
  An \(\mathrm{O}(d)\)-equivariant \emph{Cartesian coefficient} is a
  simplicially enriched functor
  \[
    \mathcal{K}_d:
    \fembcartnoniso{d}
    \longrightarrow
    \spsh(\Cart)^{\mathrm{O}(d)}
  \]
  together with a zigzag of natural \(\mathrm{O}(d)\)-equivariant
  objectwise weak equivalences
  \[
    \lambda^\infty_{\mathrm{O}(d)}\mathcal{K}_d(u)
    \simeq
    \rho_{\infty,d}^{*}\mathfrak{q}^{*}\Yo_{\iota(u)}
  \]
  in \(\psh(\Cart,\CsSet)^{\mathrm{O}(d)}\), for every
  \(u\in\fembcartnoniso{d}\), where \(\Yo_{\iota(u)}\) denotes the enriched
  representable on \(\femb{d}\) and \(\mathfrak{q}^{*}\) is the restriction
  functor of \Cref{def:Cartesian.subsite.restriction.functors}. We fix such a
  Cartesian coefficient \(\mathcal{K}_d\).
\end{definition}

\begin{remark}\label{rem:Cartesian.coefficient.intuition}
  Cartesian coefficients always exist. Indeed, take \(\mathcal{K}_d\) to
  be a functorial model for
  \[
    u\longmapsto
    \RR\ev_{\RR^0}\left(
      \rho_{\infty,d}^{*}\mathfrak{q}^{*}\Yo_{\iota(u)}
    \right).
  \]
  Here \(\RR\ev_{\RR^0}\) denotes the right derived functor of the
  objectwise evaluation functor of
  \Cref{def:constant.smooth.direction}. The derived counit of the Quillen
  equivalence in \Cref{prop:equivariant.lambda.left.quillen} gives the natural
  zigzag of objectwise weak equivalences required in
  \Cref{def:Cartesian.coefficient}.

  The diagram on the right-hand side of the comparison in
  \Cref{def:Cartesian.coefficient} is canonical, but it is often inconvenient
  for explicit calculations. A Cartesian coefficient replaces it by a
  simpler simplicial-presheaf model before passing to smooth simplicial sets.
  For example, in dimension \(1\) one may take
  \[
    \mathcal{K}_1(u)=\mathcal{O}_u\times\Yo_{\flat u},
  \]
  where \(\mathcal{O}_u\) is the set of fiberwise orientations of \(u\). We use
  this model in \Cref{cor:one-dimensional.Riemannian.classification} and give
  more details in~\cite{KP26}.
\end{remark}

Both \(\spsh(\fembcartnoniso{d})\) and \(\spsh(\Cart)\) are simplicial
presheaf categories. In particular, they are simplicially enriched, tensored,
and cotensored, and admit all small limits and colimits. The same holds for
the equivariant category \({\spsh(\Cart)}^{\mathrm{O}(d)}\). Thus, we can use
the language introduced in \Cref{sec:background}, in particular,
\Cref{def:homotopy.weighted.colimit}.

\begin{definition}\label{def:Cartesian.realization.functor}
  Let
  \[
    Q_{\proj}\mathcal{K}_d
    \longrightarrow
    \mathcal{K}_d
  \]
  be a projectively cofibrant replacement in the model category
  \(\Fun(\fembcartnoniso{d},\spsh(\Cart)^{\mathrm{O}(d)})\).
  We define the \emph{(derived) Cartesian realization functor} as
  \[
    \Cc_d(F)
    \coloneqq
    F\otimes_{\fembcartnoniso{d}}Q_{\proj}\mathcal{K}_d.
  \]
\end{definition}

\begin{proposition}\label{prop:Cartesian.realization.Quillen.adjunction}
  There is a simplicially enriched Quillen adjunction
  \begin{equation}\label{eq:Cartesian.realization.adjunction}
    \Cc_d : \spsh(\fembcartnoniso{d})_{\mathrm{inj},\Cech} \rightleftarrows
    \spsh(\Cart)_{\mathrm{inj},\Cech}^{\mathrm{O}(d)} : \mathfrak{R}_{d}
  \end{equation}
  for the \v{C}ech-local injective model structures. In particular, \(\Cc_d\)
  is obtained by deriving the coefficient diagram \(\mathcal{K}_d\) in its
  weighted-colimit presentation.
\end{proposition}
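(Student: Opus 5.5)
Since \(\Cc_d=-\otimes_{\fembcartnoniso{d}}Q_{\proj}\mathcal{K}_d\) is a weighted colimit with a fixed, projectively cofibrant coefficient diagram, I will first construct the right adjoint explicitly and establish the Quillen property before localization. I will then check compatibility with the \v{C}ech localizations using the localization criterion \cite[Proposition~3.3.18(1)]{Hir03}, in the same way as in \Cref{prop:lambda.left.quillen,prop:iota.left.quillen}.

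\emph{Step 1: adjunction and global Quillen property.} Define \(\mathfrak{R}_d(Y)(u)\coloneqq\Map_{\spsh(\Cart)^{\mathrm{O}(d)}}\bigl(Q_{\proj}\mathcal{K}_d(u),Y\bigr)\) for \(u\in\fembcartnoniso{d}\). The simplicial presheaf structure comes from the functoriality of \(Q_{\proj}\mathcal{K}_d\). The defining isomorphism of weighted colimits in \Cref{def:weighted.colimit} then gives a simplicially enriched adjunction \(\Cc_d\dashv\mathfrak{R}_d\). Both \(\spsh(\fembcartnoniso{d})\) and \(\spsh(\Cart)^{\mathrm{O}(d)}\), with their injective structures, are combinatorial simplicial model categories by \Cref{thm:enriched.injective.projective.model.structures}. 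By \Cref{prop:weighted.colimit.quillen.bifunctor}, applied to the cofibrant coefficient \(Q_{\proj}\mathcal{K}_d\) and with \(\cat{M}=\spsh(\Cart)^{\mathrm{O}(d)}_{\inj}\), the functor \(\Cc_d\) is left Quillen for the global injective structures. I would first take \(\cat{M}\) with the \v{C}ech-local injective structure instead. This is also a combinatorial simplicial model category, and the proposition gives directly that \(\Cc_d\) is left Quillen from the global injective structure on \(\spsh(\fembcartnoniso{d})\) to the local structure on the target.

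\emph{Step 2: the localizing maps.} It remains to show that \(\Cc_d\) sends every covering-sieve morphism \(s_{\mathcal{U}}:c_{\mathcal{U}}\to\Yo_u\) in \(\fembcartnoniso{d}\) to a \v{C}ech-local weak equivalence. Every weight is cofibrant (\Cref{rem:injective.presheaves.cofibrant}), so \(\Cc_d\) computes its own left derived functor. By co-Yoneda, \(\Cc_d(\Yo_u)\cong Q_{\proj}\mathcal{K}_d(u)\). Cocontinuity together with the coend presentation of \(c_{\mathcal{U}}\) in \Cref{def:covering.sieve.morphism} identifies \(\Cc_d(c_{\mathcal{U}})\) with the corresponding colimit of \(Q_{\proj}\mathcal{K}_d\) over the sieve. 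Since \(\lambda^\infty_{\mathrm{O}(d)}\) is a left Quillen equivalence for the local structures (\Cref{prop:equivariant.lambda.left.quillen}), it reflects local weak equivalences between cofibrant objects. It therefore suffices to show that \(\lambda^\infty_{\mathrm{O}(d)}\Cc_d(s_{\mathcal{U}})\) is a local weak equivalence.

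Because \(\lambda^\infty_{\mathrm{O}(d)}\) commutes with weighted colimits, this map is \(s_{\mathcal{U}}\otimes^{\mathbb L}(\lambda^\infty_{\mathrm{O}(d)}\mathcal{K}_d)\). By the zigzag in \Cref{def:Cartesian.coefficient} and the homotopy invariance of the Quillen bifunctor in its second variable, it is weakly equivalent to \(s_{\mathcal{U}}\otimes^{\mathbb L}_{\fembcartnoniso{d}}(\rho^*_{\infty,d}\mathfrak{q}^*\Yo_{\iota(-)})\). The functor \(G\otimes_{\fembcartnoniso{d}}\Yo_{\iota(-)}\) is exactly the enriched left Kan extension \(\iota_!^\infty\lambda^\infty G\), suitably composed with \(\rho^*_{\infty,d}\mathfrak{q}^*\). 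So up to weak equivalence this map is \(\rho^*_{\infty,d}\mathfrak{q}^*\mathfrak{I}_d\) applied to the covering-sieve morphism, extended from the Cartesian subsite to \(\fembnoniso{d}\) by left Kan extension, which preserves sieves. By \Cref{cor:isotop.is.left.quillen} this is a \v{C}ech-local weak equivalence in \(\field{d}\). Both \(\mathfrak{q}^*\) and \(\rho^*_{\infty,d}\) preserve local weak equivalences, since all objects are cofibrant and both are left Quillen (\Cref{prop:infty.dense.inclusion.of.sites,prop:rho.Cech.local.Quillen.equivalence}).

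\emph{Main obstacle.} The delicate part is the identification in Step 2: commuting \(\rho^*_{\infty,d}\mathfrak{q}^*\), which is a restriction and hence also a left adjoint, past the derived weighted colimit, and replacing \(Q_{\proj}\mathcal{K}_d\) by the representables while keeping projective cofibrancy. I would first handle this using the two-variable Quillen property. Restriction functors preserve colimits and tensors, so they commute with weighted colimits on the nose. The comparison \(Q_{\proj}\mathcal{K}_d\to\mathcal{K}_d\) together with the zigzag is an objectwise weak equivalence of coefficient diagrams. After projective cofibrant replacement of the intermediate terms, tensoring with the cofibrant weights \(c_{\mathcal{U}}\) and \(\Yo_u\) preserves it, so 2-out-of-3 transfers the equivalence.
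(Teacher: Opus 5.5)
Your Step~1 and the first half of Step~2 coincide with the paper's proof: you reflect along \(\lambda^{\infty}_{\mathrm{O}(d)}\), then transport along the zigzag of \Cref{def:Cartesian.coefficient} and co-Yoneda. The gap is in your detour through \(\fembnoniso{d}\). Left Kan extension \(q_!\) along \(q\colon\fembcartnoniso{d}\hookrightarrow\fembnoniso{d}\) does \emph{not} send covering sieves to covering sieves. So \Cref{cor:isotop.is.left.quillen}, applied to the \(\fembnoniso{d}\)-sieve of \(\mathcal{U}\), is about a different map from the one you need.

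For example, let \(d=2\) and \(u=(\RR^2\to\RR^0)\). Let \(\mathcal{U}\) contain two open disks \(u_1,u_2\) whose intersection has two components \(A,B\), with no other member containing \(A\cup B\). Take \(r=(A\sqcup B\to\RR^0)\) and the inclusion \(\sigma\colon r\to u\). Any representative \((a\colon c\to u,\tau)\) of an element of \((q_!c_{\mathcal{U}})(r)\) over \(\sigma\) has connected image \(a(c)\supseteq A\cup B\) lying in a single member. The relations defining the colimit only compare nested images. Hence \([\alpha_1,\tau_1]\neq[\alpha_2,\tau_2]\), and \(q_!c_{\mathcal{U}}\to\Yo_u\) is not even a monomorphism.

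The conclusion is still true, but it needs a different reason. Since \(q\) is fully faithful, \(q^*q_!\cong\id\), so \(q^*q_!(s_{\mathcal{U}})=s_{\mathcal{U}}\). Then \Cref{rem:qen.reflects.local.weak.equivalences} shows that \(q_!(s_{\mathcal{U}})\) is a \v{C}ech-local weak equivalence, and \(\mathfrak{I}_d\), \(\mathfrak{q}^*\), \(\rho^*_{\infty,d}\) preserve it. Alternatively, drop the detour, which is the paper's route. \(\mathfrak{q}^*\) is cocontinuous and sends \(\Yo_{\iota(c)}\) to the representable of \(c\) in \(\fembcart{d}\). So \(\mathfrak{q}^*\bigl(c_{\mathcal{U}}\otimes_{\fembcartnoniso{d}}\Yo_{\iota(-)}\bigr)\) is, by \Cref{def:covering.sieve.morphism}, exactly the enriched covering-sieve morphism of \(\mathcal{U}\) in \(\fembcart{d}\), and \Cref{prop:rho.Cech.local.Quillen.equivalence} finishes.

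Your ``main obstacle'' paragraph also does not resolve what it claims. Put \(D\coloneqq\rho^*_{\infty,d}\mathfrak{q}^*\Yo_{\iota(-)}\). Your 2-out-of-3 argument only compares \emph{derived} weighted colimits and ends at \(c_{\mathcal{U}}\otimes Q_{\proj}D\). The identification with a Kan extension, however, uses \(c_{\mathcal{U}}\otimes D\) for the non-cofibrant diagram \(D\) itself. That is an ordinary colimit over the sieve, and you have not shown it is a homotopy colimit. The paper also passes over this in a single sentence.

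Your corrected detour closes this point if you derive in the weight variable instead. The weighted colimit is also left Quillen for projective weights and injective diagrams, and every diagram here is injectively cofibrant. Hence \(\lambda^{\infty}_{\mathrm{O}(d)}\Cc_d(s_{\mathcal{U}})\simeq Q_{\proj}s_{\mathcal{U}}\otimes_{\fembcartnoniso{d}}D\cong\rho^*_{\infty,d}\mathfrak{q}^*\mathfrak{I}_d\,q_!(Q_{\proj}s_{\mathcal{U}})\), and the reflection argument above applies verbatim to \(Q_{\proj}s_{\mathcal{U}}\). What working over the full site buys you is that \(\mathfrak{I}_d\) is known to be homotopical there (\Cref{prop:iota.left.quillen}). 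The joint-image reduction behind that proposition leaves the Cartesian subsite, so the analogous Kan extension \(\fembcartnoniso{d}\to\fembcart{d}\) has no such guarantee.
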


\begin{proof}
  Equip
  \(\Fun(\fembcartnoniso{d}, {\spsh(\Cart)}^{\mathrm{O}(d)})\) with a
  projective model structure and set
  \[
    K \coloneqq Q_{\proj}\mathcal{K}_d
    \in \Fun(\fembcartnoniso{d},\spsh(\Cart)^{\mathrm{O}(d)}).
  \]
  By definition, \(K\) is projectively cofibrant. Since
  \(\spsh(\fembcartnoniso{d})\) and \(\spsh(\Cart)^{\mathrm{O}(d)}\) are
  simplicial model categories, by
  \Cref{prop:weighted.colimit.quillen.bifunctor}, the weighted
  colimit bifunctor
  \[
    - \otimes_{\fembcartnoniso{d}} - :
    \spsh(\fembcartnoniso{d})
    \times
    \Fun(\fembcartnoniso{d},\spsh(\Cart)^{\mathrm{O}(d)})
    \longrightarrow
    \spsh(\Cart)^{\mathrm{O}(d)}
  \]
  is a simplicial left Quillen bifunctor. Therefore, after fixing the
  projectively cofibrant diagram \(K\), the functor
  \[
    F \longmapsto F\otimes_{\fembcartnoniso{d}} K
  \]
  is a simplicial left Quillen functor. By
  \Cref{def:Cartesian.realization.functor}, this is exactly the functor
  \(\Cc_d\).

  Since weighted colimits are simplicially enriched left adjoints, the
  corresponding simplicially enriched right adjoint is given objectwise by
  \[
    (\mathfrak{R}_d G)(u)
    \coloneqq
    \Map_{\spsh(\Cart)^{\mathrm{O}(d)}}
    \bigl(\Cc_d(\Yo_u), G\bigr).
  \]
  Hence \(\Cc_d \dashv \mathfrak{R}_d\) is a simplicially enriched
  Quillen adjunction for the global injective model structures. Since
  \[
    K \longrightarrow \mathcal{K}_d
  \]
  is a projectively cofibrant replacement and every object of
  \(\spsh(\fembcartnoniso{d})\) is cofibrant in the injective model structure,
  \(\Cc_d\) computes the left derived weighted colimit with coefficient
  \(\mathcal{K}_d\).

  It remains to check that the Quillen adjunction
  \Cref{eq:Cartesian.realization.adjunction} descends to the \v{C}ech-local
  injective model
  structures. By the left-adjoint localization criterion
  \cite[Proposition~3.3.18(1)]{Hir03}, it is enough to show that \(\Cc_d\)
  sends each covering-sieve morphism to a \v{C}ech-local weak equivalence.
  Let
  \[
    \mathcal{U}=\{u_i\longrightarrow u\}_{i\in I}
  \]
  be a covering family in \(\fembcartnoniso{d}\), with covering-sieve
  morphism
  \[
    s_{\mathcal{U}}:c_{\mathcal{U}}\longrightarrow\Yo_u.
  \]
  By the projectively cofibrant replacement \(K\to\mathcal{K}_d\), the
  defining comparison for \(\mathcal{K}_d\) in
  \Cref{def:Cartesian.coefficient}, and the enriched co-Yoneda lemma
  (\Cref{def:weighted.colimit}), applying
  \(\lambda^{\infty}_{\mathrm{O}(d)}\) to
  \(\Cc_d(s_{\mathcal{U}})\) gives, up to a zigzag of natural weak
  equivalences, the restriction along \(\rho_d\) of the enriched
  covering-sieve morphism associated to the same family \(\mathcal{U}\) in
  \(\fembcart{d}\). The comparison passes through the weighted colimits
  because \(K\) is projectively cofibrant and all objects are cofibrant in the
  injective model structures. By
  \Cref{prop:rho.Cech.local.Quillen.equivalence}, this restricted enriched
  covering-sieve morphism is a \v{C}ech-local weak equivalence. Hence
  \[
    \lambda^{\infty}_{\mathrm{O}(d)}\Cc_d(s_{\mathcal{U}})
  \]
  is a \v{C}ech-local weak equivalence. Since
  \(\lambda^{\infty}_{\mathrm{O}(d)}\) is a Quillen equivalence by
  \Cref{prop:equivariant.lambda.left.quillen} and all objects are cofibrant, it
  reflects
  weak equivalences. Therefore \(\Cc_d(s_{\mathcal{U}})\) is a
  \v{C}ech-local weak equivalence. The localization criterion now gives the
  asserted Quillen adjunction.
\end{proof}

\begin{proposition}\label{prop:Cartesian.realization.comparison}
  With notation as in
  \Cref{def:useful.functors,prop:lambda.left.quillen,def:equivariant.lambda,def:Cartesian.subsite.restriction.functors,def:Cartesian.coefficient,def:Cartesian.realization.functor,def:rho.equivalence},
  there is a natural weak equivalence of homotopy-cocontinuous
  functors
  \[
    \lambda^{\infty}_{\mathrm{O}(d)}
    \circ \Cc_d\circ q^{*}
    \simeq
    \rho_{\infty,d}^{*}\circ \mathfrak{q}^{*}\circ
    \iota_{!}^{\infty}\circ\lambda^{\infty}
    :
    \spsh(\fembnoniso{d})
    \longrightarrow
    \psh(\Cart,\CsSet)^{\mathrm{O}(d)}.
  \]
\end{proposition}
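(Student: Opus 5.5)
The plan is to prove the comparison on representables and then extend by homotopy cocontinuity. Both composites are homotopy-cocontinuous for the \v{C}ech-local injective model structures. On the left, \(q^{*}\) is left Quillen by \Cref{prop:infty.dense.inclusion.of.sites}, \(\Cc_d\) is left Quillen by \Cref{prop:Cartesian.realization.Quillen.adjunction}, and \(\lambda^{\infty}_{\mathrm{O}(d)}\) is left Quillen by \Cref{prop:equivariant.lambda.left.quillen}. On the right, \(\lambda^{\infty}\) and \(\iota^{\infty}_{!}\) are left Quillen by \Cref{prop:lambda.left.quillen,prop:iota.left.quillen}, \(\mathfrak{q}^{*}\) by \Cref{prop:infty.dense.inclusion.of.sites}, and \(\rho^{*}_{\infty,d}\) by \Cref{prop:rho.Cech.local.Quillen.equivalence}. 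Since every object is injectively cofibrant, each functor computes its own left derived functor. So it suffices to produce a natural zigzag of weak equivalences on a class of objects that generates under homotopy colimits and tensors with simplicial sets, and that is compatible with those operations.

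I will first treat representables \(\Yo_u\) with \(u\in\fembcartnoniso{d}\). Here \(q^{*}\Yo_u\) is the representable \(\Yo_u\) on \(\fembcartnoniso{d}\), because \(q\) is fully faithful. Hence \(\Cc_d(q^{*}\Yo_u)\cong Q_{\proj}\mathcal{K}_d(u)\simeq\mathcal{K}_d(u)\) by co-Yoneda (\Cref{def:weighted.colimit}). On the other side, \(\iota^{\infty}_{!}\lambda^{\infty}\Yo_u\cong\Yo_{\iota(u)}\). The zigzag of \Cref{def:Cartesian.coefficient} then gives \(\lambda^{\infty}_{\mathrm{O}(d)}\mathcal{K}_d(u)\simeq\rho^{*}_{\infty,d}\mathfrak{q}^{*}\Yo_{\iota(u)}\), naturally in \(u\). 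The next step is to extend to all of \(\spsh(\fembcartnoniso{d})\). I will write an arbitrary \(G\) as the coend \(G\cong\int^{u}G(u)\otimes\Yo_u\), which is the homotopy weighted colimit of the Yoneda diagram, since all weights are cofibrant. Then I will use that both functors \(\spsh(\fembcartnoniso{d})\to\psh(\Cart,\CsSet)^{\mathrm{O}(d)}\), namely \(\lambda^{\infty}_{\mathrm{O}(d)}\Cc_d\) and \(\rho^{*}_{\infty,d}\mathfrak{q}^{*}\iota^{\infty}_{!}\lambda^{\infty}q_{!}\), are simplicial left Quillen and preserve such coends up to weak equivalence. The naturality of the coefficient zigzag in \(u\), which must hold as simplicially enriched functors, is what lets the comparison pass through \(\otimes_{\fembcartnoniso{d}}\). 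The comparison is therefore best phrased as a zigzag of projectively cofibrant diagrams \(\fembcartnoniso{d}\to\psh(\Cart,\CsSet)^{\mathrm{O}(d)}\). This is a weighted-colimit version of the Quillen bifunctor statement \Cref{prop:weighted.colimit.quillen.bifunctor}.

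Finally I must connect \(F\in\spsh(\fembnoniso{d})\) to \(q^{*}F\). I will use the homotopy left Kan extension \(q_{!}q^{*}F\to F\), which is a \v{C}ech-local weak equivalence because \(q^{*}\) is a Quillen equivalence reflecting weak equivalences (\Cref{prop:infty.dense.inclusion.of.sites,rem:qen.reflects.local.weak.equivalences}). Applying the right-hand composite to this map gives a weak equivalence. By the previous paragraph, the right-hand composite evaluated on \(q_{!}q^{*}F\) agrees with \(\lambda^{\infty}_{\mathrm{O}(d)}\Cc_d(q^{*}F)\), since \(\iota^{\infty}_{!}\lambda^{\infty}q_{!}\) sends \(\Yo_u\) to \(\Yo_{\iota(u)}\). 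Naturality in \(F\) follows because every step is functorial, using functorial replacements.

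I expect the main obstacle to be the homotopy-coherence in the second step. The zigzag of \Cref{def:Cartesian.coefficient} is only given as natural objectwise equivalences, so its middle terms must be replaced by projectively cofibrant diagrams before the weighted colimit with weight \(q^{*}F\) preserves them. My first approach would be to cofibrantly replace every term of the zigzag projectively and to invoke the Quillen bifunctor property with an injectively cofibrant weight, so that each map in the zigzag is sent to a weak equivalence. A secondary point is that \(\rho^{*}_{\infty,d}\mathfrak{q}^{*}\) must commute with the coend in the left Kan extension. This holds because restriction preserves all colimits and tensors, being computed objectwise.
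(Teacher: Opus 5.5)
Your route differs from the paper's in how it leaves the Cartesian site. Write $\mathbf{A}=\lambda^{\infty}_{\mathrm{O}(d)}\Cc_d q^{*}$ and $\mathbf{B}=\rho^{*}_{\infty,d}\mathfrak{q}^{*}\iota^{\infty}_{!}\lambda^{\infty}$. The paper compares these on Cartesian representables exactly as you do. It then compares them on every $\Yo_p$, $p\in\fembnoniso{d}$, via Cartesian split hypercovers (\Cref{lem:hypercover.replacement.Cartesian}) and homotopy cocontinuity, and finally applies the homotopy co-Yoneda formula over all of $\fembnoniso{d}$. You never leave $\spsh(\fembcartnoniso{d})$. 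You compare $\lambda^{\infty}_{\mathrm{O}(d)}\Cc_d$ with $\mathbf{B}q_{!}$ there, where the coefficient zigzag is natural by hypothesis. You then transport along the counit $q_{!}q^{*}F\to F$, which is a \v{C}ech-local weak equivalence because $q^{*}$ sends it to an isomorphism and $q^{*}$ reflects weak equivalences (\Cref{rem:qen.reflects.local.weak.equivalences}). This buys naturality. The paper's hypercovers are chosen one object at a time, so the naturality in $p$ needed for its final co-Yoneda step is not automatic, whereas your zigzag is functorial in $F$. In exchange, the paper's route uses only functors whose Quillen properties it has already established.

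That last point is where your write-up needs repair. You assert that $\mathbf{B}q_{!}$ is simplicial left Quillen, but nothing shows that $q_{!}$ is left Quillen for the injective structures. The argument used for $\iota^{\infty}_{!}$ does not transfer. At a non-Cartesian $c$, the reduction of \Cref{prop:reduced.category.discrete} would land on $c$ itself, so the comma category computing $(q_{!}G)(c)$ is not discrete. For instance, if $c$ is an annulus, any Cartesian hull has diffeomorphisms fixing $c$ that are supported in the filled hole.

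Your stated remedy does not address this. Put $D(u)=\rho^{*}_{\infty,d}\mathfrak{q}^{*}\Yo_{\iota(u)}$. Projectively replacing the coefficient zigzag gives $G\otimes QD\simeq\lambda^{\infty}_{\mathrm{O}(d)}\Cc_d(G)$. However, $\mathbf{B}q_{!}G\cong G\otimes D$ is a strict weighted colimit with a non-cofibrant diagram, and you never show that $G\otimes QD\to G\otimes D$ is a weak equivalence.

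The fix is to use the derived left Kan extension $\mathbb{L}q_{!}G\coloneqq G\otimes_{\fembcartnoniso{d}}Q_{\proj}(\Yo\circ q)$. Since $\mathbf{B}$ is a simplicial left adjoint, $\mathbf{B}(\mathbb{L}q_{!}G)\cong G\otimes\mathbf{B}Q_{\proj}(\Yo\circ q)$ on the nose. Moreover, $\mathbf{B}Q_{\proj}(\Yo\circ q)$ is projectively cofibrant and levelwise equivalent to $D$, so \Cref{prop:weighted.colimit.quillen.bifunctor} and your zigzag argument apply.

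The derived counit $\mathbb{L}q_{!}q^{*}F\to F$ is still a \v{C}ech-local weak equivalence, for three reasons. First, $q^{*}$ commutes with the coend. Second, $q^{*}Q_{\proj}(\Yo\circ q)$ is a projectively cofibrant replacement of the Cartesian Yoneda diagram. Third, $q^{*}F\otimes\Yo\cong q^{*}F$ computes the derived coend because each $\Yo_{(-)}(c)=\fembcartnoniso{d}(c,-)$ is representable; note that cofibrancy of the weights alone would not suffice here. Hence $q^{*}$ of the counit is an objectwise weak equivalence, and \Cref{rem:qen.reflects.local.weak.equivalences} applies.
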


\begin{proof}
  Set
  \[
    \mathbf{A}
    \coloneqq
    \lambda^{\infty}_{\mathrm{O}(d)}\circ\Cc_d\circ q^{*},
    \qquad
    \mathbf{B}
    \coloneqq
    \rho_{\infty,d}^{*}\circ \mathfrak{q}^{*}\circ
    \iota_{!}^{\infty}\circ\lambda^{\infty}.
  \]
  By
  \Cref{prop:equivariant.lambda.left.quillen,prop:infty.dense.inclusion.of.sites,prop:Cartesian.realization.Quillen.adjunction},
  \(\mathbf{A}\) is a composition of left Quillen functors for the
  \v{C}ech-local injective model structure. By
  \Cref{prop:lambda.left.quillen,prop:iota.left.quillen,prop:infty.dense.inclusion.of.sites,prop:rho.Cech.local.Quillen.equivalence},
  \(\mathbf{B}\) is also left Quillen for the \v{C}ech-local injective model
  structure. In particular, \(\mathbf{A}\) and \(\mathbf{B}\) are left-derived
  homotopy cocontinuous functors.

  We first compare \(\mathbf{A}\) and \(\mathbf{B}\) on Cartesian
  representables. Let \(u\in\fembcartnoniso{d}\), and put
  \(K=Q_{\proj}\mathcal{K}_d\). Since \(q\) is a full inclusion,
  \(q^{*}\Yo_u=\Yo_u\). The enriched co-Yoneda lemma
  (\Cref{def:weighted.colimit}) gives
  \[
    \Cc_dq^{*}(\Yo_u)
    =
    \Yo_u\otimes_{\fembcartnoniso{d}}K
    \cong
    K(u).
  \]
  Consequently, the projectively cofibrant replacement
  \(K\to\mathcal{K}_d\) and the defining comparison in
  \Cref{def:Cartesian.coefficient} give natural weak equivalences
  \[
    \begin{aligned}
      \mathbf{A}(\Yo_u)
      &\cong
      \lambda^{\infty}_{\mathrm{O}(d)}K(u)\\
      &\simeq
      \lambda^{\infty}_{\mathrm{O}(d)}\mathcal{K}_d(u)\\
      &\simeq
      \rho_{\infty,d}^{*}\mathfrak{q}^{*}\Yo_{\iota(u)}.
    \end{aligned}
  \]
  On the other hand, the pointwise description of isotopification gives
  a natural objectwise weak equivalence
  \[
    \iota_{!}^{\infty}\lambda^{\infty}(\Yo_u)
    \longrightarrow
    \Yo_{\iota(u)}.
  \]
  Applying \(\rho_{\infty,d}^{*}\mathfrak{q}^{*}\) gives
  \[
    \mathbf{B}(\Yo_u)
    \simeq
    \rho_{\infty,d}^{*}\mathfrak{q}^{*}\Yo_{\iota(u)}.
  \]
  Hence
  \[
    \mathbf{A}(\Yo_u)\simeq\mathbf{B}(\Yo_u)
  \]
  naturally in \(u\in\fembcartnoniso{d}\).

  Now let \(p\in\fembnoniso{d}\). Choose a split hypercover
  \(U_{\bullet}\to\Yo_p\) such that
  \[
    U_n=\coprod_{i\in I_n}\Yo_{u_{n,i}},
    \qquad
    u_{n,i}\in\fembcartnoniso{d}.
  \]
  By \Cref{lem:hypercover.replacement.Cartesian}, the augmentation induces
  a local weak equivalence
  \[
    \hocolim_{[n]\in\Delta^{\op}}U_n
    \xrightarrow{\sim}
    \Yo_p
  \]
  in \(\spsh(\fembnoniso{d})\). Applying \(\mathbf{A}\) and
  \(\mathbf{B}\), and using homotopy-cocontinuity, gives weak equivalences
  \[
    \mathbf{A}(\Yo_p)
    \simeq
    \hocolim_{[n]\in\Delta^{\op}}\mathbf{A}(U_n),
    \qquad
    \mathbf{B}(\Yo_p)
    \simeq
    \hocolim_{[n]\in\Delta^{\op}}\mathbf{B}(U_n).
  \]
  Since each \(U_n\) is a coproduct of Cartesian representables, the
  comparison already constructed for \(\Yo_u\), with
  \(u\in\fembcartnoniso{d}\), gives
  \[
    \mathbf{A}(U_n)\simeq \mathbf{B}(U_n)
  \]
  for every \(n\).  Therefore
  \[
    \mathbf{A}(\Yo_p)\simeq \mathbf{B}(\Yo_p)
  \]
  naturally in \(p\in\fembnoniso{d}\).

  Since \(\mathbf{A}\) and \(\mathbf{B}\) are homotopy-cocontinuous, they
  preserve homotopy weighted colimits, and hence, by the co-Yoneda lemma
  (\Cref{def:weighted.colimit}), we have
  \[
    \mathbf{A}(F)
    \simeq
    F\otimes^{\mathbb{L}}_{\fembnoniso{d}}(\mathbf{A}\circ\Yo)
    \simeq
    F\otimes^{\mathbb{L}}_{\fembnoniso{d}}(\mathbf{B}\circ\Yo)
    \simeq
    \mathbf{B}(F).
  \]
  Therefore
  \[
    \lambda^{\infty}_{\mathrm{O}(d)}\circ \Cc_d\circ q^{*}
    \simeq
    \rho_{\infty,d}^{*}\circ \mathfrak{q}^{*}\circ
  \iota_{!}^{\infty}\circ\lambda^{\infty}.\qedhere\]
\end{proof}

\subsection{Moduli spaces of geometric functorial field
theories}\label{subsec:moduli.spaces.functorial.field.theories}

We are now ready to define moduli spaces of geometric functorial field theories
and present them in a more tractable form. Doing so requires several
constructions from~\cite{GP26}. Although the underlying theory is substantial,
it is needed only in the proof of our result. The final statement can be
formulated and applied using only the material developed in the present paper.

We use \(\GCat_{\infty,d}^{\otimes}\) for the
\(\CsSet\)-enriched model category of geometric symmetric monoidal
\((\infty,d)\)-categories with isotopies of
\cite[Definition~3.1.8(2)]{GP26}. Its model structure is the \emph{globular}
\(\CsSet\)-enriched left Bousfield localization of the injective model
structure on
\(\psh(\Cart\times\Gamma\times\Delta^{\times d},\CsSet)\)
at the morphisms imposing descent, the symmetric monoidal condition, the
Segal and completeness conditions, and globularity; see
\cite[Proposition~3.1.7]{GP26}. For \(F\in\field{d}\), let
\(\Bord_d^F\in\GCat_{\infty,d}^{\otimes}\) denote the bordism category
with isotopies of \cite[Definition~5.3.16]{GP26}.

\begin{definition}\label{def:functorial.field.theory}
  \textnormal{\cite[Definition~5.3.18]{GP26}.}
  Let \(F\in\field{d}\) and let
  \(\cat{V}\in\GCat_{\infty,d}^{\otimes}\).
  The \emph{smooth simplicial set of \(d\)-dimensional functorial
    field theories valued in \(\cat{V}\) with geometric structure
  \(F\)} is the derived \(\CsSet\)-enriched mapping object
  (\Cref{def:derived.mapping.space})
  \begin{equation}\label{eq:fft.object}
    \FFT_{d,\cat{V}}^{F}
    \coloneqq
    \RR\Map_{\GCat_{\infty,d}^{\otimes}}
    \bigl(\Bord_{d}^{F},\cat{V}\bigr)
    \in\CsSet.
  \end{equation}

  If \(\cat{V}\) is fibrant, then, since every object of
  \(\GCat_{\infty,d}^{\otimes}\) is cofibrant,
  \[
    \FFT_{d,\cat{V}}^{F}
    \simeq
    \Map_{\GCat_{\infty,d}^{\otimes}}
    \bigl(\Bord_{d}^{F},\cat{V}\bigr).
  \]
  In this case a functorial field theory is a morphism
  \[
    Z:\Bord_{d}^{F}\longrightarrow\cat{V}
  \]
  in \(\GCat_{\infty,d}^{\otimes}\).
\end{definition}

\begin{remark}
  One must distinguish the smooth simplicial set of
  functorial field theories from the geometric symmetric monoidal
  \((\infty,d)\)-category of functorial field theories. The construction of the
  latter is
  more subtle because the globular model structure is not compatible with
  the naive internal-hom construction. Grady--Pavlov construct the
  globular functor object in~\cite[Definition~3.2.9]{GP26} and prove in
  \cite[Proposition~3.2.10]{GP26} that it satisfies the expected derived
  tensor--hom adjunction.

  Moreover, the full functor object also retains the external \(\Cart\)- and
  \(\Gamma\)-directions, encoding geometric families of field theories and
  their symmetric monoidal structure. These refinements are easy to obtain from
  our simpler model. Concretely, we can obtain the \(\Cart\)- and
  \(\Gamma\)-directions by replacing
  \(\Bord_d^F\) with
  \begin{equation}\label{eq:more.general.target.for.moduli.space}
    \Yo_{(U,\langle n\rangle,([0],\ldots,[0]))}
    \otimes\Bord_d^F
  \end{equation}
  in \Cref{eq:fft.object}, where the representable presheaf is the external
  product of
  the representables associated to \(U\) and \(\langle n\rangle\). The
  external \(\Cart\)-direction records geometric families of field theories
  and should not be confused with the smooth direction in the
  \(\CsSet\)-enrichment, which records isotopies, while the
  \(\Gamma\)-direction retains the symmetric monoidal structure. As already
  noticed by Lurie~\cite[Remark~2.4.7(a)]{Lur09}, transformations between fully
  extended field theories are invertible, so the geometric symmetric monoidal
  \((\infty,d)\)-category of functorial field theories is already a geometric
  symmetric monoidal \((\infty, 0)\)-category, and hence no additional
  information relevant to the present moduli problem is obtained by allowing
  nontrivial representables in the \(\Delta^{\times d}\)-directions.

  Thus, we use only the smooth simplicial set of
  \Cref{def:functorial.field.theory}.
\end{remark}

\begin{definition}\label{def:core.of.V}
  Let \(\cat{V}\in
  \GCat^{\otimes}_{\infty,d}\) be a geometric
  symmetric monoidal \((\infty,d)\)-category with isotopies.
  Let
  \[
    \iota: \Cart \hookrightarrow \Cart\times
    \Gamma\times\Delta^{\times d},
    \qquad
    U\longmapsto
    \bigl(U,\langle 1 \rangle,([0],\ldots,[0])\bigr)
  \]
  be the inclusion at \(\langle 1 \rangle\) in the \(\Gamma\)-direction
  and at \([0]\) in each of the \(d\) \(\Delta\)-directions, and write
  \[
    \ev\coloneqq \iota^{*}:
    \psh(\Cart\times\Gamma\times\Delta^{\times d},\CsSet)
    \longrightarrow
    \psh(\Cart,\CsSet)
  \]
  for the corresponding restriction functor. The \emph{invertible
  part} (or \emph{core})
  of \(\cat{V}\) is the object
  \[
    \cat{V}^{\times} \coloneqq \RR\ev(\cat{V})
    \ \in\ \psh(\Cart,\CsSet),
  \]
  i.e., the value on \(\cat{V}\) of the \emph{right derived}
  functor of \(\ev\).
  Equivalently, choose a fibrant replacement \(\cat{V}\to
  R\cat{V}\) in
  \(\GCat^{\otimes}_{\infty,d}\) and set
  \(\cat{V}^{\times}\coloneqq \ev(R\cat{V})\).
\end{definition}

\begin{definition}\label{def:field.theory.field.stack}
  \textnormal{\cite[Definition~4.1.3]{GP22}}.
  Let \(\cat{V}\) be a fibrant object in
  \(\GCat^{\otimes}_{\infty,d}\) and
  \(\Bord_{d}^{\Yo_{p}}\) an \emph{embedded bordism category with isotopies}.
  Define the \(\CsSet\)-enriched smooth simplicial presheaf
  \[
    \FFT_d(\cat{V}):\femb{d}^{\op}\ra\CsSet,
    \qquad
    \FFT_d(\cat{V})(p)
    \coloneqq
    \CsSet(\Bord_{d}^{\Yo_{p}},\cat{V}),
  \]
  where \(\Yo_{p}\) is a representable presheaf in
  \(\psh(\femb{d},\CsSet)\).
\end{definition}

\begin{remark}\label{rem:bordism.field.theory.Quillen.adjunction}
  Equivalently, one can define
  \[
    \FFT_d:\GCat_{\infty,d}^{\otimes}\ra\field{d},
    \qquad
    \cat{V}\longmapsto
    \bigl(p\longmapsto
    \CsSet(\Bord_d^{\Yo_p},\cat{V})\bigr),
  \]
  to be the \(\CsSet\)-enriched right adjoint to the bordism
  category functor
  \[
    \Bord_d^{(-)}:\field{d}\ra\GCat_{\infty,d}^{\otimes}.
  \]
  Grady--Pavlov~\cite[Theorem~6.0.2]{GP23} show that
  \(\Bord_d^{(-)}\) is a \(\CsSet\)-enriched left Quillen functor
  for these model structures. In particular, if \(\cat{V}\) is fibrant,
  then \(\FFT_d(\cat{V})\) is fibrant in \(\field{d}\).
\end{remark}

\begin{remark}\label{rem:equivariant.core}
  By
  \Cref{def:Cartesian.subsite.restriction.functors,def:rho.equivalence},
  the restriction
  \[
    \rho_{\infty,d}^{*}\mathfrak{q}^{*}\FFT_d(\cat{V})
    \in\psh(\Cart,\CsSet)^{\mathrm{O}(d)}
  \]
  carries the \(\mathrm{O}(d)\)-action induced by the fiberwise orthogonal
  maps
  \[
    \RR^d\times U\longrightarrow\RR^d\times U,
    \qquad
    (x,u)\longmapsto(Ax,u).
  \]
  When \(d=1\), this is the strict
  \(\mathrm{O}(1)\cong\ZZ/2\)-action induced by reflection in the
  \(\RR\)-factor.
\end{remark}

\begin{theorem}\label{thm:moduli.space.reduction}
  Recall \(\lambda^{\infty}_{\mathrm{O}(d)}\) from
  \Cref{def:equivariant.lambda}, the restriction functors \(q^{*}\) and
  \(\mathfrak{q}^{*}\) from
  \Cref{def:Cartesian.subsite.restriction.functors},
  \(\rho_{\infty,d}^{*}\) from \Cref{def:rho.equivalence},
  \(\Cc_d\) from \Cref{def:Cartesian.realization.functor}, and
  \(\FFT_{d,\cat{V}}^{F}\) and \(\FFT_d(\cat{V})\) from
  \Cref{def:functorial.field.theory,def:field.theory.field.stack},
  respectively. Let \(\cat{V}\in
  \GCat^{\otimes}_{\infty,d}\) be a geometric
  symmetric monoidal \((\infty,d)\)-category with isotopies and assume that
  \(\cat{V}\) is fibrant. Let \(F\in \fieldnoniso{d}\) be a geometric structure
  (\Cref{def:geometric.structures.non.isotopies}) and \(
  \mathfrak{I}_{d}F \) its isotopification
  (\Cref{def:isotopification.functor}).
  Then there is the following weak equivalence between the space of \( d
  \)-dimensional functorial field theories with geometric structure \(
  F \) and a smooth simplicial set of maps between \(\mathrm{O}(d)\)-equivariant
  smooth simplicial presheaves on \(\Cart\):
  \begin{equation*}
    \FFT_{d,\cat{V}}^{\mathfrak{I}_{d}F}\we
    \CsSet\bigl(\lambda^{\infty}_{\mathrm{O}(d)}\Cc_{d}q^{*}(F),\rho^{*}_{\infty,d}\mathfrak{q}^{*}\FFT_{d}(\cat{V})\bigr).
  \end{equation*}
\end{theorem}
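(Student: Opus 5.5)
The plan is to chain derived adjunctions, starting from \Cref{def:functorial.field.theory} and ending at the Cartesian, equivariant side via \Cref{prop:Cartesian.realization.comparison}.

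\textbf{Step 1: pass to field stacks.} Since \(\cat{V}\) is fibrant and every object of \(\GCat^{\otimes}_{\infty,d}\) is cofibrant, \(\FFT_{d,\cat{V}}^{\mathfrak{I}_dF}\simeq\Map(\Bord_d^{\mathfrak{I}_dF},\cat{V})\). By \Cref{rem:bordism.field.theory.Quillen.adjunction}, \(\Bord_d^{(-)}\dashv\FFT_d\) is a \(\CsSet\)-enriched Quillen adjunction, so the enriched adjunction isomorphism gives
\[
  \FFT_{d,\cat{V}}^{\mathfrak{I}_dF}\cong\CsSet\bigl(\mathfrak{I}_dF,\FFT_d(\cat{V})\bigr),
\]
with the mapping object taken in \(\field{d}\). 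This is a derived mapping object: \(\mathfrak{I}_dF\) is cofibrant because all objects are injectively cofibrant, and \(\FFT_d(\cat{V})\) is fibrant in \(\field{d}\).

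\textbf{Step 2: restrict to Cartesian and equivariant data.} Apply \(\mathfrak{q}^*\) and then \(\rho^*_{\infty,d}\). Both are left Quillen equivalences by \Cref{prop:infty.dense.inclusion.of.sites} and \Cref{prop:rho.Cech.local.Quillen.equivalence}, and both preserve all weak equivalences between fibrant objects (these are objectwise) while also being enriched. A left Quillen equivalence that preserves fibrant objects, or at least whose right derived image agrees on fibrant objects, induces weak equivalences on derived enriched mapping objects. This yields
\[
  \CsSet\bigl(\mathfrak{I}_dF,\FFT_d(\cat{V})\bigr)\simeq\mathbb{R}\CsSet\bigl(\rho^*_{\infty,d}\mathfrak{q}^*\mathfrak{I}_dF,\ \rho^*_{\infty,d}\mathfrak{q}^*\FFT_d(\cat{V})\bigr).
\]

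\textbf{Step 3: identify the source.} Since \(\mathfrak{I}_d=\iota^\infty_!\lambda^\infty\), \Cref{prop:Cartesian.realization.comparison} supplies a natural zigzag of weak equivalences
\[
  \rho^*_{\infty,d}\mathfrak{q}^*\mathfrak{I}_dF\simeq\lambda^\infty_{\mathrm{O}(d)}\Cc_dq^*F.
\]
Derived mapping spaces are invariant under such zigzags, because all objects are cofibrant.

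\textbf{Main obstacle.} Two points need care, and both sit in Step 2. First, the target \(\rho^*_{\infty,d}\mathfrak{q}^*\FFT_d(\cat{V})\) must be \v{C}ech-local fibrant, or at least local, so that the underived \(\CsSet(-,-)\) in the statement computes the derived one. Restriction preserves \v{C}ech descent, since covers of Cartesian families are covers in \(\femb{d}\), and for \(\rho\) descent transfers along the stalk comparison in the proof of \Cref{prop:rho.Cech.local.Quillen.equivalence}. Injective fibrancy may fail, however. To handle this, I would either interpret the right-hand side as derived, matching \Cref{def:derived.mapping.space}, or replace the target fibrantly, noting the resulting change is a weak equivalence.

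Second, a left Quillen equivalence \(L\) only induces \(\mathbb{R}\Map(X,Y)\simeq\mathbb{R}\Map(LX,LY)\) once one knows that \(Y\to\mathbb{R}R\,LY\) is a weak equivalence for fibrant \(Y\). I would deduce this from the restriction-reflects-equivalences argument of \Cref{lem:qen.comparison.on.cech.local}, together with \(\mathfrak{q}^*\mathfrak{q}_*\cong\id\). For \(\rho\), which is not fully faithful, I would instead use that the derived unit is an objectwise equivalence by \Cref{prop:rho.on.representables}.
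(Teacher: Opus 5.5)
Your route is the paper's. Step~1 packages its lines (2)--(4), which pass through $\Bord_d^{\mathfrak{I}_dF}\simeq\mathfrak{I}_dF\otimes_{\femb{d}}\Bord_d^{\Yo_{(-)}}$ and the enriched tensor--hom adjunction. Step~2 is its application of $\mathfrak{q}^{*}$ and $\rho^{*}_{\infty,d}$, and Step~3 is its use of \Cref{prop:Cartesian.realization.comparison}.

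The gap is the point you flag and then leave open. The right-hand side of the theorem is the strict mapping object $\CsSet(-,-)$. Both your Step~2 and your Step~3 identify strict mapping objects with derived ones, and the zigzag in Step~3 induces weak equivalences of strict mapping objects only into a fibrant target. So you need $\rho^{*}_{\infty,d}\mathfrak{q}^{*}\FFT_d(\cat{V})$ to be fibrant in the \v{C}ech-local injective model structure on $\psh(\Cart,\CsSet)^{\mathrm{O}(d)}$. Preservation of \v{C}ech descent, which you sketch, gives only locality, not injective fibrancy. Neither fallback closes this. Reading the right-hand side as derived proves a different statement. For a fibrant replacement $Y\to RY$, the map $\CsSet(X,Y)\to\CsSet(X,RY)$ is guaranteed to be a weak equivalence only when $Y$ is already fibrant, which is exactly what is in question. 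You also assert in Step~2 that the relevant left Quillen equivalences preserve fibrant objects, but you never establish it.

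The missing ingredient, used by the paper, is that the restriction functors $\mathfrak{q}^{*}$ and $\rho^{*}_{\infty,d}$ are also right Quillen for these model structures \cite[Proposition~3.3.13]{GP22}. Hence they carry $\FFT_d(\cat{V})$, which is fibrant by \Cref{rem:bordism.field.theory.Quillen.adjunction}, to a fibrant object. With that, every mapping object in the chain is derived. The Quillen equivalences of \Cref{prop:infty.dense.inclusion.of.sites,prop:rho.Cech.local.Quillen.equivalence} then give the comparison in Step~2, and the weak equivalence between cofibrant sources in Step~3 induces a weak equivalence of mapping objects.

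Your second ``obstacle'' is not one. For a Quillen equivalence the derived unit is a weak equivalence at every cofibrant object, and all objects are cofibrant here. So no separate argument via \Cref{lem:qen.comparison.on.cech.local} or \Cref{prop:rho.on.representables} is needed.
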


\begin{proof}
  We have the following zig-zag of natural weak equivalences, with the right
  column indicating the ambient category for arguments
  \begin{align*}
    \FFT_{d,\cat{V}}^{\mathfrak{I}_{d}F}
    &\stackrel{\mathrm{def}}{=}
    \CsSet\bigl(\Bord_{d}^{\mathfrak{I}_{d}F},\cat{V}\bigr)
    &&\text{\(\GCat_{\infty,d}^{\otimes}\)} \\[1ex]
    &\stackrel{(2)}{\we}
    \CsSet\bigl(\iota_{!}^{\infty}\lambda^{\infty}(F)(-)
    \otimes_{\femb{d}}\Bord_{d}^{\Yo_{(-)}},\cat{V}\bigr)
    &&\text{\(\GCat_{\infty,d}^{\otimes}\)} \\[1ex]
    &\stackrel{(3)}{\cong}
    \CsSet\bigl(\iota_{!}^{\infty}\lambda^{\infty}(F)(-),
    \CsSet\bigl(\Bord_{d}^{\Yo_{(-)}},\cat{V}\bigr)\bigr)
    &&\text{\(\psh(\femb{d},\CsSet)\)} \\[1ex]
    &\stackrel{\mathrm{def}}{=}
    \CsSet\bigl(\iota_{!}^{\infty}\lambda^{\infty}(F),
    \FFT_{d}(\cat{V})\bigr)
    &&\text{\(\psh(\femb{d},\CsSet)\)} \\[1ex]
    &\xrightarrow[(5)]{\we}
    \CsSet\bigl(
      \rho^{*}_{\infty,d}\mathfrak{q}^{*}
      \iota_{!}^{\infty}\lambda^{\infty}(F),
    \rho^{*}_{\infty,d}\mathfrak{q}^{*}\FFT_{d}(\cat{V})\bigr)
    &&\text{\({\psh(\Cart,\CsSet)}^{\mathrm{O}(d)}\)} \\[1ex]
    &\xleftarrow[(6)]{\we}
    \CsSet\bigl(
      \lambda^{\infty}_{\mathrm{O}(d)}\Cc_{d}q^{*}(F),
    \rho^{*}_{\infty,d}\mathfrak{q}^{*}\FFT_{d}(\cat{V})\bigr)
    &&\text{\({\psh(\Cart,\CsSet)}^{\mathrm{O}(d)}\)}
  \end{align*}

  The first line is the
  definition of the space of functorial field
  theories with geometric structure from
  \Cref{def:functorial.field.theory}. The second line follows
  from~\cite[Theorem~6.1.3]{GP26},
  which states that the bordism category functor
  \[
    \Bord_{d}:\field{d}\to \GCat_{\infty,d}^{\otimes},
    \qquad
    F\mapsto \Bord_{d}^{F},
  \]
  is a \( \CsSet \)-enriched left Quillen functor and preserves weak
  equivalences. Applying this to the isotopification of \( F \), and
  unwinding the definition of \( \mathfrak{I}_{d}F \), identifies
  \( \Bord^{\mathfrak{I}_{d}F}_{d} \) with the weighted colimit
  \[
    \iota^{\infty}_{!}\lambda^{\infty}(F)(-)
    \otimes_{\femb{d}}
    \Bord^{\Yo(-)}_{d}.
  \]

  The third line is the enriched tensor--hom adjunction for
  this weighted colimit. Namely, mapping out of the weighted colimit is the
  same as mapping the weight into the smooth simplicial presheaf
  \[
    p\longmapsto
    \CsSet(\Bord^{\Yo_{p}}_{d},\cat{V}).
  \]
  This presheaf is precisely
  \( \FFT_{d}(\cat{V}) \) from \Cref{def:field.theory.field.stack}, which
  gives the fourth line.

  For the fifth line, we pass from smooth simplicial presheaves on \( \femb{d}
  \) to
  \( \mathrm{O}(d) \)-equivariant smooth simplicial presheaves on
  \( \Cart \).
  This is done by applying the composition of the left Quillen equivalences
  \( \mathfrak{q}^{*} \) and \( \rho_{\infty,d}^{*} \), from
  \Cref{prop:infty.dense.inclusion.of.sites} and
  \Cref{prop:rho.Cech.local.Quillen.equivalence}. These restriction functors
  are also right Quillen by~\cite[Proposition~3.3.13]{GP22}. Hence they
  preserve the fibrancy of \(\FFT_{d}(\cat{V})\), which follows from
  \Cref{rem:bordism.field.theory.Quillen.adjunction}. Thus both mapping objects
  compute the corresponding derived mapping objects, and the Quillen
  equivalences induce the displayed weak equivalence.

  The sixth line follows from
  \Cref{prop:Cartesian.realization.comparison}. It identifies the restriction of
  \( \iota^{\infty}_{!}\lambda^{\infty}(F) \) to equivariant presheaves on
  \( \Cart \) with
  \[
    \lambda^{\infty}_{\mathrm{O}(d)}
    \Cc_{d}q^{*}(F).
  \]
  Since this is a weak equivalence between cofibrant objects and the target
  of the mapping object is fibrant, it induces a weak equivalence on mapping
  objects.
\end{proof}

\begin{remark}\label{rem:gch.is.for.noneq}
  If \(\cat{V}\) has duals, the geometric framed cobordism hypothesis
  \cite[Theorem~4.1.8]{GP22} gives, after forgetting the
  \(\mathrm{O}(d)\)-action, a weak equivalence
  \[
    \ev: \rho_{\infty,d}^{*}\mathfrak{q}^{*}\FFT_d(\cat{V})
    \xrightarrow{\sim}
    \cat{V}^{\times}
  \]
  in \(\psh(\Cart,\CsSet)\). If \(\cat{V}\) does not have duals, the same
  statement holds with \(\cat{V}^{\times}\) replaced by the core of the fully
  dualizable part of \(\cat{V}\)~\cite[Definition~2.3.21]{Lur09}. The strict
  action on its source
  determines a homotopy-coherent action on \(\cat{V}^{\times}\), but this
  need not produce a strict action for the specific choice of
  \(\cat{V}^{\times}\) from \Cref{def:core.of.V}. Consequently, the ordinary
  core cannot in general
  replace the target in \Cref{thm:moduli.space.reduction}, whose mapping
  object is formed in the category of strict
  \(\mathrm{O}(d)\)-equivariant presheaves.

  The one-dimensional Riemannian case considered below avoids this issue.
  The computation of the source in~\cite{KP26} presents it as an object with a
  free
  \(\mathrm{O}(1)\cong\mathbb{Z}/2\)-action. The free--forgetful adjunction
  therefore allows us to discard the action before applying the geometric
  framed cobordism hypothesis. This yields a description in terms of the
  ordinary core \(\cat{V}^{\times}\), without choosing a strict action on
  it.
\end{remark}

\begin{corollary}\label{cor:one-dimensional.Riemannian.classification}
  Let \(\cat{V}\in\GCat_{\infty,1}^{\otimes}\) be a geometric
  symmetric monoidal \((\infty,1)\)-category with isotopies, and assume
  that \(\cat{V}\) is fibrant and has duals; see \cite[Section~2.3]{GP22}. Let
  \(\RM\in\fieldnoniso{1}\) be the oriented Riemannian geometric
  structure of \Cref{ex:riemannian.structure}, and let
  \(\rpathgermy\) be the corresponding Riemannian path category functor
  constructed in \cite[Definition~4.1.6]{KP26}.
  Then there is a natural weak equivalence
  \[
    \FFT_{1,\cat{V}}^{\mathfrak{I}_{1}\RM}
    \we
    \CsSet\bigl(
      \lambda^{\infty}\nerve(\rpathgermy),
      \cat{V}^{\times}
    \bigr),
  \]
  where \(\cat{V}^{\times}\) is the core of
  \Cref{def:core.of.V}, with its \(\mathrm{O}(1)\)-action forgotten.
\end{corollary}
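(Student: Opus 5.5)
We start from \Cref{thm:moduli.space.reduction} with \(d=1\) and \(F=\RM\). This gives
\[
  \FFT_{1,\cat{V}}^{\mathfrak{I}_{1}\RM}\we
  \CsSet\bigl(\lambda^{\infty}_{\mathrm{O}(1)}\Cc_{1}q^{*}(\RM),\rho^{*}_{\infty,1}\mathfrak{q}^{*}\FFT_{1}(\cat{V})\bigr),
\]
formed in \(\psh(\Cart,\CsSet)^{\mathrm{O}(1)}\). The rest of the argument simplifies the source and the target separately.

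\textbf{Source.} We use the Cartesian coefficient \(\mathcal{K}_1(u)=\mathcal{O}_u\times\Yo_{\flat u}\) of \Cref{rem:Cartesian.coefficient.intuition}. First we must check the comparison required by \Cref{def:Cartesian.coefficient}. By \Cref{prop:rho.on.representables}, the value \(\rho^*_{\infty,1}\mathfrak{q}^*\Yo_{\iota(u)}\) is objectwise weakly equivalent to \(\Yo_{\flat u}\times\Sing\mathrm{O}(1)\). Since \(\mathrm{O}(1)\cong\ZZ/2\) is discrete, this is \(\Yo_{\flat u}\) times the set of fiberwise orientations, with \(\mathrm{O}(1)\) acting freely by flipping the orientation.

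Next we compute \(\Cc_1 q^*(\RM)=q^*\RM\otimes^{\mathbb{L}}_{\fembcartnoniso{1}}\mathcal{K}_1\). Since \(\mathcal{K}_1\) factors through a Grothendieck construction, \Cref{thm:thomason} identifies this homotopy weighted colimit, objectwise in \(U\in\Cart\), with the nerve of a category of elements. Its objects are Cartesian families over \(U\) with a Riemannian metric, an orientation and a map to \(M\). Its morphisms are fiberwise oriented isometric embeddings. The plan is to identify this category, up to weak equivalence, with \(\rpathgermy\times\ZZ/2\) carrying the free \(\ZZ/2\)-action. We take this identification, in particular the passage to germs, from \cite{KP26}, where the germy path category is constructed for exactly this purpose. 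Consequently the source is \(\lambda^\infty_{\mathrm{O}(1)}\) of a free \(\mathrm{O}(1)\)-object \(\ZZ/2\times\nerve(\rpathgermy)\).

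\textbf{Target and conclusion.} By \Cref{rem:bordism.field.theory.Quillen.adjunction} and the fibrancy argument in step (5) of the proof of \Cref{thm:moduli.space.reduction}, the target is fibrant. For a free object \(\ZZ/2\times Y\), the free--forgetful adjunction gives an isomorphism of equivariant mapping objects with the nonequivariant mapping object out of \(Y\) into the underlying presheaf. This is compatible with \(\lambda^\infty\), since \(\lambda^\infty\) commutes with the coproduct defining the free object. We therefore obtain \(\CsSet(\lambda^\infty\nerve(\rpathgermy),\rho^*_{\infty,1}\mathfrak{q}^*\FFT_1(\cat{V}))\) in \(\psh(\Cart,\CsSet)\). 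Finally, \Cref{rem:gch.is.for.noneq} (the geometric framed cobordism hypothesis, valid since \(\cat{V}\) has duals) gives a weak equivalence \(\rho^*_{\infty,1}\mathfrak{q}^*\FFT_1(\cat{V})\to\cat{V}^\times\) between fibrant objects. Because every object is cofibrant, postcomposition induces the desired weak equivalence of mapping objects.

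The main obstacle is the source computation: identifying the Thomason category of elements with \(\ZZ/2\times\rpathgermy\) equivariantly and up to local weak equivalence. I would first try to exhibit an explicit functor, taking the germ of the path at its core interval, and show that its fibers are contractible via Quillen's Theorem A. Naturality in \(U\) and the interaction with \v{C}ech localization are the delicate points, and for these we defer to \cite{KP26}.
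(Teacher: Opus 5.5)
Your proposal is correct and follows the paper's proof: apply \Cref{thm:moduli.space.reduction} with \(d=1\), identify the source via \cite{KP26} with the free \(\ZZ/2\)-object on \(\lambda^{\infty}\nerve(\rpathgermy)\), discard the action by the free--forgetful adjunction, and finish with the geometric framed cobordism hypothesis as in \Cref{rem:gch.is.for.noneq}. The only difference is the order of steps: the paper first obtains \(\ZZ/2\times\hocolim_{\CM}\Yo_U\) from \cite[Proposition~3.4.7]{KP26}, drops the action, and only then replaces \(\hocolim_{\CM}\Yo_U\) by \(\nerve(\rpathgermy)\) via \cite[Theorem~4.2.1]{KP26}, whereas you combine these two inputs before dropping the action (the freeness you need plausibly comes from the orientation contained in \(\RM\), which trivializes the torsor \(\mathcal{O}_u\) naturally over the category of elements of \(q^{*}\RM\)).
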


\begin{proof}
  Apply \Cref{thm:moduli.space.reduction} with \(d=1\) and
  \(F=\RM\). The resulting mapping object can be further simplified as follows:
  \begin{align*}
    &\CsSet\bigl(\lambda^{\infty}_{\mathrm{O}(1)}
    \Cc_{1}q^{*}(\RM),\rho^{*}_{\infty,1}\mathfrak{q}^{*}\FFT_{1}(\cat{V})\bigr)
    &&\text{\(\psh(\Cart,\CsSet)^{\ZZ/2}\)} \\[1ex]
    &\we
    \CsSet\bigl(\lambda^{\infty}_{\mathrm{O}(1)}(\ZZ/2\times
    \hocolim_{\CM}\Yo_{U}),\rho^{*}_{\infty,1}\mathfrak{q}^{*}\FFT_{1}(\cat{V})\bigr)
    &&\text{\(\psh(\Cart,\CsSet)^{\ZZ/2}\)} \\[1ex]
    &\cong
    \CsSet\bigl(\lambda^{\infty}\hocolim_{\CM}\Yo_{U},
    \rho^{*}_{\infty,1}\mathfrak{q}^{*}\FFT_{1}(\cat{V})\bigr)
    &&\text{\(\psh(\Cart,\CsSet)\)} \\[1ex]
    &\we
    \CsSet\bigl(\lambda^{\infty}\nerve(\rpathgermy),
    \rho^{*}_{\infty,1}\mathfrak{q}^{*}\FFT_{1}(\cat{V})\bigr)
    &&\text{\(\psh(\Cart,\CsSet)\)} \\[1ex]
    &\we
    \CsSet\bigl(\lambda^{\infty}\nerve(\rpathgermy),
    \cat{V}^{\times}\bigr)
    &&\text{\(\psh(\Cart,\CsSet)\)}
  \end{align*}

  The second and fourth lines use
  \cite[Proposition~3.4.7]{KP26} and \cite[Theorem~4.2.1]{KP26},
  respectively. The middle isomorphism is the free--forgetful adjunction
  for \(\ZZ/2\)-equivariant objects; in the last two lines, the action
  on \(\cat{V}^{\times}\) is forgotten. Finally, the last line is an application
  of the one-dimensional geometric framed cobordism hypothesis as explained in
  \Cref{rem:gch.is.for.noneq}.
\end{proof}

\def\doi#1{\href{https://doi.org/#1}{doi:#1}}
\def\arXiv#1{\href{https://arxiv.org/abs/#1}{arXiv:#1}}
\def\gen#1{\href{http://gen.lib.rus.ec/book/index.php?md5=#1}{PDF}}
\def\jstor#1{\href{https://www.jstor.org/stable/#1}{JSTOR:#1}}
\def\numdam#1{\href{http://www.numdam.org/item/#1}{numdam:#1}}


\end{document}